 \newtheorem{theorem}{Theorem}[section]
 \newtheorem{lemma}[theorem]{Lemma}
 \newtheorem{cor}[theorem]{Corollary}
 \newtheorem{definition}[theorem]{Definition}
\theoremstyle{remark}
\newtheorem{remark}{\bf Remark}
\title{Local principle satisfying high order total variation diminishing approximation for non-sonic data extrema} 
\author{Ritesh Kumar Dubey\footnote{\texttt{mail-to: riteshkd@gmail.com}}, Biswarup Biswas \\
  {\small Research Institute, SRM University, Tamilnadu, India}\\ \& \\Vikas Gupta\\{\small LNMIIT Jaipur, Rajsthan, India}}
\date{}
\begin{document}
\maketitle

   \begin{abstract}
     The main contribution of this work is to construct higher than
     second order accurate total variation  diminishing (TVD) schemes which can preserve high accuracy at
     non-sonic extrema with out induced local oscillations. It is done
     in the framework of local maximum principle (LMP) and
     non-conservative formulation. The representative uniformly second
     order accurate schemes are converted in to their
     non-conservative form using the ratio of consecutive
     gradient. These resulting schemes are analyzed for their
     non-linear LMP/TVD stability bounds using the local maximum
     principle.  Based on the bounds, second order accurate
     hybrid numerical schemes are constructed using a shock
     detector. Numerical results are presented to show that such
     hybrid schemes yield TVD approximation with second or higher
     order convergence rate for smooth solution with extrema. 
    \end{abstract} 

    {\bf keyword} Hyperbolic conservation laws;Smoothness parameter; Non-sonic critical
      point, Total variation stability, Finite difference schemes.\\
      {\bf AMS Classification}:
      65M12, 
      35L65,
      35L67,
      35L50,
      65M06,
      65M15 
      65M22.

  \section{Introduction}\label{sec1} We consider the 1D scalar
  conservation law associated to the conserved variable
  $u(x,t)$,
 \begin{eqnarray} \displaystyle \frac{\partial}{\partial t}
    u(x,t) + \frac{\partial}{\partial x}f(u(x,t)) = 0,\;\; (x,\,t)\in
    \mathbb{R}\times\,
    \mathbb{R}^{+} \label{nonlin}\\ u(x,0)=u_{0}(x)\nonumber 
\end{eqnarray}
  where $f(u)$ is a non-linear flux function. The numerical
  approximation for the solution of (\ref{nonlin}) is done by the
  discretization of the spatial and temporal space into $N$ equispaced
  cells $I_{i} = [x_{i-\frac{1}{2}}, x_{i+\frac{1}{2}}],\; i=0,1,\dots
  N$ of length $\Delta x$ and $M$ equispaced intervals $[t^n,
    t^{n+1}],\; n=0,1,\dots, M$ of length $\Delta t$ respectively.
  Let $x_{i}=i\Delta x$ and $t^{n}=n\Delta t$ denote the cell center
  of cell $I_{i}$ and the $n^{th}$ time level respectively then a
  conservative numerical approximation for (\ref{nonlin}) can be defined
  by \begin{equation} {u}^{n+1}_{i} = {u}_{i}^{n} - \lambda
    \left(\mathcal{F}^n_{i+\frac{1}{2}}-
    \mathcal{F}^n_{i-\frac{1}{2}}\right),\;\;\lambda = \frac{\Delta
      t}{\Delta x}. \label{s2eq2} \end{equation} where
  ${u^n_{i}=u(x_{i},t^{n})}$ and $\mathcal{F}^n_{i\pm
    \frac{1}{2}}$ is the numerical flux function defined at the cell
  interface $x_{i \pm \frac{1}{2}}$ at time level $n$.  The characteristics speed $a(u)= \frac{\partial
    f(u)}{\partial u}$ associated with (\ref{nonlin}) can be approximated as, 
  \begin{equation}
    \displaystyle a^n_{i+\frac{1}{2}}= \left\{\begin{array}{clc}
    \displaystyle \frac{F^n_{i+1}-F^n_{i}}{{u}^n_{i+1} - {u}^n_{i}}& \mbox{if}
    &{u}^n_{i+1} \neq {u}^n_{i},\\ & &\\ \displaystyle
    \left. \frac{\partial f}{\partial u}\right|_{{u}^n_{i}} & \mbox{if}
    &{u}^n_{i+1}=
         {u}^n_{i} \end{array}\right.,\label{speed}
\end{equation} 
where $F_{i}= f(u^n_{i})$. In general due to non-linearity of
(\ref{nonlin}), beyond a small finite time, even for a smooth initial
data the evolution of discontinuities in the solution $u(x,t)$ is
inevitable. Therefore, it is required to have a conservative
approximation of the solution with high accuracy and crisp resolution
of such discontinuities with out numerical oscillations. Contrary to
this need, most classical high order schemes despite of being linearly
Von-Neumann stable give oscillatory approximation for discontinuities
even for the trivial case of transport equation i.e., $f(u)=au,\,
0\neq a\in R $.  Such oscillatory approximation can not be considered
as admissible solution since it violets the following global maximum
principle satisfied by the physically correct solution $u(x,t)$ of
(\ref{nonlin}) i.e.,
\begin{equation} \min(u_{0}(x)) \leq u(x,t) \leq \max(u_{0}(x)),
  \forall (x,\;t)\in \mathbb{R}\times\, \mathbb{R}^{+}. 
\label{mp} 
\end{equation} In
order to overcome these undesired numerical instabilities, various
notion of non-linear stability are developed in the light of maximum
principle (\ref{mp}). Examples of Maximum principle satisfying schemes
are monotone schemes \cite{vanLeer1974, Crandall}, total variation
diminishing (TVD) schemes
\cite{harten1984,sweby1984,Yee1987,sanders1988,davis1987,
  goodman1988,rkd2007,zhang2010}. Some uniformly high order
maximum-principle satisfying and positivity preserving schemes are
\cite{LaxLiu1998, zhangjcp2010,zhang2011}.  There are other
non-oscillatory schemes which do not strictly follow maximum principle
but practically give excellent numerical results e.g., Essentially
non-oscillatory (ENO) and weighted ENO schemes see \cite{cwshu1999}
and references therein.  It is known that among global maximum
principle satisfying schemes, the monotone and total variation
diminishing (TVD) schemes experience difficulties at data extrema.  On
the one hand, such high order schemes locally degenerate to first
order accuracy at non-sonic data extrema and on the other hand, even
such a uniformly first order accurate schemes may exhibit induced
local oscillations at data extrema. In this work the focus is on the
construction of improved TVD schemes at smooth data extrema.
 
  \section{Global maximum principle and data extrema}\label{sec2} 
  The above global maximum principle (\ref{mp}) satisfying monotone
  and TVD schemes have been of great interest mainly due to excellent
  convergence proofs for entropy solution
  \cite{Sanders1983,Lefloach99} and \cite{chakra,yang1996}
  respectively.  The key idea is, any maximum principle satisfying
  scheme produce a bounded solution sequence and convergence follows
  due to compactness of solution sequence space \cite{LeVeque1992}. It
  can be shown that monotone stable scheme $\Rightarrow$ TVD scheme
  $\Rightarrow$ monotonicity preserving scheme (or Local extremum
  diminishing (LED)) scheme \cite{harten1983,jameson1995}. Unfortunately,
  monotone as well TVD schemes experience difficulty at data
  extrema. The monotone stability relies on monotone data and
  therefore a monotone scheme preserves the monotonicity of a data set
  by mapping it to a new monotone data set but fails to preserve the
  non-monotone solution region i.e., at data extrema. These monotone
  schemes are criticized mainly due to barrier theorem which state
  that a 'linear' three point monotone scheme can be at most first
  order accurate \cite{godunov}. Later, second order 'non-linear'
  conservative monotone schemes are constructed using limiters but
  again by compromising on second order accuracy at extrema,
  e.g. \cite{vanLeer1974}. The TVD stability mimics the maximum
  principle as it relies on the condition that global extremum values
  of solution must remain be bounded by global extremum values of
  initial solution. In \cite{harten1983}, Harten gave the concept of
  total variation diminishing scheme by measuring the variation of the
  grid values as follows
  \begin{definition}\label{def1} Conservative scheme
    (\ref{s2eq2}) is called total variation diminishing if
    \begin{equation}\label{tvddef} \displaystyle
      \sum_{i=-\infty}^{\infty}\left|\Delta_{-}u_{i}^{n+1}\right|\leq
      \sum_{i=-\infty}^{\infty} \left|\Delta_{-}u_{i}^{n}\right|
    \end{equation} where
    $\Delta_{+}{u}_{i} = \Delta_{-}{u}_{i+1} = {u}_{i+1} -{u}_{i}$.
  \end{definition}
  Note that that the definition \ref{def1} is global as it is defined on the whole
  computational domain and ensures that global maxima or minima of
  initial solution $u_{0}(x)$ will not increase or decrease
  respectively.  Such conservative TVD schemes are heavily
  criticized because, even if they are higher order accurate in most
  solution region, they give up second order of accuracy at non-sonic
  critical values of the solution \cite{tadmor1988,chakra}\footnote{Sanders also defined the total
    variation by measuring the variation of the reconstructed
    polynomials and such TVD schemes can be uniformly high order
    accurate \cite{sanders1988,zhang2010}.}.  {\it We emphasize that these depressing results on
    degeneracy of accuracy of TVD method are given for {\bf
      conservative} schemes and in the above {\bf global sense}}. More
  precisely the global nature of TVD definition (\ref{tvddef}) allows
  shift in indices technique in $\sum$ sign and is extensively used
  in different terms of the infinite sums in the TVD proofs of various
  schemes and results in the literature including the following one
  due to Harten \cite{harten1983}.
  \begin{lemma}\label{lem1} A conservative scheme in
    Incremental form (I-form)
    \begin{equation}
      {u}_{i}^{n+1}=u_{i}^{n}+\alpha_{i+\frac{1}{2}}({u}^{n}_{i+1}-{u}^{n}_{i})-
      \beta_{i-\frac{1}{2}}({u}^{n}_{i}-{u}^{n}_{i-1})\label{iform}
    \end{equation}
    is TVD iff $\alpha_{i+\frac{1}{2}}\geq0, \beta_{i+\frac{1}{2}}\geq
    0\;\mbox{and}\; \alpha_{i+\frac{1}{2}}+ \beta_{i+\frac{1}{2}}\leq
    1,\; \forall i$. \cite{Thomas}
  \end{lemma}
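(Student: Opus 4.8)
The plan is to prove both implications of the equivalence directly from Definition \ref{def1}, working throughout with the forward differences $\Delta_+ u_i = u_{i+1}-u_i$ and using that $\sum_i|\Delta_- u_i| = \sum_i|\Delta_+ u_i|$ after a shift of index. The engine of the whole argument is a single recursion for the increments of the updated solution. Writing the I-form \eqref{iform} at indices $i$ and $i+1$, subtracting, and using $\Delta_- u_{i+1}^n = \Delta_+ u_i^n$ together with $\Delta_- u_i^n = \Delta_+ u_{i-1}^n$, I expect to obtain
\[
\Delta_+ u_i^{n+1} = \alpha_{i+3/2}\,\Delta_+ u_{i+1}^n + \bigl(1-\alpha_{i+1/2}-\beta_{i+1/2}\bigr)\,\Delta_+ u_i^n + \beta_{i-1/2}\,\Delta_+ u_{i-1}^n .
\]

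For the sufficiency (\emph{if}) direction I would then invoke the hypotheses $\alpha_{i+1/2}\ge 0$, $\beta_{i+1/2}\ge 0$ and $\alpha_{i+1/2}+\beta_{i+1/2}\le 1$, which make all three coefficients on the right nonnegative. The triangle inequality bounds $|\Delta_+ u_i^{n+1}|$ by the same combination of $|\Delta_+ u_{i+1}^n|$, $|\Delta_+ u_i^n|$ and $|\Delta_+ u_{i-1}^n|$. Summing over all $i$ and shifting indices in the first and third sums (sending $i+1\mapsto i$ and $i-1\mapsto i$) regroups the coefficients multiplying each $|\Delta_+ u_i^n|$ into $\alpha_{i+1/2}+(1-\alpha_{i+1/2}-\beta_{i+1/2})+\beta_{i+1/2}=1$, giving $\sum_i|\Delta_+ u_i^{n+1}|\le \sum_i|\Delta_+ u_i^n|$, which is the TVD property.

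For the necessity (\emph{only if}) direction I would argue by a single-jump test datum. Fix an index $k$ and take $u^n$ with $\Delta_+ u_j^n=0$ for all $j\ne k$ and $\Delta_+ u_k^n=\delta>0$. Substituting into the recursion, only three increments survive at level $n+1$, namely $\alpha_{k+1/2}\delta$, $(1-\alpha_{k+1/2}-\beta_{k+1/2})\delta$ and $\beta_{k+1/2}\delta$, so the TVD inequality reduces to
\[
|\alpha_{k+1/2}| + |1-\alpha_{k+1/2}-\beta_{k+1/2}| + |\beta_{k+1/2}| \le 1 .
\]
Because the three quantities inside the absolute values sum to $1$, the triangle inequality forces the left-hand side to be at least $1$, with equality exactly when each term is nonnegative; hence $\alpha_{k+1/2}\ge 0$, $\beta_{k+1/2}\ge 0$ and $\alpha_{k+1/2}+\beta_{k+1/2}\le 1$. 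Since $k$ is arbitrary, the stated conditions follow.

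The step I expect to be delicate is the necessity direction, not the algebra but its justification: for a nonlinear scheme the increment coefficients $\alpha_{i+1/2},\beta_{i+1/2}$ generally depend on the data, so the test datum must be a monotone (here one-sided) configuration on which these coefficients are well defined and the sign analysis is legitimate. This is precisely why an isolated-jump perturbation is used, and it is the reason the resulting conditions are understood pointwise in $i$ rather than as a single global constraint.
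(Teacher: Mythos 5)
The paper does not prove Lemma \ref{lem1} at all; it is stated as a known result and attributed to the textbook \cite{Thomas}, so there is no internal proof to compare against. Your argument is the classical one and is correct where it is standard: the increment recursion
$\Delta_+ u_i^{n+1} = \alpha_{i+3/2}\,\Delta_+ u_{i+1}^n + (1-\alpha_{i+1/2}-\beta_{i+1/2})\,\Delta_+ u_i^n + \beta_{i-1/2}\,\Delta_+ u_{i-1}^n$
is exactly right, and the sufficiency direction (triangle inequality, sum, shift indices, regroup the coefficients to $1$) is Harten's original proof. The one point to be careful about is the necessity direction, and you have already put your finger on it: for a nonlinear scheme the coefficients are functionals of the data, so the single-jump test only forces $\alpha_{k+1/2}\ge 0$, $\beta_{k+1/2}\ge 0$, $\alpha_{k+1/2}+\beta_{k+1/2}\le 1$ \emph{as evaluated on that particular datum}; it does not rule out a TVD scheme whose incremental coefficients violate the conditions on some other data, nor does it account for the non-uniqueness of the I-form decomposition of a given scheme. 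This is why the result is most often stated as a sufficient condition only, and the ``iff'' in the paper's statement should be read with that qualification. Your sign analysis for the necessity step (the three terms sum to $1$, so the triangle inequality is an equality only when each is nonnegative) is clean and correct, and your closing remark about why an isolated-jump datum is the right test configuration is exactly the honest caveat a careful reader would want; no further repair is needed for the way the lemma is actually used in this paper, which only ever invokes the sufficiency direction (e.g.\ in the proof of Lemma \ref{lem2}).
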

\subsection{Degenerate accuracy at extrema:} 
In \cite{chakra}, proof for degeneracy to first order accuracy at
non-sonic critical points of solution i.e., points $u^{*}(x,t)$
s.t. $f^{'}(u^{*}(x,t)) \neq 0= u^{*}_{x}(x,t)$ is mainly based on
modified equation analysis and a {\it conservative} semi-discrete
version of Lemma \ref{lem1}. In \cite{tadmor1988}, using a trade off
between second order accuracy and TVD requirement along with shift in
indices technique, it is shown that second order accuracy must be
given up by a {\it conservative} TVD scheme at non sonic critical
values $u_{i}=u^{*}(x_{i},t)$ which corresponds to extreme values i.e.,
$[u(x_{i}+\Delta\,x,t)-
  u(x_{i},t)].[u(x_{i},t)-u(x_{i}-\Delta\,x,t)]<0 \neq f^{'}(u_{i})$.
It is also worthy to note that problem of degenerate accuracy by
modern high resolution TVD schemes is also due to their construction
procedure. For example, the numerical flux function of flux limiters or
slope limiters based TVD schemes is essentially design in such a way
that it reduces to first order accuracy at extrema and high
gradient region by forcing limiter function to be zero see
\cite{Piperno,Dubey2013} and references therein. This makes it
impossible for a limiter based TVD schemes to achieve higher than
first order accuracy at solution extrema as well at steep gradient
region \cite{LeVeque1992}. Thus every high order TVD (in global sense
(\ref{tvddef}) ) scheme suffers from clipping error and cause flatten
approximation for smooth extrema though they sharply capture
discontinuities\cite{Laney}.

\subsection{Induced local oscillations:}  Apart from compromise in uniform high accuracy, it is notable that
  global maximum principle satisfying monotone and TVD schemes do not
  necessarily ensure preservation of non-monotone data set i.e., for a
  data set with extrema as demonstrated in Figure \ref{F1a}(b). In
  particular first order monotone and TVD schemes with {\bf large
    coefficient of numerical viscosity} can allow the occurrence of
  induced oscillations at data extrema and formation of new local
  extremum values as shown in Figure \ref{F1b}(a). This phenomena of
  generation of local oscillations at extrema is reported and analyzed
  for well known monotone and TVD three point {\it Lax-Friedrichs}
  scheme in \cite{Breuss2010,jiequan2009,jiequan2011} similar to
  Figure \ref{F1b}(a).
  \begin{figure}[h]
    \begin{tabular}{cc}
      \hspace{-0cm} \includegraphics[%
      scale=0.3]{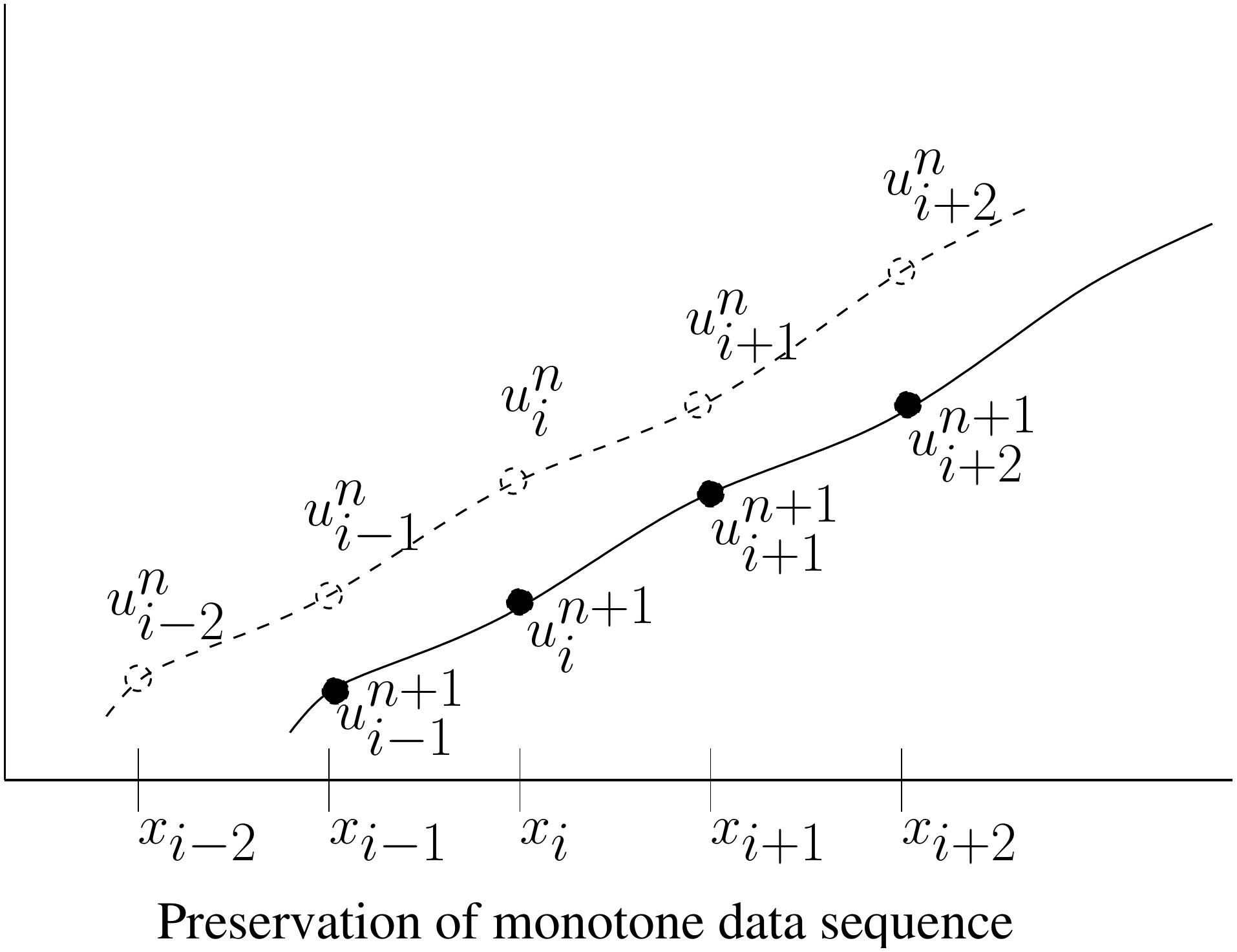} & \includegraphics[%
      scale=0.3]{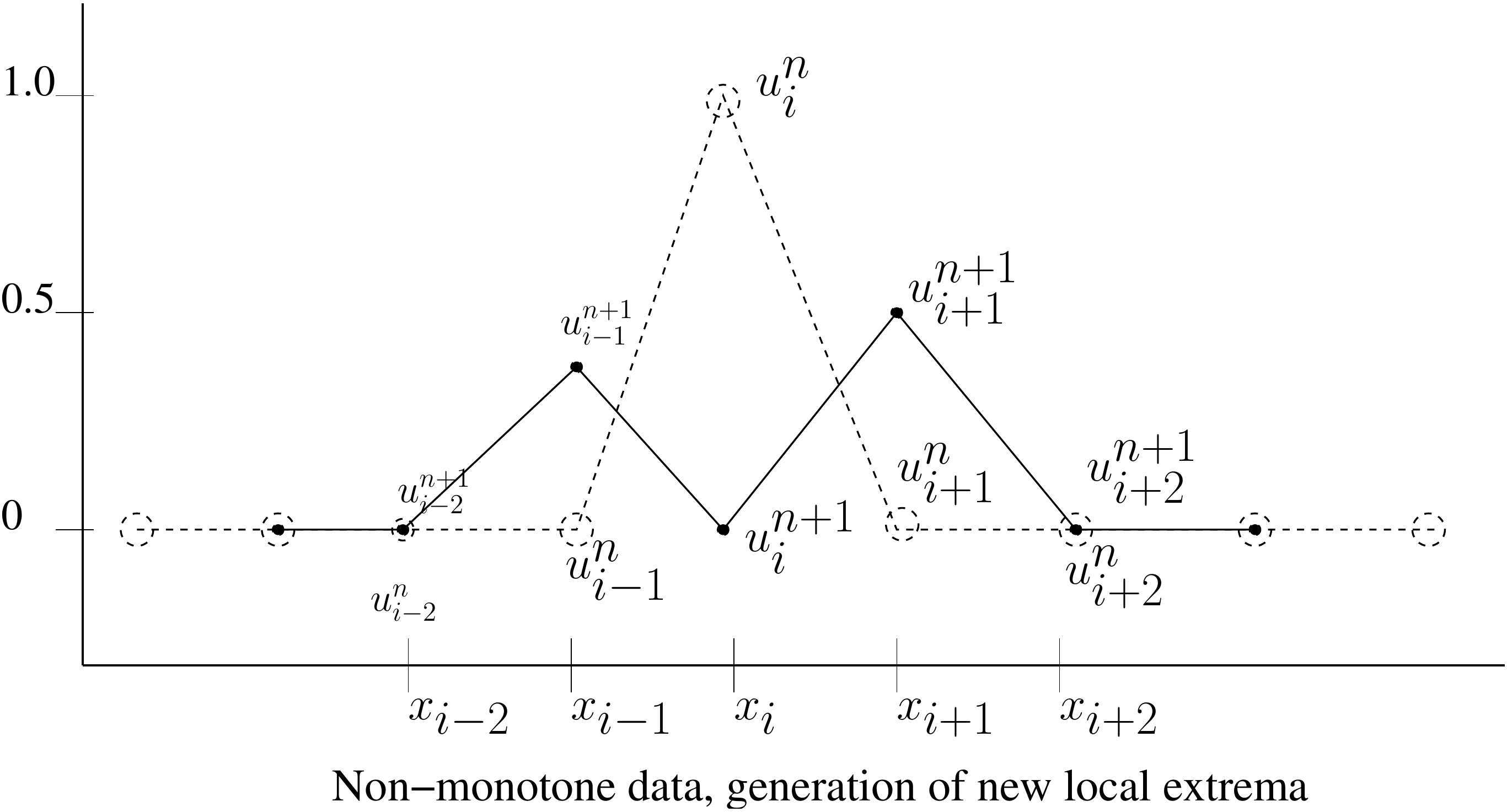}\\
      (a) & (b) 
    \end{tabular} 
    \caption{\label{F1a} Induced oscillations may occur for
      non-monotone data extrema. (a) Monotone stability rely on
      monotone data sequence. (b) For non-monotone data set TVD
      definition (\ref{tvddef}) is satisfied though updated approximation
      is oscillatory.}
  \end{figure}
  \begin{figure}[h]
    \begin{tabular}{cc}
      \includegraphics[scale=0.45]{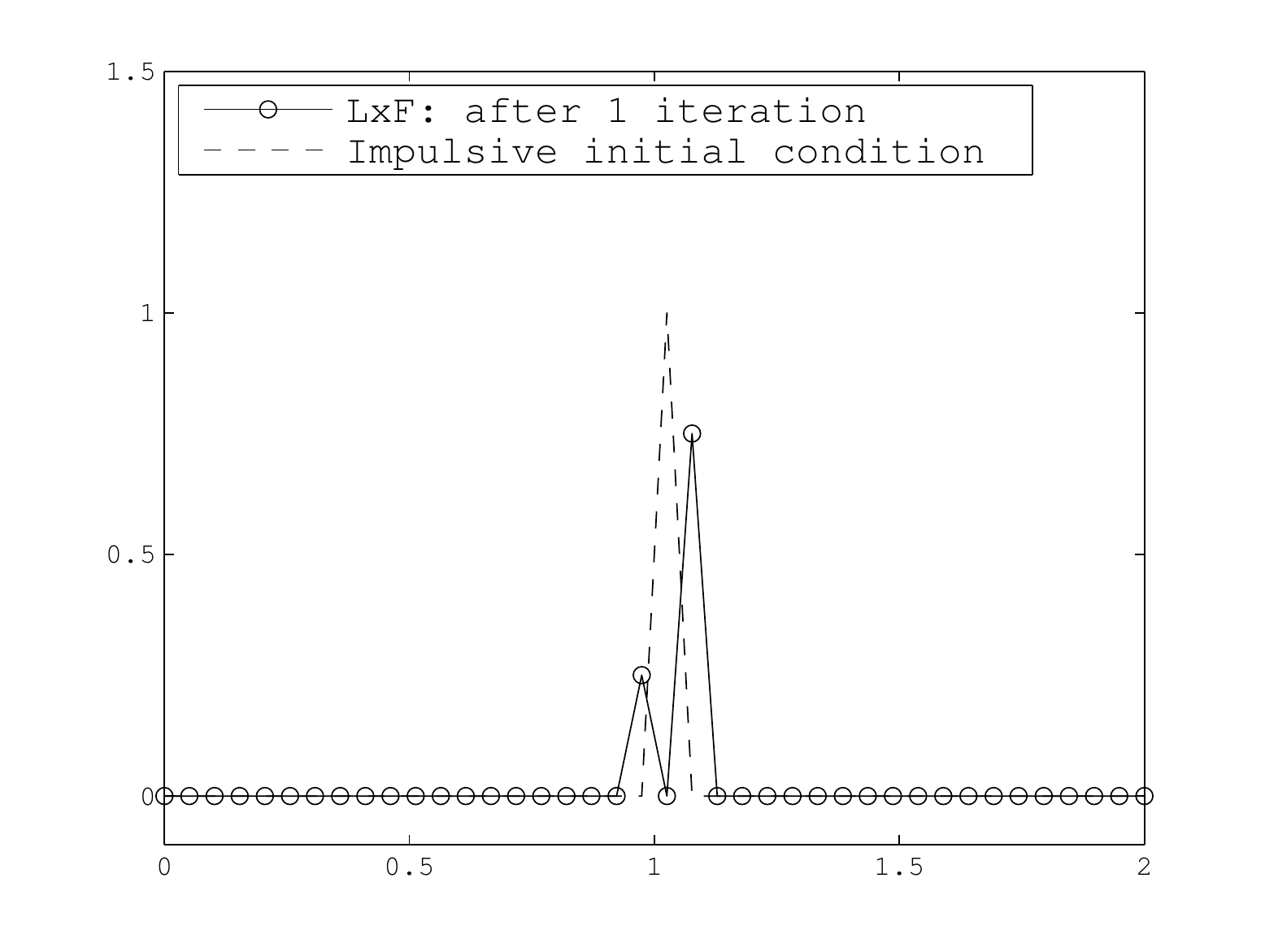} &
      \includegraphics[scale=0.45]{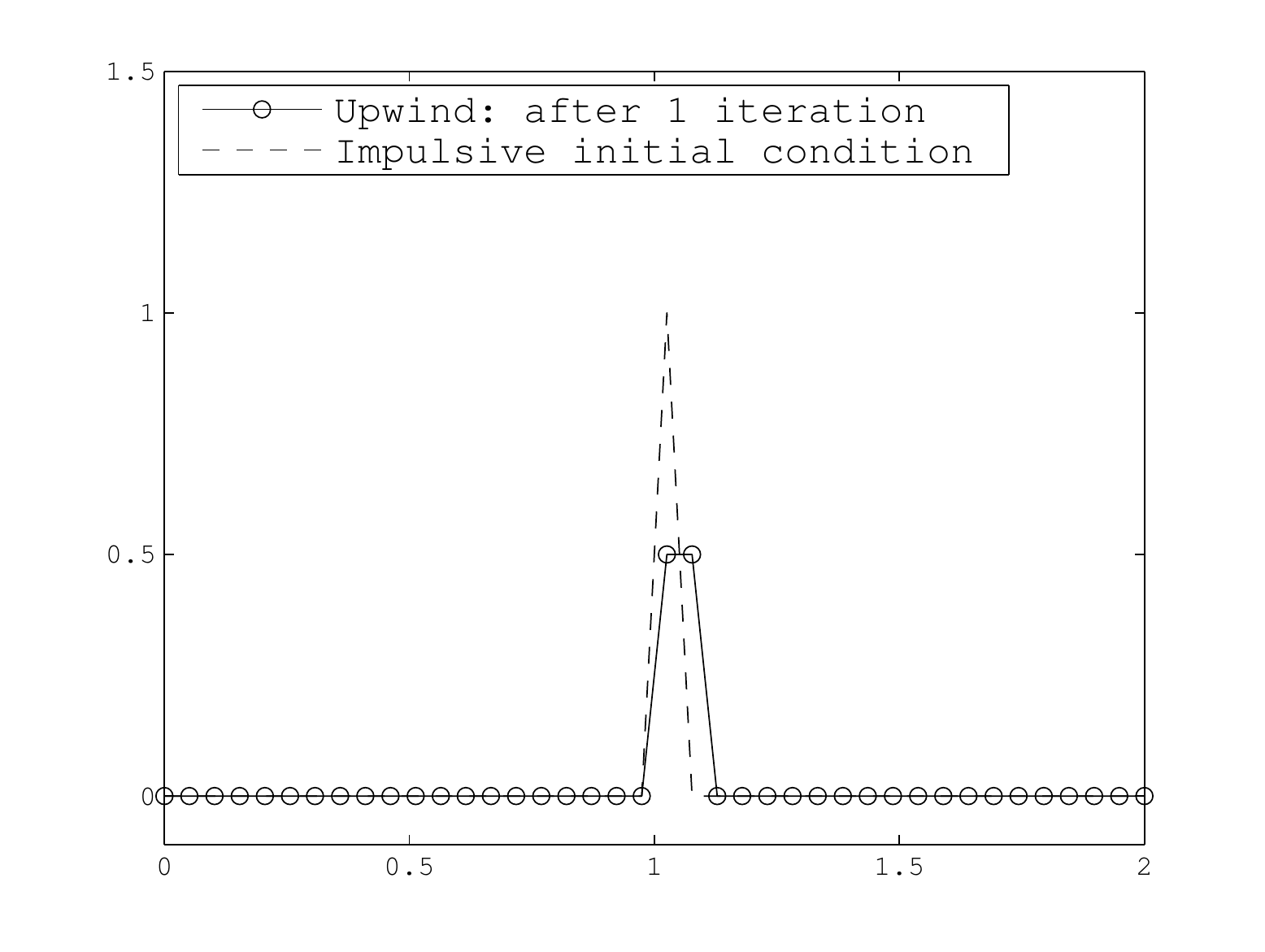}\\ (a) &
      (b) \end{tabular} \caption{\label{F1b} Numerical
      approximation of Linear transport equation (\ref{transport})
      with impulsive initial data. (a) By monotone and TVD
      Lax-Friedrichs scheme: Induced oscillations (b) First order two
      point upwind scheme: Absence of induced oscillations.}  
  \end{figure}
  It is interesting to note that two point monotone and TVD upwind
  scheme does not exhibit induced oscillations in Figure
  \ref{F1b}(b). 
  
  The main aim of this work is to construct uniformly non-oscillatory
  shock capturing monotone and TVD methods with high accuracy for
  non-sonic smooth extrema\footnote{It shows improved TVD
    approximation in the region of degenerate accuracy reported in
    \cite{chakra,tadmor1988}.}. In order to achieve it, a {\bf
    non-conservative} formulation is done using framework of local
  maximum principle (LMP) with the help of gradient ratio parameter.
  The rest of paper is organized as: For completeness, in section
  \ref{sec3}, local maximum principle (LMP) stability is defined for
  two points schemes. It is shown in Lemma \ref{lem2}, that away from
  sonic point LMP stability implies global monotone and TVD
  stability. In section \ref{sec4}, we analyze representative
  uniformly second  order schemes in non-sonic region for
  their TVD (or LMP) stability by converting them into two point
  schemes. This yields computable bounds for the stability of these
  scheme and are presented as main results in Theorems
  \ref{thm1}-\ref{thm3}. These obtained TVD bounds ensure for {\it
    second rrder TV stable approximation for smooth
    solution with non-sonic critical point of extreme nature}. In
  section \ref{sec5}, hybrid schemes are designed using the TVD
  bounds and a shock detector technique. Numerical results are
  given to support the theoretical results and claim. Conclusion and
  future work is discussed in section \ref{sec6}.
  \section{Local maximum principle (LMP) stability}\label{sec3}
  It is clear from the above discussion that the global maximum principle
  (\ref{mp}) satisfying monotone or TVD stability experience
  difficulty in the presence of non-monotone data with extrema. These
  two local phenomena i.e, induced oscillations and degeneracy in accuracy
  at non-sonic extrema by monotone and TVD schemes motivate us to look
  for their non-linear stability locally at non-sonic extrema. Consider the
  following local maximum principle (LMP)\footnote{Also known as
    upwind range condition \cite{Laney}} for scalar conservation law
  (\ref{nonlin}), \begin{subequations} \label{urc} \begin{align}
      \min_{x_{i-1}\leq x \leq x_{i}}u(x,t^{n})\leq u(x,t^{n+1})\leq
      \max_{x_{i-1}\leq x \leq x_{i}}u(x,t^{n})&\;\; \mbox{if}\;
      f^{'}(u) >0, \\ \min_{x_{i}\leq x \leq x_{i+1}}u(x,t^{n})\leq
      u(x,t^{n+1})\leq \max_{x_{i}\leq x \leq x_{i+1}}u(x,t^{n})&\;\;
      \mbox{if}\;f^{'}(u)<0.  \end{align} \end{subequations} In case
  of two point schemes, initial solution data will always be monotone
  as either $u(x_{a},t)\leq u(x_{b},t),\; x_{a}\neq x_{b}$ or vice
  verse thus the LMP condition (\ref{urc}) reduces to
  \begin{subequations} \label{urc1} \begin{align} \min(u_{i-1}^{n},
      u_{i}^{n}) \leq u_{i}^{n+1} \leq \max(u_{i-1}^{n}, u_{i}^{n})
      &\;\; \mbox{if} f^{'}(u)>0,\\ \min(u_{i}^{n}, u_{i+1}^{n}) \leq
      u_{i}^{n+1} \leq \max(u_{i}^{n}, u_{i+1}^{n}) &\;\; \mbox{if}\;
      f^{'}(u)< 0.  \end{align} \end{subequations} Thus away from
  sonic point $u_{i}^{*}$ i.e., $a_{i+\frac{1}{2}}=f^{'}(u_{i}^{*})\neq0$ define, 
  \begin{definition}\label{def2} A numerical scheme is LMP stable (\ref{urc1}) if it can be written
    as \begin{equation} \label{lmpscheme} u_{i}^{n+1} =
      \left\{ \begin{array}{ll} \mathcal{C} u_{i}^{n} + \mathcal{D}
        u_{i-1}^{n} &\;\; \mbox{if}\;\;\;\;\;\;\;\;\; 0< \lambda
        a^n_{i+\frac{1}{2}}\leq 1,\\ \mathcal{C} u_{i}^{n} +
        \mathcal{D} u_{i+1}^{n} &\;\; \mbox{if} \;\;\;\;\; -1\leq
        \lambda a^n_{i+\frac{1}{2}}< 0, \end{array}\right.
    \end{equation} where
    coefficients $\mathcal{C}$and $\mathcal{D}$ are real functions such
    that $\mathcal{C}\geq 0, \mathcal{D} \geq 0$, $\mathcal{C} +
    \mathcal{D} = 1$.
  \end{definition}
  Scheme (\ref{lmpscheme}) is essentially a convex combination of two
  point values of $u(:,t^{n})$ thus ensures that the updated solution
  value of $u(x_{i}, t^{n+1})$ will remain be bounded by both point values
  without introducing of new local maxima-minima.  Note that two point
  first order upwind scheme is a natural example of LMP stable scheme
  with coefficients
  \begin{subequations} \begin{align} \mathcal{C} = 1-\lambda
      a_{i-\frac{1}{2}}, \mathcal{D} = \lambda a_{i-\frac{1}{2}} &\;\;
      \mbox{if}\;\;\;\;\; 0< \lambda f^{'}(u_{i}^{n})\leq 1\\
      \mathcal{C} = 1-\lambda a_{i+\frac{1}{2}}, \mathcal{D} = \lambda
      a_{i+\frac{1}{2}} &\;\; \mbox{if}\;\;\;\;\; -1\leq \lambda
      f^{'}(u_{i}^{n})< 0.  \end{align}
  \end{subequations}
  This justifies the non-occurrence of local oscillation by first order
  upwind scheme in Figure \ref{F1b}(b).  From definition \ref{def2}
  it follows,
  \begin{lemma}\label{lem2} Local maximum principle stable
    scheme (\ref{lmpscheme}) is global total variation diminishing stable.
  \end{lemma}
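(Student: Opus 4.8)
The plan is to show that the LMP-stable scheme (\ref{lmpscheme}) can be recast into the incremental form (\ref{iform}) with coefficients satisfying the hypotheses of Harten's Lemma \ref{lem1}, from which global TVD stability follows immediately. The key observation is that the LMP scheme is a two-point convex combination, whereas the I-form of Lemma \ref{lem1} is a three-point update involving both $\Delta_+ u_i$ and $\Delta_- u_i$. So the main work is to write the one-sided convex combination as an incremental scheme and then verify the three sign/magnitude conditions $\alpha_{i+\frac12}\ge 0$, $\beta_{i+\frac12}\ge 0$, and $\alpha_{i+\frac12}+\beta_{i+\frac12}\le 1$.

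First I would treat the case $0<\lambda a^n_{i+\frac12}\le 1$, where $u_i^{n+1}=\mathcal{C}u_i^n+\mathcal{D}u_{i-1}^n$ with $\mathcal{C},\mathcal{D}\ge 0$ and $\mathcal{C}+\mathcal{D}=1$. Using $\mathcal{C}=1-\mathcal{D}$, I rewrite the update as
\begin{equation}
u_i^{n+1}=u_i^n-\mathcal{D}\,(u_i^n-u_{i-1}^n)=u_i^n-\mathcal{D}\,\Delta_- u_i^n,\nonumber
\end{equation}
which matches (\ref{iform}) with $\alpha_{i+\frac12}=0$ and $\beta_{i-\frac12}=\mathcal{D}$. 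Then $\alpha_{i+\frac12}=0\ge 0$ trivially, $\beta_{i+\frac12}=\mathcal{D}\ge 0$ by the hypothesis of Definition \ref{def2}, and $\alpha_{i+\frac12}+\beta_{i+\frac12}=\mathcal{D}\le \mathcal{C}+\mathcal{D}=1$ since $\mathcal{C}\ge 0$. Thus all three conditions of Lemma \ref{lem1} hold in this case.

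Next I would handle the symmetric case $-1\le\lambda a^n_{i+\frac12}<0$, where $u_i^{n+1}=\mathcal{C}u_i^n+\mathcal{D}u_{i+1}^n$. Here the analogous manipulation gives $u_i^{n+1}=u_i^n+\mathcal{D}\,\Delta_+ u_i^n$, identifying $\alpha_{i+\frac12}=\mathcal{D}$ and $\beta_{i-\frac12}=0$, and the same three inequalities follow from $\mathcal{C},\mathcal{D}\ge 0$ and $\mathcal{C}+\mathcal{D}=1$. With both upwind cases reduced to admissible incremental coefficients, Harten's Lemma \ref{lem1} yields (\ref{tvddef}), i.e. global TVD stability, completing the argument.

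The subtlety to watch — and the step I would flag as the main obstacle — is the indexing/bookkeeping when passing from the scheme's branch-dependent coefficients ($\mathcal{C},\mathcal{D}$ defined per grid point according to the local sign of $\lambda a^n_{i+\frac12}$) to the globally-indexed sequences $\alpha_{i+\frac12},\beta_{i+\frac12}$ required by Lemma \ref{lem1}. One must ensure that the coefficient $\beta_{i-\frac12}$ produced at cell $i$ and the coefficient $\beta_{i+\frac12}$ appearing in the condition at the neighboring cell are consistently defined, and that the sign-of-speed branching does not create a mismatch at points where the characteristic speed changes sign (the sonic points are excluded by hypothesis, so this is a bookkeeping rather than a genuine analytical difficulty). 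Once the per-cell convex-combination coefficients are correctly relabeled as the interface quantities $\alpha_{i+\frac12},\beta_{i+\frac12}$, the verification of Harten's three conditions is immediate in both cases.
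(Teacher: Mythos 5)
Your proof is correct and follows essentially the same route as the paper: rewrite the convex combination using $\mathcal{C}=1-\mathcal{D}$ as a half incremental form with one of $\alpha,\beta$ set to zero and the other equal to $\mathcal{D}\in[0,1]$, then invoke Harten's Lemma \ref{lem1}. The bookkeeping subtlety you flag about relabeling per-cell coefficients as interface quantities is a fair observation, but it does not change the substance of the argument.
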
 
  \begin{proof} Using relation $\mathcal{C} =1-\mathcal{D}$, rewrite
    (\ref{lmpscheme}) in the {\bf non-conservative}\footnote{Note that for problems with constant $f'(u)$ e.g. linear transport
  equation (\ref{transport}), the form (\ref{lmptvd}) is
  conservative. Also one can obtained a conservative approximation
  from (\ref{lmptvd}) by defining $\mathcal{D}$ suitably such as
  $\displaystyle \mathcal{D}=
  \frac{\Delta_{-}f(u^{n}_{i})}{\Delta_{-}u_{i}^{n}},\; 0<\lambda
  f'(u_{i}^{n})\leq 1$ as in \cite{rkdejde1}. In the persent work the
  coefficient $\mathcal{D}$ comes out in such a form which results
  in to a non-conservative approximation.}  half incremental form as,
    \begin{equation} \label{lmptvd} u_{i}^{n+1} =
      \left\{ \begin{array}{ll} u_{i}^{n} - \mathcal{D}
          (u_{i}^{n}-u_{i-1}^{n}) &\;\; \mbox{if}\;\;\;\;\;\;\;\;\; 0<
          \lambda a^n_{i+\frac{1}{2}}\leq 1,\\ u_{i}^{n}+\mathcal{D}
          (u_{i+1}^{n} - u_{i}^{n}) &\;\; \mbox{if} \;\;\;\;\; -1\leq
          \lambda a^n_{i+\frac{1}{2}}< 0, \end{array}\right.       
    \end{equation} 
    where from Definition \ref{def2}, $0\leq \mathcal{C}\leq 1$ and
    $0\leq \mathcal{D}\leq 1$. On appropriately choosing one of the
    coefficients $\alpha$ or $\beta$ zero in I-form (\ref{iform}), the
    Lemma \ref{lem1} shows approximation by half I-form (\ref{lmptvd})
    is global TVD. 
  \end{proof}

  \begin{remark} The LMP stability is defined only in non-sonic region
    therefore by Lemma \ref{lem2}, LMP stability implies TVD stability
    in non-sonic region i.e., away from sonic point $f^{'}(u^{*}) \neq 0$. 
    In this setting, from next section onward until stated the term LMP/TVD
    stability implies global TVD stability (\ref{tvddef}) {\bf away
      from sonic point}.
\end{remark}
\section{Bounds on high order TVD accuracy}\label{sec4}
In this section using definition \ref{def2}, it is shown that second
order total variation diminishing approximation is possible for the
solution with non-sonic extreme critical points. It follows from the
LMP stability bounds given for the representative second order
accurate Lax-Wendroff (LxW), Beam-Warming (BW) and Fromm schemes
respectively for scalar problem (\ref{nonlin}). Let the stencil
$[x_{i-s},x_{i+r}]$ of $r+s+1$ point scheme locally does not contain
sonic point $u^{*}(x,t)$ i.e., $f^{'}(u^*)\neq0$ and characteristics
speed at local cell interfaces is non-zero. Note that the case of
degenerate characteristic speed i.e., $a_{i+\frac{1}{2}}=0$ or/and
$a_{i-\frac{1}{2}}=0$ are not interesting as these schemes do not
necessarily preserve their uniform order of accuracy.  We also
consider the wave speed slpit $a^{+}_{i+\frac{1}{2}} +
a^{-}_{i+\frac{1}{2}}= a_{i+\frac{1}{2}}$ such that
$a^{+}_{i+\frac{1}{2}}\geq0$ and $a^{-}_{i+\frac{1}{2}}\leq 0$. Note
that for $a_{i+\frac{1}{2}}>0 \Rightarrow
a_{i+\frac{1}{2}}=a^{+}_{i+\frac{1}{2}}>0, a^{-}_{i+\frac{1}{2}}=0$
whereas $a_{i+\frac{1}{2}}<0 \Rightarrow
a_{i+\frac{1}{2}}=a^{-}_{i+\frac{1}{2}}<0, a^{+}_{i+\frac{1}{2}}=0,$.
After dropping the superscript for time level $n$, following function
definitions and notations are used in the rest of the presentation.
Define the smoothness parameter as
\begin{equation}
  \displaystyle r^{\pm}_{i} = r^{\pm}(F_{i})= \frac{\left(1\mp \lambda a^{\pm}_{i\mp\frac{1}{2}}\right)\Delta_{\mp}F^{\pm}_{i}}{\left(1 \mp
      \lambda  a^{\pm}_{i\pm\frac{1}{2}}\right)\Delta_{\pm}F^{\pm}_{i}},\label{gradratio}
\end{equation}
where the flux split $F^{+}_{i}+F^{-}_{i}=F_{i}$ is consistent with wave split and given by
\begin{equation}
F^{\pm}_{i+1} -F^{\pm}_{i}= a^{\pm}_{i+\frac{1}{2}}(u_{i+1}-u_{i}). \label{fluxsplit}
\end{equation} 
Here the superscript $\pm$ sign of $r_{i}$ denotes the
positive/negative sign of wave speed. Also define the signum function
\begin{equation}
  \sigma(x) = sgn(x) =\left\{\begin{array}{ll} +1 & \;if\; x\geq0,\\-1 & \;if\; x< 0.\end{array}\right.
\end{equation} 
In order to analyze the local non-linear
stability of considered schemes we choose practically viable CFL like
condition
\begin{equation}
0<\lambda \max_{u}|f^{'}(u)|<1 \label{cflNo}
\end{equation}
Note that the choice $\displaystyle \lambda \max_{u}|f^{'}(u)|=0\Rightarrow f^{'}(u)$ 
corresponds to the case of degenerate characteristic speed or steady
state case.  
\subsection{Centered Lax-Wendroff scheme}
\begin{theorem}\label{thm1}
  Away from sonic point and under CFL condition (\ref{cflNo}), the
  second order accurate Lax-Wendroff scheme for scalar conservation
  law (\ref{nonlin}) is TVD in the solution region where

\[\displaystyle
r^{\pm}_{i}\in \left(-\infty,\, \kappa_{1}\left(\lambda
    a^{\pm}_{i\mp\frac{1}{2}}\right)\right)\cup
\left(\gamma_{1}\left(\lambda
    a^{\pm}_{i\mp\frac{1}{2}}\right),\,\infty\right)\]
where numbers $\kappa_{1} < 0$, $\gamma_{1}> 0$ depends on CFL number
for linear stability given by
\begin{equation}
  \kappa_{1}(x) = -\frac{1-
    x\sigma(x)}{1+x\sigma(x))}, \label{kappa1}
\end{equation}
\begin{equation}
  \gamma_{1}(x) = \frac{x \sigma(x)}{2+ x\sigma(x)}.
  \label{gamma1}
\end{equation}
\end{theorem}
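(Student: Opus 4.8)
The plan is to rewrite the centered Lax--Wendroff scheme as a two point scheme of the non-conservative half incremental type (\ref{lmptvd}), extract the single coefficient $\mathcal{D}$ as an explicit function of the gradient ratio $r_i^{\pm}$ and the local CFL number, and then invoke Definition \ref{def2} together with Lemma \ref{lem2}: away from sonic points the scheme is LMP/TVD exactly when $0\le\mathcal{D}\le 1$. The admissible range for $r_i^{\pm}$ is then read off by solving these two inequalities.

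First I would start from (\ref{s2eq2}) with the Lax--Wendroff flux $\mathcal{F}_{i+\frac12}=\frac12(F_i+F_{i+1})-\frac{\lambda}{2}a_{i+\frac12}(F_{i+1}-F_i)$, expand $u_i^{n+1}-u_i^n$, and use the flux split (\ref{fluxsplit}) to replace each gradient $\Delta_{\pm}F_i^{\pm}$ by $a^{\pm}_{i\pm\frac12}\Delta_{\pm}u_i$. Working in the positive wave speed case ($a^{+}>0$, $a^{-}=0$) this collapses the three point update to
\[
u_i^{n+1}=u_i^n-\tfrac{\lambda}{2}\bigl(1-\lambda a^{+}_{i+\frac12}\bigr)\Delta_{+}F_i^{+}-\tfrac{\lambda}{2}\bigl(1+\lambda a^{+}_{i-\frac12}\bigr)\Delta_{-}F_i^{+}.
\]
Next I would force this into the form $u_i^{n+1}=u_i^n-\mathcal{D}\,\Delta_-u_i$ by factoring out $\Delta_-u_i=\Delta_-F_i^{+}/a^{+}_{i-\frac12}$ and using the definition (\ref{gradratio}) of $r_i^{+}$ to eliminate the downwind gradient $\Delta_{+}F_i^{+}$. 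This is exactly where the particular weights $(1\mp\lambda a^{\pm})$ built into (\ref{gradratio}) pay off: they cancel against the coefficients above and leave
\[
\mathcal{D}=\frac{\lambda a^{+}_{i-\frac12}}{2}\left[\frac{1-\lambda a^{+}_{i-\frac12}}{r_i^{+}}+1+\lambda a^{+}_{i-\frac12}\right],
\]
a function of $r_i^{+}$ and the single upwind CFL number $\nu:=\lambda a^{+}_{i-\frac12}\in(0,1)$.

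The final step is to impose $0\le\mathcal{D}\le1$ and solve for $r_i^{+}$. Clearing the denominator $r_i^{+}$ forces a split into the cases $r_i^{+}>0$ and $r_i^{+}<0$, since the inequality reverses in the latter. The constraint $\mathcal{D}\ge0$ holds automatically when $r_i^{+}>0$ and reduces to $r_i^{+}\le-\frac{1-\nu}{1+\nu}=\kappa_1(\nu)$ when $r_i^{+}<0$; dually, $\mathcal{D}\le1$ holds automatically when $r_i^{+}<0$ and reduces to $r_i^{+}\ge\frac{\nu}{2+\nu}=\gamma_1(\nu)$ when $r_i^{+}>0$. Intersecting the two conditions yields precisely $r_i^{+}\in(-\infty,\kappa_1(\nu))\cup(\gamma_1(\nu),\infty)$. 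The negative wave speed case is mirror symmetric: the scheme is written as $u_i^{n+1}=u_i^n+\mathcal{D}\,\Delta_+u_i$, forward and backward differences swap, and the identity $x\sigma(x)=|x|$ packages both directions into the single thresholds $\kappa_1,\gamma_1$ evaluated at $\lambda a^{\pm}_{i\mp\frac12}$, giving the stated region for $r_i^{\pm}$.

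The hard part will be bookkeeping rather than any deep idea, and it has two facets. First, one must verify that the denominators $(1-\lambda a^{+}_{i+\frac12})$ and $a^{+}_{i-\frac12}$ are nonzero, which is guaranteed away from sonic points under (\ref{cflNo}), and that the substitution through (\ref{gradratio}) is the one combination making $\mathcal{D}$ collapse to a clean function of a single CFL number. Second, the sign case analysis of the rational inequalities must be carried through without silently dropping the branch $r_i^{+}<0$ on which $\mathcal{D}\ge0$ still holds; this is exactly the interval $(-\infty,\kappa_1)$, and since a smooth non-sonic extremum has $\Delta_+u_i\,\Delta_-u_i<0$ and hence $r_i^{\pm}<0$, it is precisely the part of the admissible region that certifies TVD stability at the extrema targeted by the theorem.
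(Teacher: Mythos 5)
Your proposal is correct and follows essentially the same route as the paper: rewrite the Lax--Wendroff update in the non-conservative half incremental form, impose $0\le\mathcal{D}\le1$ via Definition \ref{def2} and Lemma \ref{lem2}, and solve the resulting rational inequality in $r_i^{\pm}$ with the sign split on $r_i^{\pm}$ producing the two branches $(-\infty,\kappa_1)$ and $(\gamma_1,\infty)$. The only cosmetic difference is that you isolate $\mathcal{D}$ as an explicit function of $r_i^{\pm}$ before the case analysis, whereas the paper carries a chain of inequalities on $\Delta_+u_i/\Delta_-u_i$ and inverts at the end; the computations are equivalent.
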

\begin{proof}
  Consider the numerical flux function of Lax-Wendroff (LxW) scheme
  \begin{equation}
    F^{n,LxW}_{i+ \frac{1}{2}}= \frac{1}{2}\left(F_{i+1} + F_{i}\right) -
    \frac{\lambda\,a^2_{i+ \frac{1}{2}}}{2} \Delta_{+} {u}_{i}. \label{flxw}
  \end{equation}
  In order to ensure non sonic region, let the characteristics speed is
  locally non-zero. Since LxW uses three point centred stencil $[x_{i-s},x_{i+r}],
  r=s=1$, it suffice to assume $a_{i+\frac{1}{2}}\times a_{i-\frac{1}{2}}>0.$
\begin{itemize}
\item[] {\bf Case $f^{'}({u})> 0$:} Let $ a_{i \pm \frac{1}{2}}> 0$,
  then the conservative approximation using (\ref{flxw}) can be
  written as
  \begin{equation} {u}^{n+1}_{i}= {u}_{i} - \left[\frac{\lambda
        a_{i+\frac{1}{2}}}{2}\left(1-\lambda\,
        a_{i+\frac{1}{2}}\right) \Delta_{+}{u}_{i}+ \frac{\lambda\,
        a_{i-\frac{1}{2}}}{2}\left(1+\lambda\,
        a_{i-\frac{1}{2}}\right) \Delta_{-}{u}_{i} \right],
  \end{equation}
  which can be written in the following non-conservative half Incremental form (\ref{lmptvd}),
  \begin{equation} {u}^{n+1}_{i}= {u}_{i} - \left[\frac{\lambda
        a_{i+\frac{1}{2}}}{2}\left(1-\lambda\,
        a_{i+\frac{1}{2}}\right) \frac{\Delta_{+}{u}_{i}}
      {\Delta_{-}{u}_{i}} +\frac{\lambda\,
        a_{i-\frac{1}{2}}}{2}\left(1+\lambda\,
        a_{i-\frac{1}{2}}\right) \right]
    \Delta_{-}{u}_{i}. \label{Iflxw1}
  \end{equation}
  From Lemma \ref{lem2}, half I-from (\ref{Iflxw1}) will be TVD if,
  \begin{equation} 0\leq \left[\lambda
      a_{i+\frac{1}{2}}\left(1-\lambda\, a_{i+\frac{1}{2}}\right)
      \frac{\Delta_{+}{u}_{i}} {\Delta_{-}{u}_{i}} +\lambda\,
      a_{i-\frac{1}{2}}\left(1+\lambda\, a_{i-\frac{1}{2}}\right)
    \right] \leq 2,
  \end{equation}
  which reduces to,
  \begin{equation}
    - \lambda\, a_{i-\frac{1}{2}}\left(1+\lambda\,
      a_{i-\frac{1}{2}}\right) \leq \lambda\, a_{i+\frac{1}{2}}\left(1-\lambda\,
      a_{i+\frac{1}{2}}\right) \frac{\Delta_{+}{u}_{i}}
    {\Delta_{-}{u}_{i}} \leq 2- \lambda\, a_{i-\frac{1}{2}}\left(1+\lambda\,
      a_{i-\frac{1}{2}}\right)\label{inq1}
  \end{equation}
  Note that $\lambda a_{i-\frac{1}{2}}(1-\lambda\,
  a_{i-\frac{1}{2}})>0$ under discrete CFL condition,
  \begin{equation}
    0<\lambda\, \max_{i}{a_{i+\frac{1}{2}}} < 1, \label{cfl1}
  \end{equation} 
  Hence inequality (\ref{inq1}) can be written as
  \begin{equation}
    -\frac{(1+\lambda\, a_{i-\frac{1}{2}})}{(1-\lambda\, a_{i-\frac{1}{2}})}
    \leq
    \frac{a_{i+\frac{1}{2}}(1-\lambda\,
      a_{i+\frac{1}{2}})}{a_{i-\frac{1}{2}}(1-\lambda\,
      a_{i-\frac{1}{2}})} \frac{\Delta_{+}{u}_{i}}
    {\Delta_{-}{u}_{i}} \leq 
    \frac{2-\lambda  a_{i-\frac{1}{2}}(1+\lambda\, a_{i-\frac{1}{2}})}
    {\lambda\, a_{i-\frac{1}{2}}(1-\lambda\, a_{i-\frac{1}{2}})}
  \end{equation}
  or
  \begin{equation}
    -\frac{(1+\lambda\, a_{i-\frac{1}{2}})}{(1-\lambda\, a_{i-\frac{1}{2}})}
    \leq
    \frac{a_{i+\frac{1}{2}}(1-\lambda\,
      a_{i+\frac{1}{2}})}{a_{i-\frac{1}{2}}(1-\lambda\,
      a_{i-\frac{1}{2}})} \frac{\Delta_{+}{u}_{i}}
    {\Delta_{-}{u}_{i}} \leq 
    \frac{2+\lambda\, a_{i-\frac{1}{2}}}{\lambda  a_{i-\frac{1}{2}}},
    \label{inq2b}
  \end{equation}
  Using definition (\ref{speed}) and flux wave split (\ref{fluxsplit}), Inequality (\ref{inq2b}) becomes
  \begin{equation}
    -\frac{(1+\lambda\, a^{+}_{i-\frac{1}{2}})}{(1-\lambda\, a^{+}_{i-\frac{1}{2}})}
    \leq
    \frac{(1-\lambda\,
      a^{+}_{i+\frac{1}{2}})}{(1-\lambda\,
      a^{+}_{i-\frac{1}{2}})} \frac{\Delta^{+}_{+}{F^{+}}_{i}}
    {\Delta^{+}_{-}{F^{+}}_{i}} \leq \frac{2+\lambda\, a^{+}_{i-\frac{1}{2}}}{\lambda  a^{+}_{i-\frac{1}{2}}}.    \label{inq2c}
  \end{equation}
  Inequality (\ref{inq2c}) on inversion yields,
  \begin{equation}
    r^{+}_{i}< \kappa^{+}_{1}(\lambda\, a^{+}_{i-\frac{1}{2}})
    \; \mbox{OR}\; r^{+}_{i} >  
    \gamma^{+}_{1}(\lambda a^{+}_{i-\frac{1}{2}}), \label{inq2d}
  \end{equation}
  where $$\displaystyle r^{+}_{i}
  =\frac{(1-\lambda\, a^{+}_{i-\frac{1}{2}})
    \Delta^{+}_{-}{F}_{i}}{(1-\lambda\,
    a^{+}_{i+\frac{1}{2}})\Delta^{+}_{+}{F}_{i}},\; \kappa^{+}_{1}(\lambda
  a^{+}_{i-\frac{1}{2}})= -\frac{(1-\lambda\,
    a^{+}_{i-\frac{1}{2}})}{(1+\lambda\, a^{+}_{i-\frac{1}{2}})}\;
  \mbox{and}\;\displaystyle \gamma^{+}_{1}(\lambda
  a^{+}_{i-\frac{1}{2}})=\frac{\lambda\, a^{+}_{i-\frac{1}{2}}}{2+\lambda a^{+}_{i-\frac{1}{2}}}.$$\\
  \item[] {\bf Case ${f^{'}(u)<0}$:} Let $a_{i\pm\frac{1}{2}}<0$, then the non-conservative I-form can be written
  as
  \begin{equation} {u}^{n}_{i} = {u}_{i} - \left[\frac{\lambda\,
        a_{i+\frac{1}{2}}}{2}\left(1 -\lambda\,
        a_{i+\frac{1}{2}}\right) + \frac{\lambda\,
        a_{i-\frac{1}{2}}}{2}\left(1+\lambda\,
        a_{i-\frac{1}{2}}\right)\frac{\Delta_{-}{u}_{i}}
      {\Delta_{+}{u}_{i}}\right] \Delta_{+}{u}_{i}.\label{Iflxw2}
  \end{equation}
  Using Lemma \ref{lem2}, half I-from (\ref{Iflxw2}) will be TVD if,
  \begin{equation}
    0 \leq -\frac{\lambda\,
      a_{i+\frac{1}{2}}}{2}\left(1 -\lambda\, a_{i+\frac{1}{2}}\right)
    - \frac{\lambda\,
      a_{i-\frac{1}{2}}}{2}\left(1+\lambda\,
      a_{i-\frac{1}{2}}\right)\frac{\Delta_{-}{u}_{i}}
    {\Delta_{+}{u}_{i}}\leq 1
  \end{equation}
  which can be written as
  \begin{equation}
    \lambda\,
    a_{i+\frac{1}{2}}\left(1 -\lambda\, a_{i+\frac{1}{2}}\right)
    \leq - \lambda\,
    a_{i-\frac{1}{2}}\left(1+\lambda\,
      a_{i-\frac{1}{2}}\right)\frac{\Delta_{-}{u}_{i}}
    {\Delta_{+}{u}_{i}}\leq 2 + \lambda\,
    a_{i+\frac{1}{2}}\left(1 -\lambda\, a_{i+\frac{1}{2}}\right). \label{inq3}
  \end{equation}
The discrete CFL condition for $a(u)<0$ is,
  \begin{equation}
    -1 < \lambda a_{i+\frac{1}{2}}<0,\, \forall i.\label{cfl2}
  \end{equation}
  Therefore the quantity $-\lambda a_{i+\frac{1}{2}}(1+ \lambda
  a_{i+\frac{1}{2}})>0.$ Divide Inequality (\ref{inq3}) by it and
  using (\ref{speed}) yields,
  \begin{equation}
    -\frac{\left(1 -\lambda\, a_{i+\frac{1}{2}}\right)}{\left(1+ \lambda a_{i+\frac{1}{2}}\right)}
    \leq  \frac{\left(1+\lambda\,
        a_{i-\frac{1}{2}}\right)}{\left(1+ \lambda a_{i+\frac{1}{2}}\right)}\frac{\Delta_{-}F_{i}}
    {\Delta_{+}F_{i}}\leq -\frac{2 + \lambda\,
      a_{i+\frac{1}{2}}\left(1 -\lambda\,
        a_{i+\frac{1}{2}}\right)}{\lambda a_{i+\frac{1}{2}}\left(1+ \lambda
        a_{i+\frac{1}{2}}\right)}
  \end{equation}
  or using flux wave split (\ref{fluxsplit})
  \begin{equation}
    -\frac{\left(1 -\lambda\, a^{-}_{i+\frac{1}{2}}\right)}{\left(1+ \lambda a^{-}_{i+\frac{1}{2}}\right)}
    \leq  \frac{\left(1+\lambda\,
        a^{-}_{i-\frac{1}{2}}\right)}{\left(1+ \lambda a^{-}_{i+\frac{1}{2}}\right)}\frac{\Delta_{-}F^{-}_{i}}
    {\Delta_{+}F^{-}_{i}}\leq \frac{\lambda\,a^{-}_{i+\frac{1}{2}}-2}{\lambda\;a^{-}_{i+\frac{1}{2}}}
    \label{inq3a}
  \end{equation}
  Inequality (\ref{inq3a}) on inversion yields,
  \begin{equation}
    r^{-}_{i}<\kappa_{1}\left(\lambda a^{-}_{i+\frac{1}{2}}\right)\;
    \mbox{OR}\; r^{-}_{i} > \gamma_{1}\left(\lambda a^{-}_{i+\frac{1}{2}}\right). \label{inq3b}
  \end{equation} 
  where $\displaystyle r_{i}^{-}=\frac{\left(1+ \lambda
      a^{-}_{i+\frac{1}{2}}\right)\Delta_{+}F^{-}_{i}}{\left(1+\lambda\,
      a^{-}_{i-\frac{1}{2}}\right)\Delta_{-}F^{-}_{i}},\;
  \kappa_{1}\left(\lambda a^{-}_{i+\frac{1}{2}}\right)=
  \frac{-\left(1+\lambda a^{-}_{i+\frac{1}{2}}\right)}{\left(1-\lambda
      a^{-}_{i+\frac{1}{2}}\right)}$ and $\displaystyle
  \gamma_{1}\left(\lambda a^{-}_{i+\frac{1}{2}}\right)= \frac{-\lambda
    a^{-}_{i+\frac{1}{2}}}{2 - \lambda\, a^{-}_{i+\frac{1}{2}}}$.\\
  Condition (\ref{inq2d}) and (\ref{inq3b}) completes the proof.
\end{itemize}
\end{proof}
\subsubsection{LxW on Linear problem: Every extrema is non-sonic.}
In order to see the improvement in the TVD approximation at non-sonic
extrema, consider the linear transport equation
\begin{equation}
  \frac{\partial}{\partial t}u(x,t) + a\frac{\partial}{\partial u}(\,u(x,t)) =0,\; a\neq0  \label{transport}
\end{equation}
In this case the smoothness parameter (\ref{gradratio}) reduces to $
r^{\pm}_{i} = \displaystyle \frac{\Delta_{\mp} u_{i}}{\Delta_{\pm}
  u_{i}}$. Note that at point of extrema the measure of smoothness is
negative i.e, for transport equation (\ref{transport}), every
$r^{\pm}_{i} <0$ implies a {\it non-sonic extreme critical
  point}. Following result follows from Theorem \ref{thm1}
\begin{cor}\label{cor1}
  Under the linear stability condition $0<\lambda |a|\leq 1$, the
  second order accurate Lax-Wendroff scheme for (\ref{transport}) is
  total variation diminishing where $r_{i} \in
  (\infty,\,\kappa_{1}(a\lambda))\cup(\gamma_{1}(a\lambda),\,\infty)$.
\end{cor}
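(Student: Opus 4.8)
The plan is to obtain Corollary \ref{cor1} as a direct specialization of Theorem \ref{thm1} to the constant-coefficient flux $f(u)=au$, so that no fresh stability analysis is required — only a verification that the general quantities in Theorem \ref{thm1} collapse to the stated forms. First I would observe that for (\ref{transport}) the characteristic speed $f'(u)=a$ is constant and nonzero, so the solution has no sonic point and the non-sonic hypothesis of Theorem \ref{thm1} (in particular $a_{i+\frac{1}{2}}\,a_{i-\frac{1}{2}}=a^{2}>0$) holds at every cell and every time level. The wave split then degenerates globally: if $a>0$ then $a^{+}_{i\pm\frac{1}{2}}=a$ and $a^{-}_{i\pm\frac{1}{2}}=0$ for all $i$, and symmetrically if $a<0$. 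Since the interface speed carries a single global sign, one may drop the $\pm$ superscript and work with a single parameter $r_i$.

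Next I would specialize the smoothness parameter (\ref{gradratio}). Because $a$ is constant, the numerator weight $(1\mp\lambda a^{\pm}_{i\mp\frac{1}{2}})$ equals the denominator weight $(1\mp\lambda a^{\pm}_{i\pm\frac{1}{2}})$, so these factors cancel; and since $F_i=au_i$ gives $\Delta_{\pm}F_i=a\,\Delta_{\pm}u_i$, the parameter reduces to $r_i=\Delta_{\mp}u_i/\Delta_{\pm}u_i$, exactly as asserted. I would record the elementary but crucial consequence that $r_i<0$ precisely when $\Delta_-u_i$ and $\Delta_+u_i$ have opposite signs, i.e. exactly at a (necessarily non-sonic) discrete extremum, which is the regime the corollary is meant to illuminate.

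The core step is to confirm that the endpoint functions appearing in Theorem \ref{thm1} coincide with $\kappa_1,\gamma_1$ of (\ref{kappa1})--(\ref{gamma1}). Substituting $a^{+}_{i-\frac{1}{2}}=a$ into $\kappa_1^{+}=-(1-\lambda a^{+}_{i-\frac{1}{2}})/(1+\lambda a^{+}_{i-\frac{1}{2}})$ and $\gamma_1^{+}$ for the case $a>0$, and $a^{-}_{i+\frac{1}{2}}=a$ into the corresponding $a<0$ expressions, I would check them against the definitions using the unifying identity $x\sigma(x)=|a|\lambda$ at $x=a\lambda$: a one-line computation in each sign case gives $\kappa_1^{\pm}(\lambda a)=\kappa_1(a\lambda)$ and $\gamma_1^{\pm}(\lambda a)=\gamma_1(a\lambda)$. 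Theorem \ref{thm1} then immediately yields TVD stability for $r_i\in(-\infty,\kappa_1(a\lambda))\cup(\gamma_1(a\lambda),\infty)$.

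The one point needing genuine care, and the main obstacle, is the CFL endpoint $\lambda|a|=1$: Theorem \ref{thm1} is proved under the strict condition (\ref{cflNo}) because its inversion step divides by $\lambda a_{i-\frac{1}{2}}(1-\lambda a_{i-\frac{1}{2}})$, which vanishes when $\lambda|a|=1$, whereas Corollary \ref{cor1} permits equality. At $\lambda a=1$ I would argue directly from the half incremental form rather than through the inverted inequality: the coefficient multiplying $\Delta_+u_i/\Delta_-u_i$ carries the factor $(1-\lambda a)=0$, so the TVD requirement reduces to the $r_i$-independent statement $0\leq\lambda a(1+\lambda a)\leq 2$, which holds (with equality on the right), and in fact the scheme collapses to exact one-cell transport $u_i^{n+1}=u_{i-1}$, hence is unconditionally TVD there. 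Thus the claimed region is a valid, if not sharpest, sufficient TVD region at the endpoint, and combined with the strict-CFL interior case the corollary follows.
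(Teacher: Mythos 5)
Your proposal is correct and follows essentially the same route as the paper, which presents Corollary \ref{cor1} as a direct specialization of Theorem \ref{thm1} to the constant-coefficient flux $f(u)=au$, under which the weights in (\ref{gradratio}) cancel and $r_i$ reduces to $\Delta_{\mp}u_i/\Delta_{\pm}u_i$. Your explicit treatment of the CFL endpoint $\lambda|a|=1$ (where the scheme degenerates to the exact shift $u_i^{n+1}=u_{i-1}^{n}$) is a welcome refinement that the paper silently glosses over, since Theorem \ref{thm1} is stated only under the strict inequality (\ref{cflNo}) while the corollary permits equality.
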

\begin{figure}[!htb]
  \begin{center}
    \begin{tabular}{cc}
      \includegraphics[%
      scale=0.55]{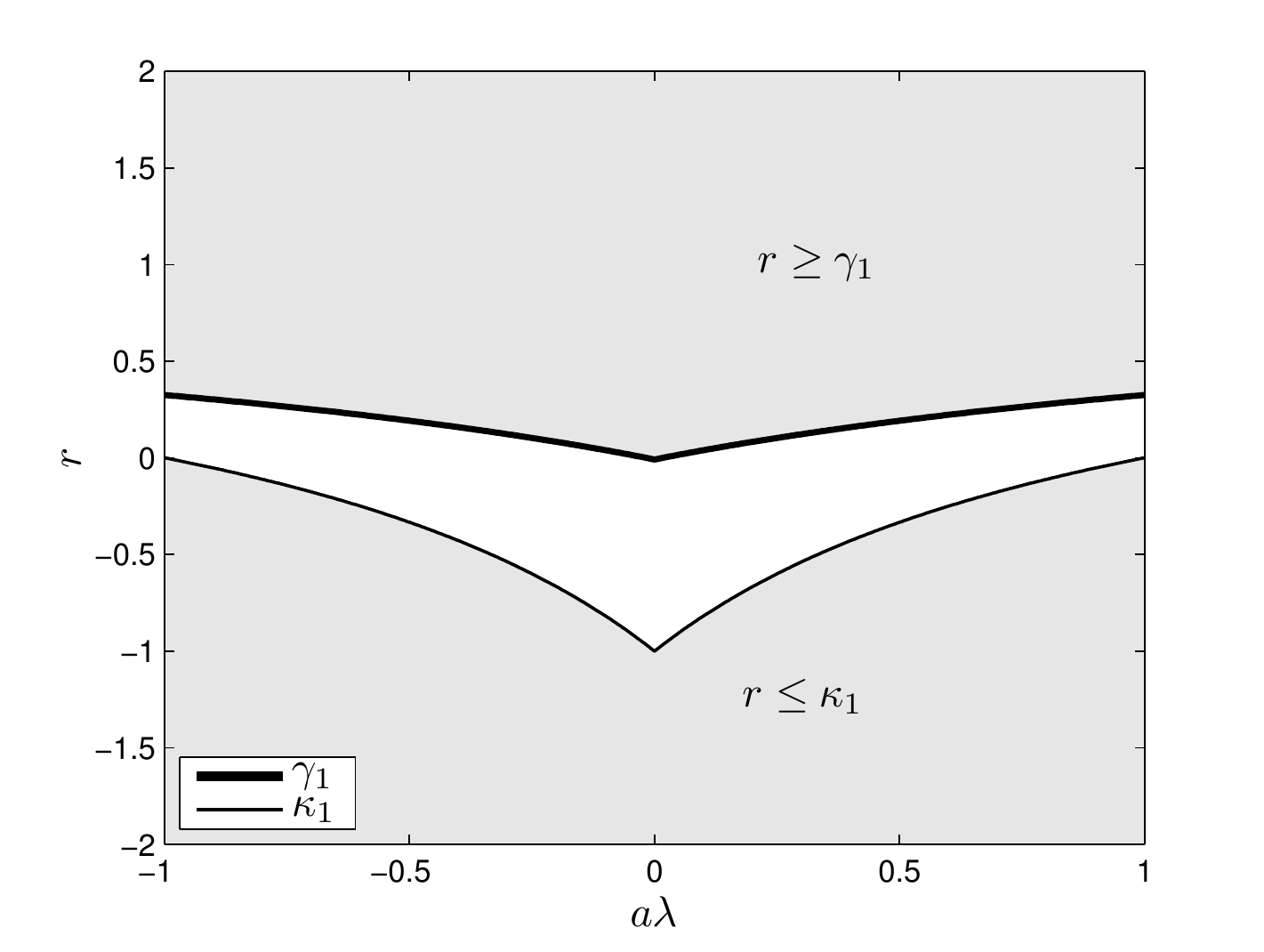}&\includegraphics[%
      scale=0.55]{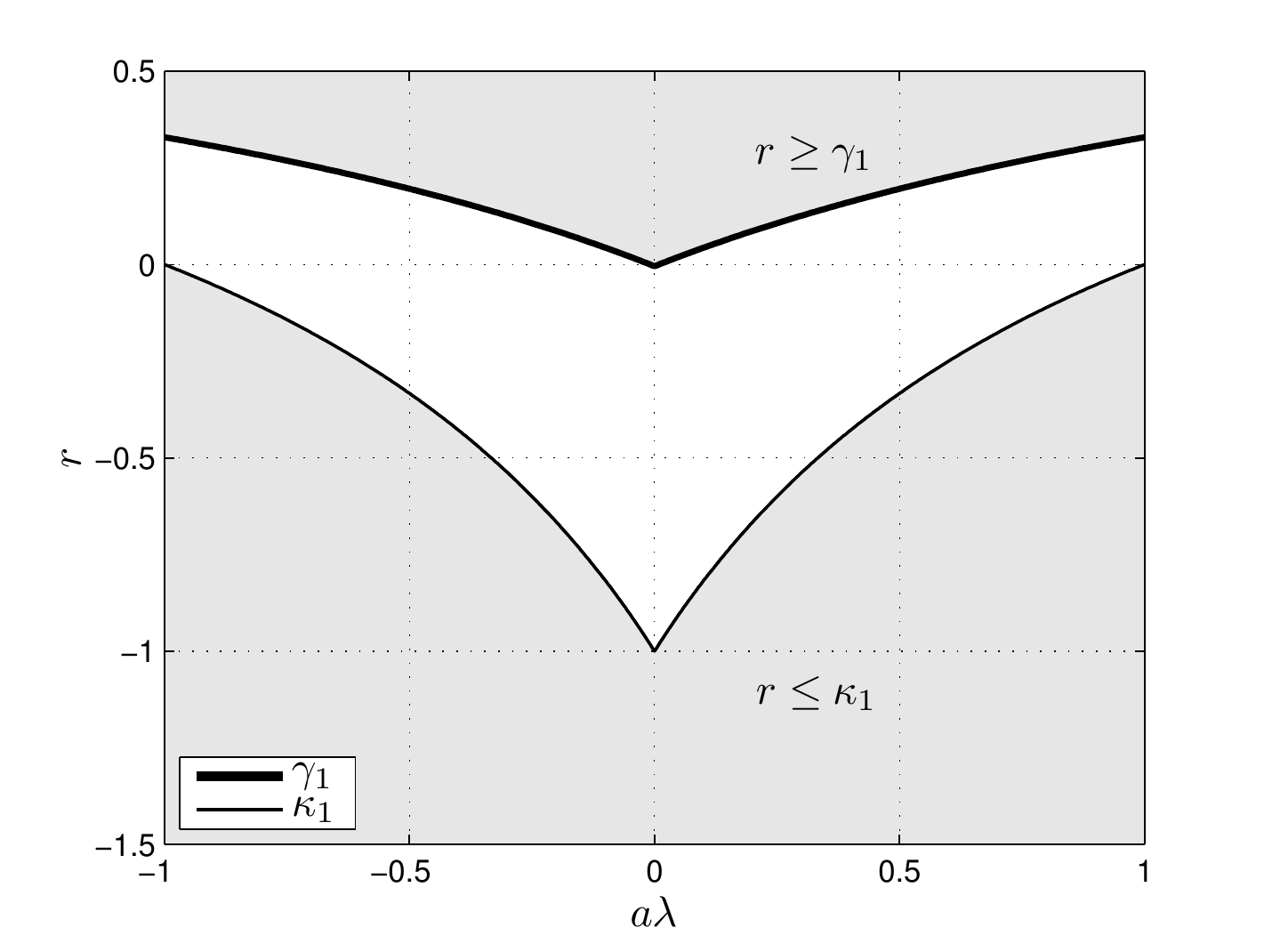}\\
      (a) & (b)
    \end{tabular}
    \caption{\label{kappagammaLxW}(a) : LMP/TVD stability region (shaded) for
      Lax-Wendroff scheme (b): Zoomed view.}
  \end{center}
\end{figure}
In Figure \ref{kappagammaLxW}, the behavior of CFL number $a\lambda$
on dependent parameters $\kappa_{1}$ and $\gamma_{1}$ is shown.  Note
that when $\lambda a \rightarrow 0^{+}$ the parameter $\gamma_{1}
\rightarrow 0^{+}$ whilst when $\lambda a \rightarrow 1^{-}$ the
parameter $\kappa_{1} \rightarrow 0^{-}$. In particular, for
$a|\lambda|=1$, definitions (\ref{kappa1}), (\ref{gamma1}) yield
non-TVD interval $[\kappa_{1}=0, \gamma_{1}=\frac{1}{3}]$.  Note that
under linear CFL condition $0<\lambda a\leq1$, $\kappa_{1}(\lambda\,a
) \in (-1,0]$ and $ \gamma_{1}(\lambda\,a) \in (0,1/3)$. The following
result give CFL independent TVD bounds for LxW scheme
\cite{riteshEJDE}.
\begin{cor}\label{cor2}
  The Lax-Wendroff scheme for (\ref{transport}) is total variation
  diminishing under the linear stability condition $0<\lambda |a|\leq1,$
  if $r_{i} =\in (\infty,\,-1)\cup(\frac{1}{3},\,\infty)$
\end{cor}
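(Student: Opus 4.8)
The plan is to obtain a CFL-free TVD interval by intersecting, over all admissible CFL numbers, the CFL-dependent TVD regions furnished by Corollary \ref{cor1}. Writing $c = \lambda|a| \in (0,1]$ (so that $x\sigma(x) = c$ in the definitions (\ref{kappa1})--(\ref{gamma1}) for either sign of $a$), Corollary \ref{cor1} guarantees that the scheme is TVD precisely when $r_i$ avoids the closed non-TVD interval $[\kappa_{1}(c), \gamma_{1}(c)]$, with $\kappa_{1}(c) = -(1-c)/(1+c)$ and $\gamma_{1}(c) = c/(2+c)$. A value of $r_i$ that renders the scheme TVD for \emph{every} CFL number must therefore lie outside the union $\bigcup_{c \in (0,1]} [\kappa_{1}(c), \gamma_{1}(c)]$, and the task reduces to computing this union explicitly.

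First I would establish the monotone behaviour of the two endpoints as functions of $c$. A direct differentiation (or the elementary observation that $(1-c)/(1+c)$ decreases and $c/(2+c)$ increases on $(0,1]$) shows that $\kappa_{1}$ is strictly increasing with range $(-1, 0]$ and that $\gamma_{1}$ is strictly increasing with range $(0, 1/3]$ over $c \in (0,1]$; in particular $\inf_{c} \kappa_{1}(c) = -1$ is approached as $c \to 0^{+}$ but not attained, while $\sup_{c} \gamma_{1}(c) = 1/3$ is attained at $c = 1$.

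Next I would argue that the family of non-TVD intervals collapses to a single interval. Because $\kappa_{1}(c) \le 0 \le \gamma_{1}(c)$ for every $c \in (0,1]$, each interval $[\kappa_{1}(c), \gamma_{1}(c)]$ contains the point $0$, so the whole family shares a common point and its union is again an interval. Combining this with the endpoint computation gives $\bigcup_{c\in(0,1]}[\kappa_{1}(c),\gamma_{1}(c)] = (-1, 1/3]$, whose complement is $(-\infty, -1] \cup (1/3, \infty)$. Since this set contains the region $(-\infty,-1)\cup(1/3,\infty)$ stated in the corollary, the claim follows.

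The conceptual crux --- and the step I expect to be the main subtlety --- is recognising that the worst left endpoint and the worst right endpoint are produced by \emph{different} CFL numbers (the left edge $-1$ by the vanishing-CFL limit $c\to 0^{+}$, the right edge $1/3$ by the maximal CFL $c = 1$). No single CFL value attains both extremes, so the CFL-independent forbidden band $(-1,1/3]$ is strictly wider than the forbidden band $[\kappa_{1}(c),\gamma_{1}(c)]$ at any fixed $c$; the monotonicity of $\kappa_{1}$ and $\gamma_{1}$ is exactly what guarantees that taking these two extreme endpoints captures the entire union and that no value between $-1$ and $1/3$ can be safe for all admissible CFL numbers.
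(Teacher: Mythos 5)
Your proposal is correct and follows essentially the same route as the paper: the text preceding Corollary \ref{cor2} records exactly the uniform bounds $\kappa_{1}(\lambda a)\in(-1,0]$ and $\gamma_{1}(\lambda a)\in(0,\tfrac{1}{3}]$ over the admissible CFL range, from which the CFL-independent region follows by Corollary \ref{cor1}. Your additional observations --- that the family of forbidden intervals all contain $0$ so their union is the single interval $(-1,\tfrac{1}{3}]$, and that the two extreme endpoints arise from opposite CFL limits --- are accurate refinements of the same computation rather than a different method.
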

Thus it, can also be concluded from corollary \ref{cor1} and shaded
TVD region for LxW scheme in Figure \ref{kappagammaLxW} that {\it
  except for $r_{i} \in [\kappa_{1},\gamma_{1}]$ the second order
  accurate LxW scheme yields TVD approximation for all solution region
  including extreme points with $r_{i}<0$.}  More precisely,
\subsection{Upwind Beam-Warming scheme}
\begin{theorem}\label{thm2}
  Away from sonic point and under CFL condition (\ref{cflNo}), second
  order accurate Beam-Warming scheme is TVD for scalar conservation
  law (\ref{nonlin}) in the solution region where \[ r^{\pm}_{i \mp 1}
  \in \left[ \kappa_{2}\left(\lambda a^{\pm}_{i\mp\frac{1}{2}}\right),
    \gamma_{2}\left(\lambda a^{\pm}_{i\mp\frac{1}{2}}\right)\right],\]
  where parameter $\gamma_{2}$ and $\kappa_{2}$ defined as,
  \begin{equation}
    \kappa_{2}(x) = \displaystyle \frac{-(2 -x \sigma(x))}{x\sigma(x)} \label{kappa2}
  \end{equation}
  \begin{equation}
    \gamma_{2}(x) = \displaystyle \frac{3-x\sigma(x)}{1-x\sigma(x)}\label{gamma2}
  \end{equation}
\end{theorem}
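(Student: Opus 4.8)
The plan is to mirror the proof of Theorem \ref{thm1} step for step, replacing the centred Lax--Wendroff flux by the upwind-biased Beam--Warming flux. For $f'(u)>0$ (so $a_{i\mp\frac12}>0$) the Beam--Warming scheme uses the one-sided stencil $\{x_{i-2},x_{i-1},x_i\}$ with numerical flux
\[
\mathcal{F}^{BW}_{i+\frac12}=F_i+\tfrac12\bigl(1-\lambda a_{i-\frac12}\bigr)\bigl(F_i-F_{i-1}\bigr),
\]
and the symmetric right-biased flux for $f'(u)<0$. First I would form the conservative update $u_i^{n+1}=u_i-\lambda(\mathcal{F}^{BW}_{i+\frac12}-\mathcal{F}^{BW}_{i-\frac12})$ and recast it in the non-conservative half incremental form (\ref{lmptvd}); Lemma \ref{lem2} (equivalently Lemma \ref{lem1} with one increment coefficient set to zero) then says the scheme is LMP/TVD exactly when the single surviving coefficient $\mathcal{D}$ satisfies $0\le\mathcal{D}\le1$.

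The key computation is the evaluation of $\mathcal{D}$. For $f'(u)>0$ I would expand $\mathcal{F}^{BW}_{i+\frac12}-\mathcal{F}^{BW}_{i-\frac12}$, use the speed definition (\ref{speed}) to replace each flux difference $F_k-F_{k-1}$ by $a_{k-\frac12}\Delta_-u_k$, and factor out $\Delta_-u_i$ so that the scheme reads $u_i^{n+1}=u_i-\mathcal{D}\,\Delta_-u_i$ with $\mathcal{D}$ depending on the gradient ratio $\Delta_-u_{i-1}/\Delta_-u_i$ weighted by the upwind speeds. Substituting the definition (\ref{gradratio}) of the smoothness parameter one index upwind, namely $r^+_{i-1}$, the neighbouring speed $a_{i-\frac32}$ cancels against the $(1-\lambda a)$ weights built into (\ref{gradratio}), and the crucial identity
\[
\mathcal{D}=\tfrac12\,\lambda a^{+}_{i-\frac12}\Bigl[\bigl(3-\lambda a^{+}_{i-\frac12}\bigr)-\bigl(1-\lambda a^{+}_{i-\frac12}\bigr)r^{+}_{i-1}\Bigr]
\]
emerges, clean of any $a_{i-\frac32}$ dependence. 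This cancellation is precisely why (\ref{gradratio}) carries those weights, and verifying it is the most delicate bookkeeping in the argument.

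With this identity in hand the rest is algebra. Writing $\nu=\lambda a^{+}_{i-\frac12}\in(0,1)$ under the CFL condition (\ref{cfl1}), both $\nu>0$ and $1-\nu>0$, so $\mathcal{D}$ is an affine, strictly decreasing function of $r^+_{i-1}$. Imposing $0\le\mathcal{D}\le1$ therefore yields two inequalities that bound $r^+_{i-1}$ from the same side each, i.e. a single closed interval rather than the complement-of-an-interval seen for Lax--Wendroff. The upper bound $r^+_{i-1}\le(3-\nu)/(1-\nu)=\gamma_2$ follows at once from $\mathcal{D}\ge0$, while the lower bound comes from $\mathcal{D}\le1$ after the factorisation $\nu(3-\nu)-2=(1-\nu)(\nu-2)$, which collapses the quotient to $(\nu-2)/\nu=-(2-\nu)/\nu=\kappa_2$. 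These reproduce (\ref{kappa2})--(\ref{gamma2}) for $x=\nu>0$, where $\sigma(x)=1$.

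Finally, the case $f'(u)<0$ is handled by the mirror-image argument: the right-biased flux, factoring out $\Delta_+u_i$, the upwind parameter $r^-_{i+1}$, and the CFL bound (\ref{cfl2}) giving $\lambda a^-_{i+\frac12}\sigma(\cdot)=|\lambda a^-_{i+\frac12}|$, produce the identical interval once $x\sigma(x)$ is inserted into (\ref{kappa2})--(\ref{gamma2}); the signum factor absorbs the sign so both wave directions collapse to one formula. Combining the two cases yields the stated region $r^{\pm}_{i\mp1}\in[\kappa_{2}(\lambda a^{\pm}_{i\mp\frac12}),\gamma_{2}(\lambda a^{\pm}_{i\mp\frac12})]$. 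I expect the main obstacle to lie not in the final inequalities but in the middle step --- tracking which neighbouring speed cancels and confirming that the one-index-upwind shift $r^{\pm}_{i\mp1}$ (rather than $r^{\pm}_{i}$) is what appears --- since that shift is the essential structural difference from the centred scheme of Theorem \ref{thm1}.
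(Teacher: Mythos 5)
Your proposal is correct and follows essentially the same route as the paper: the same non-conservative half incremental form $u_i^{n+1}=u_i-\mathcal{D}\,\Delta_-u_i$ with $\mathcal{D}=\tfrac12\lambda a^{+}_{i-\frac12}\bigl[(3-\lambda a^{+}_{i-\frac12})-(1-\lambda a^{+}_{i-\frac12})r^{+}_{i-1}\bigr]$, the same TVD criterion $0\le\mathcal{D}\le 1$ from Lemma \ref{lem2}, the same factorisation $\nu(3-\nu)-2=(1-\nu)(\nu-2)$, and the same mirror argument for $f'(u)<0$. The only cosmetic difference is that you impose $0\le\mathcal{D}\le1$ on the affine function of $r^{+}_{i-1}$ directly, whereas the paper manipulates the chained compound inequality before substituting the definition of $r^{+}_{i-1}$; the content is identical.
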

\begin{proof}
\begin{itemize}
\item[]{\bf Case $f^{'}(u)>0$:} In this case, BW stencil use grid points
  $[x_{i-s}, x_{i+r}], s=2, r=0$ thus to ensure locally non-sonic
  region it suffice to let $a_{i-\frac{k}{2}} >0, k=-1,1,3$. The
  numerical flux of Beam-Warming scheme is,
  \begin{equation}
    F^{n,BW}_{i+\frac{1}{2}}= F_{i} +
    \frac{a_{i-\frac{1}{2}}}{2}\left(1- \lambda\,a_{i-\frac{1}{2}}\right)
    \Delta_{-}{u}_{i},\;\; a_{i+\frac{1}{2}}>0. \label{bwflx1}
  \end{equation}
  Resulting non-conservative half I-form can be written as
  \begin{equation} {u}^{n+1}_{i} = {u}_{i} -\left[\frac{\lambda\,
        a_{i-\frac{1}{2}}}{2}\left(3-\lambda\,
        a_{i-\frac{1}{2}}\right) - \frac{\lambda\,
        a_{i-\frac{3}{2}}}{2}\left(1- \lambda\,
        a_{i-\frac{3}{2}}\right)\frac{\Delta_{-}{u}_{i-1}}{\Delta_{-}{u}_{i}}
    \right]\Delta_{-}{u}_{i} \label{Ifbw1}
  \end{equation}
  For half I-from (\ref{Ifbw1}) to be TVD, from Lemma \ref{lem2}
  \begin{equation}
    0 \leq \frac{\lambda\,a_{i-\frac{1}{2}}}{2}\left(3 -
      \lambda\,a_{i-\frac{1}{2}}\right) - \frac{\lambda\,a_{i-\frac{3}{2}}}{2}\left(1 -
      \lambda\,a_{i-\frac{3}{2}}\right)\frac{\Delta_{-}{u}_{i-1}}
    {\Delta_{-}{u}_{i}}
    \leq 1. \label{inq5}
  \end{equation}
  Compound Inequality (\ref{inq5}) can be written as,
  \begin{equation}
    \lambda a_{i-\frac{1}{2}}(3-\lambda a_{i-\frac{1}{2}})-2 \leq 
    \lambda a_{i-\frac{3}{2}}(1-\lambda
    a_{i-\frac{3}{2}})\frac{\Delta_{-}{u}_{i-1}}{\Delta_{-}{u}_{i}}
    \leq
    \lambda
    a_{i-\frac{1}{2}}(3-\lambda a_{i-\frac{1}{2}})  
  \end{equation}

  Under CFL condition (\ref{cfl1}), i.e., $0<\lambda
  a_{i+\frac{1}{2}}< 1,\; \forall i$ quantity $\lambda
  a_{i-\frac{1}{2}}(1-\lambda a_{i-\frac{1}{2}})$ is positive, hence
  (\ref {inq5}) can be written as,
  \[
  \frac{\lambda a_{i-\frac{1}{2}}(3-\lambda
    a_{i-\frac{1}{2}})-2}{\lambda a_{i-\frac{1}{2}}(1-\lambda
    a_{i-\frac{1}{2}})} \leq \frac{a_{i-\frac{3}{2}}(1-\lambda
    a_{i-\frac{3}{2}})}{ a_{i-\frac{1}{2}}(1-\lambda
    a_{i-\frac{1}{2}})}\frac{\Delta_{-}{u}_{i-1}}{\Delta_{-}{u}_{i}}
  \leq \frac{ (3-\lambda a_{i-\frac{1}{2}})}{(1-\lambda
    a_{i-\frac{1}{2}})}
  \]
  or
  \[
  \frac{\lambda a_{i-\frac{1}{2}}-2}{\lambda a_{i-\frac{1}{2}}} \leq
  \frac{a_{i-\frac{3}{2}}(1-\lambda a_{i-\frac{3}{2}})}{
    a_{i-\frac{1}{2}}(1-\lambda
    a_{i-\frac{1}{2}})}\frac{\Delta_{-}{u}_{i-1}}{\Delta_{-}{u}_{i}}
  \leq \frac{ (3-\lambda a_{i-\frac{1}{2}})}{(1-\lambda
    a_{i-\frac{1}{2}})}
  \]
On using flux wave split (\ref{fluxsplit}) we get
  \[
  \frac{\lambda a^{+}_{i-\frac{1}{2}}-2}{\lambda
    a^{+}_{i-\frac{1}{2}}} \leq \frac{(1-\lambda
    a_{i-\frac{3}{2}})}{(1-\lambda
    a_{i-\frac{1}{2}})}\frac{\Delta_{-}F^{+}_{i-1}}{\Delta_{-}F^{+}_{i}}
  \leq \frac{ (3-\lambda a^{+}_{i-\frac{1}{2}})}{(1-\lambda
    a^{+}_{i-\frac{1}{2}})}
  \]
  Which, using (\ref{gradratio}) can be written as,
  \begin{equation}
    \kappa_{2}\left(\lambda\,a^{+}_{i-\frac{1}{2}}\right) \leq r^{+}_{i-1} \leq
    \gamma_{2}\left(\lambda\,a^{+}_{i-\frac{1}{2}}\right)  \label{inq6}
  \end{equation}
  where $\kappa_{2}$ and $\gamma_{2}$ are defined in (\ref{kappa2}) and (\ref{gamma2}).\\
  \item[] {\bf Case $f^{'}(u)<0$:} In case of negative characteristics
    speed, BW use stencil $[x_{i-s}, x_{i+r}], s=0, r=2]$ thus to
      ensure locally non-sonic region it suffice to let
      $a_{i+\frac{k}{2}} >0, k=-1,1,3$. In case of negative wave speed
      the Beam-Warming flux is given by,
  \begin{equation}
    F^{n,BW}_{i+\frac{1}{2}} = F_{i+1} -
    \frac{a_{i+\frac{3}{2}}}{2}\left(1+
      \lambda\,a_{i+\frac{3}{2}}\right)\Delta_{+}{u}_{i+1},\;\; a_{i+\frac{1}{2}}>0.\label{bwflx2}
  \end{equation}
  Resulting non-conservative half I-form is,
  \begin{equation} {u}^{n+1}_{i}= {u}_{i} +\left[
      \frac{\lambda\,a_{i+\frac{3}{2}}}{2}\left(1 +
        \lambda\,a_{i+\frac{3}{2}}\right)\frac{\Delta_{+}{u}_{i+1}}{\Delta_{+}{u}_{i}}
      -
      \frac{\lambda\,a_{i+\frac{1}{2}}}{2}\left(3 +
        \lambda\,a_{i+\frac{1}{2}}\right) \right]\Delta_{+}{u}_{i}.\label{Ifbw2}
  \end{equation}
  Condition for (\ref{Ifbw2}) to be TVD is
  \begin{equation}
    0\leq \lambda\,a_{i+\frac{3}{2}}\left(1 +
      \lambda\,a_{i+\frac{3}{2}}\right)\frac{\Delta_{+}{u}_{i+1}}{\Delta_{+}{u}_{i}} -
    \lambda\,a_{i+\frac{1}{2}}\left(3 +\lambda\,a_{i+\frac{1}{2}}\right)  \leq 2 \label{inq7}
  \end{equation}

\begin{equation}
  \lambda\,a_{i+\frac{1}{2}}\left(3 +
    \lambda\,a_{i+\frac{1}{2}}\right) \leq \lambda\,a_{i+\frac{3}{2}}\left(1 +
    \lambda\,a_{i+\frac{3}{2}}\right)\frac{\Delta_{+}{u}_{i+1}}{\Delta_{+}{u}_{i}}
  \leq 2 +\lambda\,a_{i+\frac{1}{2}}\left(3 +
    \lambda\,a_{i+\frac{1}{2}}\right) 
  \label{inq8}
\end{equation}
Note under CFL condition (\ref{cfl2}) $-1\leq
\lambda\,a_{i+\frac{1}{2}} <0,\, \forall i$, $\lambda\,
a_{i+\frac{1}{2}}\left(1+\lambda\,a_{i+\frac{1}{2}}\right)$ is
negative. Compound Inequality (\ref{inq8}) reduced to
\begin{equation}
  \frac{2 +\lambda\,a_{i+\frac{1}{2}}\left(3
      +\lambda\,a_{i+\frac{1}{2}}\right)}{\lambda\,
    a_{i+\frac{1}{2}}\left(1+\lambda\,a_{i+\frac{1}{2}}\right)}
  \leq
  \frac{a_{i+\frac{3}{2}}\left(1 +
      \lambda\,a_{i+\frac{3}{2}}\right)}{ a_{i+\frac{1}{2}}\left(1+\lambda\,a_{i+\frac{1}{2}}\right)} 
  \frac{\Delta_{+}{u}_{i+1}}{\Delta_{+}{u}_{i}}
  \leq \frac{\left(3 +
      \lambda\,a_{i+\frac{1}{2}}\right)}{\left(1+\lambda\,a_{i+\frac{1}{2}}\right)}
\end{equation}
or
\begin{equation}
  \frac{\lambda\,a_{i+\frac{1}{2}}+2}{\lambda\,a_{i+\frac{1}{2}}}  \leq
  \frac{a_{i+\frac{3}{2}}\left(1 +
      \lambda\,a_{i+\frac{3}{2}}\right)}{ a_{i+\frac{1}{2}}\left(1+\lambda\,a_{i+\frac{1}{2}}\right)} 
  \frac{\Delta_{+}{u}_{i+1}}{\Delta_{+}{u}_{i}}
  \leq \frac{\left(3 +
      \lambda\,a_{i+\frac{1}{2}}\right)}{\left(1+\lambda\,a_{i+\frac{1}{2}}\right)} 
  \label{inq9a}
\end{equation}
on using (\ref{fluxsplit})
\begin{equation}
  \frac{\lambda\,a^{-}_{i+\frac{1}{2}}+2}{\lambda\,a^{-}_{i+\frac{1}{2}}}  \leq
  \frac{\left(1 +
      \lambda\,a^{-}_{i+\frac{3}{2}}\right)}{ \left(1+\lambda\,a_{i+\frac{1}{2}}\right)} 
  \frac{\Delta_{+}F^{-}_{i+1}}{\Delta_{+}F^{-}_{i}}
  \leq \frac{\left(3 +
      \lambda\,a^{-}_{i+\frac{1}{2}}\right)}{\left(1+\lambda\,a^{-}_{i+\frac{1}{2}}\right)} 
  \label{inq9}
\end{equation}

Using (\ref{gradratio}), (\ref{kappa2}) and (\ref{gamma2}) Inequality
(\ref{inq9}) can be written as,
\begin{equation}
  \kappa_{2}\left(\lambda\, a^{-}_{i+\frac{1}{2}}\right)
  \leq r^{-}_{i+1} \leq 
  \gamma_{2}\left(\lambda\, a^{-}_{i+\frac{1}{2}}\right)
\end{equation}
\end{itemize}
\end{proof}
\subsubsection{BW on Linear problem}
\begin{cor}\label{cor3}
  Beam-Warming scheme for (\ref{transport}) is total variation
  diminishing under the linear stability condition $0<\lambda |a|
  \leq1$, if the smoothness parameter $r^{\pm}_{i\mp 1} \in
  [\kappa_{2}(\lambda a),\gamma_{2}(\lambda a)] $
\end{cor}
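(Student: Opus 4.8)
The plan is to derive Corollary \ref{cor3} as a direct specialization of Theorem \ref{thm2} to the linear flux $f(u)=au$. The first observation I would record is that for the transport equation the characteristic speed $f'(u)=a$ is constant, so every interface speed coincides, $a_{i+\frac12}=a_{i-\frac12}=a$ for all $i$, and — since $a\neq0$ — the non-sonic hypothesis $f'(u^{*})\neq0$ of Theorem \ref{thm2} holds at every grid point automatically. Consequently the wave-speed split degenerates: for $a>0$ one has $a^{+}=a$, $a^{-}=0$, and for $a<0$ the reverse, so the subscript-dependent speeds appearing in the bounds all collapse to the single constant $a$.

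Next I would simplify the smoothness parameter exactly as was done for the Lax-Wendroff case following Theorem \ref{thm1}. Because all interface speeds are equal, the prefactors $(1\mp\lambda a^{\pm}_{i\mp\frac12})$ and $(1\mp\lambda a^{\pm}_{i\pm\frac12})$ in (\ref{gradratio}) are identical and cancel, while the flux differences satisfy $\Delta F=a\Delta u$ with the common factor $a$ cancelling as well; hence $r^{\pm}_{i}=\Delta_{\mp}u_i/\Delta_{\pm}u_i$, the ordinary ratio of consecutive gradients. I would likewise note that the bound functions $\kappa_{2}$ and $\gamma_{2}$ in (\ref{kappa2})--(\ref{gamma2}) depend on their argument only through $x\sigma(x)=|x|$, so evaluating them at the constant speed gives $\kappa_{2}(\lambda a^{\pm}_{i\mp\frac12})=\kappa_{2}(\lambda a)$ and $\gamma_{2}(\lambda a^{\pm}_{i\mp\frac12})=\gamma_{2}(\lambda a)$.

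With these reductions in place the inclusion of Theorem \ref{thm2}, namely $r^{\pm}_{i\mp1}\in[\kappa_{2}(\lambda a^{\pm}_{i\mp\frac12}),\gamma_{2}(\lambda a^{\pm}_{i\mp\frac12})]$, becomes precisely $r^{\pm}_{i\mp1}\in[\kappa_{2}(\lambda a),\gamma_{2}(\lambda a)]$, and the CFL-like hypothesis (\ref{cflNo}), which reads $0<\lambda\max_u|f'(u)|<1$, reduces to $0<\lambda|a|<1$. This yields the claim on the open range of CFL numbers, so that the bulk of the argument is a mechanical transcription of Theorem \ref{thm2}.

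The one step I expect to need genuine care — rather than mere substitution — is the closed endpoint $\lambda|a|=1$ asserted in the corollary, at which $\gamma_{2}$ develops a pole ($1-|x|\to0$). I would handle this boundary case directly from the half-incremental form (\ref{Ifbw1}): at $\lambda a=1$ the coefficient $\frac{\lambda a}{2}(1-\lambda a)$ multiplying the gradient ratio vanishes, so the incremental coefficient collapses to the constant value $1$ independently of $r$, and the TVD inequality $0\le1\le1$ of Lemma \ref{lem2} holds marginally for \emph{every} $r$. Thus at the endpoint the scheme is TVD for all gradient ratios, which in particular covers the stated interval (with $\gamma_{2}$ read as $+\infty$), justifying the closed bound $\lambda|a|\le1$. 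Confirming this limiting consistency is the delicate point; everything else follows from Theorem \ref{thm2}.
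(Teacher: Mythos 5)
Your proposal is correct and follows essentially the same route the paper intends: Corollary \ref{cor3} is stated as an immediate specialization of Theorem \ref{thm2} to constant characteristic speed, where all interface speeds collapse to $a$, the smoothness parameter reduces to $\Delta_{\mp}u_{i}/\Delta_{\pm}u_{i}$, and the bounds become $\kappa_{2}(\lambda a)$, $\gamma_{2}(\lambda a)$. Your explicit verification of the closed endpoint $\lambda|a|=1$ via the half-incremental form (where the coefficient of the gradient ratio vanishes and the scheme becomes the exact shift, hence trivially TVD for every $r$) is a point the paper passes over silently, and it correctly resolves the apparent pole of $\gamma_{2}$ there.
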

\begin{figure}[!\htb]
  \begin{tabular}{cc}
    \includegraphics[%
    scale=0.55]{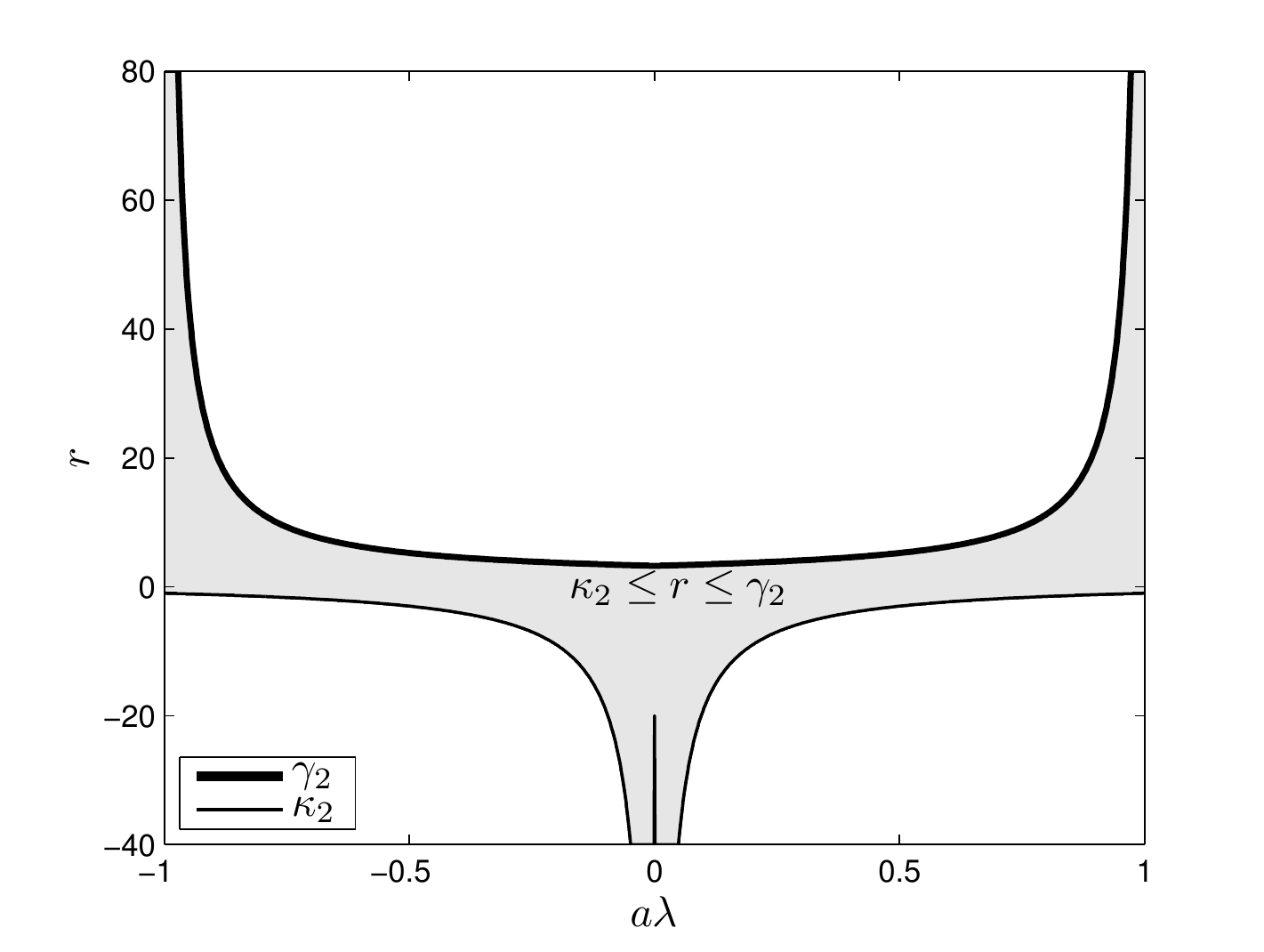}& \includegraphics[%
    scale=0.55]{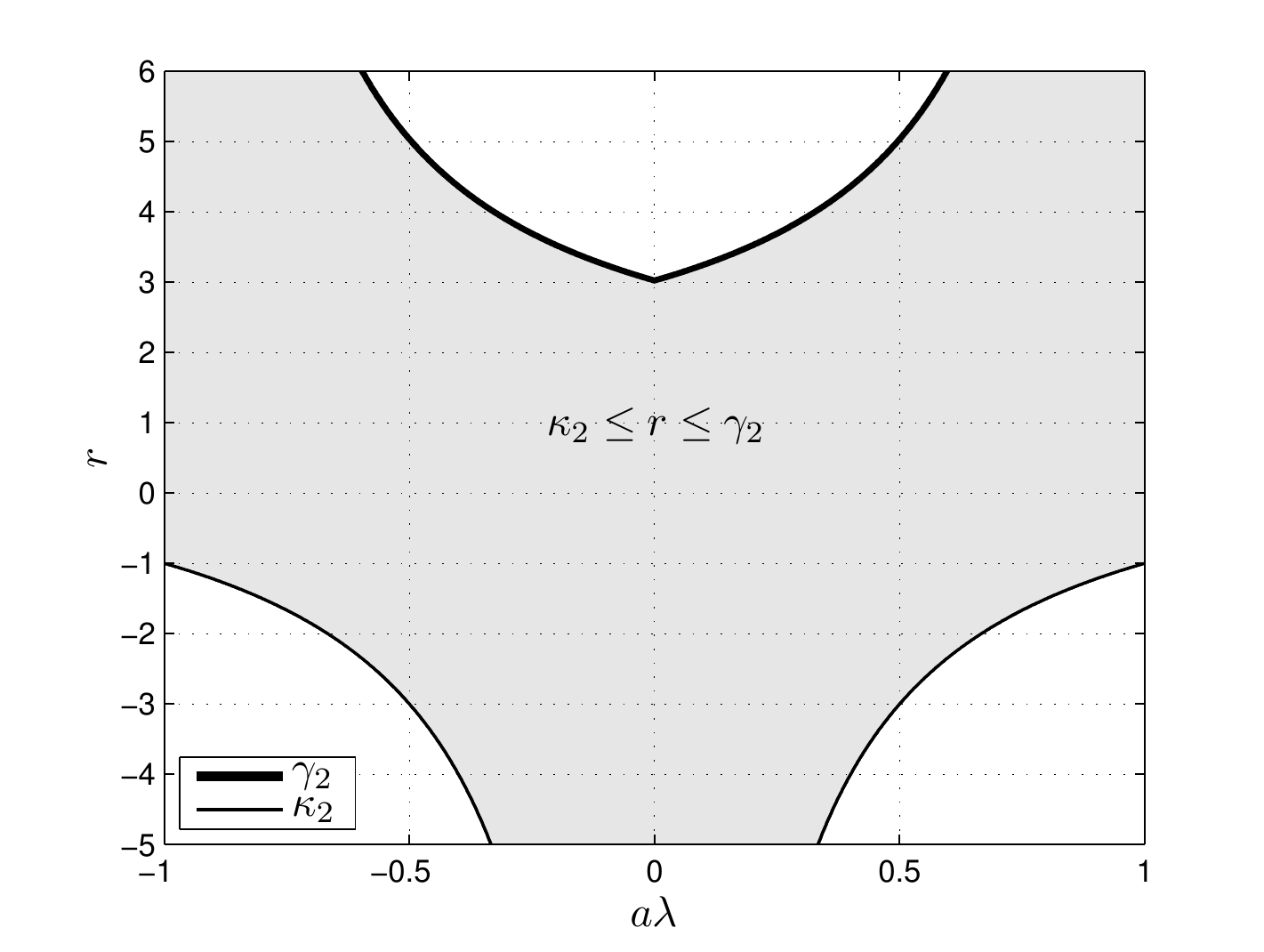}\\
    (a) & (b)
  \end{tabular}
  \caption{\label{kappagammaBW} (a) : LMP/TVD stability region (shaded) for
    Beam-Warming scheme (b): Zoomed view.}
\end{figure}
In Figure \ref{kappagammaBW}, TVD region for Beam-Warming scheme is
shown as shaded region. It can be deduced that
\begin{enumerate}
\item $\lambda a\rightarrow 0^{+}$ Beam-Warming scheme yield TVD
  approximation for $r_{i-\sigma(a)} \in (-\infty, 3]$.
\item $\lambda a\rightarrow 1^{-}$ Beam-Warming scheme yield TVD
  approximation for $r_{i-\sigma(a)} \in [-1,\infty)$.
\end{enumerate}
Note that for $\lambda |a| \in (0,1]$ parameters $\kappa_{2}\in
(-\infty, -1]$ and $\gamma_{2} \in [3,\infty)$. Following CFL number
independent weaker TVD bounds can be concluded 
\begin{cor}\label{cor4}
  The Beam-Warming scheme for (\ref{transport}) is total variation
  diminishing under the linear stability condition $0<\lambda |a|< 1,$
  if $r^{\pm}_{i \mp 1} =[-1,3]$.
\end{cor}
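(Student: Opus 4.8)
The plan is to show that the fixed interval $[-1,3]$ sits inside the CFL-dependent admissible interval $[\kappa_{2}(\lambda a),\gamma_{2}(\lambda a)]$ of Corollary \ref{cor3}, uniformly over every admissible CFL number $0<\lambda|a|<1$. Once this containment is established, any $r^{\pm}_{i\mp 1}\in[-1,3]$ automatically lies in the TVD region guaranteed by Theorem \ref{thm2} specialized to the linear flux, and the weaker but CFL-free bound follows at once.

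First I would set $y=x\sigma(x)=|x|=\lambda|a|\in(0,1)$, so that the two boundary functions collapse to the single-variable expressions $\kappa_{2}=\frac{-(2-y)}{y}=1-\frac{2}{y}$ and $\gamma_{2}=\frac{3-y}{1-y}$. The task then splits into two elementary one-variable inequalities. For the left endpoint I would verify $\kappa_{2}(\lambda a)\leq -1$: the inequality $1-\frac{2}{y}\leq -1$ is equivalent to $\frac{2}{y}\geq 2$, i.e. $y\leq 1$, which holds throughout $(0,1)$ with equality only in the limit $y\to 1^{-}$. For the right endpoint I would verify $\gamma_{2}(\lambda a)\geq 3$: since $1-y>0$ on $(0,1)$, the inequality $\frac{3-y}{1-y}\geq 3$ is equivalent to $3-y\geq 3-3y$, i.e. $2y\geq 0$, which is automatic, with equality only in the limit $y\to 0^{+}$.

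Combining the two bounds gives $\kappa_{2}(\lambda a)\leq -1<3\leq\gamma_{2}(\lambda a)$ for all $y\in(0,1)$, hence $[-1,3]\subseteq[\kappa_{2}(\lambda a),\gamma_{2}(\lambda a)]$. Because this containment is uniform in the CFL number, membership $r^{\pm}_{i\mp 1}\in[-1,3]$ forces $r^{\pm}_{i\mp 1}\in[\kappa_{2},\gamma_{2}]$, which by Theorem \ref{thm2} (Corollary \ref{cor3} in the linear case) is exactly the condition for the Beam-Warming scheme to be TVD. Conceptually the statement is best read as the intersection $\bigcap_{0<|\lambda a|<1}[\kappa_{2}(\lambda a),\gamma_{2}(\lambda a)]=[-1,3]$, since $-1$ is the supremum of $\kappa_{2}$ and $3$ is the infimum of $\gamma_{2}$ over the admissible range; this also clarifies why the hypothesis carries the strict inequality $\lambda|a|<1$ matching the CFL restriction (\ref{cflNo}), the extreme values being attained only in the boundary limits $y\to 1^{-}$ and $y\to 0^{+}$.

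There is no substantive obstacle in this argument; it is a short monotonicity and containment check. The only point that requires attention is the sign bookkeeping carried by $\sigma(x)$, which is precisely the device that merges the separate $f^{'}(u)>0$ and $f^{'}(u)<0$ branches of Theorem \ref{thm2} into the single nonnegative variable $y=|\lambda a|$, so that one computation suffices for both directions of propagation.
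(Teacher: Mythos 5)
Your proposal is correct and follows essentially the same route the paper takes: the paper states no formal proof but justifies Corollary \ref{cor4} by the observation that for $\lambda|a|\in(0,1]$ one has $\kappa_{2}\in(-\infty,-1]$ and $\gamma_{2}\in[3,\infty)$, which is exactly the containment $[-1,3]\subseteq[\kappa_{2}(\lambda a),\gamma_{2}(\lambda a)]$ you verify explicitly before invoking Theorem \ref{thm2} (Corollary \ref{cor3}). Your two elementary inequality checks in the variable $y=\lambda|a|$ simply make the paper's implicit argument rigorous.
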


\subsection{Fromm's scheme}
A less ocsillatory and second order accurate scheme is obtained by using a simple
average of LxW and BW flux i.e.,
\begin{equation}
  F^{n,FROMM}_{i+\frac{1}{2}}= \frac{1}{2}\left(F^{n,LxW}_{i+\frac{1}{2}} + F^{n,BW}_{i+\frac{1}{2}}\right)\label{3rd}
\end{equation}
From Theorem \ref{thm1} and Theorem \ref{thm2} following result can be
proved,
\begin{theorem}\label{thm3}
  Away from sonic point and CFL condition (\ref{cflNo}), the Fromm's scheme corresponding
  to flux (\ref{3rd}) for scalar conservation law (\ref{nonlin}) is
  TVD in the solution region where
  \[\displaystyle 
  r^{\pm}_{i}\in \left(-\infty,\, \kappa_{1}\left(\lambda
      a^{\pm}_{i\mp\frac{1}{2}}\right)\right)\cup
  \left(\gamma_{1}\left(\lambda
      a^{\pm}_{i\mp\frac{1}{2}}\right),\,\infty\right)\]
  and
  \[ r^{\pm}_{i \mp1} \in \left[ \kappa_{2}\left(\lambda
      a^{\pm}_{i\mp\frac{1}{2}}\right),
    \gamma_{2}\left(\lambda
      a^{\pm}_{i\mp\frac{1}{2}}\right)\right].\] where
  parameter $\kappa_{1}, \gamma_{1},\kappa_{2}$ and $ \gamma_{2}$ are
  defined in (\ref{kappa1}), (\ref{gamma1}), (\ref{kappa2}) and
  (\ref{gamma2}) respectively.
\end{theorem}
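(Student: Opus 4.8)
The plan is to exploit the fact that the Fromm flux (\ref{3rd}) is the arithmetic mean of the Lax--Wendroff flux (\ref{flxw}) and the Beam--Warming flux (\ref{bwflx1})--(\ref{bwflx2}), so that the whole analysis reduces to the two cases already settled in Theorems \ref{thm1} and \ref{thm2}. Since the spatial flux difference is linear in the numerical flux, the conservative Fromm update is the exact average of the Lax--Wendroff and Beam--Warming updates,
\[
u_{i}^{n+1,FROMM} = \frac{1}{2}\left(u_{i}^{n+1,LxW} + u_{i}^{n+1,BW}\right).
\]
For the case $f'(u)>0$ I would then write each of these updates in the non-conservative half incremental form (\ref{lmptvd}); by (\ref{Iflxw1}) and (\ref{Ifbw1}) both are written against the same backward increment $\Delta_{-}u_{i}$, so setting $u_{i}^{n+1,LxW}=u_{i}-\mathcal{D}^{LxW}\Delta_{-}u_{i}$ and $u_{i}^{n+1,BW}=u_{i}-\mathcal{D}^{BW}\Delta_{-}u_{i}$ the Fromm update inherits the single half incremental form $u_{i}^{n+1}=u_{i}-\mathcal{D}^{FROMM}\Delta_{-}u_{i}$ with
\[
\mathcal{D}^{FROMM}=\frac{1}{2}\left(\mathcal{D}^{LxW}+\mathcal{D}^{BW}\right).
\]

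By Lemma \ref{lem2} it then suffices to verify $0\le\mathcal{D}^{FROMM}\le1$. The key step is the elementary convexity observation that if $\mathcal{D}^{LxW}\in[0,1]$ and $\mathcal{D}^{BW}\in[0,1]$ then their average also lies in $[0,1]$; hence I only need each summand to satisfy the $[0,1]$ bound separately. Now the bound $0\le\mathcal{D}^{LxW}\le1$ is precisely the TVD requirement analysed in the proof of Theorem \ref{thm1}, whose inversion gives $r_{i}^{+}\in(-\infty,\kappa_{1}(\lambda a^{+}_{i-\frac{1}{2}}))\cup(\gamma_{1}(\lambda a^{+}_{i-\frac{1}{2}}),\infty)$, while $0\le\mathcal{D}^{BW}\le1$ is the TVD requirement from the proof of Theorem \ref{thm2}, whose inversion gives $r_{i-1}^{+}\in[\kappa_{2}(\lambda a^{+}_{i-\frac{1}{2}}),\gamma_{2}(\lambda a^{+}_{i-\frac{1}{2}})]$. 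Imposing both conditions simultaneously yields exactly the stated region for positive wave speed, and the case $f'(u)<0$ follows identically from the negative-speed half incremental forms (\ref{Iflxw2}) and (\ref{Ifbw2}), using the $a^{-}$ versions of $\kappa_{1},\gamma_{1},\kappa_{2},\gamma_{2}$.

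The only genuinely nontrivial point, and the one I would be most careful about, is the compatibility of the two half incremental decompositions: the argument collapses so cleanly only because, for a fixed sign of the wave speed, both the Lax--Wendroff and Beam--Warming non-conservative forms are expressed against the same one-sided difference ($\Delta_{-}u_{i}$ when $a>0$, $\Delta_{+}u_{i}$ when $a<0$), so that their coefficients are genuinely additive rather than living on mismatched stencils. I would therefore make explicit that the resulting region is only a sufficient condition, obtained as the intersection of the individual Lax--Wendroff and Beam--Warming TVD regions, and need not be the maximal TVD set for Fromm's scheme; establishing necessity would instead require analysing the combined four-point coefficient $\mathcal{D}^{FROMM}$ directly, which the convexity shortcut deliberately avoids.
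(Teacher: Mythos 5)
Your proposal is correct and follows exactly the route the paper intends: the paper states only that Theorem \ref{thm3} ``can be proved'' from Theorems \ref{thm1} and \ref{thm2} and omits the details, and your averaging argument --- writing $\mathcal{D}^{FROMM}=\tfrac{1}{2}(\mathcal{D}^{LxW}+\mathcal{D}^{BW})$ against the common one-sided difference and invoking convexity of $[0,1]$ --- is precisely the missing justification. Your closing remark that the resulting region is only a sufficient (intersection) condition, not necessarily the maximal TVD set for Fromm's scheme, is an accurate and worthwhile clarification of what the theorem actually asserts.
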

\section{Hybrid high order LMP/TVD stable schemes} \label{sec5}
It follows from the Theorem \ref{thm1}, \ref{thm2} and Theorem
\ref{thm3} that it is possible to achieve second or higher order TVD
approximation for most solution region including non-sonic exterma
where $r_{i}<0$. In order to demonstrate it numerically, we construct
hybrid schemes using a monotone/TVD scheme as {\bf complementary conservative 
  scheme (CCS)}. The following hybrid schemes are the natural choice
which satisfies the LMP/TVD bounds obtained in previous section and
thus ensures a LMP/TVD approximation. The second order accurate
LMP/TVD schemes use second order LxW and BW schemes in the region of
their LMP/TVD stability using bounds on smoothness parameter in
Theorem \ref{thm1} and \ref{thm2} respectively, otherwise use a
conservative conservative scheme (CCS).
\subsection{Centered scheme: LW-CCS}\label{algo1}  
  \begin{algorithmic}[1]
    \IF{$r^{\pm}_{i}\leq \kappa_{1}$ OR  $r^{\pm}_{i}\geq \gamma_{1}$} \STATE Update $u_{i}^{n+1} \gets$
    LxW scheme 
    \ELSE \STATE Update $u_{i}^{n+1} \gets$ CCS.
    \ENDIF
  \end{algorithmic}
\subsection{Upwind Scheme: BW-CCS}\label{algo2}
  \begin{algorithmic}[1]
    \IF{($r^{\pm}_{i\mp 1}\geq \kappa_{2}$ AND
      $r^{\pm}_{i\mp 1}\leq \gamma_{2}$)} \STATE Update $u_{i}^{n+1}
    \gets$ BW scheme \ELSE \STATE Update $u_{i}^{n+1} \gets$ CCS.
    \ENDIF
  \end{algorithmic}
\subsection{Centred-Upwind scheme: FLWBW-CCS approximation}\label{algo3}
This LMP/TVD stable scheme can be obtained using Fromm scheme in
its region of LMP/TVD stability using bounds in \ref{thm3} along
with schemes \ref{algo1} and \ref{algo2} as follows
\begin{algorithmic}[1]
  \IF{($r^{\pm}_{i}\leq \kappa_{1}$ OR $r^{\pm}_{i}\geq \gamma_{1}$)
    \text{AND} ($r^{\pm}_{i\mp1}\geq \kappa_{2}$ AND
    $r^{\pm}_{i\mp1}\leq \gamma_{2}$)} \STATE Update $u_{i}^{n+1}
  \gets$ Fromm's scheme \ELSIF{$r^{\pm}_{i}\leq \kappa_{1}$ OR
    $r^{\pm}_{i}\geq \gamma_{1}$} \STATE Update $u_{i}^{n+1} \gets$
  LxW scheme \ELSIF{($r^{\pm}_{i\mp 1}\geq \kappa_{2}$ AND
    $r^{\pm}_{i\mp 1}\leq \gamma_{2}$)} \STATE Update $u_{i}^{n+1}
  \gets$ BW scheme \ELSE \STATE Update $u_{i}^{n+1} \gets$ CCS.
  \ENDIF
\end{algorithmic}
Note that the scheme \ref{algo1}-\ref{algo3} are non-conservative as
they are based on TVD conditions on the smoothness parameter dedced
from the non-conservative form of studied schemes\footnote{except for
  equations having constant characteristic speed e.g. linear transport
  problem.}. Therefore they capture the steady shock accurately but may
produce moving shock at wrong location see results in Figure
\ref{Fig6a}(a). Note that incorrect shock location by scheme
\ref{algo1} is legging whereas by scheme \ref{algo2} it is leading to
the exact shock location in Figure \ref{Fig6a}(a). It is interesting
to see the scheme \ref{algo3} cancels the leading and legging errors
and gives exact shock location in Figure \ref{Fig6b}(a).  This
phenomena of yielding wrong moving shock location by non-conservative
schemes along with the shock correction criteria is well explained in
\cite{Hou1994}. The idea for shock correction is to apply locally a
shock capturing conservative scheme in the vicinity of discontinuity
using a shock detector. It is therefore, to capture the moving shock
correctly, the following hybrid approach can be used,
\subsection{Shock Correction: SC-LW-CCS, SC-BW-CCS, SC-FLWBW-CCS hybrid schemes}\label{algo4}
  \begin{algorithmic}[1]
    \IF{Shock region} \STATE Update $u_{i}^{n+1} \gets$ use CCS,
    \ELSE \STATE Update $u_{i}^{n+1} \gets$
    with either of algorithm \ref{algo1}-\ref{algo3}.  \ENDIF
  \end{algorithmic}
\subsection{Extension to system of hyperbolic conservation
  laws} \label{algo4sys} Consider the hyperbolic systems of
conservation law in one dimensions,
\begin{equation}
  \frac{\partial}{\partial
    t}\textbf{u}+\frac{\partial}{\partial x}\textbf{F(u)}=0, \label{1Dsys}
\end{equation}
where $\textbf{u}$ is vector of conserved quantities $u^{j},\,
i=1,2,\dots l$ and $\textbf{F}$ is the vector flux function. The above
proposed schemes for scalar case are extended to non-linear systems
(\ref{1Dsys}) in the natural manner using flux vector splitting and
average flux Jacobian matrix $A=\textbf{F}^{'}\textbf{(u)}$ of the
flux function. In particular for the numerical results presented in
next section, the Steger-Warming flux vector splitting is used for 1D
and 2D systems. The average Jacobian matrix is computed as follows,
$$A^{n}_{i+\frac{1}{2}}=A\left(\frac{\textbf{u}^n_{i+1}+\textbf{u}^n_{i}}{2}\right).$$
In order to compute the TVD bounds, the $n-$characteristic speeds associated
with system \ref{1Dsys}) 
\begin{equation}
a^{j}_{i+\frac{1}{2}}=\left\{\begin{array}{cc}
\frac{F^{j}_{i+1}-F^{j}_{i}}{u^{j}_{i+1}-u^{j}_{i}} &
     {u^{j}_{i+1}-u^{j}_{i}}\neq 0
     \\ \sigma(A_{i+\frac{1}{2}}) & else,\end{array}\right.j=1,2\dots l, \label{sysspeed}
\end{equation}
where $\sigma_{i+\frac{1}{2}}$ is the spectrum of eigen values of
$A_{i+\frac{1}{2}}$.  In above computation (\ref{sysspeed}) the
nonphysical discrete wave speed caused by numerical overflow in case
of $u^{j}_{i+1}\approx u^{j}_{i}$ are corrected using following way
which is similar to the wave speed correction technique proposed in
\cite{jairaghu}.  It is done as,
\begin{eqnarray}
a^{j}_{i+\frac{1}{2}}= \sigma_{max} \;\mbox{if}\; |a^{j}_{i+\frac{1}{2}}|\geq \sigma_{max},\\
a^{j}_{i+\frac{1}{2}}= \sigma_{min} \;\mbox{if}\; |a^{j}_{i+\frac{1}{2}}|\leq \sigma_{min},  
\end{eqnarray}
where $\sigma_{max}$ and $\sigma_{min}$ refer to the local maxima and
minima of the magnitudes of characteristic speeds associated with
system (\ref{1Dsys}). For example, the one dimensional Euler equations
has the eigenvalues $u,\;u\pm c$ where $u$ and $c$ denotes fluid
velocity and the speed of sound respectively. In this case, we define
\begin{eqnarray}
\,\sigma_{\max}&=& \max{(\max{(|u|, |u-c|, |u+c|)_{i}} , \max{(|u|, |u-c|, |u+c|)_{i+1}) }},\\
\sigma_{\min}&=& \max{(\min{(|u|,|u-c|, |u+c|)_{i}} , \min{(|u|, |u-c|, |u+c|)_{i+1} )}}.
\end{eqnarray}

\subsection{Shock Sensor}
In order to locate the presence of discontinuities, a shock detector
proposed in \cite{Liushock} with some modification is used. A brief
detail on the shock switch is given below for the sake of completeness
of the discussion on numerical implementation.
\begin{itemize}
\item[Step 1:] Check multigrid ratio check
$$MR(i,h)=\frac{T_{C}(i,h)}{T_{F}(i,h) +\epsilon}$$ where
  $T_{C}(i,2h)$ and $T_{F}(i,h)$ are the $(4^{th}, 5^{th}\;
  \mbox{and}\; 6^{th})$ order truncation error sum on a coarse (with
  N/2 grid points) and fine grid (with N points) respectively. The
  derivatives in this step are calculated by by sixth order compact
  scheme proposed in \cite{Lele1992}. The small parameter
  $0<\epsilon<<1$ is used to avoid division by zero.
\item[Step 2:] Calculate the local ratio check at
  the grid point $x_i$ which has multigrid ratio $MR(i,h)\leq 4$,
$$LR(i) = \left|\frac{\left(u^{'}_{R}\right)^2-\left(u^{'}_{L}\right)^2}{\left(u^{'}_{R}\right)^2 +
    \left(u^{'}_{R}\right)^2 +\epsilon}\right| $$ where
  $u^{'}_{L}=3u_{i}-4u_{i-1}+u_{i-2}$ and
  $u^{'}_{R}=3u_{i}-4u_{i+1}+u_{i+2}$ left and right slope
  respectively at grid point $x_i$.
\item[Step 3:] Use a cutoff value $\delta \in(0,1]$ to create a shock switch (SS) on the result of step 2. i.e.,
$$ SS(i)=\left\{\begin{array}{ll} 0, & \mbox{if}\;LR(i)<\delta\; \mbox{i.e., data is locally smooth around grid point}\; x_{i},\\
    1,& \mbox{if} \;LR(i)\geq \delta\; \mbox{i.e., data is discontinuous around grid point}\; x_{i}.
\end{array}\right.$$
\end{itemize}
Note that the above shock detector has parameters $\epsilon$ and
$\delta$ which governs the sensitivity of shock switch. It is observed
in numerical computations that for larger value of parameters e.g.,
$\epsilon =1\times 10^{-2},\delta=0.8$ above shock switch is less
sensitive for mild shock or sharp turns and detects only strong shocks
whereas $\epsilon =1\times 10^{-8},\delta=0.2$ detect corners and mild
shock along with the strong shock. Also in case of non-linear systems,
it is observed that small oscillations may arise in the vicinity of
shock depending on the choice of shock parameters
$\epsilon,\delta$. It is therefore, to make it robust and less prone
to parameters $\epsilon,\delta$ a slight modification is done as
follows,
\begin{itemize}
\item[Step 4:] Treat neighbouring grid point $x_{i\pm 1}$ in
  discontinuity region if $SS(i)=1$ i.e., use $SS(i\pm1)=1$ if
  $SS(i)=1$.
\end{itemize}

\section{Numerical Results} \label{sec6} 
In this section numerical results are presented for various benchmark
scalar and system test problems in both one and two
dimensions. Different smooth as well discontinuous initial conditions
are taken to show the performance of schemes in section \ref{sec5} in terms of
accuracy and discontinuity capturing respectively. Numerical results
show that the proposed hybrid scheme, due to improved accuracy at
extrema and steep gradient region, nicely approximates the smooth
region of solution with crisp resolution for rarefaction, contact and
shock discontinuities.  Moreover it produces the total variation
diminishing numerical approximation.

\subsection{\bf Linear transport equation: every extrema is non-sonic}
Consider the linear transport equation
\begin{equation}
u_{t} + u_{x}=0,\; u(x,0)= u_{0}(x) \label{transport1}
\end{equation}
with periodic boundary condition. The exact solution of
(\ref{transport1}) equation convects with out changing the initial
shape of $u_{0}(x)$ and is given by $u(x,t) = u_{0}(x-t)$. Note that
in general number of extrema are finite and clipping error due to
degenerate accuracy at smooth extrema by existing high order monotone
and TVD method is visible only after a long period of time in form of
approximation of smooth extrema with corners or flatten profile see
Figure \ref{Fig1a}(a). Also due to degenerate accuracy at extrema and
steep gradient region their erratic convergence rate can be seen only
after a long time see Table \ref{LWflmT3linprb3}. It is therefore,
probably the transport equation (\ref{transport1}) the only test which
can be used to check the large time performance of any method. Since
the problem (\ref{transport1}) is linear thus discontinuities present
in the solution does not represents shock or rarefaction therefore
scheme \ref{algo3} can be directly applied for this test problem with
out shock switch. The numerical computation for problem
(\ref{transport1}) is done by using the first order upwind scheme as
complementary conservative scheme (CCS) in hybrid scheme \ref{algo3}
and results are shown by legend {\it method}.
\subsubsection{\bf Accuracy check: smooth initial condition}
Consider \ref{transport1}) along with the the following three different
initial conditions which comprises of smooth extrema, monotone region
with mild as well sharp turn. 
\begin{enumerate}
\item[i] Smooth extrema 
  \begin{equation}
    u(x,0) = \sin(\pi x), \; x\in
    [-1,\;1].\label{Lin-IC1}
  \end{equation} 
  The initial profile consists smooth extreme points at $x=\pm
  \frac{1}{2}$ which preservingly convects to the right direction.
\item[ii] Smooth extrema with monotone data region
  \begin{equation}
    u(x,0) = \sin^4(\pi x), \; x\in [0,\;1].\label{Lin-IC2}
  \end{equation} 
  This initial condition is taken from \cite{zhang2010} and has a
  smooth extrema at $x=0.5$ with monotone solution regions with mild
  turn towards the bottom.
\item[iii] Smooth extrema with steep gradient region  
  \begin{equation}\label{Lin-IC3}
    u(x,0) = \left\{\begin{array}{ll}
        e^{\frac{-1}{1-x^2}}\; & x\in [-1:1]\\
        0\; & else
      \end{array}\right.
  \end{equation}
  This initial condition has a smooth extrema at $x=0$. Compared to
  initial condition (\ref{Lin-IC2}) it has high gradient monotone
  region with sharp turns towards the bottom where $r\rightarrow 0+$
  or $r>>1$ respectively. This is a good test to see the degenerate
  convergence rate for any limiter based TVD scheme.
\end{enumerate}
 For smooth initial data numerical solution plots are given at large
 time level{ $\bf T_{s}$}\footnote{We consider large time $T_{s}$ as
   the time level when corners are visible in the approximation of
   smooth extrema by high order TVD methods such as in
   \cite{harten1983,Dubey2013,sweby1984,Yee1987,davis1987}.}.  The
 convergence rate of scheme \ref{algo4} is given at various time
 instance with varying $CFL$ number to show the robust and higher than
 second order convergence rate of the method \ref{algo3}. In Figure
 \ref{Fig1a}(a), numerical solution obtained by method corresponding
 to IC (\ref{Lin-IC1}) is compared with high order TVD Lax-Wendroff
 flux limited method (LxWFLM) \cite{Rider} with compressive Superbee
 limiter \cite{roe1985some}. In Figure \ref{Fig1b} and Figure
 \ref{Fig1c} approximate solution is given for transport problem
 corresponding to initial conditions (\ref{Lin-IC2}) and
 (\ref{Lin-IC3}) respectively.  The total variation of the computed
 solution obtained by method is compared with uniformly second  order LxW, BW and Fromm schemes respectively for all three IC's in Figure
 \ref{Fig1a}(b), \ref{Fig1b}(b) and \ref{Fig1c}(c).

From the numerical results in it is evident that problem of flattening
of smooth round shaped solution profile is removed due to improved
approximation of extreme points. Moreover it can be observed and
Figure \ref{Fig1b}(a) that this improvement more visible as $\lambda a
\rightarrow 1$ which support the improved TVD region for extrema of
LxW as discussed in the Corollary \ref{cor1}. Total variation plots show
that total variation for designed scheme \ref{algo3} is decreasing
whilst uniformly second order LxW and BW schemes do not produce TVD
solution though it remain total variation bounded (TVB).

In Table \ref{Tab1linprb1}, discrete maximum $L^{\infty}$ error
convergence rate is given for scheme \ref{algo3} as $L^{\infty}$ error
is the best indicator for checking the performance of any scheme in
terms of clipping error due to drop in accuracy at smooth extrema. In
Table \ref{Tab2linprb2} and Table \ref{Tab3linprb3}, error convergence
rates are given in terms of $L^{1}$ and $L^{\infty}$ error for method
with different choice of $CFL$ and time for linear test corresponding
to initial conditions (\ref{Lin-IC2}) and (\ref{Lin-IC3})
respectively. The numerical results show that the designed scheme
\ref{algo3} shows higher than second order convergence rate
independently of the choice of $CFL$ number or final time $T$. Also
due to improved approximation of extrema and steep gradient region,
the used method yields smooth approximation with out clipping error
which support the Corollary \ref{cor1} and \ref{cor2}.
\begin{figure}[!htb]
  \begin{center}
    \begin{tabular}{cc}
      \includegraphics[%
      scale=0.5]{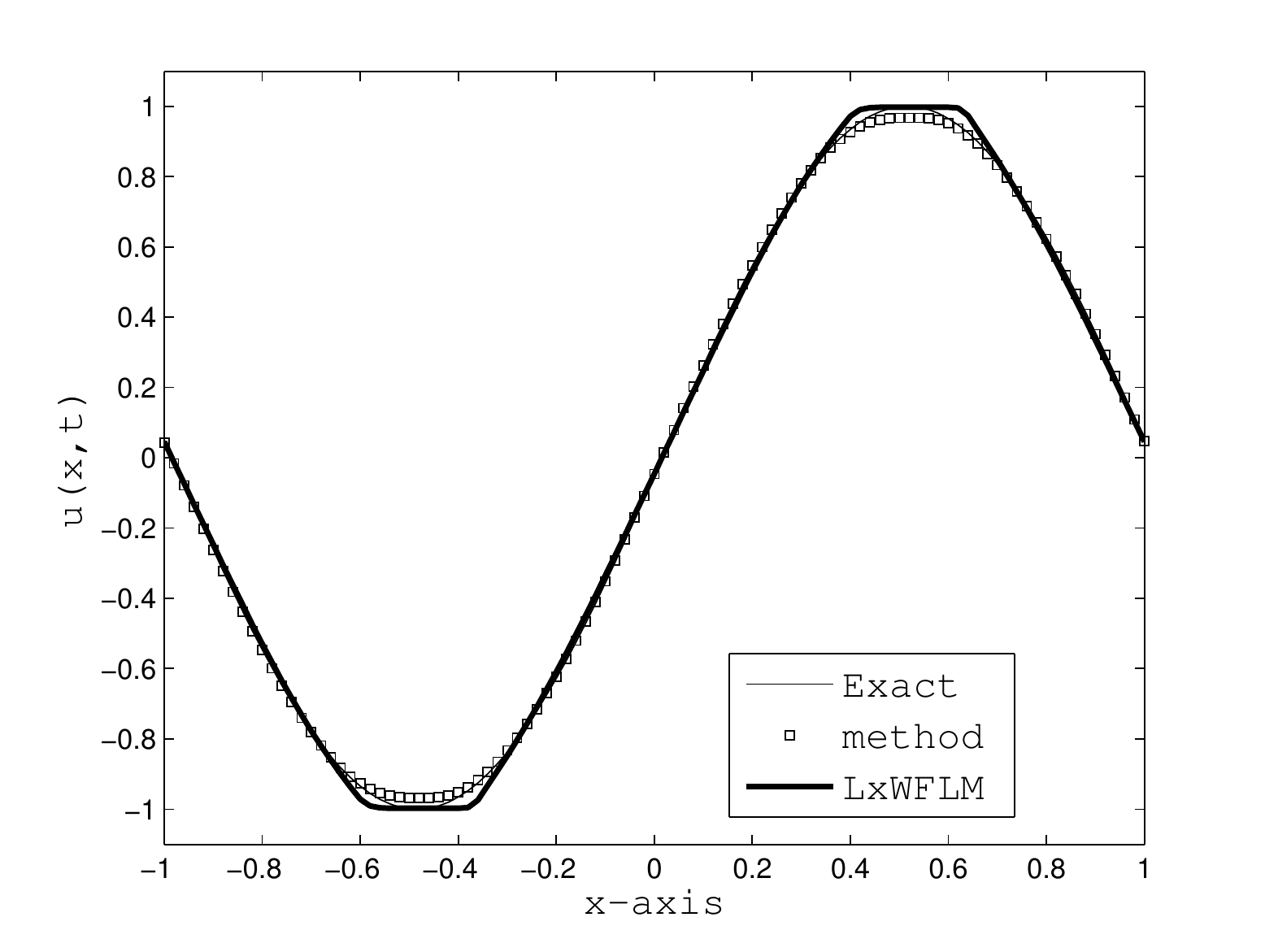} &\includegraphics[%
      scale=0.5]{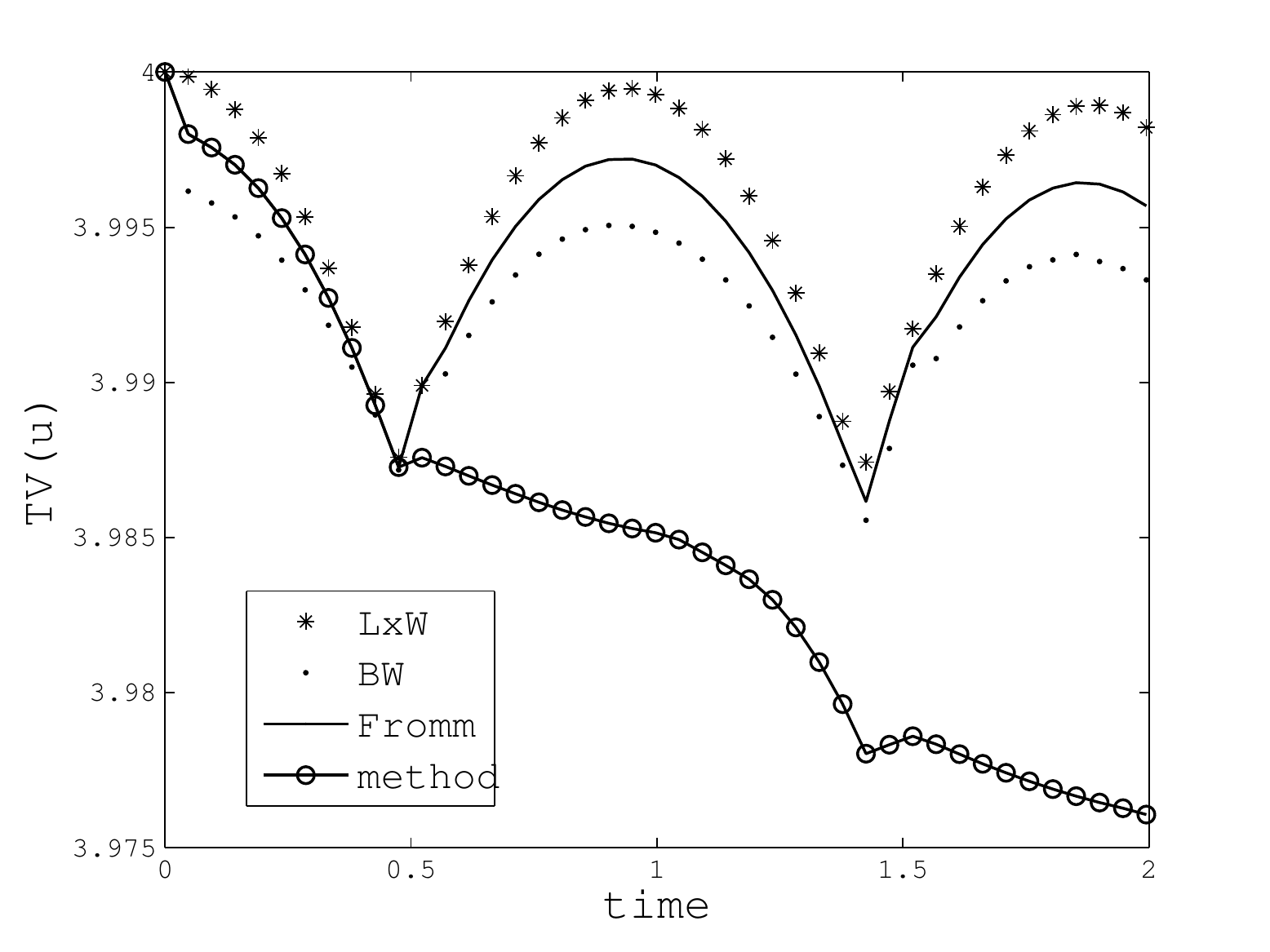}\\
      (a) at $T=30$. & (b) at $T=2$ with $CFL=0.95$.\\
    \end{tabular}
  \end{center}
  \caption{\label{Fig1a} Solution of (\ref{transport1}) with IC
    (\ref{Lin-IC1}) using $N=80$: (a) Flatten approximation for smooth
    extrema by LxWFLM whereas proposed scheme \ref{algo4} preserve
    solution with smooth extrema with out introducing corners. (b)
    Comparison of total variation.}
\end{figure}

\begin{table}[!htb]
\begin{tabular}{|c|c|}
\hline
T=2 & T=30\\
\hline
\begin{tabular}{c|c}
CFL=0.5 & CFL=0.95\\
\hline
\begin{tabular}{ccc}
N & $L^{\infty}$ error & Rate\\
\hline
20 & 9.1927e-03 & \dots \\
40 & 1.9456e-03 & 2.240 \\
80 & 3.7656e-04 & 2.369 \\
160 & 7.0744e-05 & 2.412 \\
320 & 1.2599e-05 & 2.489 \\
640 & 3.2485e-06 & 1.955\\
\end{tabular}
&
\begin{tabular}{cc}
$L^{\infty}$ error & Rate\\
\hline
1.4026e-03 & \dots \\
2.6640e-04 & 2.396 \\
5.6365e-05 & 2.241 \\
1.1332e-05 & 2.314 \\
2.6879e-06 & 2.076 \\
3.1057e-07 & 3.113 \\
\end{tabular}
\end{tabular}
&
\begin{tabular}{c|c}
CFL=0.5 & CFL=0.95\\
\hline
\begin{tabular}{cc}
$L^{\infty}$ error & Rate\\
\hline
6.3628e-02 & \dots \\
1.0892e-02 & 2.546 \\
1.8543e-03 & 2.554 \\
3.3400e-04 & 2.473 \\
4.1812e-05 & 2.998 \\
3.5142e-06 & 3.573\\
\end{tabular}
&
\begin{tabular}{cc}
$L^{\infty}$ error & Rate\\
\hline
1.1716e-02 & \dots\\ 
2.2095e-03 & 2.407 \\
4.4726e-04 & 2.305 \\
8.6187e-05 & 2.376 \\
1.4613e-05 & 2.560 \\
4.5598e-07 & 5.002 \\
\end{tabular}
\end{tabular}\\
\hline
\end{tabular}
\caption{\label{Tab1linprb1} {\it Consistent higher than second order $L^{\infty}$ convergence rate with the
    mesh refinement corresponding to initial condition (\ref{Lin-IC1})}.}
\end{table}

\begin{figure}[!htb]
  \begin{center}
    \begin{tabular}{cc}
      \includegraphics[%
      scale=0.5]{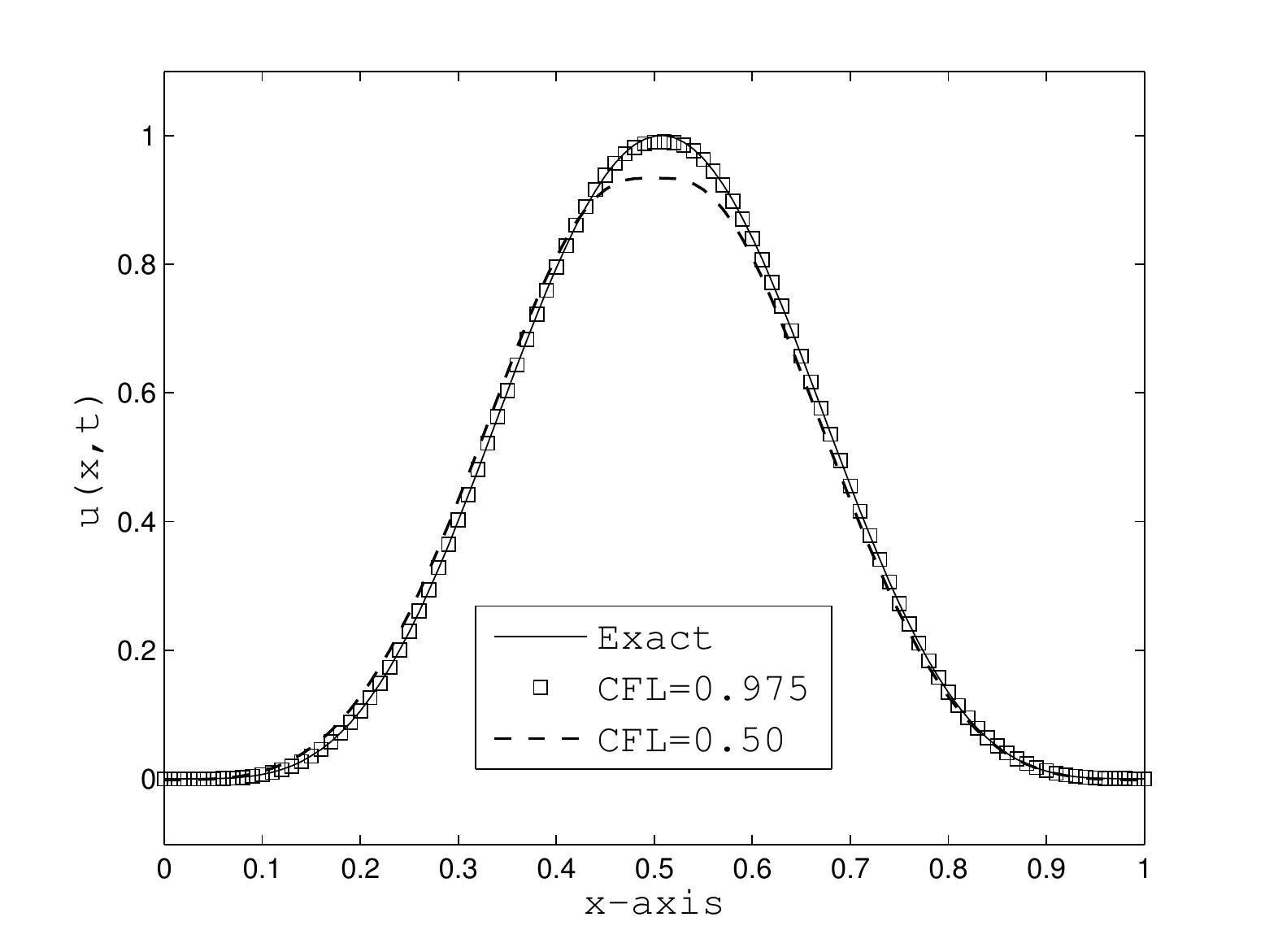} &\includegraphics[%
      scale=0.5]{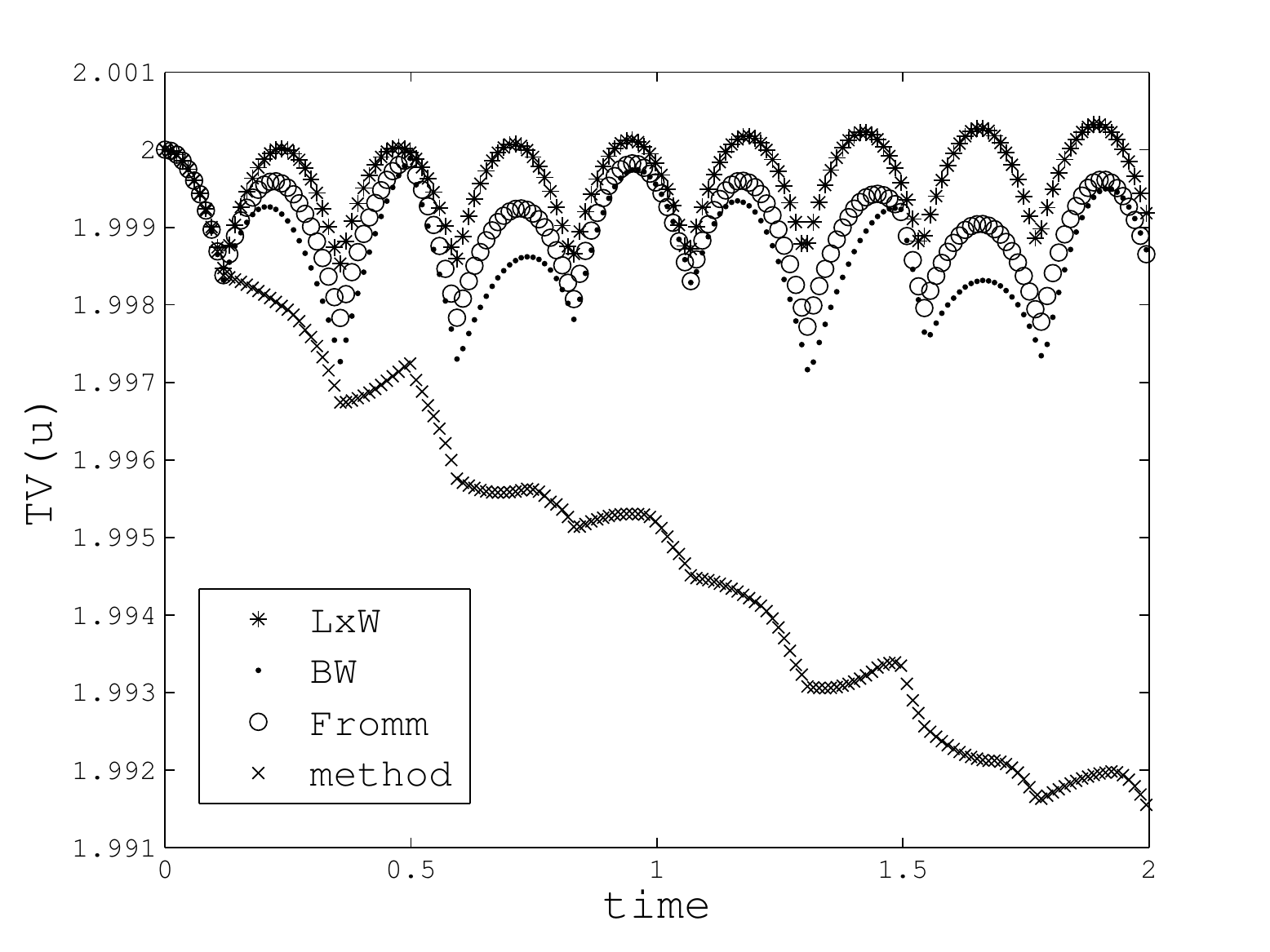}\\
      (a) & (b)\\
    \end{tabular}
  \end{center}
  \caption{\label{Fig1b} Solution for linear equation
    (\ref{transport}) with IC (\ref{Lin-IC2}) (a) at time $T=20,\;
    N=100$, Smooth approximation of extrema with reduced clipping
    error: (b) TV plot up to $T=2$ using $N=80, CFL=0.95$.}
\end{figure}

\begin{table}[!htb]
  \begin{tabular}{|c|c|}
    \hline
    CFL=0.5 & CFL=0.95\\
    \hline
    \begin{tabular}{ccc|cc}
      \hline
      N& $L^{1}$ Error & Rate & $L^{\infty}$ error & Rate\\
      \hline
      40 & 7.6962e-02 & \dots & 	 6.1348e-03 & \dots \\
      80 & 1.7350e-02 & 2.149 & 	 1.1039e-03 & 2.474 \\
      160 & 3.8066e-03 & 2.188 & 	 1.9596e-04 & 2.494 \\
      320 & 7.3723e-04 & 2.368 & 	 2.4368e-05 & 3.008 \\
      640 & 1.1693e-04 & 2.656 & 	 4.3613e-07 & 5.804 \\
      1280 & 2.8601e-05 & 2.031 & 	 5.4189e-08 & 3.009 \\
      2560 & 7.0685e-06 & 2.017 & 	 6.6891e-09 & 3.018 \\
    \end{tabular}
    & 
    \begin{tabular}{cc|cc}
      \hline
      $L^{1}$ Error & Rate & $L^{\infty}$ error & Rate\\
      \hline
      1.3179e-02 & \dots & 	 1.4315e-03 & \dots \\
      3.4723e-03 & 1.924 & 	 2.7228e-04 & 2.394 \\
      8.7436e-04 & 1.990 & 	 5.2159e-05 & 2.384 \\
      2.0210e-04 & 2.113 & 	 8.6669e-06 & 2.589 \\
      3.2508e-05 & 2.636 & 	 1.1141e-07 & 6.282 \\
      8.0887e-06 & 2.007 & 	 1.3877e-08 & 3.005 \\
      2.0182e-06 & 2.003 & 	 1.7315e-09 & 3.003 \\
    \end{tabular}\\
    \hline
  \end{tabular}
  \caption{\label{Tab2linprb2} {\it Order of convergence with the
      mesh refinement at $T=20$ corresponding to initial condition (\ref{Lin-IC2})}.}
\end{table}

\begin{figure}[!htb]
  \begin{center}
    \begin{tabular}{cc}
      \includegraphics[%
      scale=0.5]{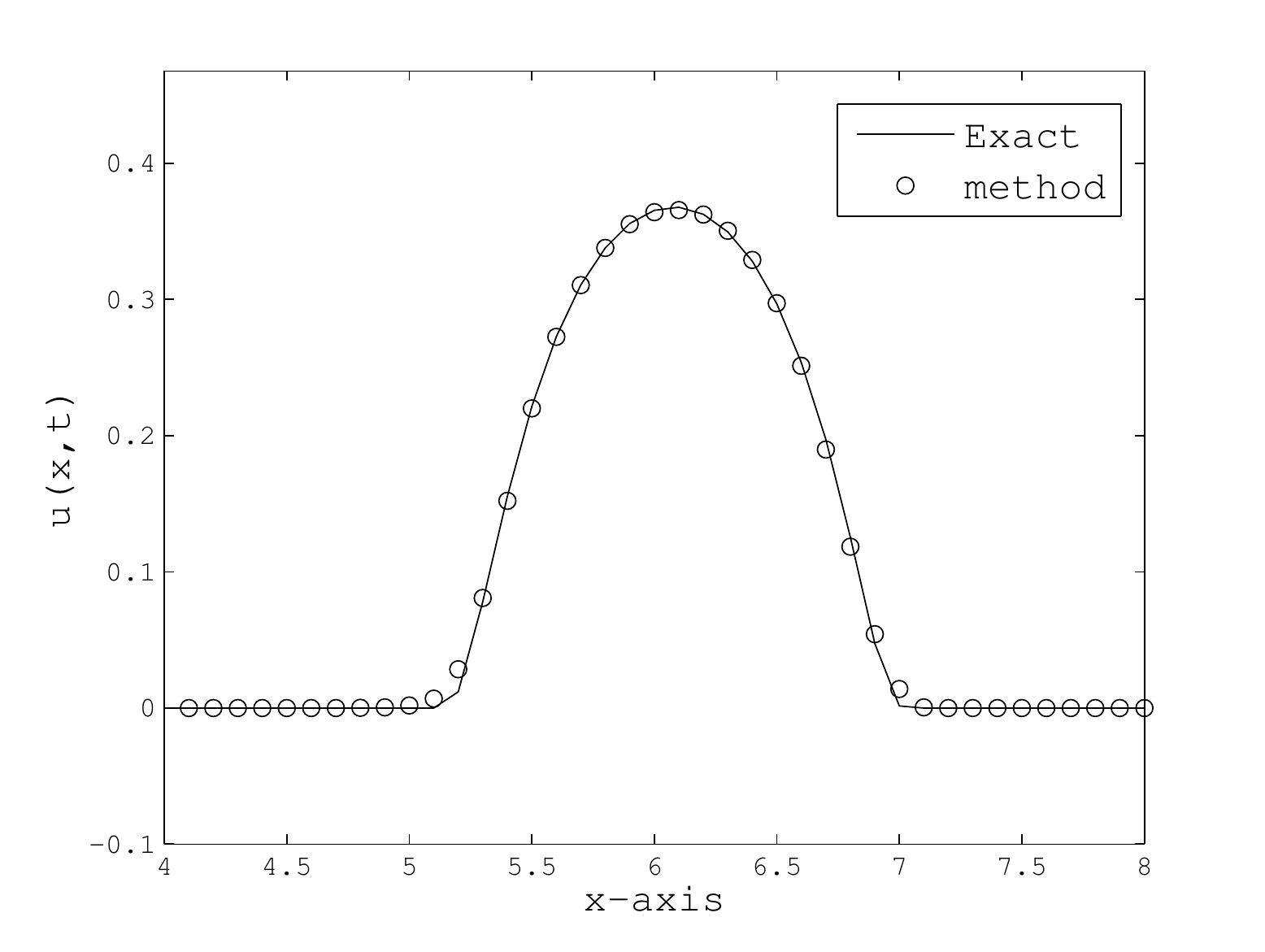} &\includegraphics[%
      scale=0.5]{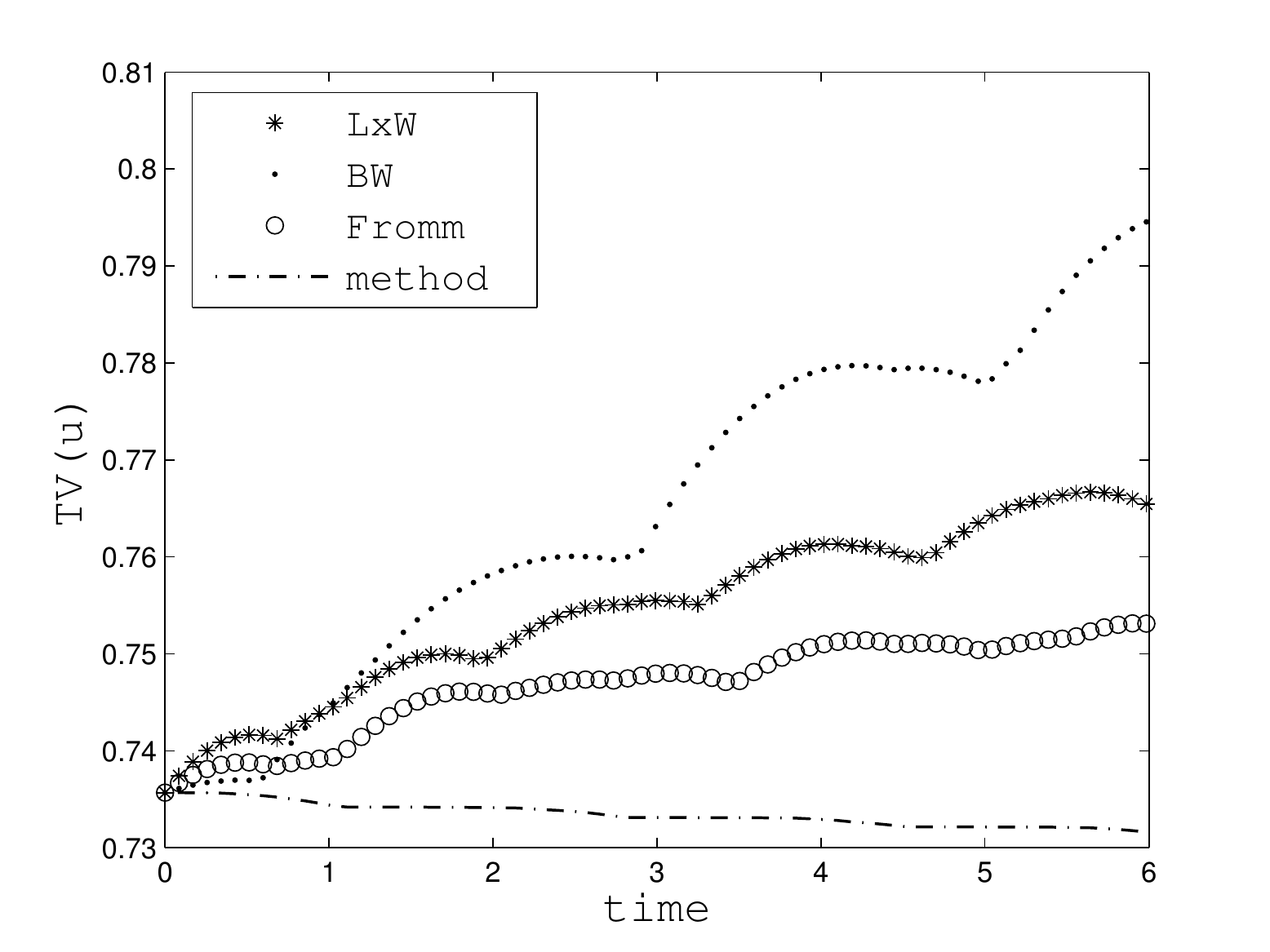}\\
      (a) & (b)\\
    \end{tabular}
  \end{center}
  \caption{\label{Fig1c} Solution for linear equation
    (\ref{transport}) with IC (\ref{Lin-IC3}) at time $T=6$ and
    $N=100,\;CFL=0.95$. (a) Smooth approximation of extrema with no
    clipping error by Scheme \ref{algo1}. (b) Comparison of total
    variation.}
\end{figure}

\begin{table}[!htb]
  \begin{tabular}{|c|c|}
    \hline
    I-Order Upwind & Second order TVD scheme\\
    \hline
    \begin{tabular}{c|cc|cc}
      \hline
      N& $L^{1}$ Error & Rate & $L^{\infty}$ error & Rate\\
      \hline
      80 & 2.5248e-01 & 0.583 & 	 1.6091e-02 & 1.482 \\
      160 & 1.6210e-01 & 0.639 & 	 5.8392e-03 & 1.462 \\
      320 & 9.7673e-02 & 0.731 & 	 2.2646e-03 & 1.367 \\
      640 & 5.6346e-02 & 0.794 & 	 8.2206e-04 & 1.462 \\
      1280 & 3.1355e-02 & 0.846 & 	 2.8328e-04 & 1.537 \\
      2560 & 1.6889e-02 & 0.893 & 	 9.2241e-05 & 1.619 \\
    \end{tabular}
    & 
    \begin{tabular}{cc|cc}
      \hline
      $L^{1}$ Error & Rate & $L^{\infty}$ error & Rate\\
      \hline
      2.2319e-02 & 1.314 & 	 2.4725e-03 & 2.294 \\
      1.6468e-02 & 0.439 & 	 1.3464e-03 & 0.877 \\
      5.2704e-03 & 1.644 & 	 2.0531e-04 & 2.713 \\
      2.3125e-03 & 1.188 & 	 8.7812e-05 & 1.225 \\
      1.0757e-03 & 1.104 & 	 4.1735e-05 & 1.073 \\
      4.1654e-04 & 1.369 & 	 1.0950e-05 & 1.930\\
    \end{tabular}\\
    \hline
  \end{tabular}
  \caption{\label{LWflmT3linprb3} {\it Order of convergence using I
      order upwind and LxW flux limited TVD method with Superbee
      limiter at $T=6, CFL=0.5$ corresponding to initial condition
      (\ref{Lin-IC3})}.}
\end{table}

\begin{table}[!htb]
  \begin{tabular}{|c|c|}
    \hline
    CFL=0.5 & CFL=0.95\\
    \hline
    \begin{tabular}{c|cc|cc}
      \hline
      N& $L^{1}$ Error & Rate & $L^{\infty}$ error & Rate\\
      \hline
      80 & 4.3378e-02 & 1.654 & 	 4.9870e-03 & 2.139\\ 
      160 & 1.2992e-02 & 1.739 & 	 1.3187e-03 & 1.919 \\
      320 & 3.2156e-03 & 2.014 & 	 2.6333e-04 & 2.324 \\
      640 & 6.1867e-04 & 2.378 & 	 3.8218e-05 & 2.785 \\
      1280 & 1.1353e-04 & 2.446 & 	 4.1315e-06 & 3.210 \\
      2560 & 1.7115e-05 & 2.730 & 	 3.6114e-07 & 3.516 \\
    \end{tabular}
    & 
    \begin{tabular}{cc|cc}
      \hline
      $L^{1}$ Error & Rate & $L^{\infty}$ error & Rate\\
      \hline
      1.1681e-02 & 1.646 & 	 2.2055e-03 & 2.239 \\
      3.2801e-03 & 1.832 & 	 5.0305e-04 & 2.132 \\
      8.1011e-04 & 2.018 & 	 8.1325e-05 & 2.629 \\
      1.7801e-04 & 2.186 & 	 1.3173e-05 & 2.626 \\
      4.5829e-05 & 1.958 & 	 1.8927e-06 & 2.799 \\
      1.2376e-05 & 1.889 & 	 2.5213e-07 & 2.908 \\
    \end{tabular}\\
    \hline
  \end{tabular}
  \caption{\label{Tab3linprb3} {\it Consistent higher than second order of convergence with
      the mesh refinement at $T=6$ corresponding to initial condition (\ref{Lin-IC3})}}
\end{table}

\subsubsection{\bf Discontinuous initial condition}
In this test is taken from \cite{despres} originally used by Harten in
\cite{Harten1989}. Initial solution is complex in nature which
contains parts of smooth solution, mix discontinuities,
discontinuities of derivative in the interval $[-1,1]$. In Figure
\ref{Fig4}(a), numerical results is given by proposed method and in
Figure \ref{Fig4}(b) the total variation plot of the computed solution
by {\it method} is given.
\begin{equation}
  \label{icprb3}
  u_{0}(x) = \left\{\begin{array}{lcl}
      2x + 2 -sin(3\pi(x-0.5))/6 & \mbox{if} &-1\leq x\leq 0.5, \\
      (0.5-x)sin(1.5\pi(x-0.5)^2) & \mbox{if} &-0.5<x<1/6,\\
      |sin(2\pi(x-0.5))| & \mbox{if} & 1/6< x<= 5/6,\\
      2x - 2 -sin(3\pi(x-0.5))/6 & \mbox{if} & 5/6< x <=1.\\
    \end{array}\right. \\ 
\end{equation}
It can be seen in Figure \ref{Fig4} that the proposed scheme \ref{algo3} yields TVD
solution with crisp capturing of the discontinuities with out clipping
error at smooth extrema. The error convergence rate is not shown for
this discontinuous test problem as discontinuities can only be
approximated with at most first order of accuracy \cite{cwshu2012}
\begin{figure}[httb!]
  \begin{center}
    \begin{tabular}{cc}
      \includegraphics[scale=0.5]{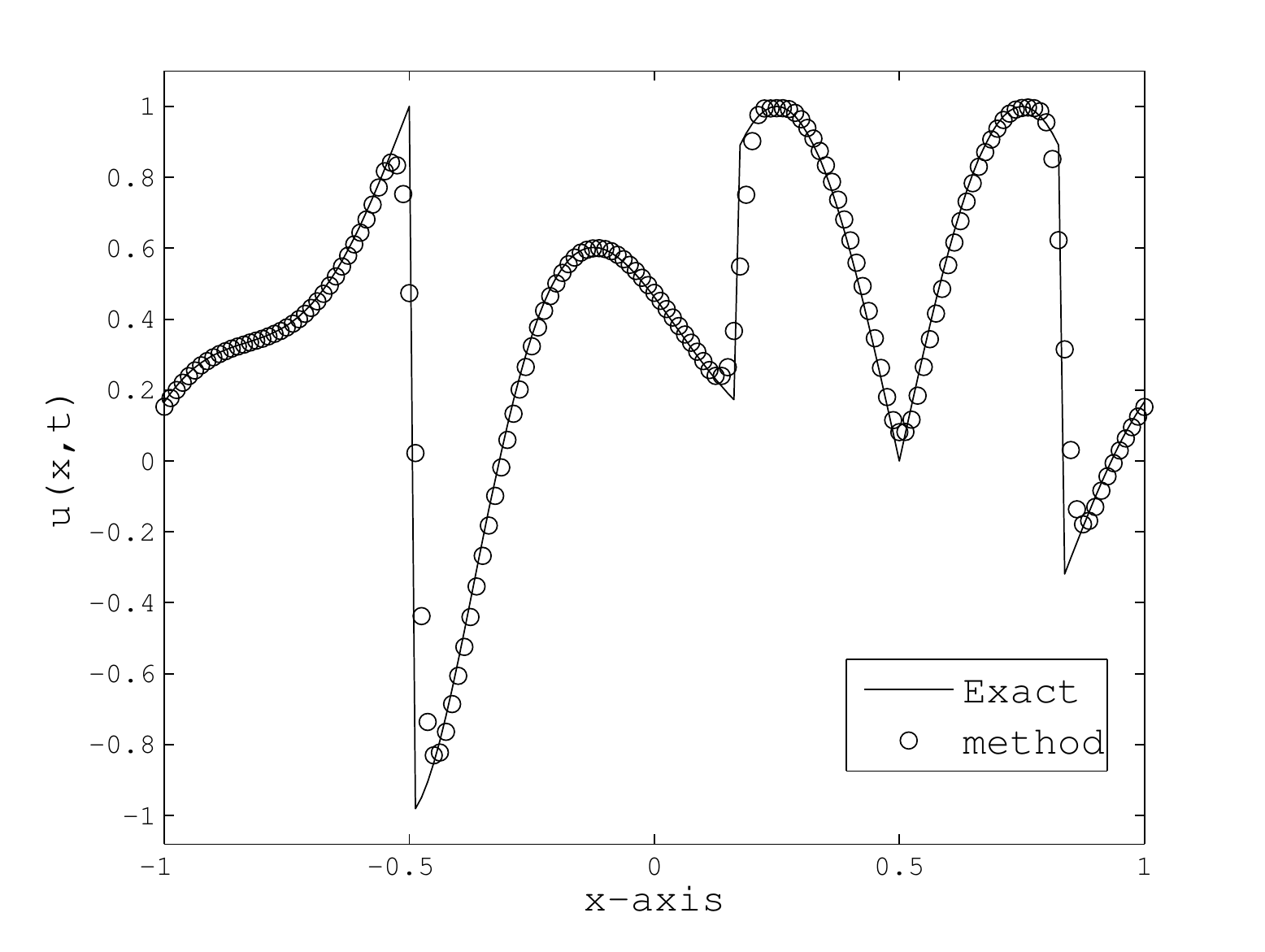} & \includegraphics[%
    scale=0.5]{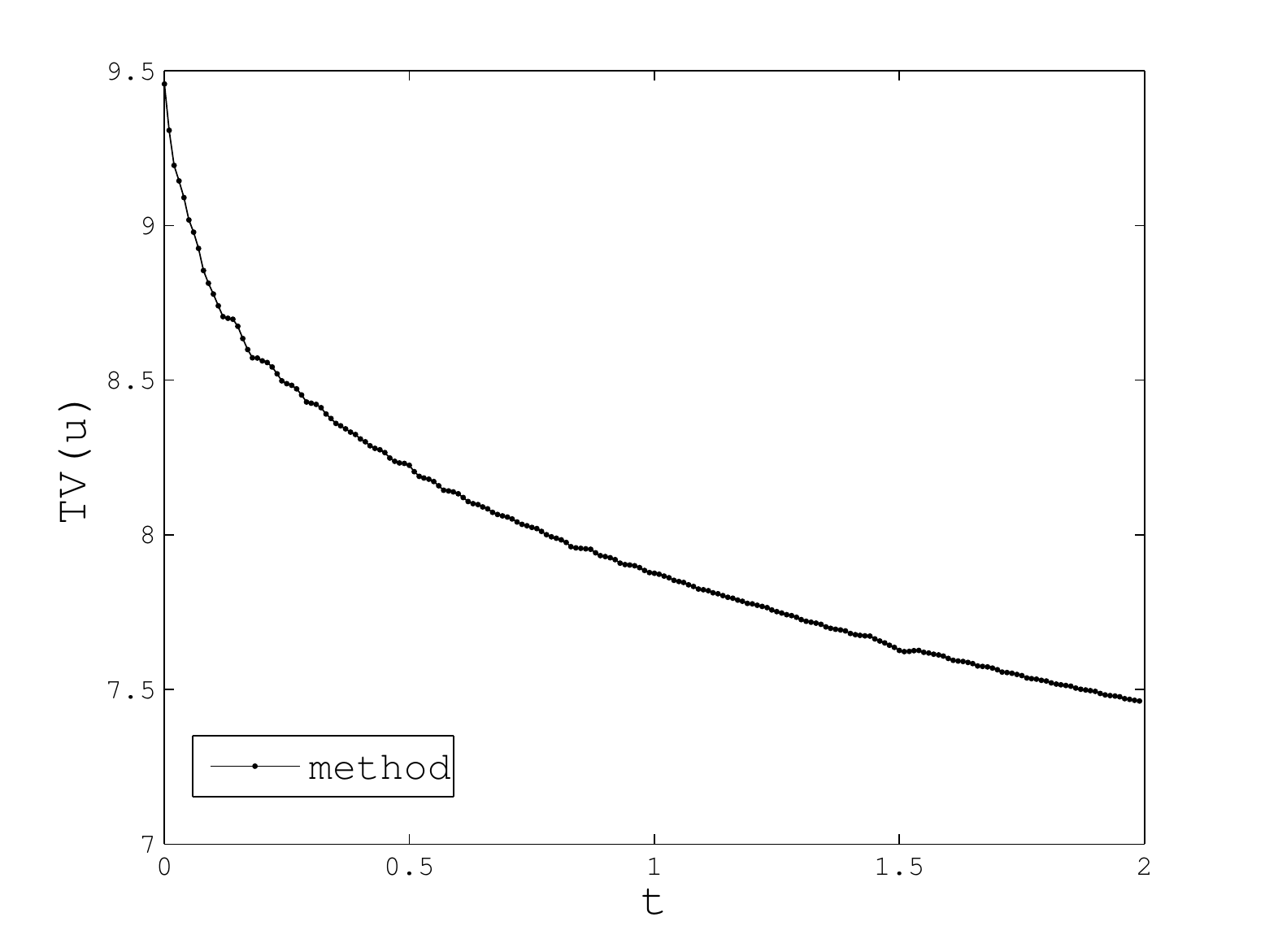}\\
      (a) & (b)
    \end{tabular}
    \caption{\label{Fig4} (a) High resolution oscillation free solution
      for test problem \ref{icprb3} by scheme \ref{algo4} for data
      $CFL=0.95,\, N=160,\, T=2.0 $. (b) Total variation decreasing plot of method}
  \end{center}
\end{figure}
\subsection{\bf Non-linear case: Burgers equation}
Consider the Burgers equation
\begin{equation}
  \displaystyle u_t + \left(\frac{u^2}{2}\right)_x=0, -a\leq x
  \leq b \label{burgers}
\end{equation}
with initial condition $u_{0}(x)$ and periodic boundary conditions. It
is the non-linear nature of the equation (\ref{burgers}) that even for
smooth initial condition, the solution of (\ref{burgers}) eventually develops
discontinuities like rarefaction and shocks after breaking time $T_{b}$
given by
\begin{equation}
  T_{b}=\frac{-1}{\min_{x}(u^{'}_0(x))} \label{Tbtime}
\end{equation}
Also the unique sonic point for Burgers equation (\ref{burgers}) is
$u^{*}=0$. It is therefore, Burgers equation is a good test to check
the performance of any scheme for smooth as well discontinuous
solutions profile at pre and post-shock time $T_{b}$ respectively. In
the numerical computation FORCE scheme is used as CCS in schemes
\ref{algo1} to \ref{algo3} and their respective hybrid shock corrected
analog in scheme \ref{algo4}.
\subsubsection{\bf Shock correction moving shock} 
We first consider the following discontinuous initial condition to show
the non-conservative nature of schemes \ref{algo1}, \ref{algo2} and
\ref{algo3} as they yield solution with wrong location of moving shock. 
\begin{equation}
    u(x,0) = \left\{\begin{array}{cc} 1, & |x|\leq \frac{1}{3},\\ 0, &
    else. \end{array}\right.\label{nonlin-IC2}
\end{equation}
The solution corresponding to IC (\ref{nonlin-IC2}), develops a
rarefaction wave and a {\it moving} shock which corresponds to initial
discontinuities at $x=-1/3$ and $x=1/3$ respectively. In Figure
\ref{Fig6a}(a) results obtained by second order Lax-Wendroff
(LW-FORCE) and Beam-Warming (BW-FORCE) schemes \ref{algo1} and
\ref{algo2} respectively are given. It is clear that shock location
given by LW-FORCE is legging behind whereas by BW-FORCE it is leading
ahead of exact shock location. Results by shock corrected schemes
SC-LW-FORCE and SC-BW-FORCE as described in \ref{algo4} are given in
\ref{Fig6a}(b) which show correct location of shock is recovered with
out loosing crisp resolution of left rarefaction. Numerical results in
Figure \ref{Fig6b}(a), obtained by scheme \ref{algo3}
(FLWBW-FORCE) shows that shock is crisply captured at right location
with high resolution for bottom and top of left rarefaction. In Figure
\ref{Fig6b}(b), results by SC-FLWBW-FORCE are given which show little
dissipative resolution for shock which is due to approximation by
dissipative FORCE scheme in the vicinity of shock.
\begin{figure}[httb!]
  \begin{tabular}{c|c}
\includegraphics[scale=0.4]{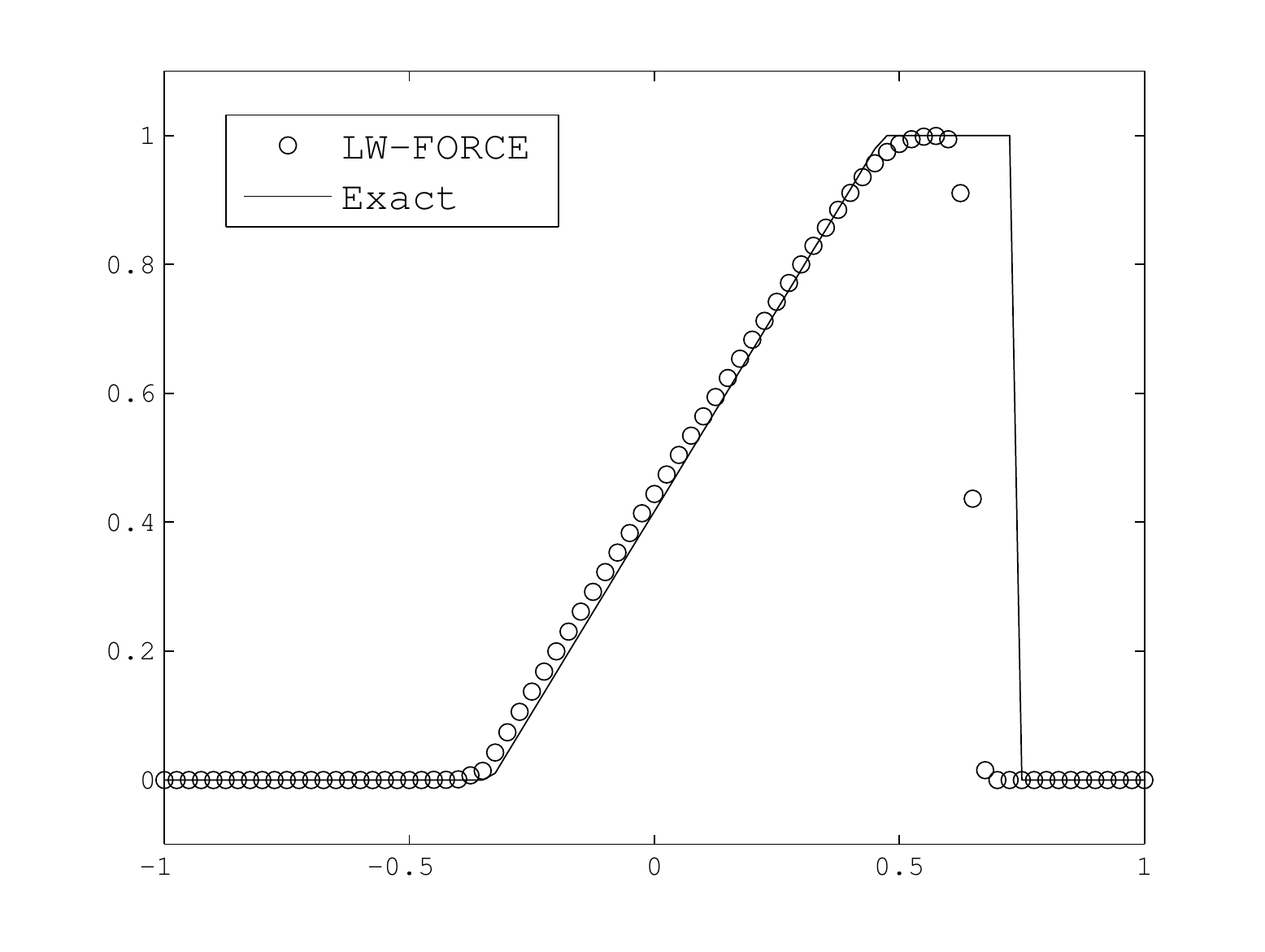} & \includegraphics[scale=0.4]{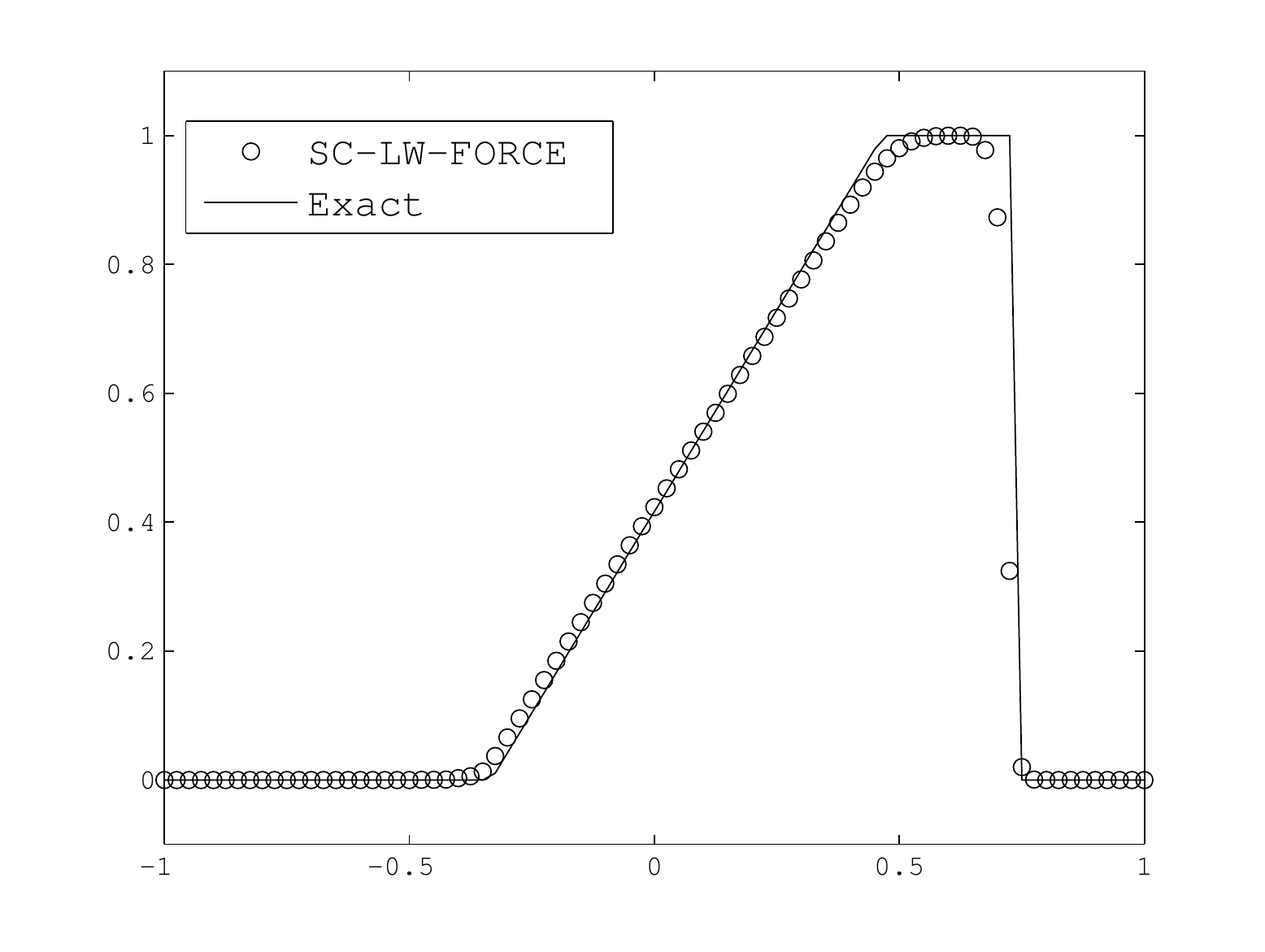}\\ 
\includegraphics[scale=0.4]{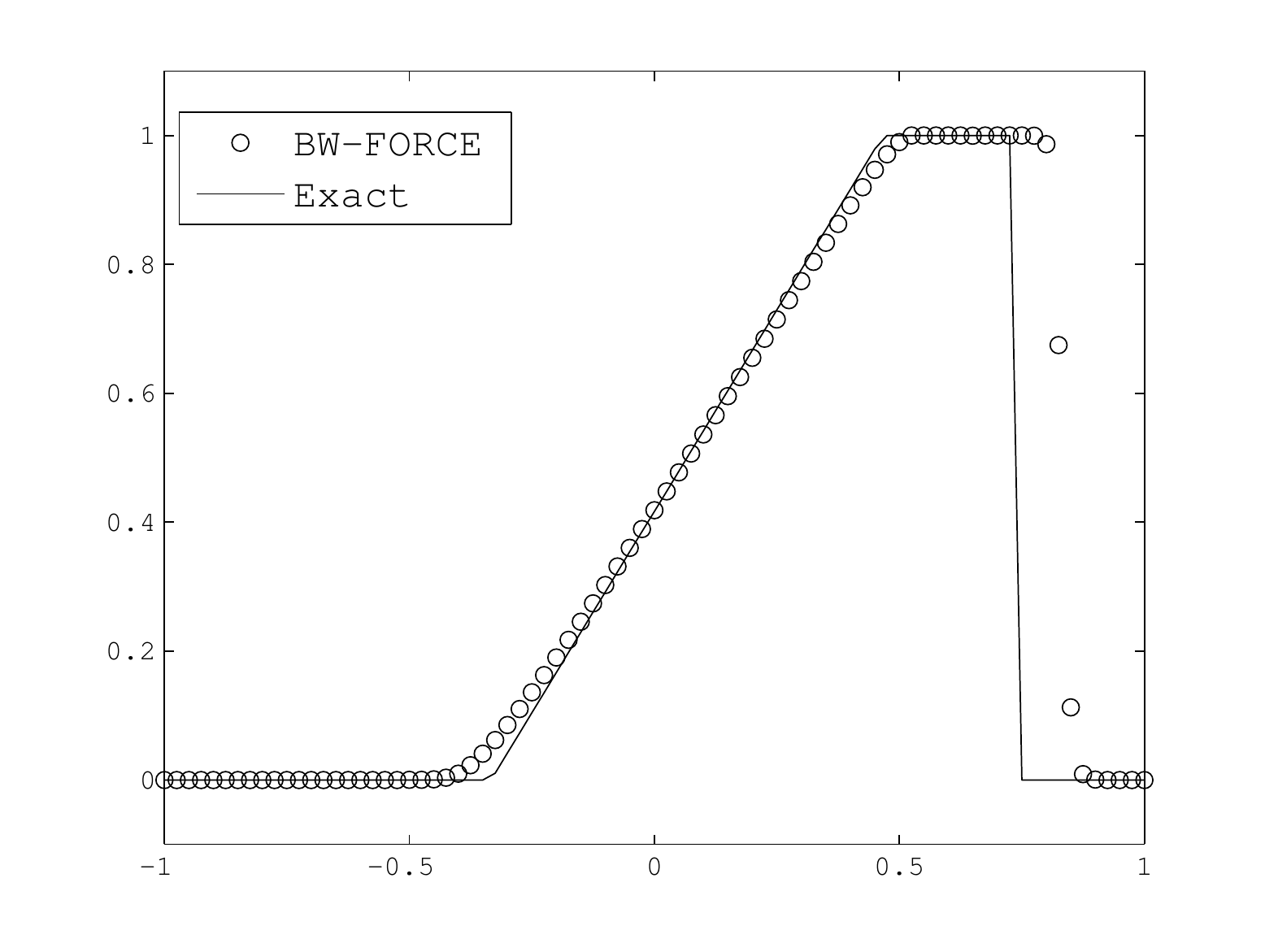} & \includegraphics[scale=0.4]{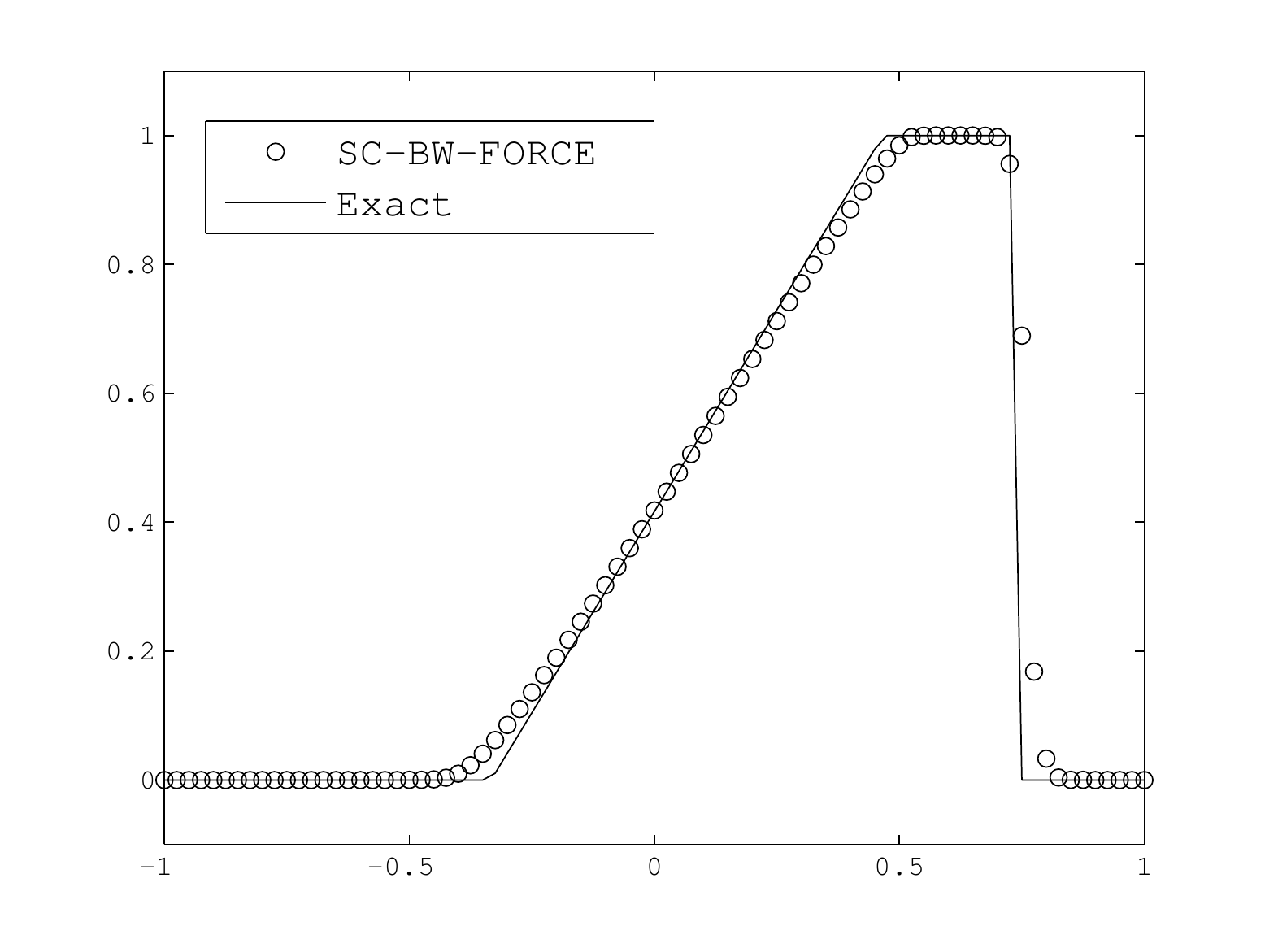} \\
(a) & (b)\\
\end{tabular}
    \caption{\label{Fig6a} Solution at $T =0.8$ using $CFL=0.8,\;
      N=80$, (a) Wrong location of moving shock using Scheme
      \ref{algo1} and \ref{algo2} (b) Shock correction using shock
      switch.}
\end{figure}

\begin{figure}[httb!]
  \begin{tabular}{c|c}
 \includegraphics[scale=0.4]{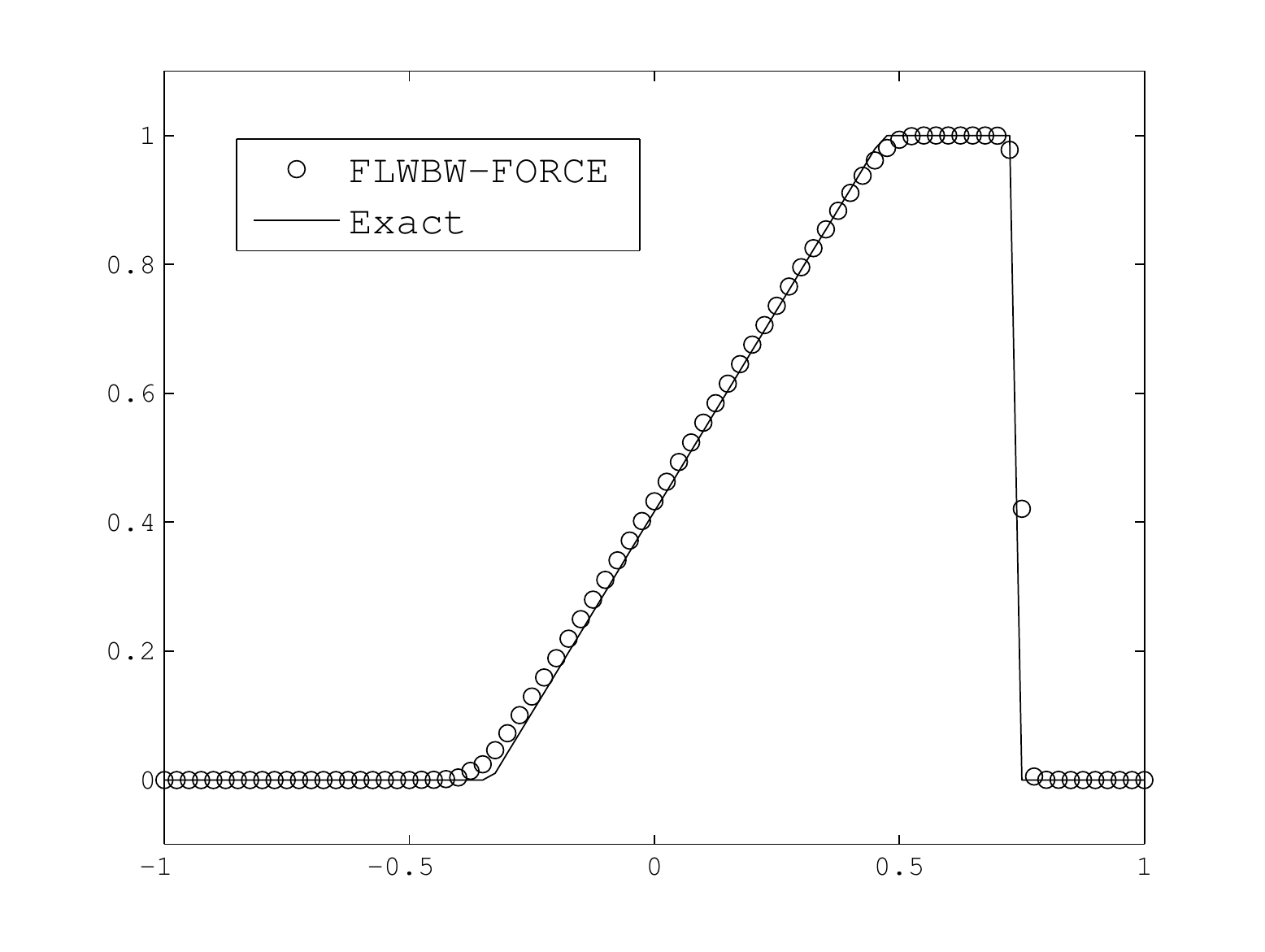}
 &\includegraphics[scale=0.4]{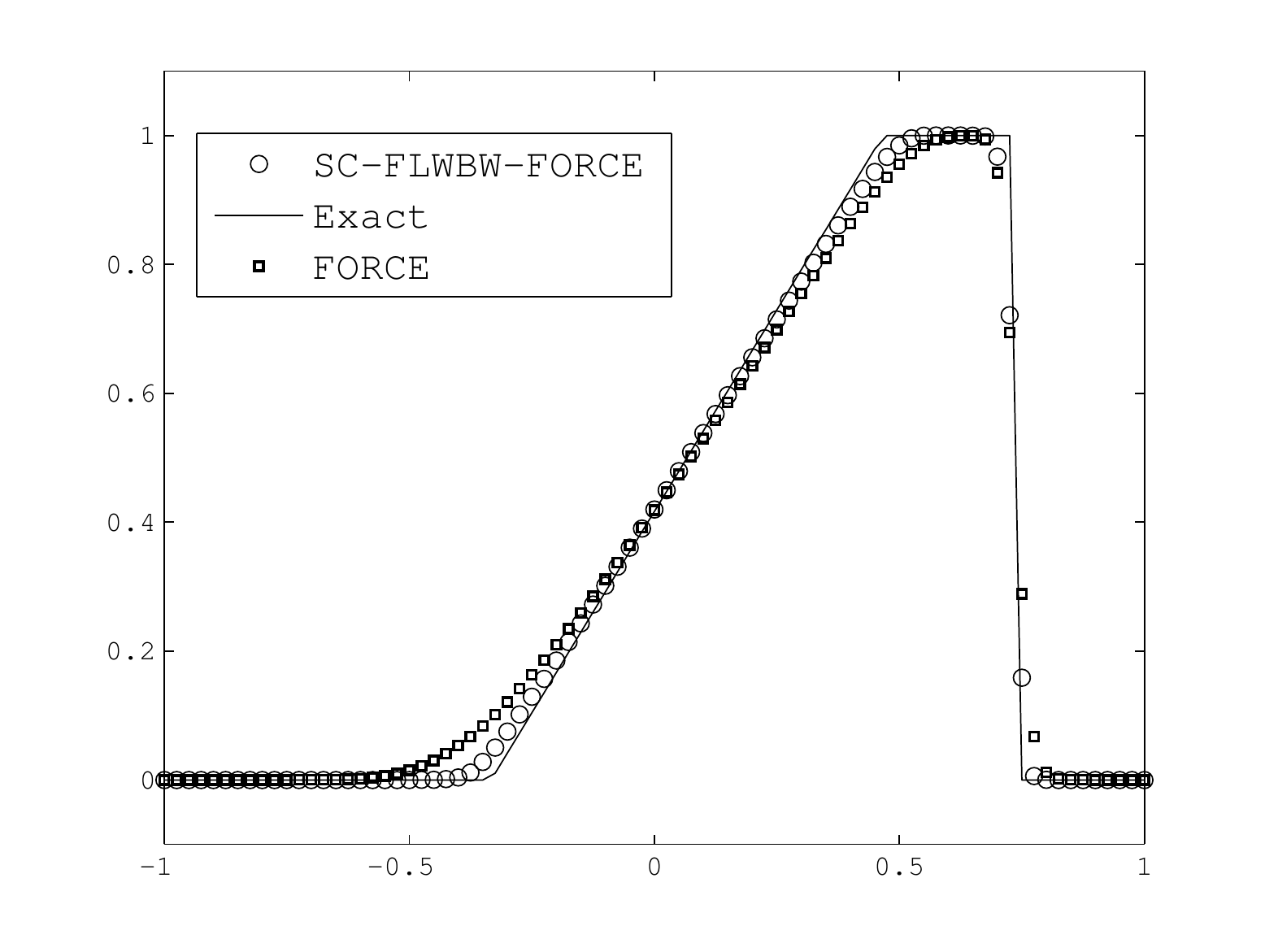}\\
 (a) & (b)\\
\end{tabular}
\caption{\label{Fig6b} Solution at $T =0.8$ using $CFL=0.8,\; N=80$,
  (a) Scheme \ref{algo3} give correct shock location high
  resolution for left rarefaction and crisp capturing for the moving
  shock, (b) Solution by Scheme \ref{algo4}.}
\end{figure}
\subsubsection{\bf Accuracy Check: Smooth Initial conditions}
Consider three different smooth initial
conditions (IC) along with corresponding breaking time $T_{b}$
  \begin{equation}
    u(x,0) = 0.1+ sin^{4}(\pi x),\, x\in[0,\,1],\; T_{b} =
    0.27803225 \label{nonlin-IC2a}
  \end{equation}
  \begin{equation}
    u(x,0) = 0.1+\exp^{-x^{4}},\, x\in [-2:3], \; T_{b} =
    0.65669683. \label{nonlin-IC2b}
  \end{equation} 
\begin{equation}
  u(x,0) = \frac{1}{4}(1+ sin(\pi x)),\, x\in [-1:1], \;
  T_{b}=\frac{4}{\pi}.  \label{nonlin-IC2c}
\end{equation}
IC (\ref{nonlin-IC2a}) and (\ref{nonlin-IC2b}) does not contain any
sonic point whereas IC (\ref{nonlin-IC2c}) has a sonic point at
$x=-0.5$ since $u(-0.5, 0)=0$.  The solution corresponding to IC
(\ref{nonlin-IC2a}) and (\ref{nonlin-IC2b}) develop a {\it moving}
shock followed by a rarefaction fan whereas the moving shock
corresponding to IC (\ref{nonlin-IC2c}) is separated by two
rarefaction fans. In Figure \ref{Fig1burgerIC2a}, \ref{Fig2burgerIC2b}
and \ref{burgerShuosherFig} the pre and post-shock solution of Burgers
equation obtained by the shock corrected hybrid Scheme \ref{algo4}
({\it SC-FLWBW-FORCE}) corresponding to IC (\ref{nonlin-IC2a}),
(\ref{nonlin-IC2b}) and (\ref{nonlin-IC2c}) respectively are
given. The total variation plots are also given for different choices
of CFL number $\lambda \max_{u}|f^{'}(u)|$.  In Table
\ref{Tab1burgerIC2a} to Table \ref{burgerShuosherTab}, $L^{1}$ and $L^{\infty}$
errors are shown at pre-shock time $\displaystyle T_{b}$ using
different CFL number. Results show that the hybrid scheme nicely
approximated pre-shock solution with out clipping error and does not
introduce induced oscillations near shock in the post-shock
solution. Moreover purposed method yields a total variation
diminishing solution and shows a consistent convergence rate between
second and third order in both the norms.
  \begin{figure}[!htb]
    \begin{tabular}{cc}
      \includegraphics[%
      scale=0.5]{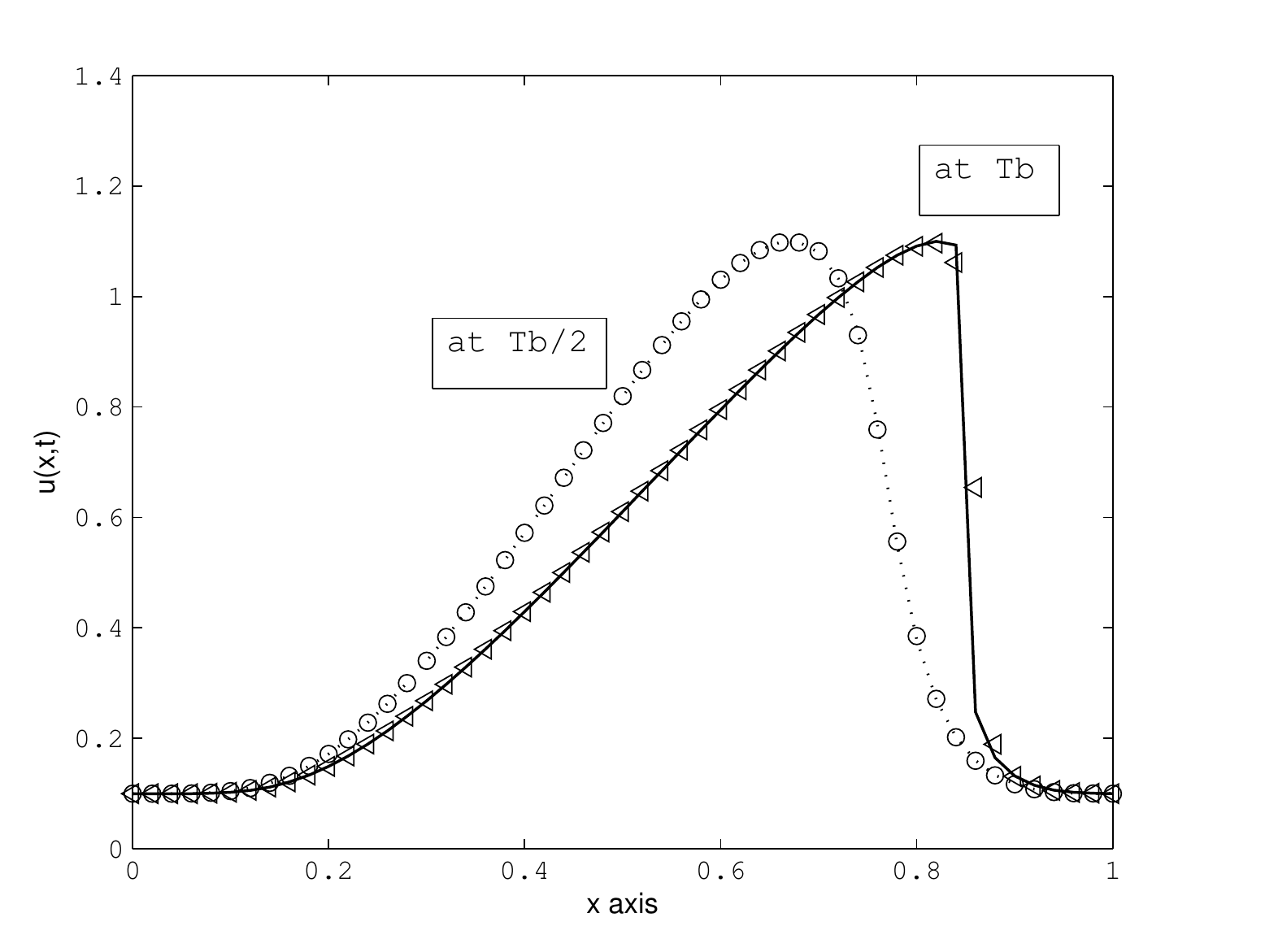}&\includegraphics[%
      scale=0.5]{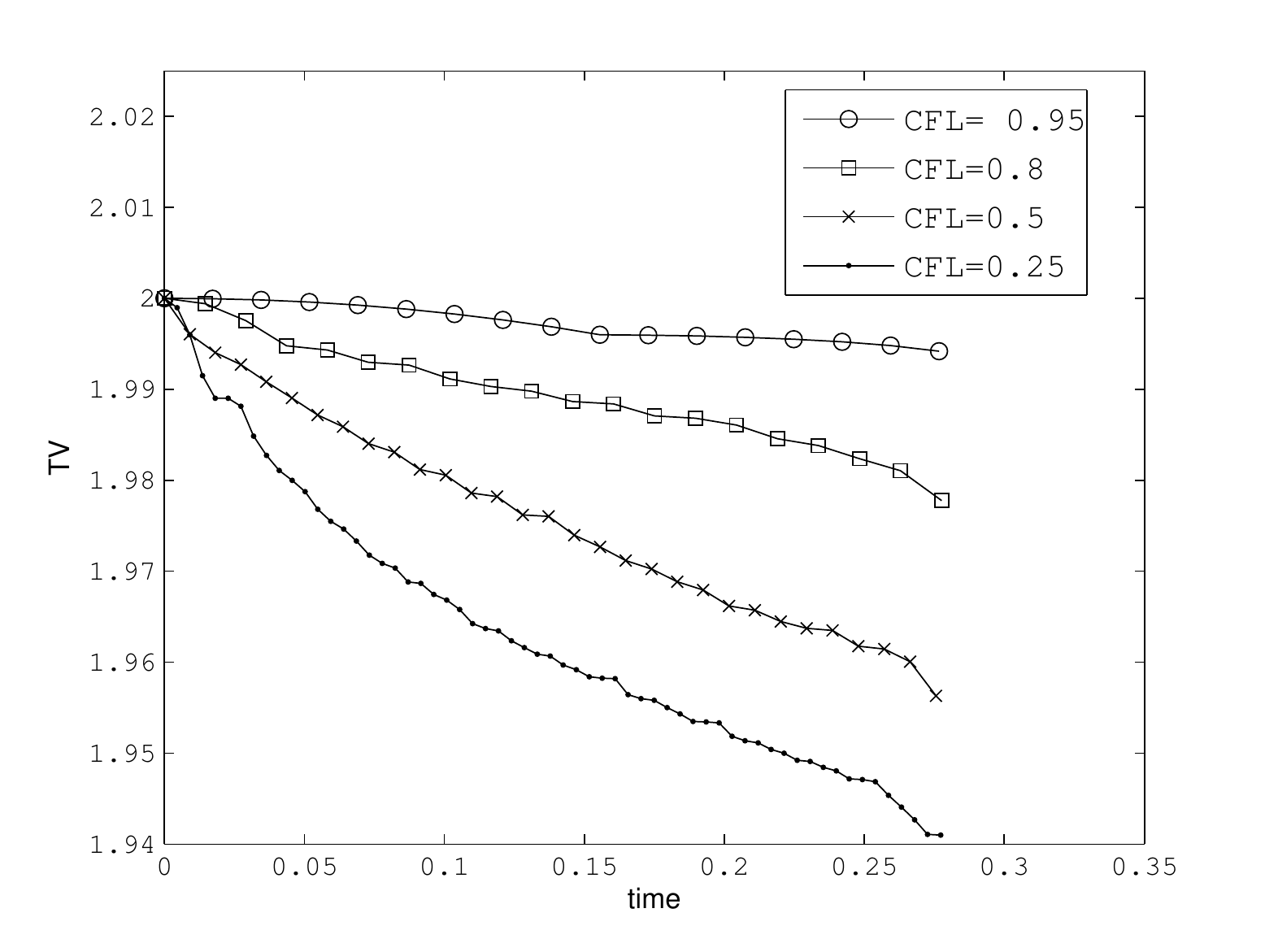}\\
      (a) & (b)\\
    \end{tabular}
    \caption{\label{Fig1burgerIC2a} Burgers equation solution using {\it SC-FLWBW-FORCE}
      corresponding to IC (\ref{nonlin-IC2a}): (a) No clipping error for
      smooth extrema as well near shock zone $CFL=0.95, N=50$, (b) Effect of CFL on total variation of computed solution}
  \end{figure}

    \begin{table}[!htb]
      \begin{tabular}{|c|c|c|}
        \hline N & CFL=0.6 & CFL=0.9\\ \hline
        \begin{tabular}{c}
          \\
          10\\
          20\\
          40\\
          80\\
          160\\
          320\\
          640\\
        \end{tabular}
        &
        \begin{tabular}{cccc}
          $L^{1}$ error & Rate &$L^{\infty}$ error & Rate\\
          \hline
          3.1074e-02 & \dots  &	 1.2720e-02 & \dots\\
          1.3874e-02 & 1.1634 &	 4.6798e-03 & 1.4426\\
          2.0530e-03 & 2.7566 &	 3.9157e-04 & 3.5791e\\
          4.0690e-04 & 2.3350 &	 6.9600e-05 & 2.4921e\\
          8.6630e-05 & 2.2317 &	 1.2510e-05 & 2.4760e\\
          1.4700e-05 & 2.5591 &	 1.2000e-06 & 3.3820e\\
          2.4800e-06 & 2.5674 &	 5.0000e-08 & 4.5850e\\
        \end{tabular}
        &
        \begin{tabular}{cccc}
          $L^{1}$ Error & Rate &$L^{\infty}$ error & Rate\\
          \hline
         2.7767e-02 & \dots  & 	 1.0627e-02 & \dots\\
         5.8287e-03 & 2.2521 & 	 2.5638e-03 & 2.0514\\
         1.0398e-03 & 2.4868 & 	 2.2157e-04 & 3.5324\\
         2.3176e-04 & 2.1657 & 	 3.1460e-05 & 2.8162\\
         5.2120e-05 & 2.1527 & 	 3.9000e-06 & 3.0120\\
         1.0380e-05 & 2.3280 & 	 4.1000e-07 & 3.2498\\
         2.4200e-06 & 2.1007 & 	 5.0000e-08 & 3.0356\\
        \end{tabular}\\
        \hline
      \end{tabular}
      \caption{\label{Tab1burgerIC2a} Convergence rate corresponding to IC (\ref{nonlin-IC2a}) at time $T=T_{b}/2$}
    \end{table}

  \begin{table}[!htb]
    \begin{tabular}{|c|c|}
\hline
        CFL=0.45 & CFL=0.95\\
        \hline
        \begin{tabular}{ccccc}
          N& $L^{1} error$ & Rate& $L^{\infty} error$ & Rate\\
          \hline
          10 & 3.1120e-01 & \dots & 	 1.0335e-01 & \dots\\
          20 &  6.4051e-02 & 2.2805 & 	 2.7574e-02 & 1.9061\\
          40 &  8.7373e-03 & 2.8740 & 	 2.9299e-03 & 3.2344\\
          80 &  1.9428e-03 & 2.1690 & 	 4.6987e-04 & 2.6405\\
          160 &  4.1491e-04 & 2.2273 & 	 6.1350e-05 & 2.9371\\
          320 &  9.0750e-05 & 2.1928 & 	 6.7500e-06 & 3.1841\\
          640 &  2.0870e-05 & 2.1205 & 	 7.3000e-07 & 3.2089\\
        \end{tabular}
      &
        \begin{tabular}{cccc}
          $L^{1} error$ & Rate & $L^{\infty} error$ & Rate\\
          \hline
          2.5190e-01 & \dots & 	 9.7632e-02 &\dots\\
          4.7036e-02 & 2.4210 & 	 2.2831e-02 & 2.0964\\
          1.2098e-02 & 1.9590 & 	 5.2950e-03 & 2.1083\\
          2.3084e-03 & 2.3899 & 	 5.5543e-04 & 3.2530\\
          4.2048e-04 & 2.4568 & 	 5.5800e-05 & 3.3153\\
          8.6830e-05 & 2.2758 & 	 5.3900e-06 & 3.3719\\
          1.9470e-05 & 2.1569 & 	 5.6000e-07 & 3.2668\\
        \end{tabular}\\
\hline
\end{tabular}
\caption{\label{Tab1burgerIC2b} Third order $L^{\infty}$ convergence rate corresponding to IC (\ref{nonlin-IC2b}) at pre-Shock time Tb/2, $Tb=0.65669683$}
  \end{table}

  \begin{figure}[!htb]
    \begin{center}
      \begin{tabular}{cc}
      \includegraphics[%
      scale=0.45]{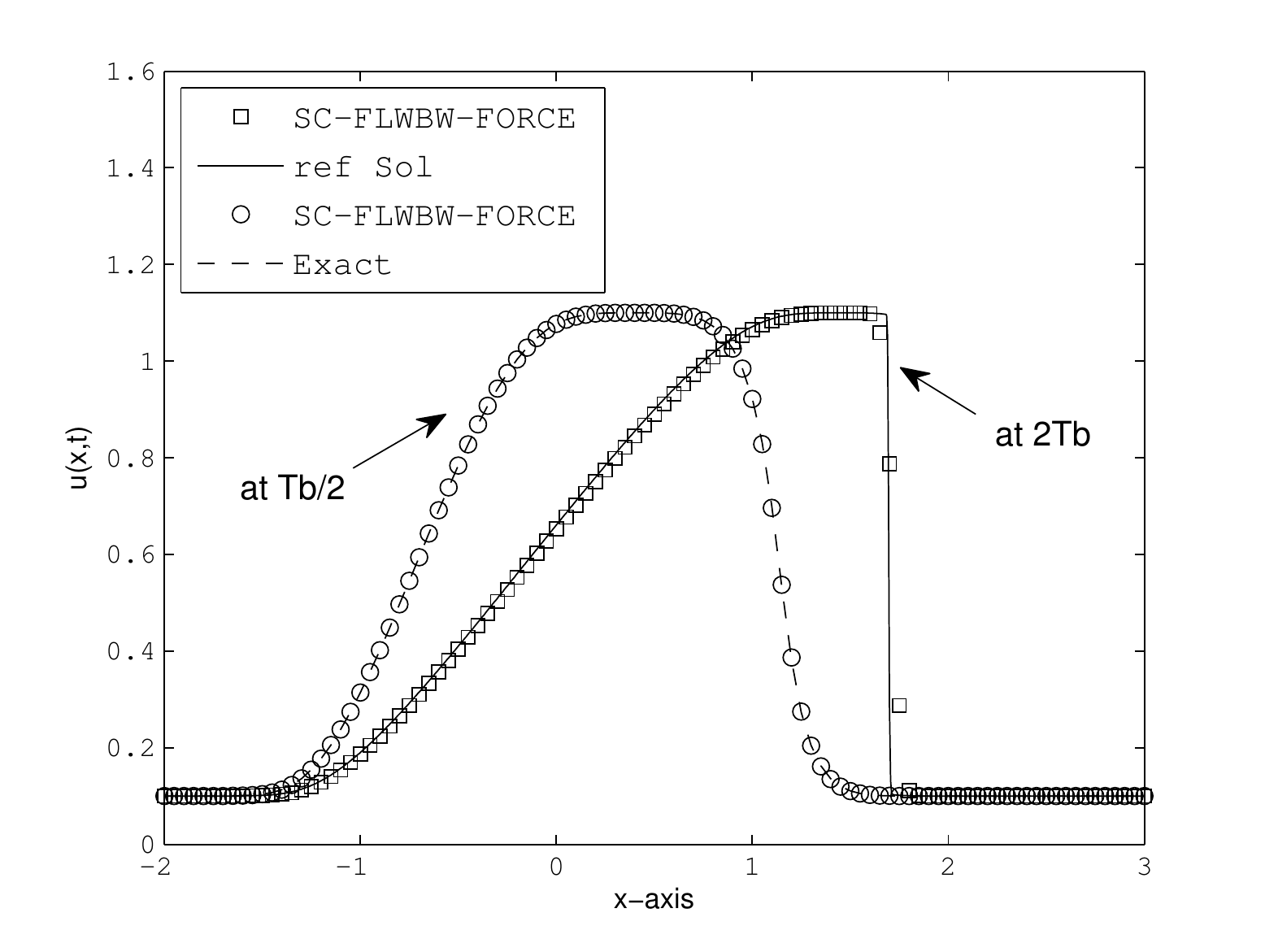}&\includegraphics[%
      scale=0.45]{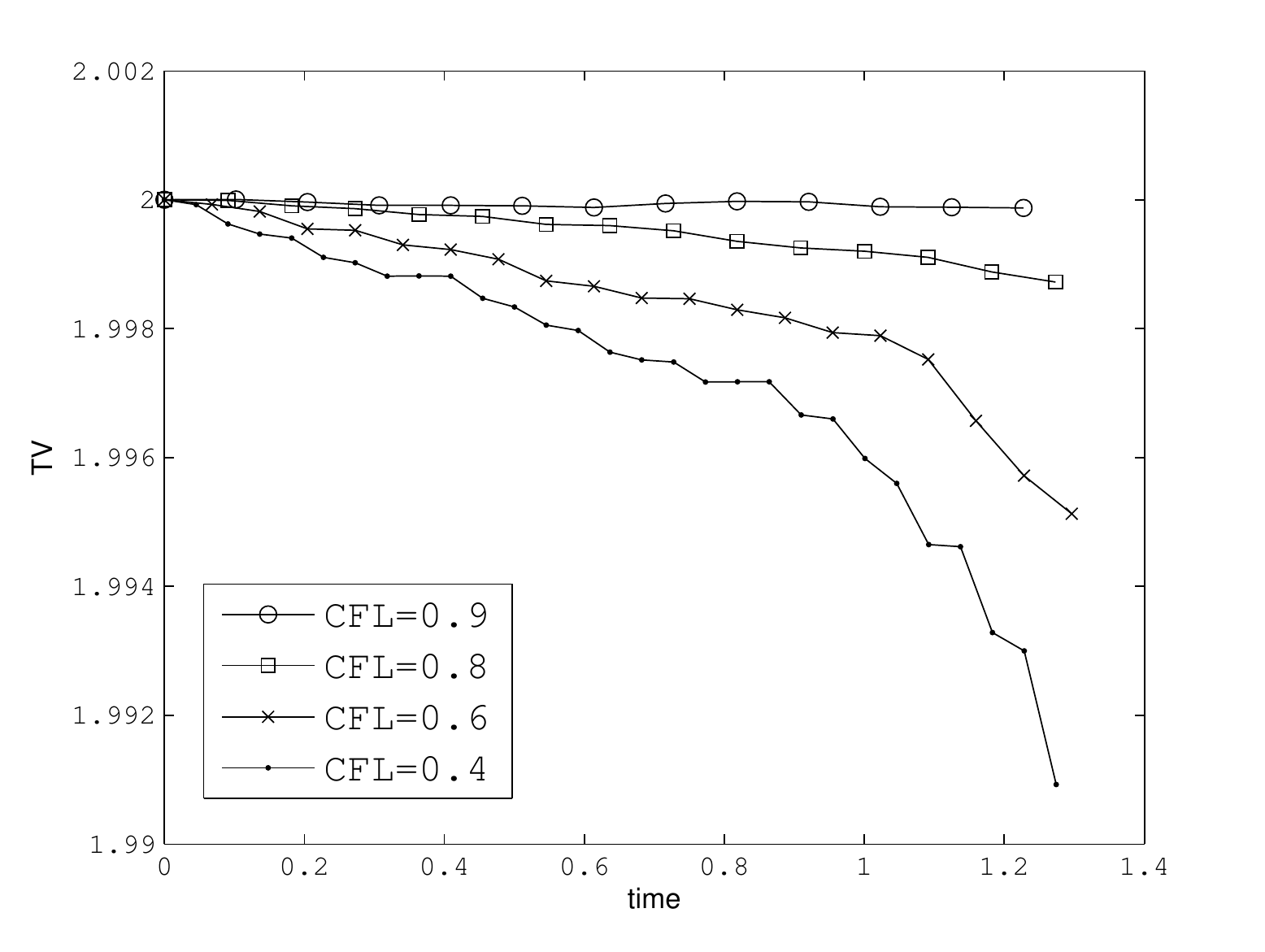}\\
      (a) & (b)\\
      \end{tabular}
    \end{center}
    \caption{\label{Fig2burgerIC2b} Solution corresponding to IC
      (\ref{nonlin-IC2b}): (a) No clipping error for smooth extrema as
      well near shock zone $CFL=0.8, N=80$, (b) Effect of CFL on total variation diminishing plot of computed solution.}
  \end{figure}

\begin{figure}[!htb]
\begin{tabular}{cc}
\includegraphics[scale=0.45]{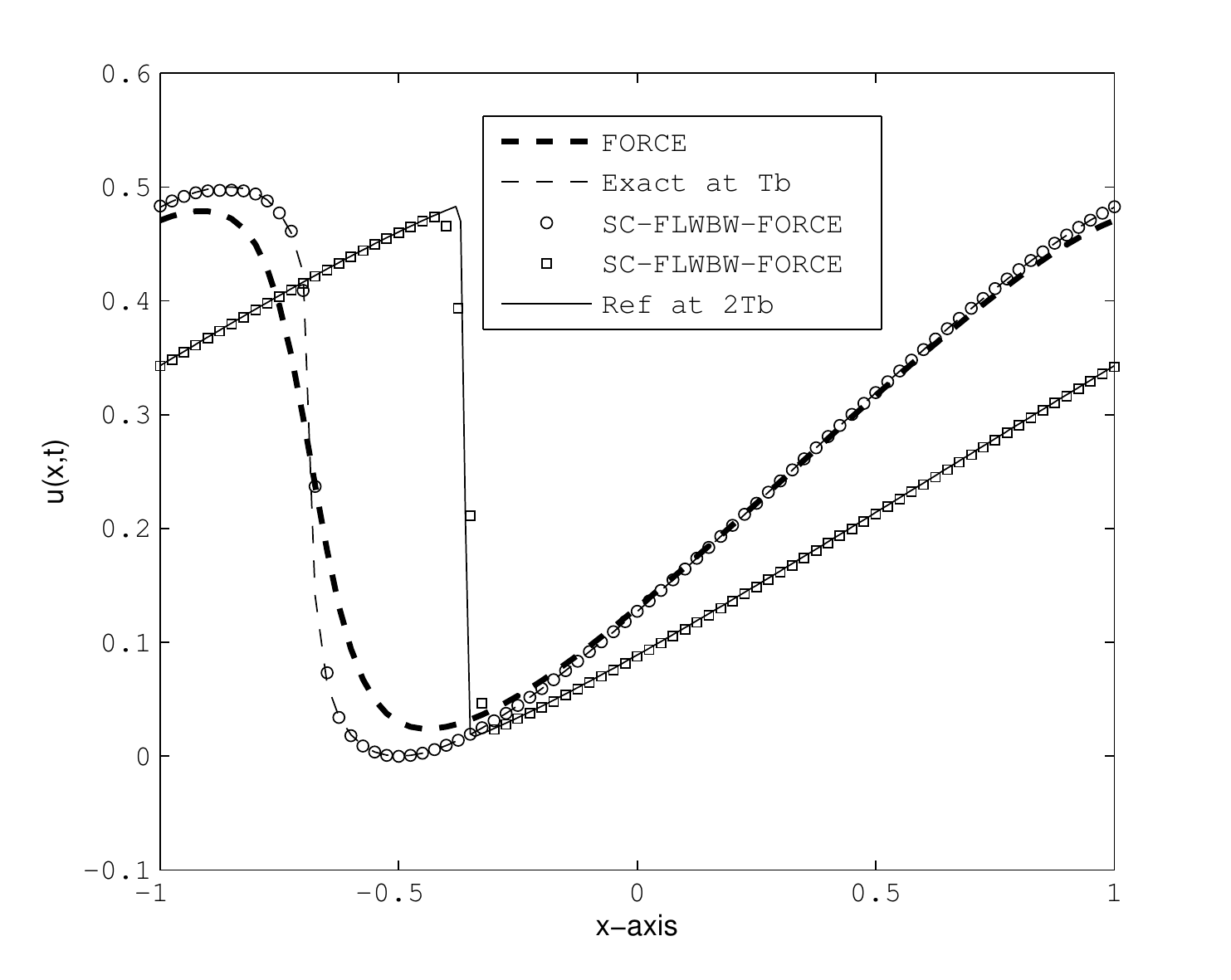}& \includegraphics[scale=0.45]{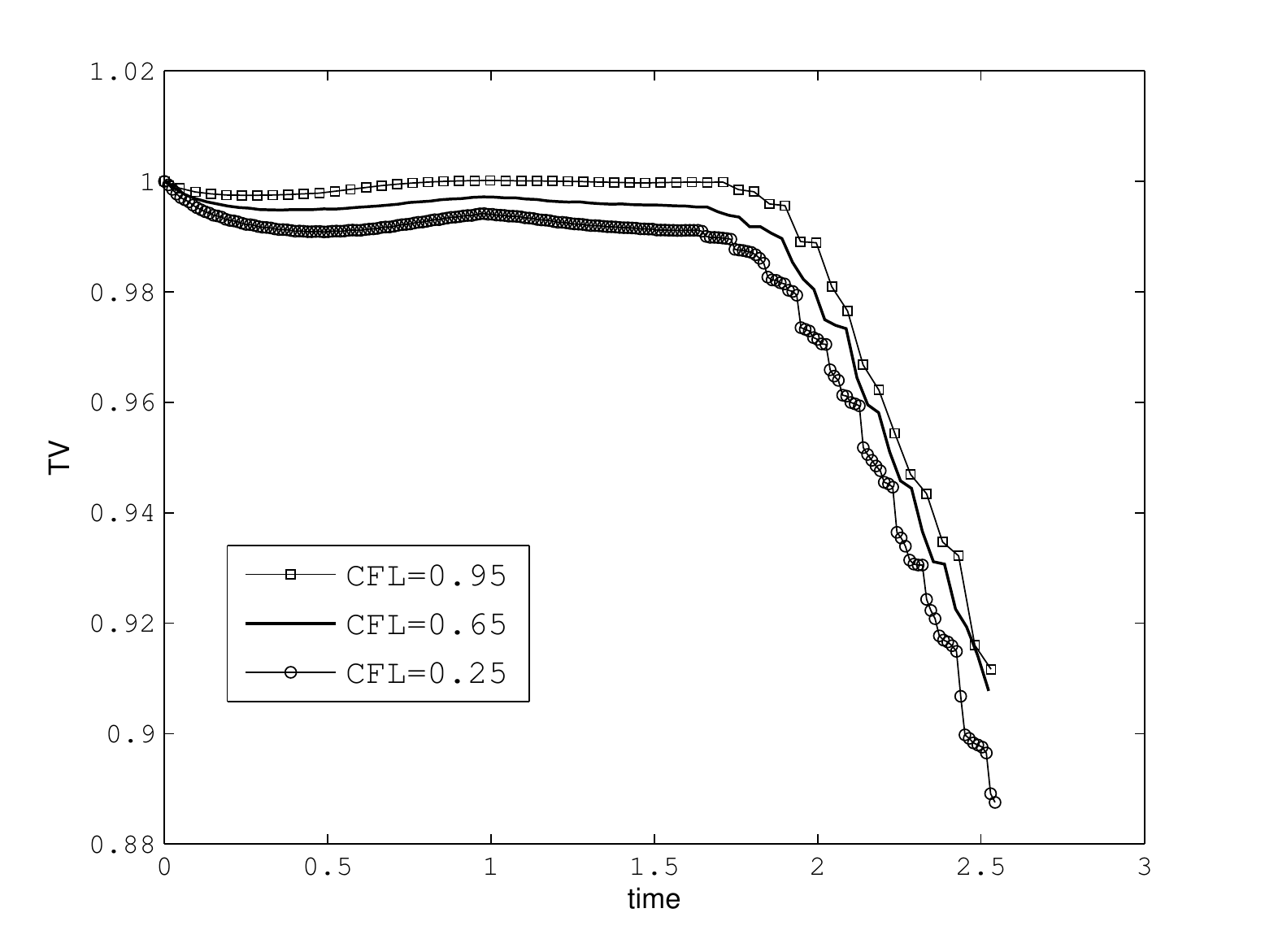}\\
(a) & (b)\\
\end{tabular}
\caption{\label{burgerShuosherFig} (a) Comparison of computed solution
  of corresponding to IC (\ref{nonlin-IC2c}) at  $Tb=1/4\pi$ $CFL=0.6, N=80$ (b) Total variation diminishing plot up to
  $t=2Tb$}
\end{figure}
\begin{table}[!htb]
\begin{tabular}{|ccccc|cccc|}
\hline
& & CFL=0.45 & &  & & CFL=0.9 & & \\
\hline
N& $L^{1}$ error& Rate & $L^{\infty}$ error& Rate & $L^{1}$ error& Rate & $L^{\infty}$ error& Rate\\
\hline
10  & 3.9495e-02 & 0.0000 & 	 1.0467e-02 & 0.0000 & 1.9593e-02 & 0.0000 & 	 4.7957e-03 & 0.0000\\
20  & 7.2108e-03 & 2.4534 & 	 1.8537e-03 & 2.4974 & 4.6493e-03 & 2.0753 & 	 1.1326e-03 & 2.0821\\
40  & 2.1212e-03 & 1.7653 & 	 3.7326e-04 & 2.3122 & 1.0845e-03 & 2.1000 & 	 2.4774e-04 & 2.1927\\
80  & 4.9995e-04 & 2.0850 & 	 7.0710e-05 & 2.4002 & 2.7441e-04 & 1.9826 & 	 4.2200e-05 & 2.5535\\
160 & 9.2620e-05 & 2.4324 & 	 1.7250e-05 & 2.0353 & 6.6870e-05 & 2.0369 & 	 8.3800e-06 & 2.3322\\
320 & 2.2230e-05 & 2.0588 & 	 4.1900e-06 & 2.0416 & 1.6580e-05 & 2.0119 & 	 1.7400e-06 & 2.2679 \\
640 & 5.5900e-06 & 1.9916 & 	 1.0400e-06 & 2.0104 & 4.1900e-06 & 1.9844 & 	 3.6000e-07 & 2.2730 \\
\hline
\end{tabular}
\caption{\label{burgerShuosherTab} Convergence rate for test case corresponding to IC (\ref{nonlin-IC2c})  at pre-shock time $t=2/\pi$ $CFL=0.8$}
\end{table}

\subsection{Buckley Leverett Equation}
Consider Buckley-Leverett equation which has convex-concave flux. This equation physically represents 
the flow of a mixture of oil and water through a porous medium. 
\begin{equation}
\frac{\partial u}{\partial t} +\frac{\partial f(u)}{\partial x} =0\label{buckley}
\end{equation}
The flux function is given by,
\begin{equation}
f(u) = \frac{u^2}{u^2 + \alpha(1-u)^2}.
\end{equation}
Here $\alpha$ is viscosity ratio and $u$ represents the saturation of water and 
lies between $0$ and $1.$
\subsubsection{One moving shock }
Consider equation (\ref{buckley}) with $\alpha =\frac{1}{2}$ and initial condition
\begin{equation}
u(x,\,0)= \left\{
\begin{array}{ll} 
  1, & x<0,\\ 
  0,  & x>0.
\end{array}
\right. \label{ICbuckley1}
\end{equation}
The solution involves one single moving shock followed by an rarefaction wave. 
\subsubsection{Two moving shock}
Consider equation (\ref{buckley}) with $\alpha =\frac{1}{4}$ and 
subject to initial condition
\begin{equation}
u(x,\,0)= \left\{
\begin{array}{ll} 
  1, & -0.5\leq x\leq 0,\\ 
  0,  & elsewhere.
\end{array}
\right. \label{ICbuckley2}
\end{equation}
The solution involves two moving shocks, each followed by an
rarefaction wave. In numerical simulation flux limited high resolution
LxW TVD scheme \cite{Rider} is used in hybrid scheme \ref{algo4} as
CCS. The results corresponding IC (\ref{ICbuckley1}) and
(\ref{ICbuckley2}) are given in Figure \ref{Figbuckley}(a) and
\ref{Figbuckley}(b) respectively. Results show that the proposed
scheme sharply captures both the fast and slow shocks. The rarefaction
waves are also approximated with high resolution.

\begin{figure}[!htb]
\begin{tabular}{cc}
\hspace{-1cm}\includegraphics[%
scale =0.55,angle =0]{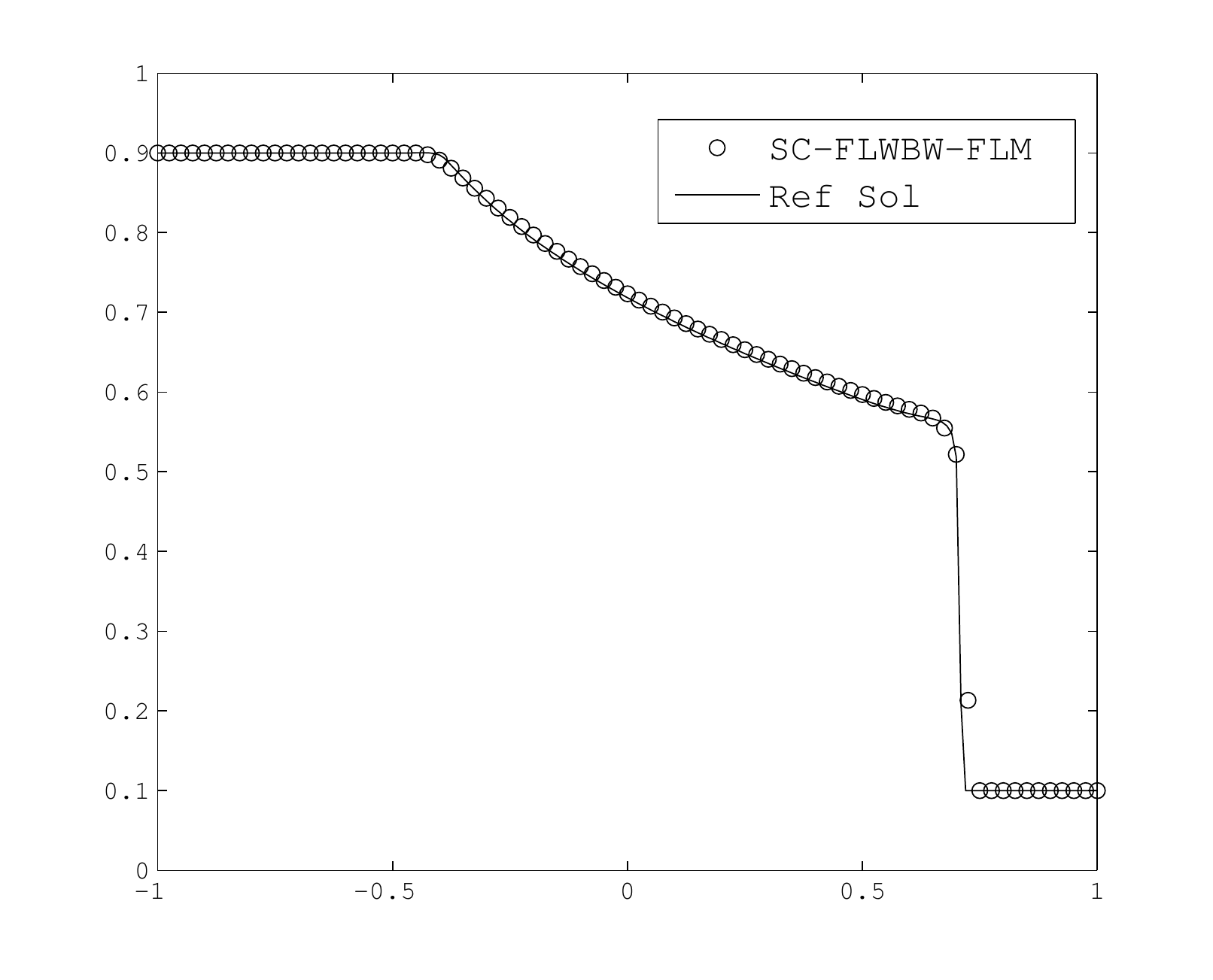}&\hspace{-1cm}\includegraphics[%
scale =0.55,angle =0]{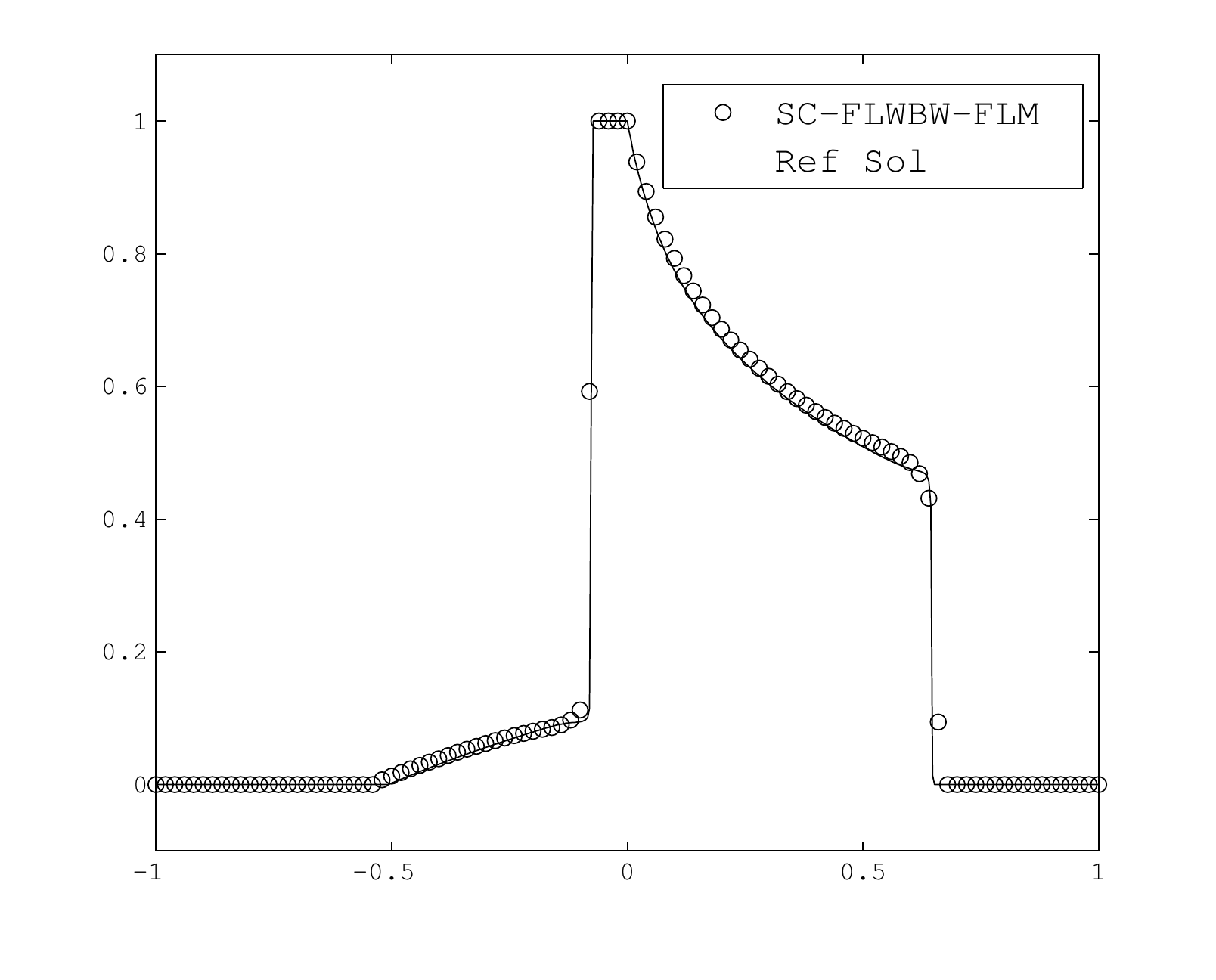}\\
(a) & (b)\\
\end{tabular}
\caption{\label{Figbuckley}{\it Numerical solution using $CFL=0.8$ (a) $N=80$ at time $t=0.75$ (b) $N=100$ at time $t=0.4$: Sharp
  resolution for rarefaction fans and slow and fast moving shocks.}}
\end{figure}
\subsection{\bf 1D Euler Equation}
The 1D Euler equations of the Gas dynamics is given by
\begin{equation}
  \frac{\partial}{\partial
    t}\textbf{u}+\frac{\partial}{\partial x}\textbf{F(u)}=0, \label{euler1d}
\end{equation}
where $\textbf{u}=\left(\begin{array}{c}\rho\\ \rho
  u\\ E\end{array}\right)$ and
  $\textbf{F(u)}=\left(\begin{array}{c}\rho u\\ \rho
    u^2+p\\ (E+p)u \end{array}\right)$ denotes vector of conservative
  variables and conservative fluxes respectively. Variables $\rho,u $
  and $p$ represents density, velocity and pressure respectively . The
  total energy $e$ is defined by,
\begin{equation}
  e=\frac{p}{\gamma-1}+\frac{\rho u^2}{2}
\end{equation}
where $\gamma$ is the ratio of specific heat coefficients. We consider
the four shock tube problems modeled by (\ref{euler1d}) to check the
robustness of proposed scheme in section \ref{algo4sys}.  These shock
tube tests check any method in capturing the contact and shock
discontinuity along with non-oscillatory high resolution approximation
for smooth extrema. In all the numerical test a simple high resolution
TVD flux limited centered (FLIC) \cite{toro2000,toro2009} scheme with
MINBEE limiter is used as CCS in \ref{algo4}.  We denote results by
this scheme by FLWBW-FLIC instead SC-FLWBW-FLIC. Numerical results are
compared with FLIC to see the improvement in capturing the solution
profile by FLWBW-FLIC. Note that, the MINBEE limiter satisfies the
universal TVD stability region given in \cite{Dubey2013} and therefore
robustly works for both positive and negative characteristics speed
associated with system (\ref{euler1d})
\subsubsection{Shu-Osher shock tube test \cite{shuosher}}
\begin{equation}
(\rho,u,p)=\begin{array}{ll} (3.857143,2.629369,10.3333)  & x<-4.0,\\
(1+0.2\sin(5x),0,1) & x\geq -4.0. \label{1dEulerIC1}
\end{array}
\end{equation}
This test depicts shock interaction with a sine wave in density. The
main challenge in this case is to capture both the complex small-scale
smooth flow and shocks. In Figure \ref{FigShuosher1DEuler} results are
presented an compared with FLIC scheme. It is evident from zoomed
figure \ref{FigShuosher1DEuler}(b) that the FLWBW-FLIC yields
oscillation free approximation for shock with higher resolution
compared to FLIC for complex oscillatory solution region about
$[0.5,2.5]$. It also capture the smooth region in around $[-3,0.5]$
with out clipping or flattening error which is due to improved
approximation of smooth extrema and steep gradient region.
\begin{figure}[!htb]
\begin{tabular}{cc}
\hspace{-1cm}\includegraphics[scale=0.55]{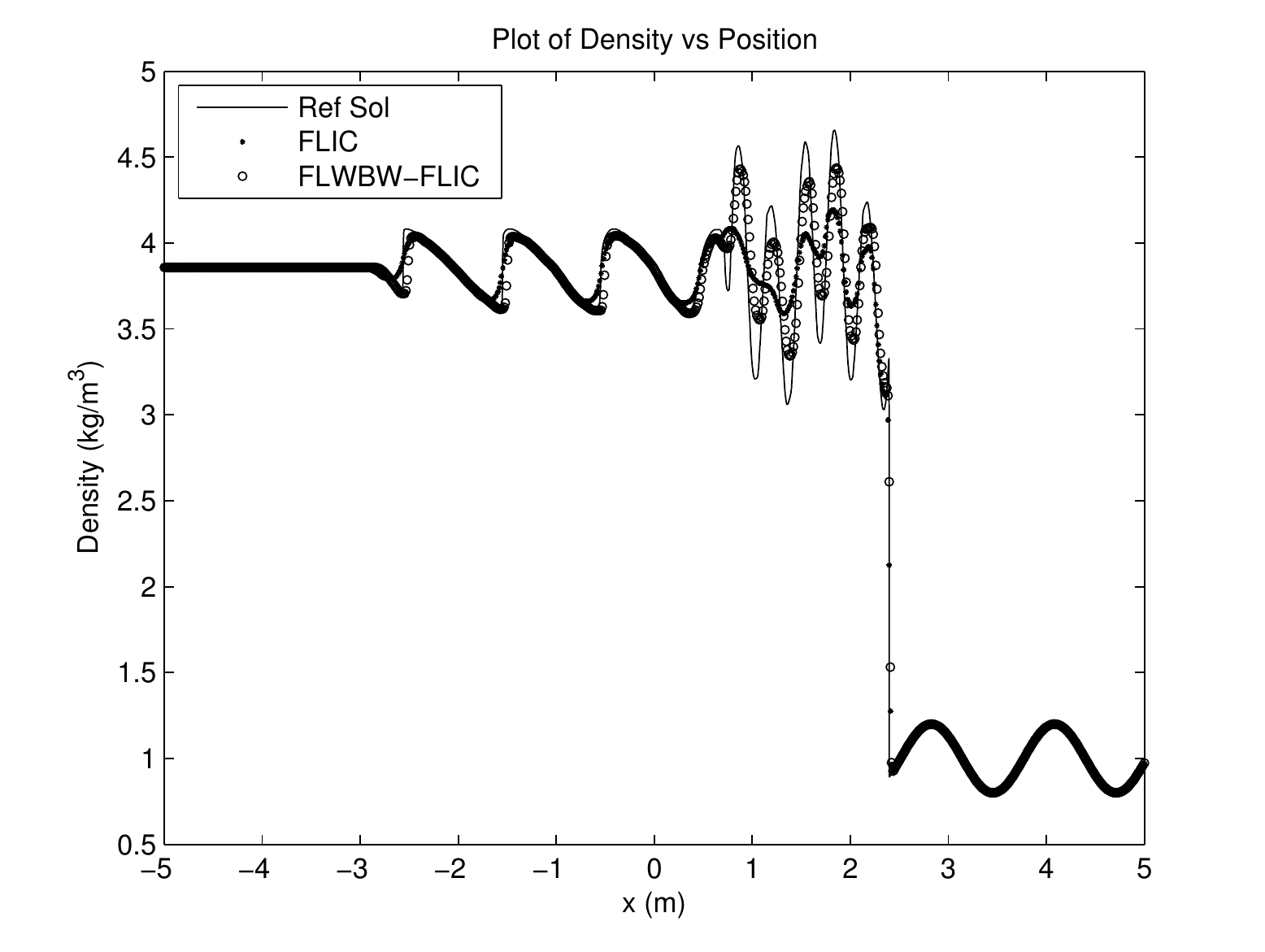}&\hspace{-1cm}\includegraphics[scale=0.55]{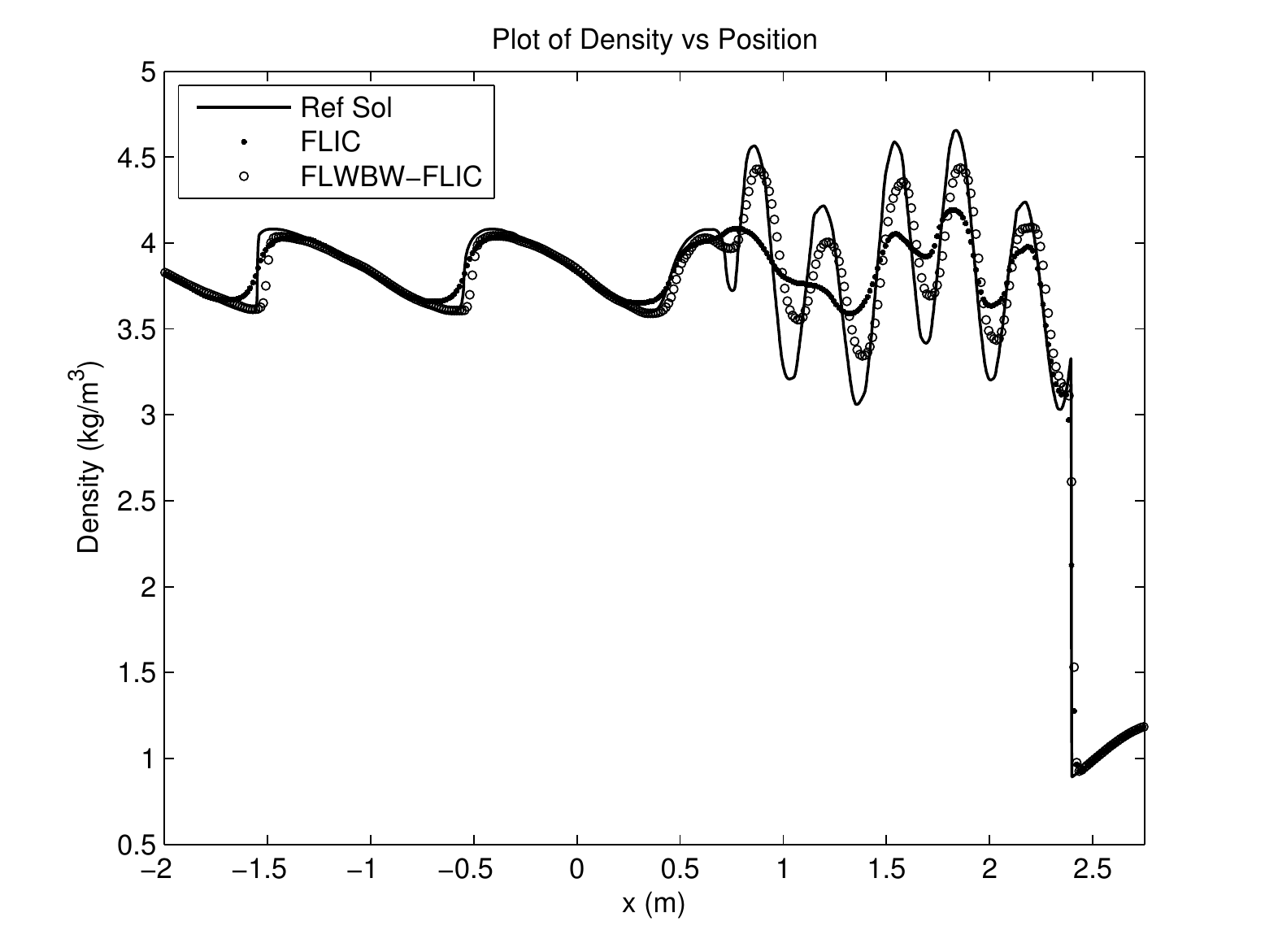}\\
(a) & (b)\\
\end{tabular}
\caption{\label{FigShuosher1DEuler} Numerical solution of shock
  entropy wave interaction $CFL=0.8, N=800$ using shock switch
  parameters $\epsilon=1\times10^{-8}, \delta=0.8$: high resolution of
  smooth extrema and steep gradient region.}
\end{figure}
\subsubsection{Sod test tube}
\begin{equation}
(\rho,u,p)=\begin{array}{ll} (1\; kg/m^3 , 0\; m/s, 100,000\; N/m^2)  & x<0\\
(0.125\; kg/m^3,  0\;m/s , 10,000\; N/m^2) & x\geq 0; \label{SodIC}
\end{array}, x\in[-10,10].
\end{equation}
This test problem has no sonic point but the contact and shock are
very close which cause a smeared approximation to the middle contact
discontinuity. In Figure \ref{Fig1Sod}, numerical results are given and
for different choice of shock switch parameters and compared with
FLIC. Results show that proposed FLWBW-FLIC crisply captures the
smooth rarefaction and contact discontinuity and shock more accurately
than high order TVD scheme FLIC with Minbee limiter.
\begin{figure}[!htb]
\begin{tabular}{cc}
  \includegraphics[scale=0.4]{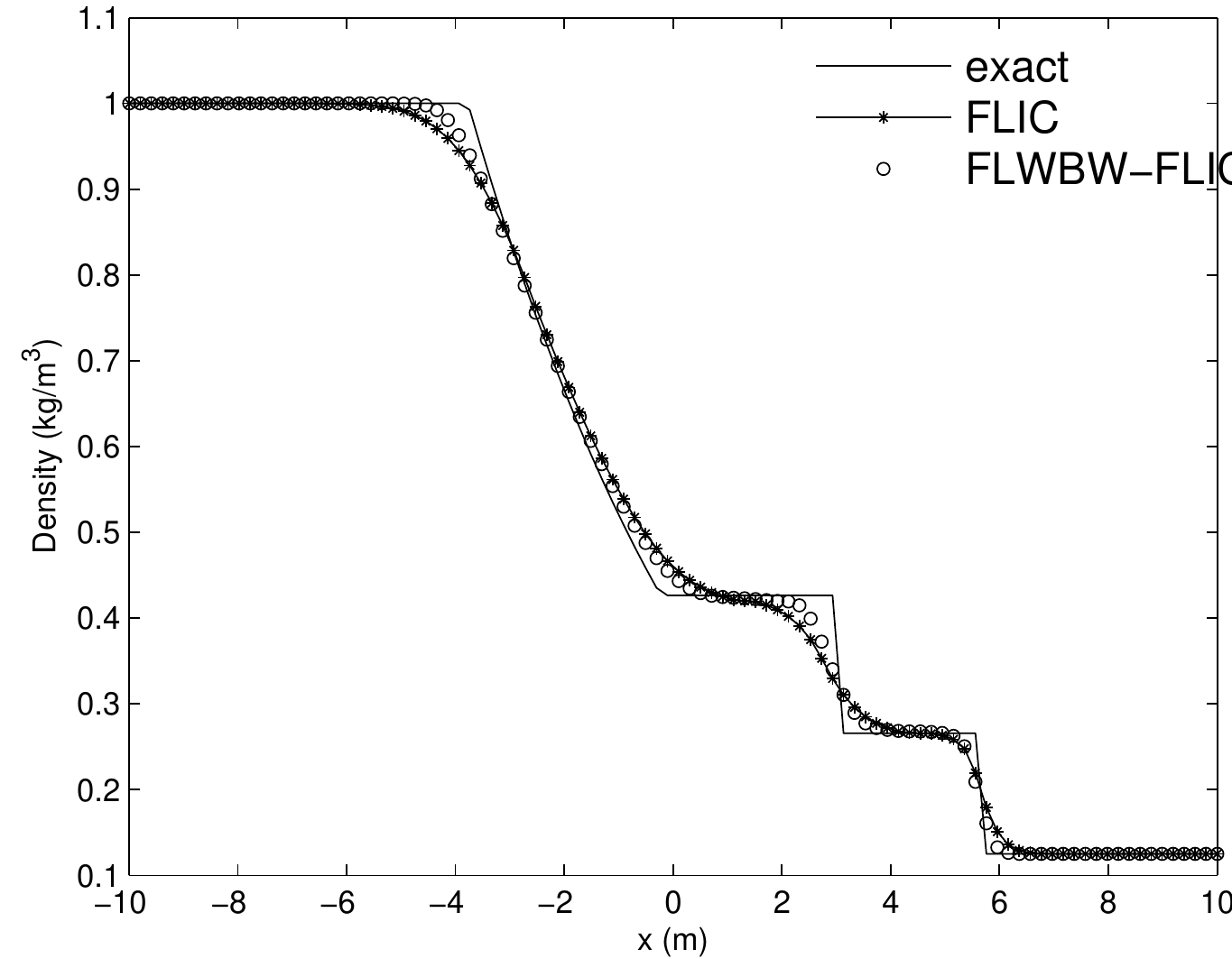} & 
  \includegraphics[scale=0.4]{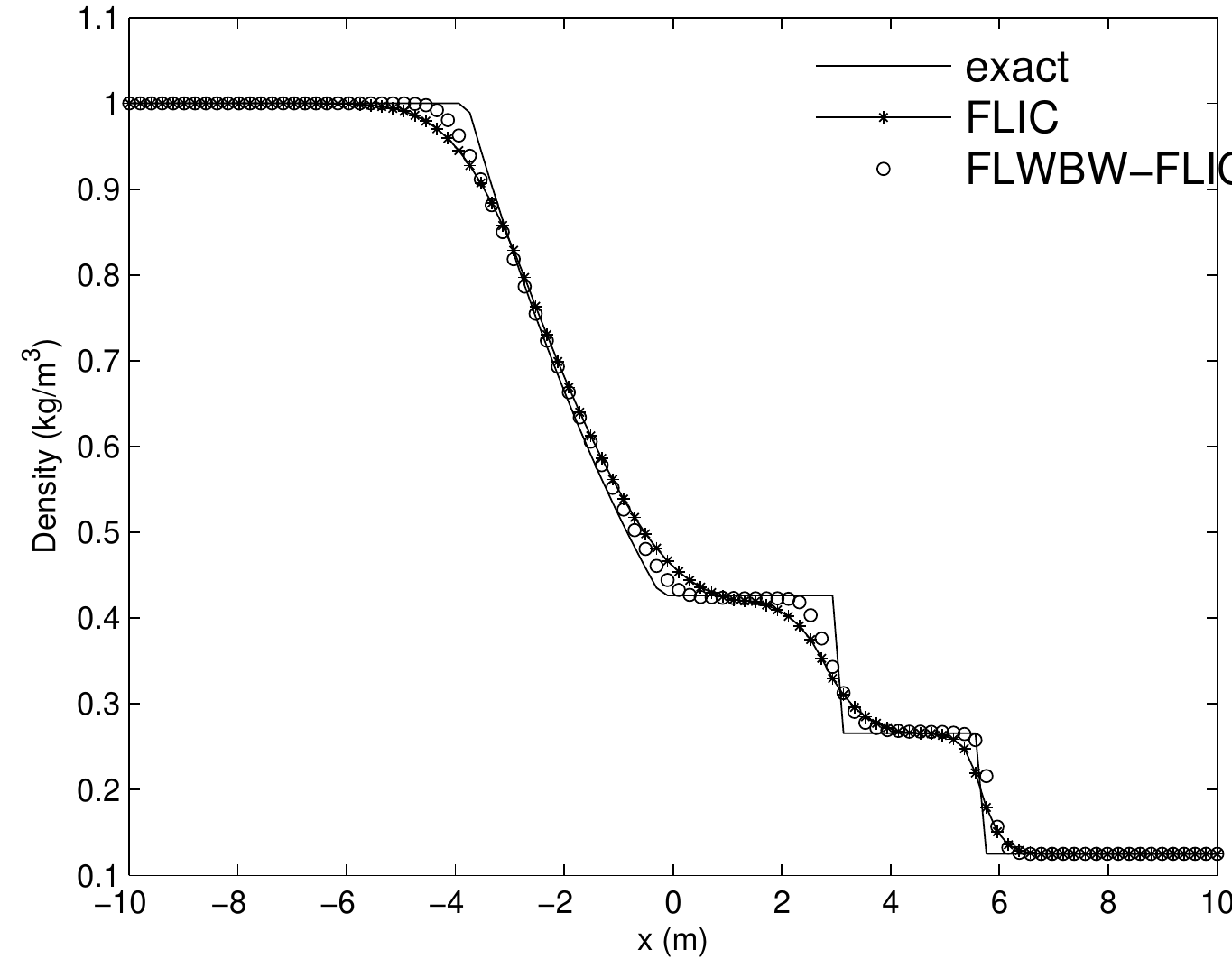}\\
  \includegraphics[scale=0.4]{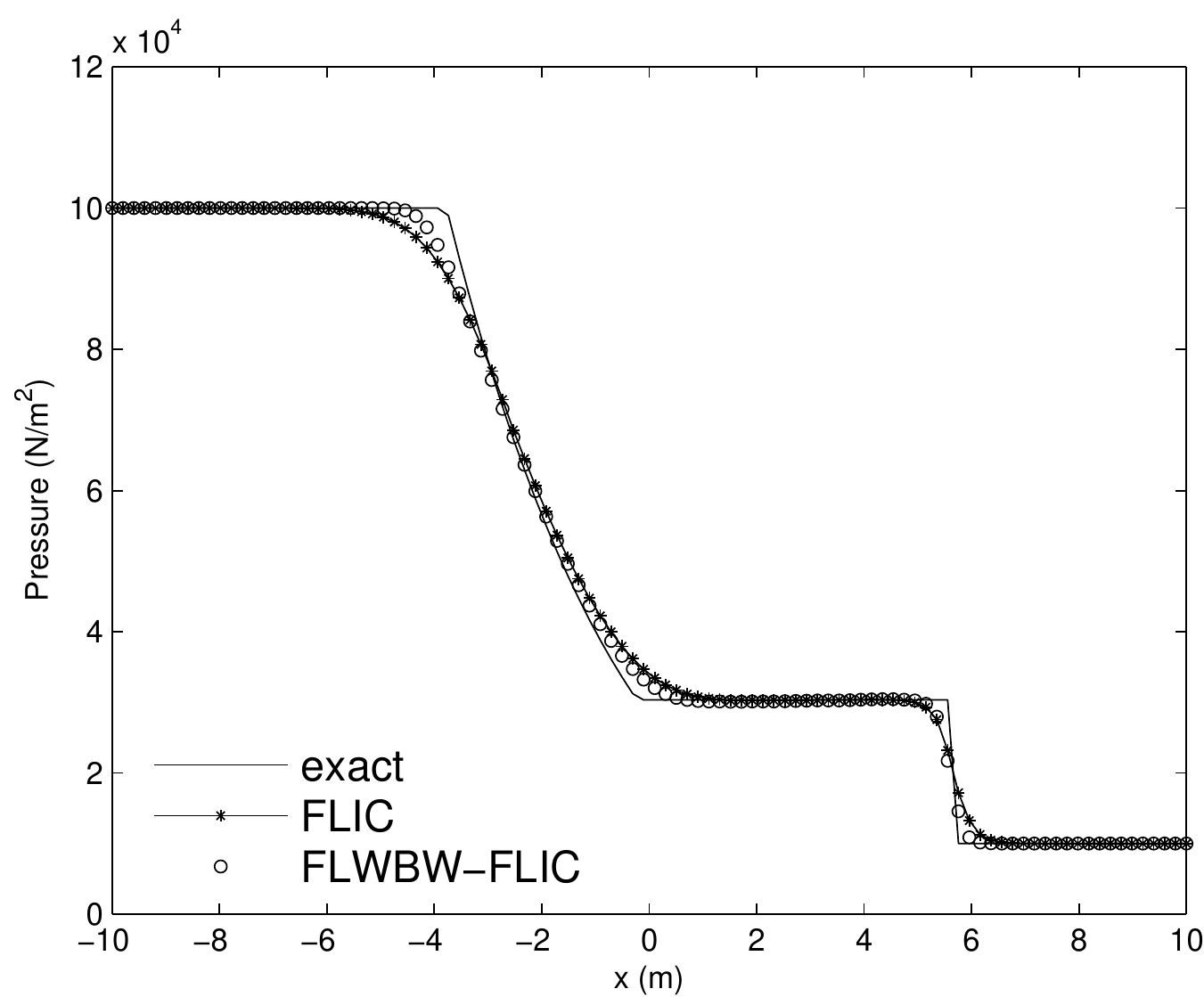}&
  \includegraphics[scale=0.4]{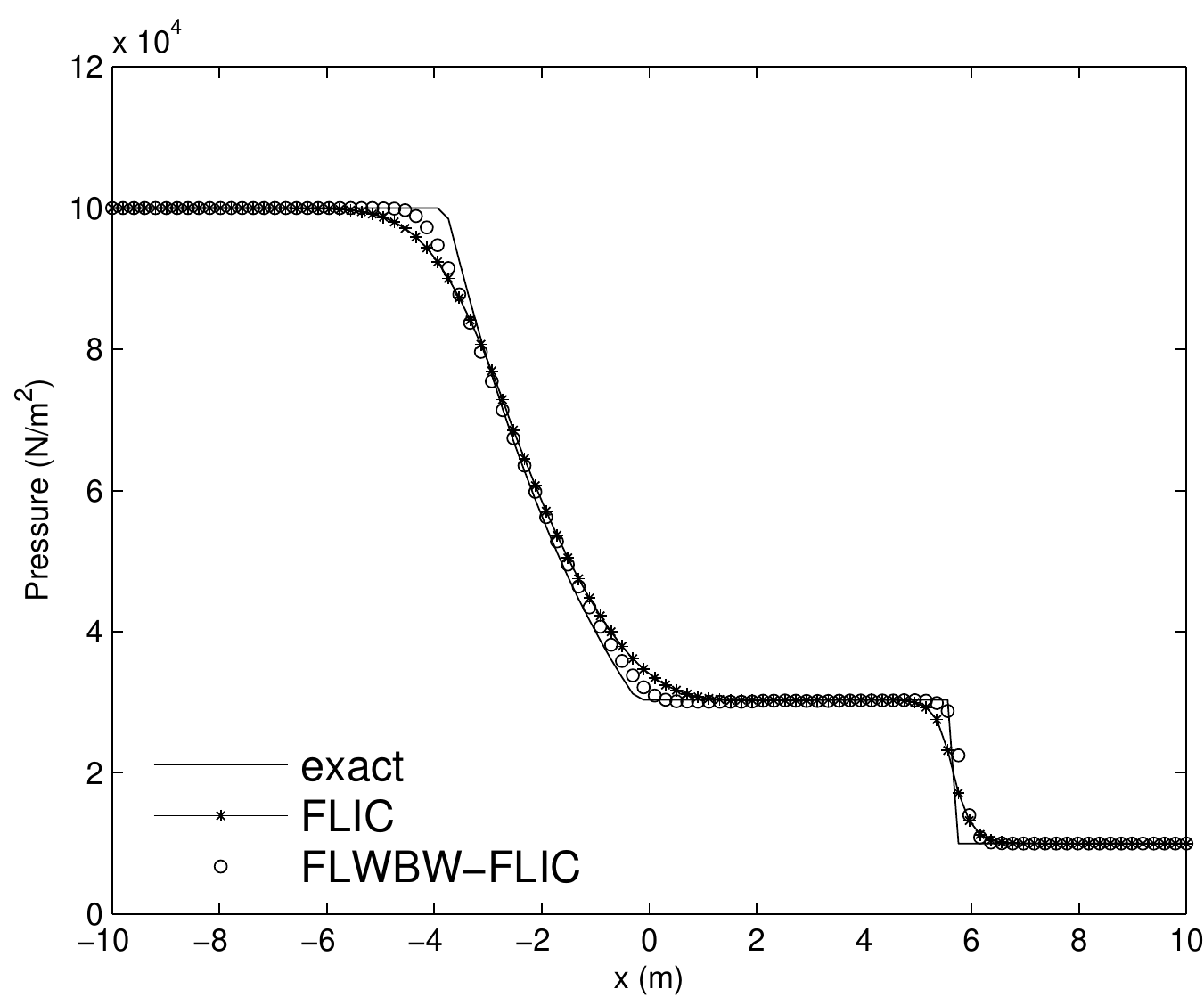}\\
  \includegraphics[scale=0.4]{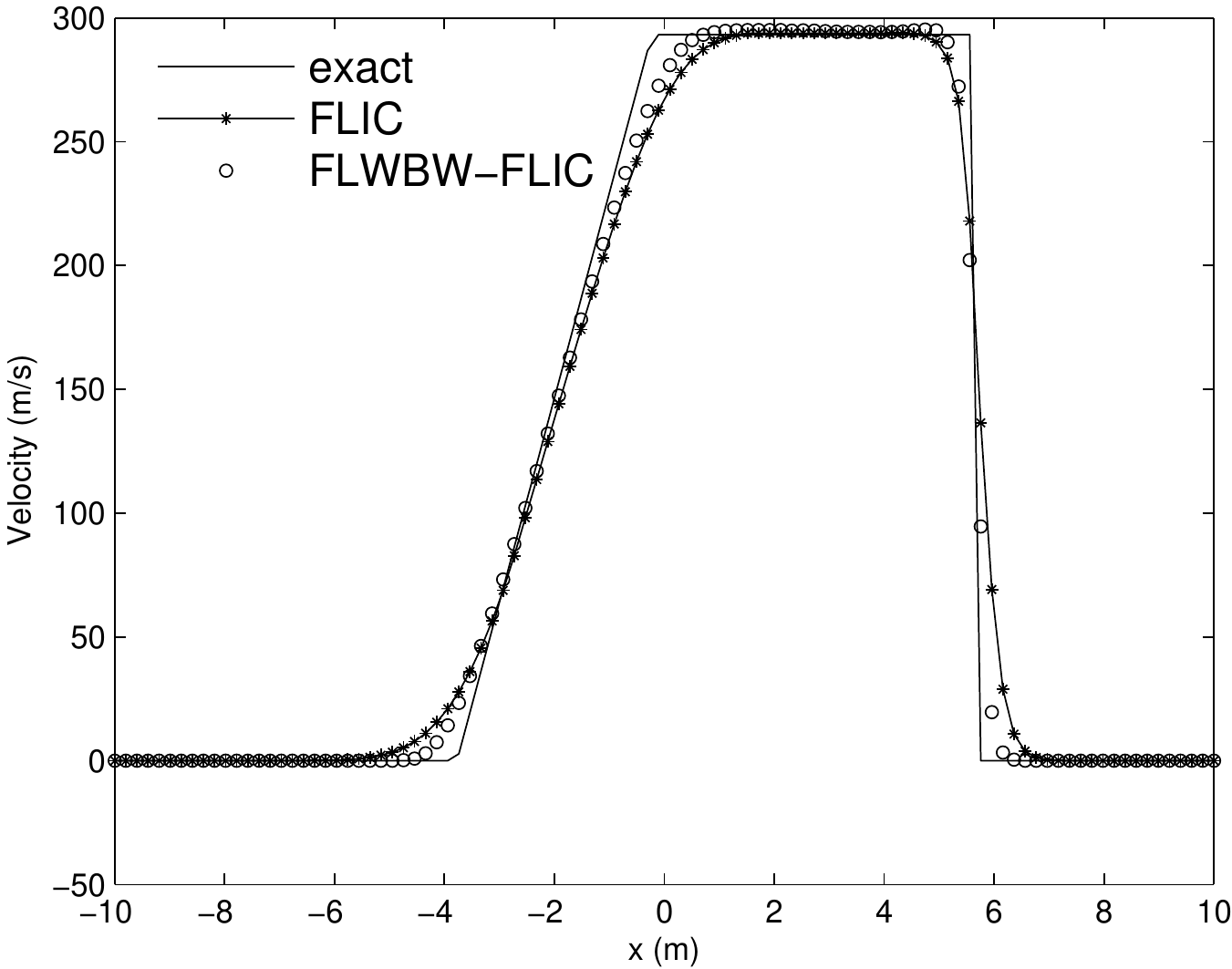}&
  \includegraphics[scale=0.4]{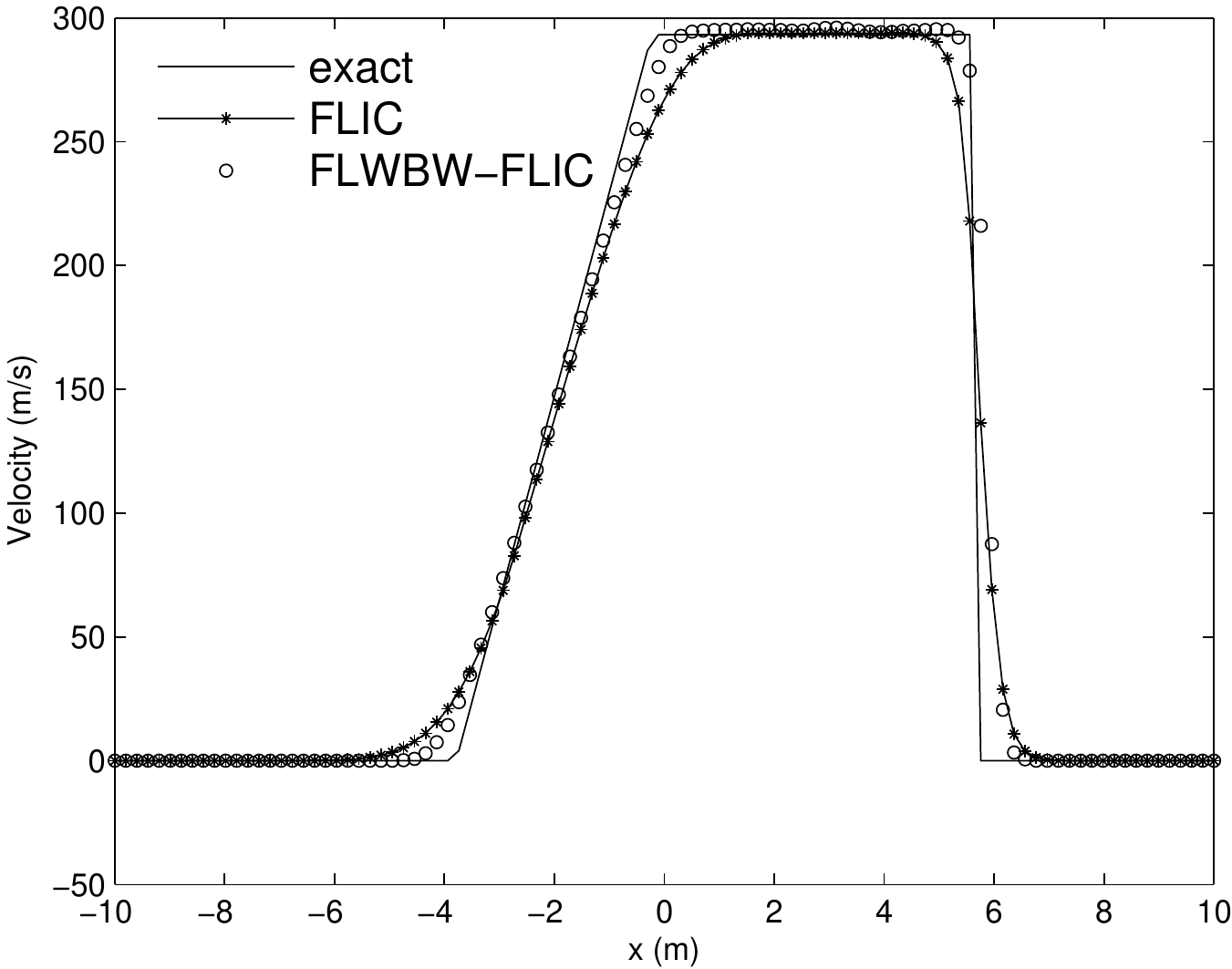}\\
  \includegraphics[scale=0.4]{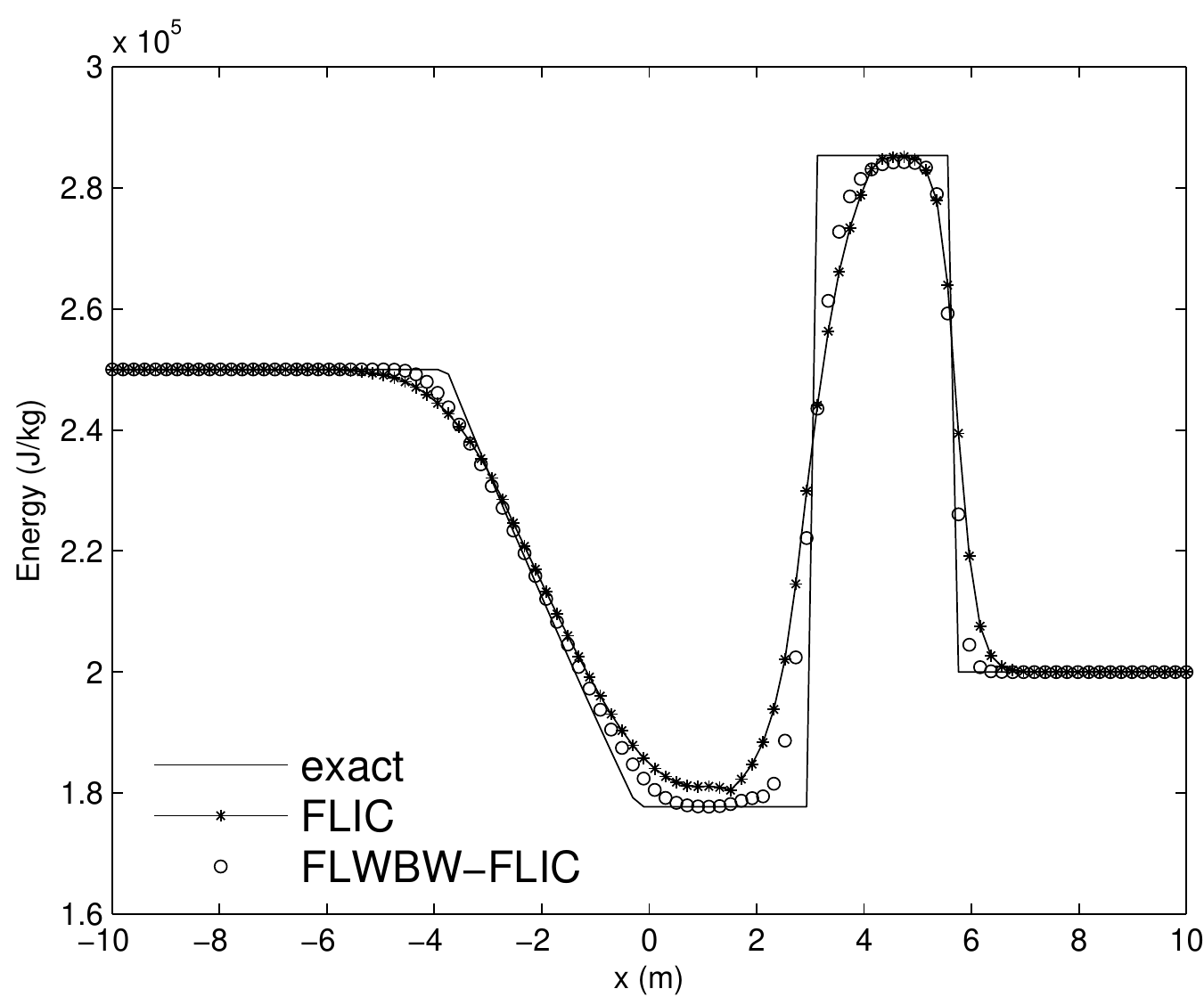}&
  \includegraphics[scale=0.4]{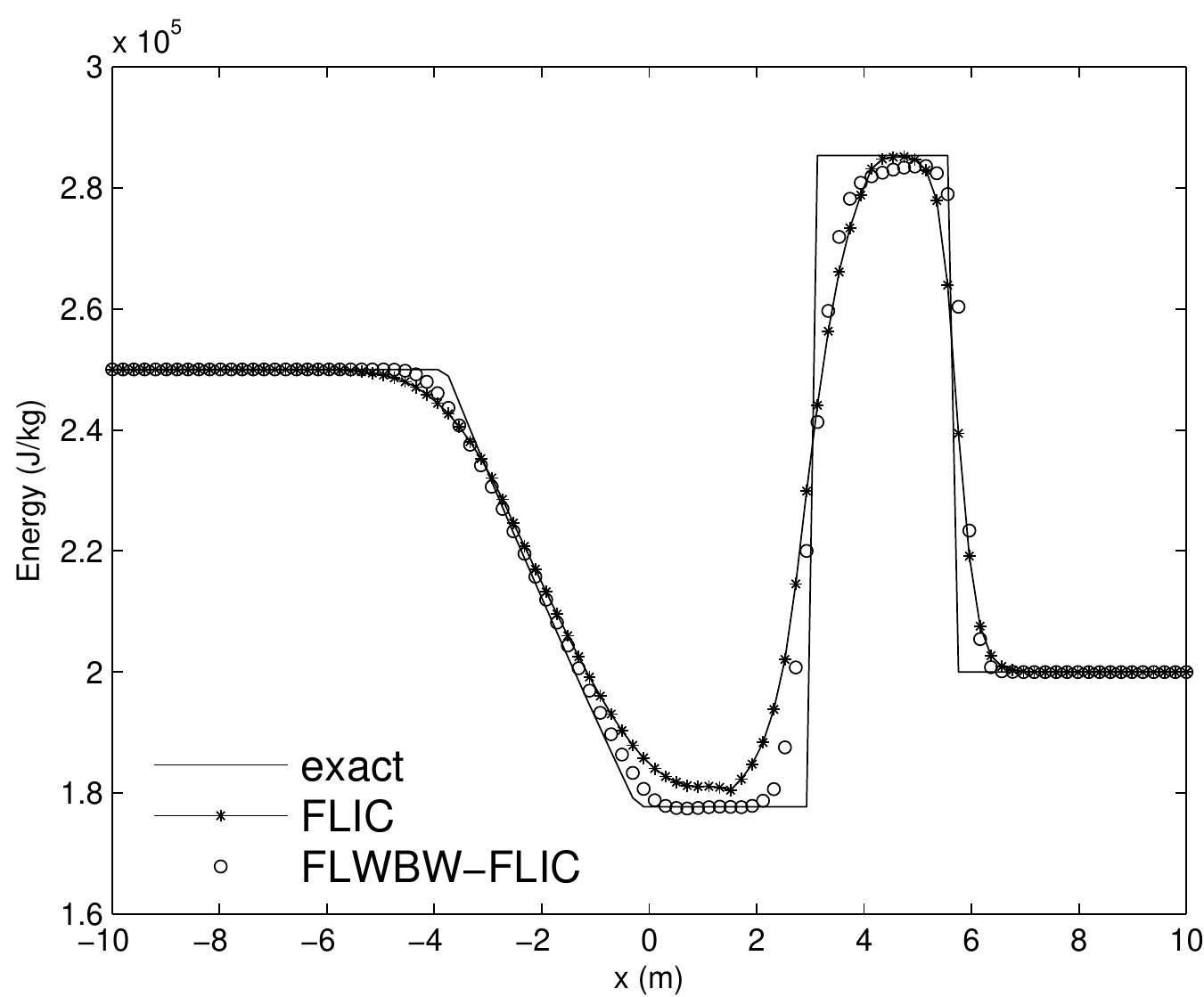}\\
(a) & (b) \\
\end{tabular}
  \caption{\label{Fig1Sod} Solution Sod shock tube, $N=100, CFL=0.5$
    after $67$ time steps at $T= 0.01$ second with shock switch
    parameters $\epsilon=1\times10^{-8}$, (a) $\delta=0.4$, (b)  $\delta=0.9$ }
\end{figure}
\subsubsection{Lax Tube}
\begin{equation}
(\rho,u,p)=\begin{array}{ll} (0.445\; kg/m^3 , 0.698\; m/s, 3.528\; N/m^2)  & x<1,\\
(0.5\; kg/m^3,  0\;m/s , 0.571\; N/m^2) & x\geq 1; \label{LaxIC}
\end{array}  x\in[0,2].
\end{equation}
Compared to Sod tube the shock in this case is very strong and mostly
use to check the robustness of any schemes. In Figure \ref{FigLax}
numerical results obtained by FLWBW-FLIC are given. It can be seen
that that the method capture the contact and the rarefaction wave with higher resolution  
shock compared to FLIC for various choices of shock parameters.

\begin{figure}[!htb]
\begin{tabular}{ccc}
\raisebox{3cm}{\bf (a)} &  \includegraphics[scale=0.36]{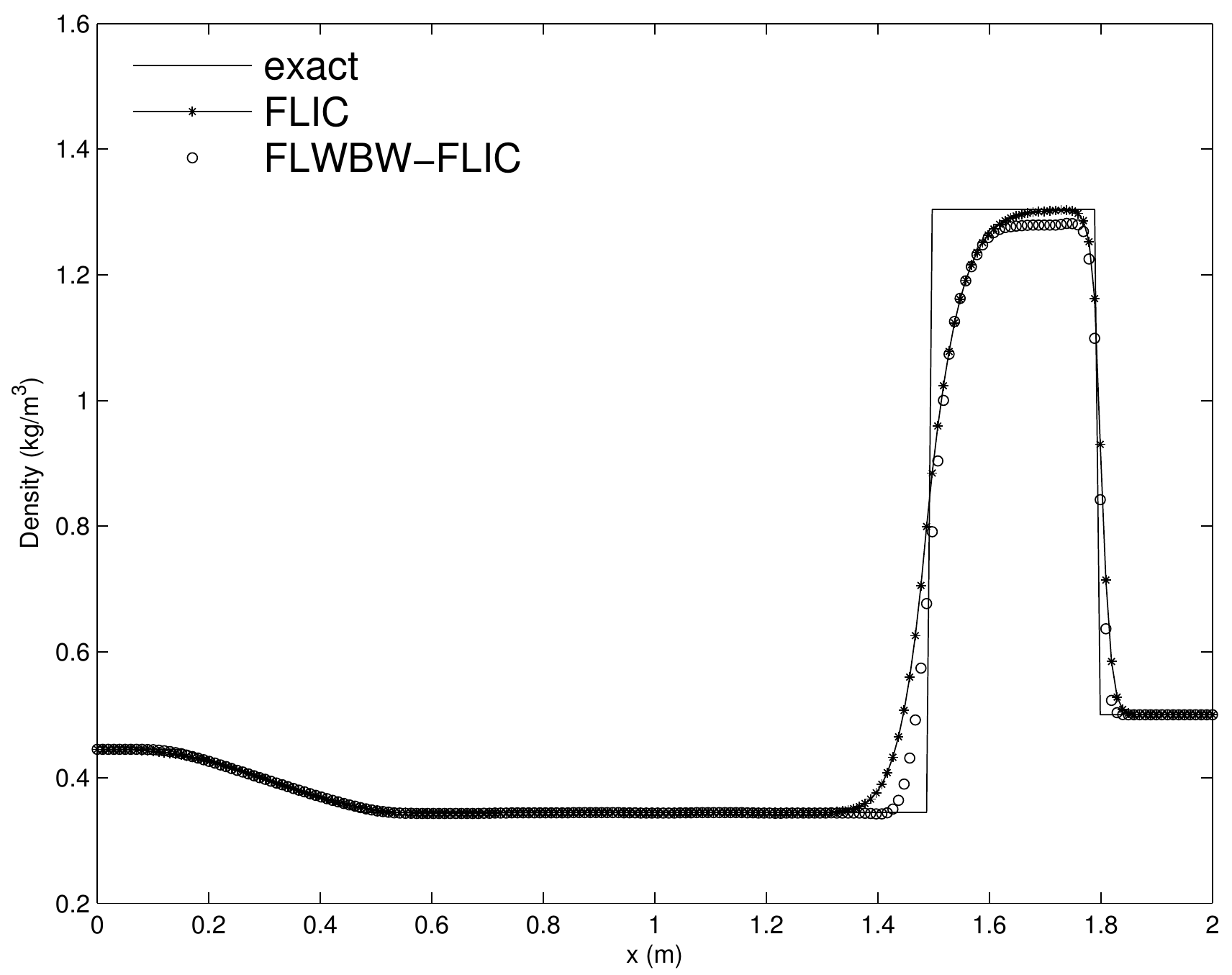}& \includegraphics[scale=0.36]{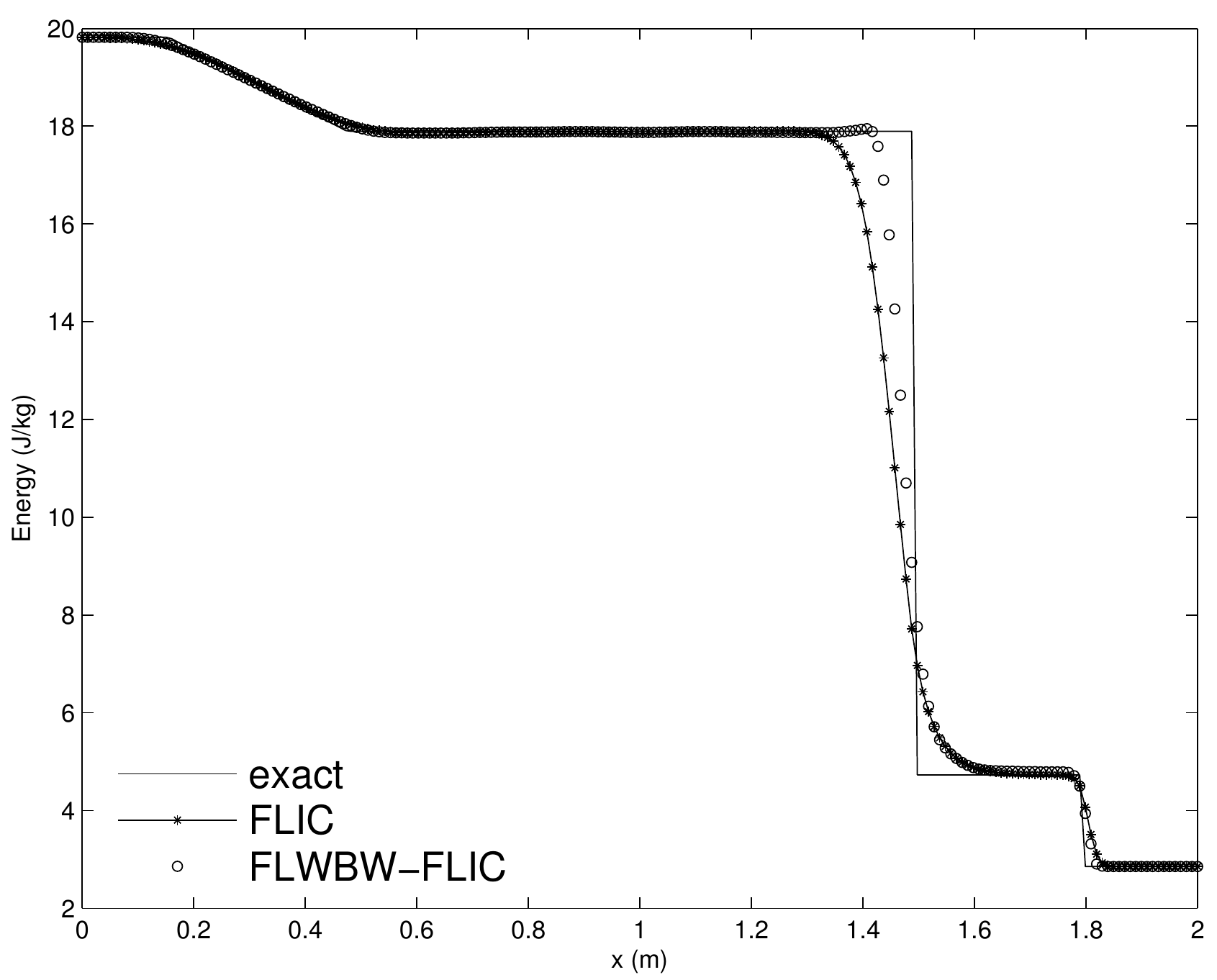}\\
\raisebox{3cm}{\bf (b)} &  \includegraphics[scale=0.36]{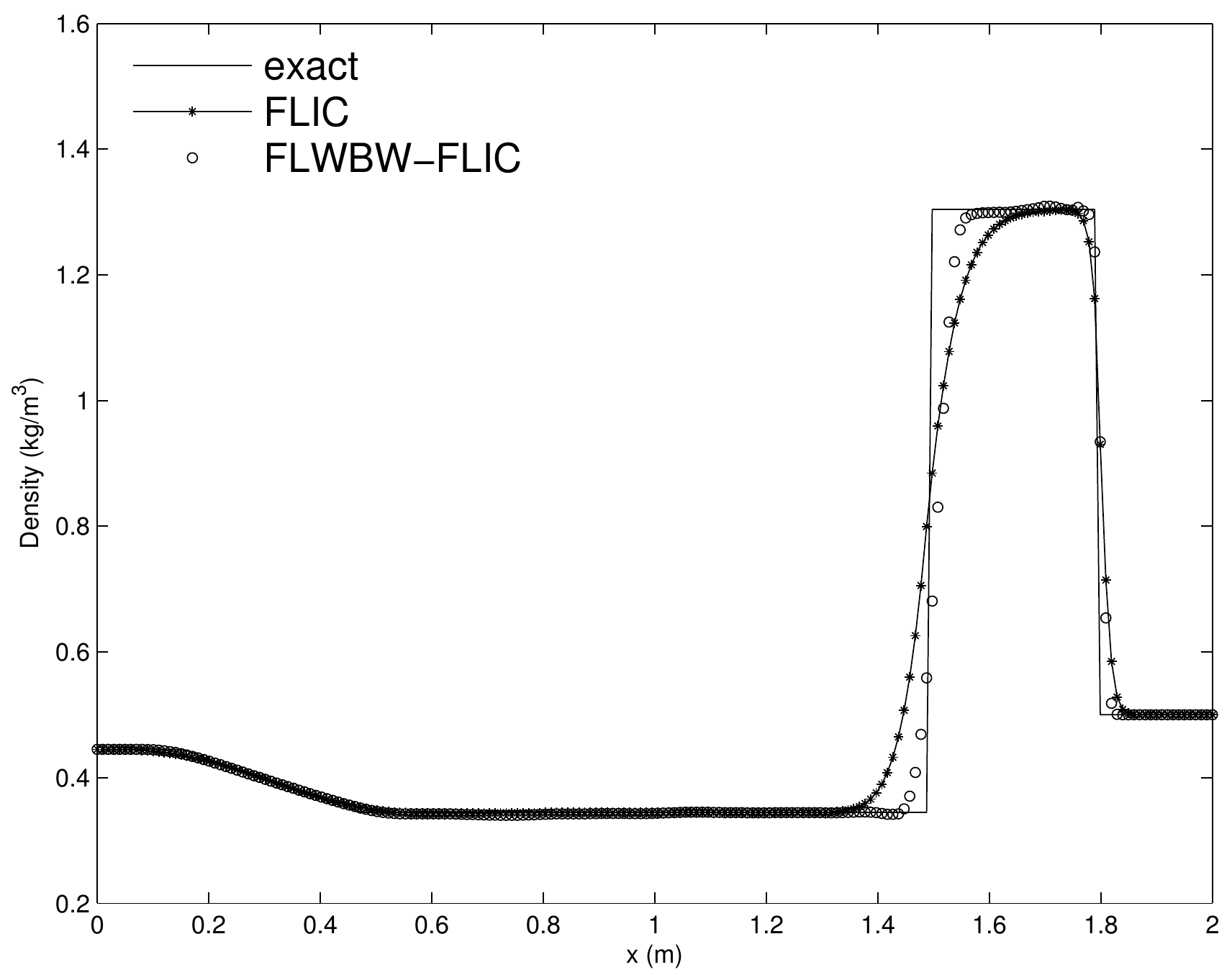} &  \includegraphics[scale=0.36]{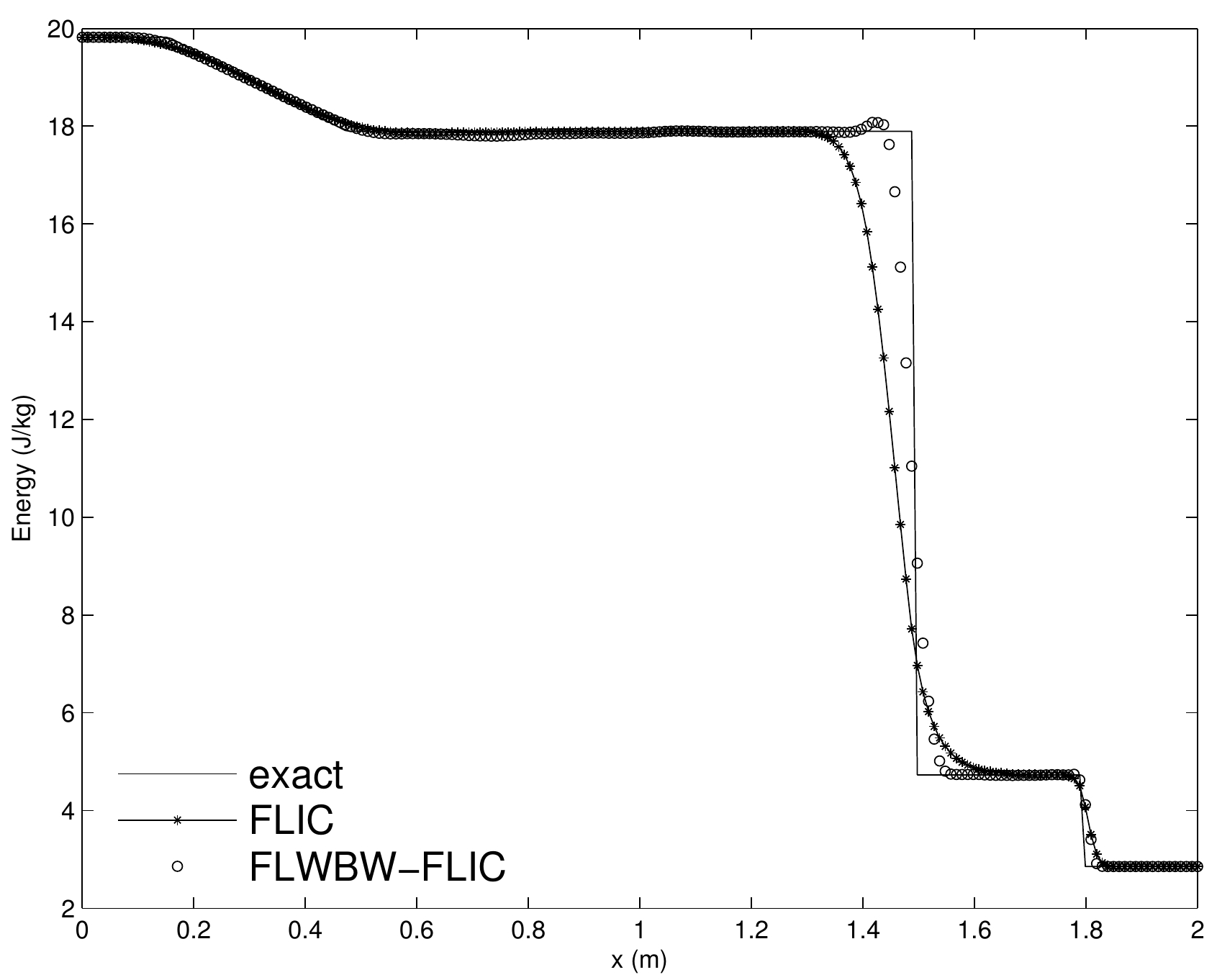}\\
\raisebox{3cm}{\bf (c)}&  \includegraphics[scale=0.36]{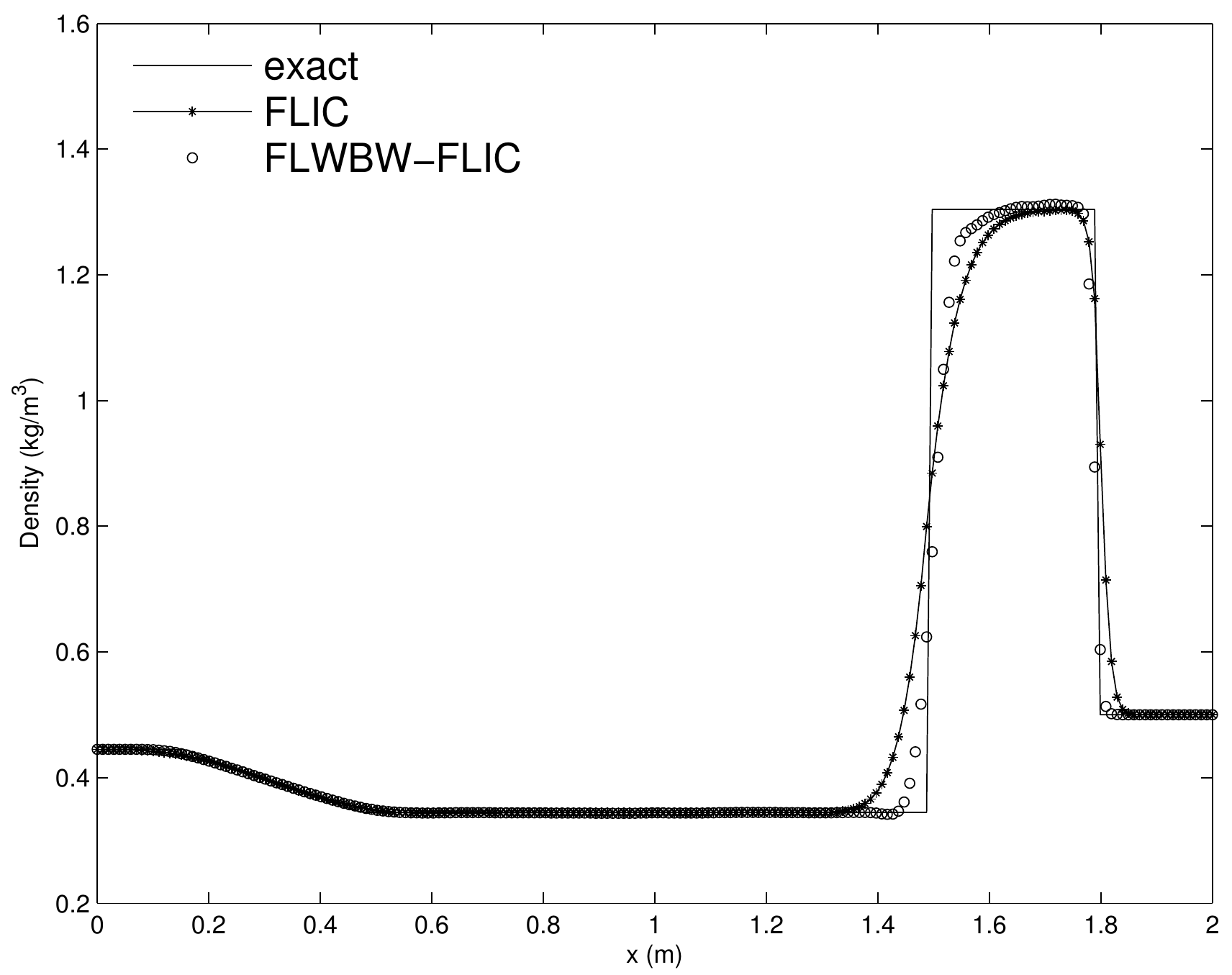}&   \includegraphics[scale=0.36]{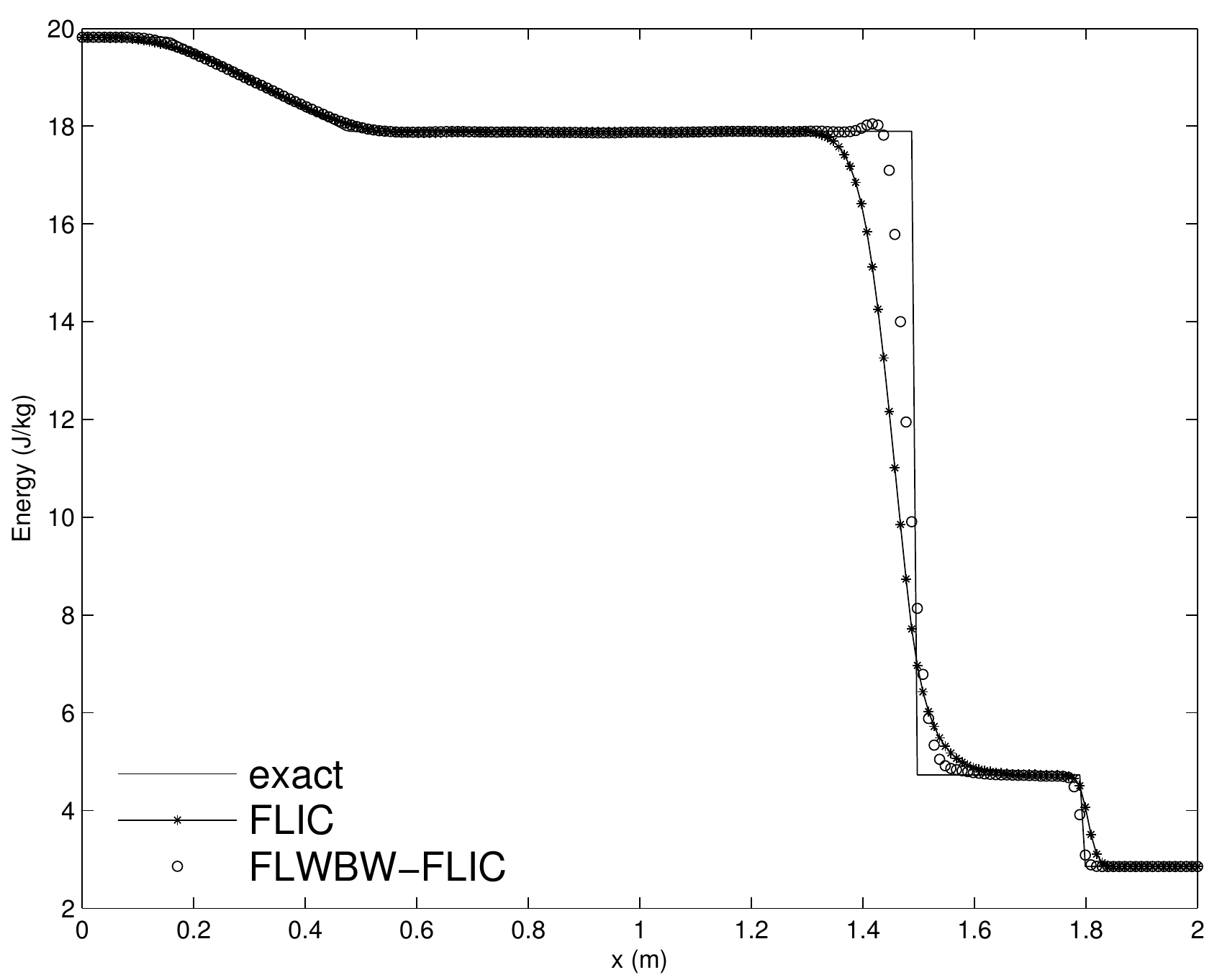}\\
\raisebox{3cm}{\bf (d)}&  \includegraphics[scale=0.36]{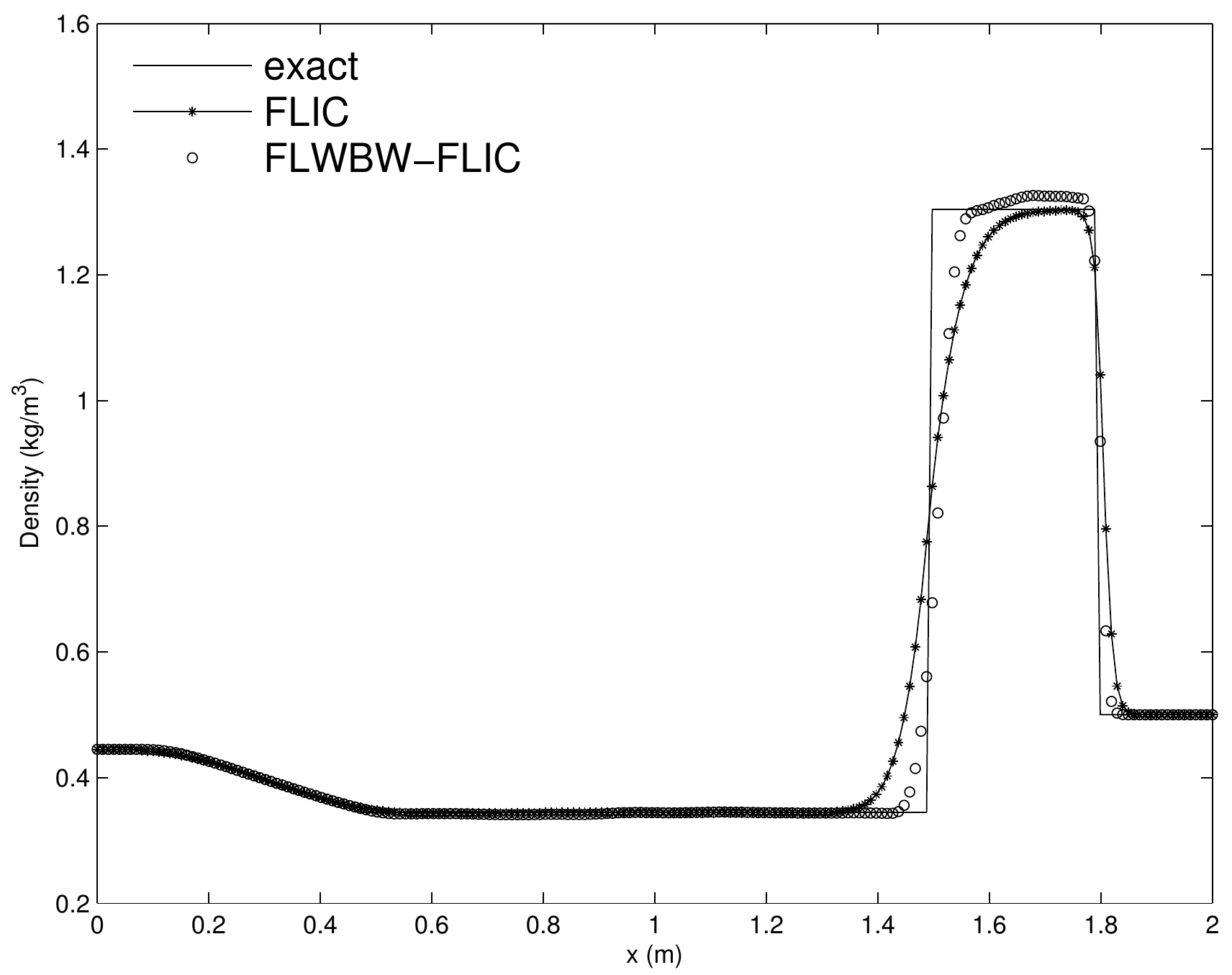}&   \includegraphics[scale=0.36]{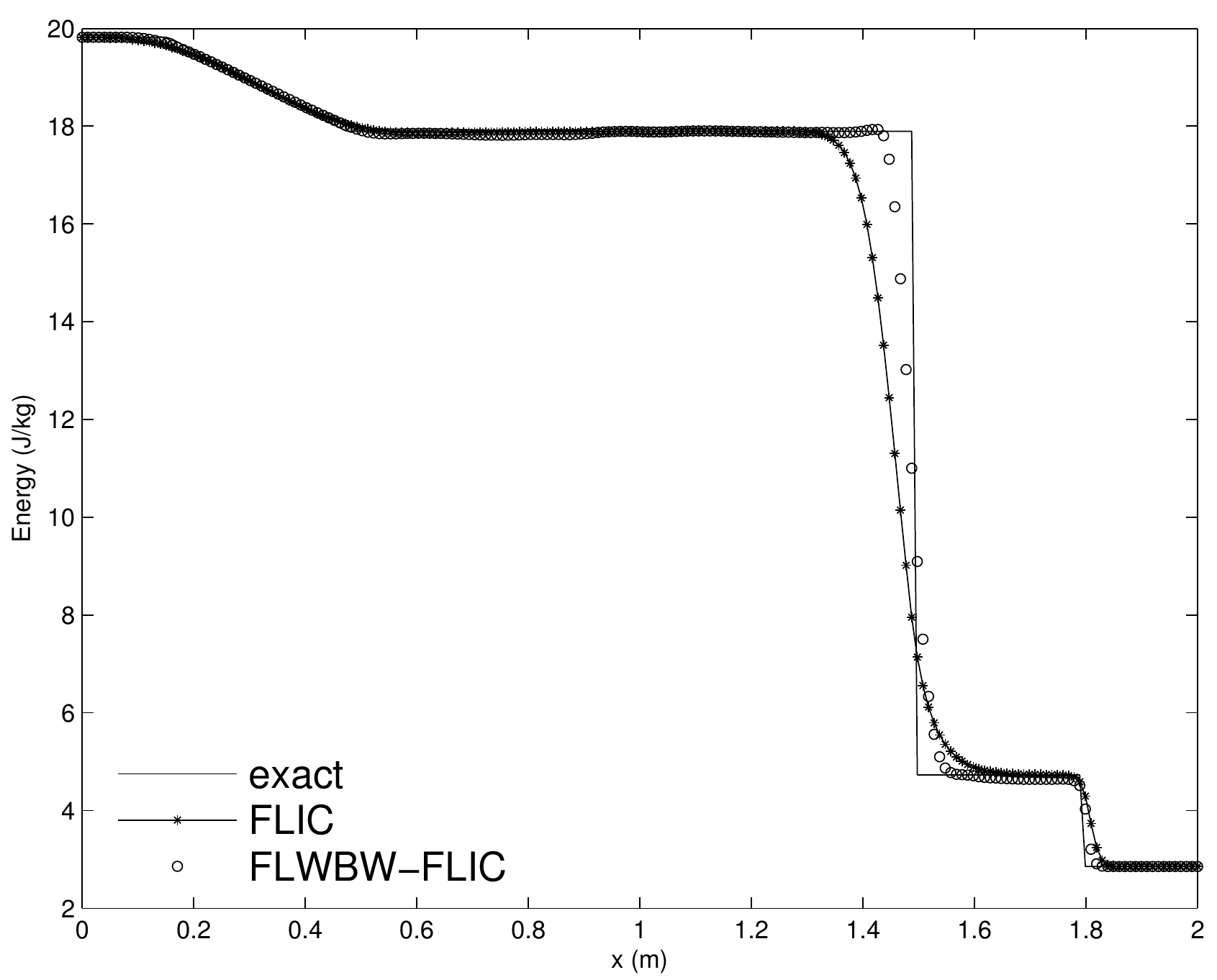}\\
\end{tabular}
\caption{\label{FigLax} Density and energy plots Lax Shock tube case 2, $N=200,
  CFL=0.8$ after $187$ time steps at $T= 0.32$ second with shock
  switch parameters in row (a) $\epsilon=1\times10^{-2}, \delta=0.2$, (b)
  $\epsilon=1\times 10^{-2}, \delta=0.8$, (c) $\epsilon=1\times
  10^{-4}, \delta=0.4$ and (d) $\epsilon=1\times 10^{-8}, \delta=0.8$.}
\end{figure}

\subsubsection{Laney Test \cite{Laney}}
\begin{equation}
(\rho,u,p)=\begin{array}{ll} (1\; kg/m^3 , 0\; m/s, 100,000\; N/m^2)  & x<0,\\
(0.01\; kg/m^3,  0\;m/s , 1,000\; N/m^2) & x\geq 0; \label{LaneyIC}
\end{array}  x\in[-10,15].
\end{equation}
In this test the density and pressure state on the right side of
initial discontinuity is much smaller compared to the left
state. Therefore computationally, even small oscillations can lead to negative
density or pressure which results in to non-physical imaginary speed
of sound $c=\sqrt{\frac{\gamma p}{\rho}}$. This makes it an important
test to check the non-oscillatory nature of any numerical scheme. In
Figure \ref{FigLaney}, the density and pressure plots obtained by
FLWBW-FLIC are given and compared for shock switch parameters
$\epsilon,\delta$.
\begin{figure}[!htb]
\begin{tabular}{cc}
  \includegraphics[scale=0.55]{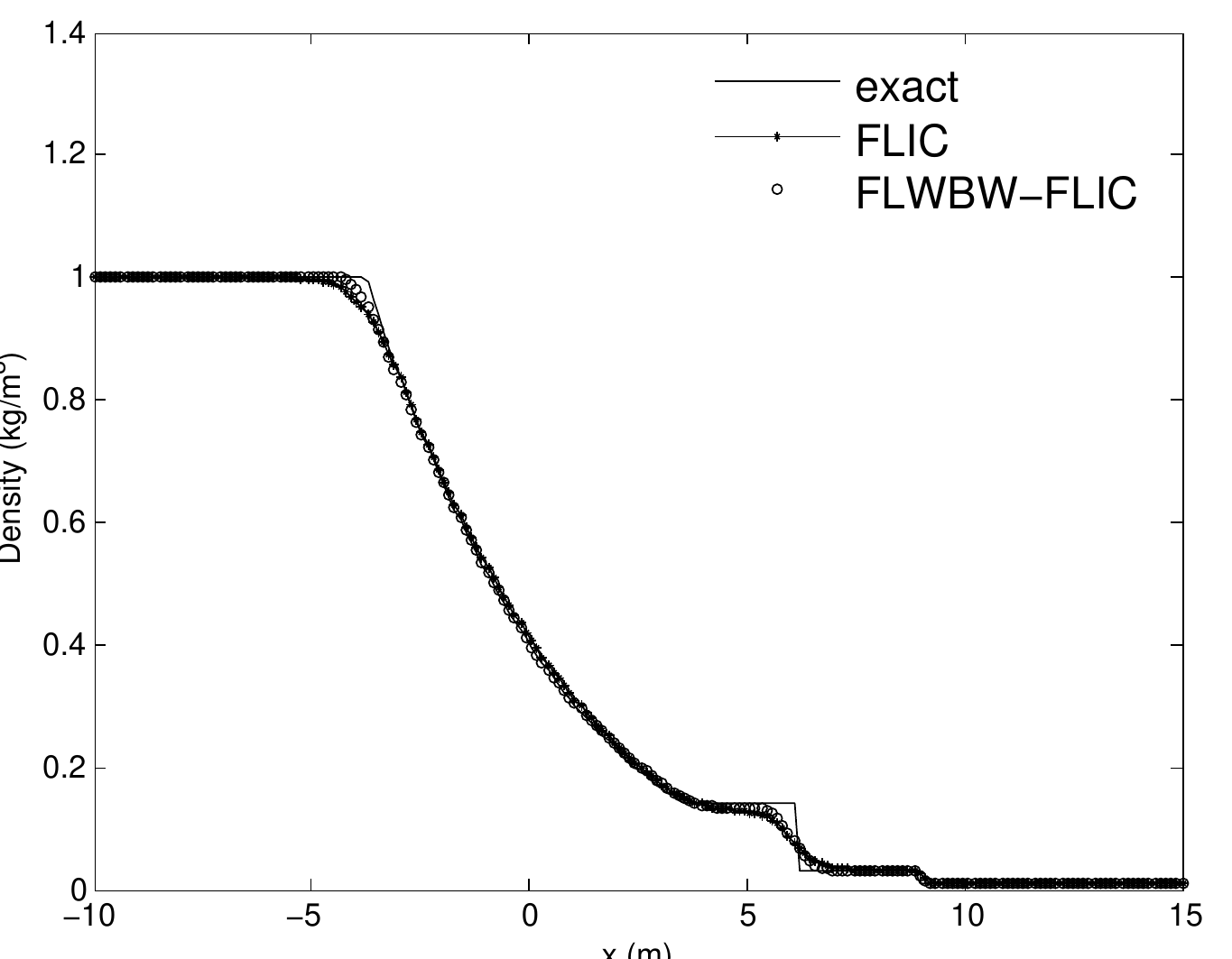}&\includegraphics[scale=0.55]{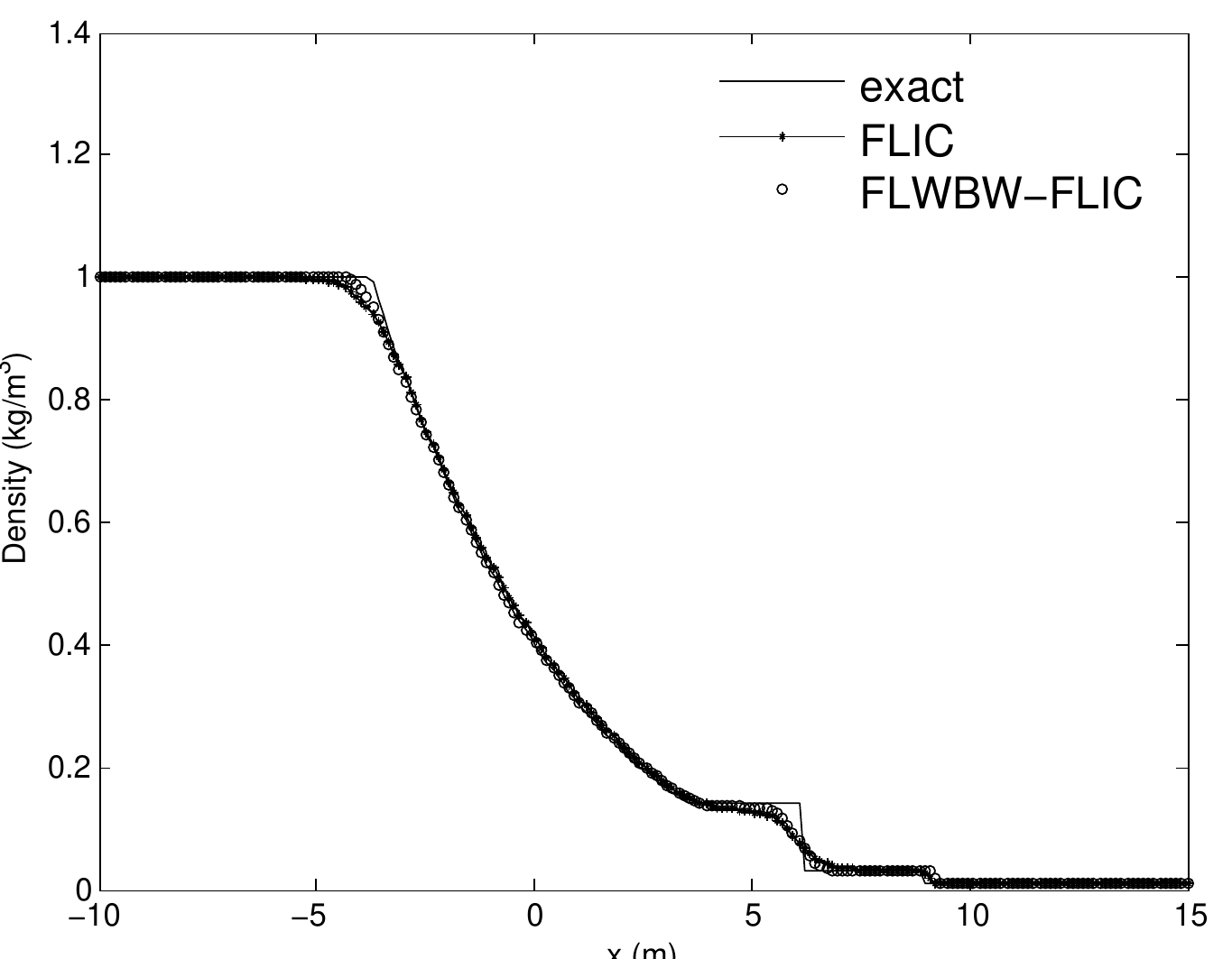}\\
  \includegraphics[scale=0.55]{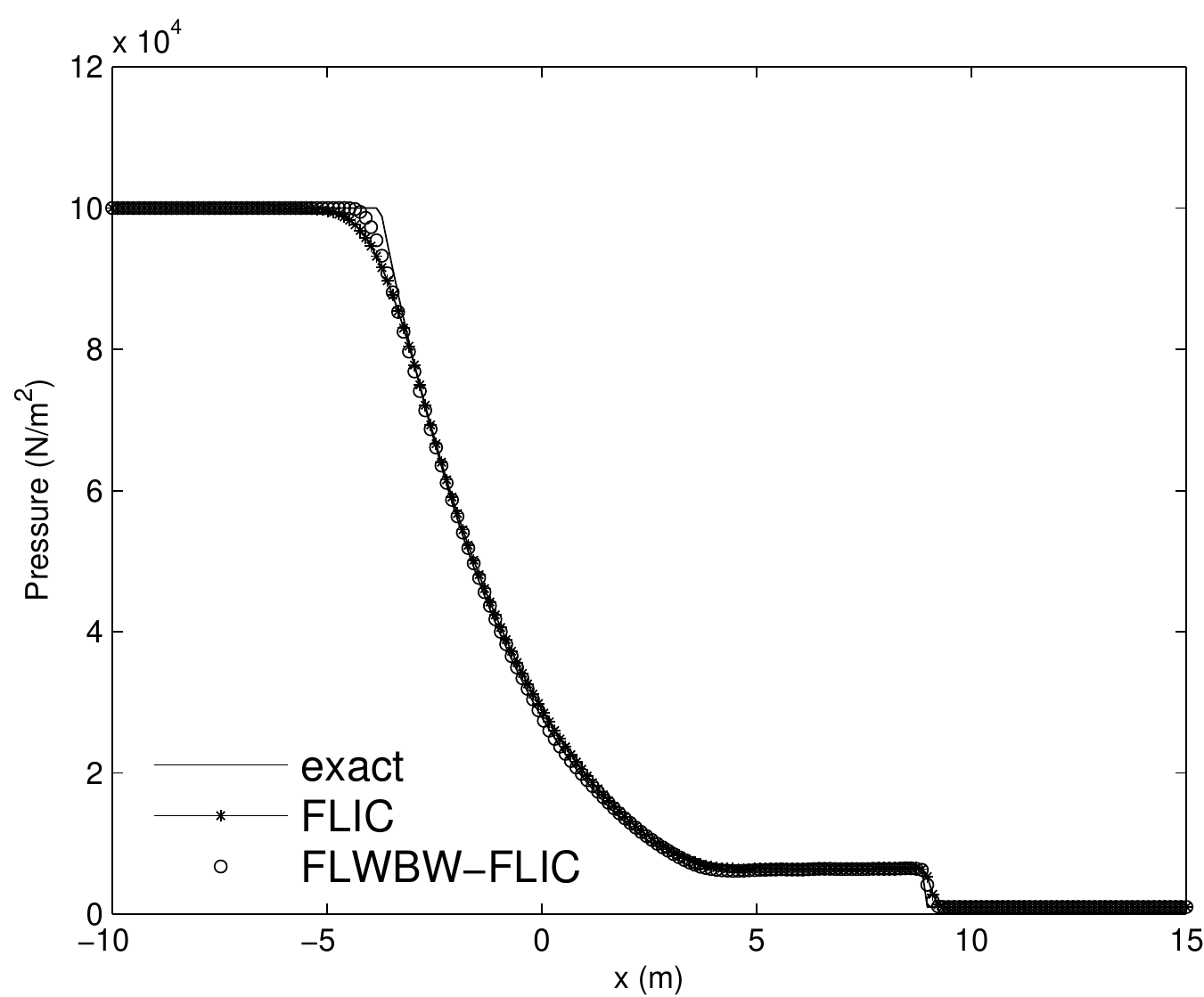}&\includegraphics[scale=0.55]{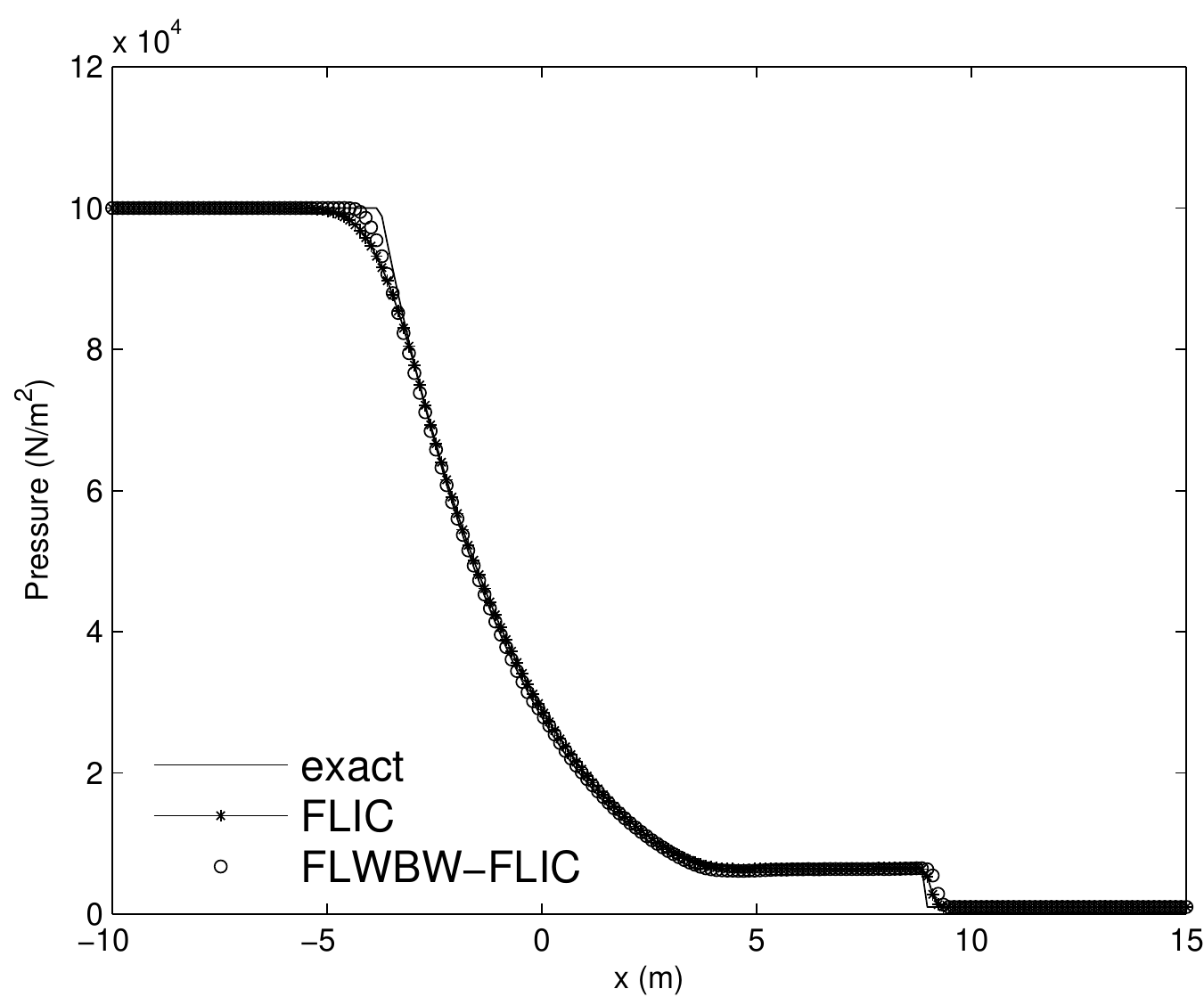}\\
  \includegraphics[scale=0.55]{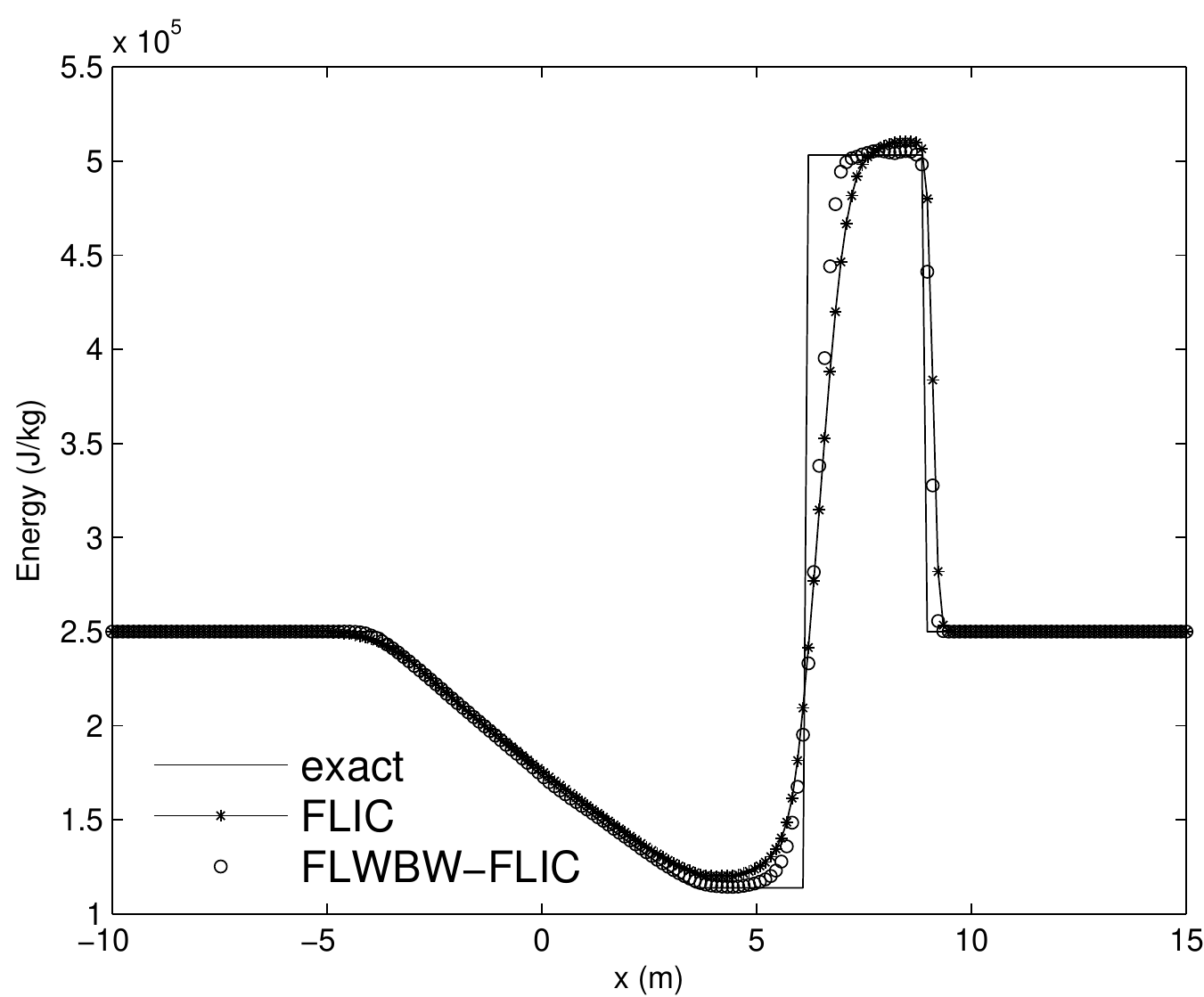}&\includegraphics[scale=0.55]{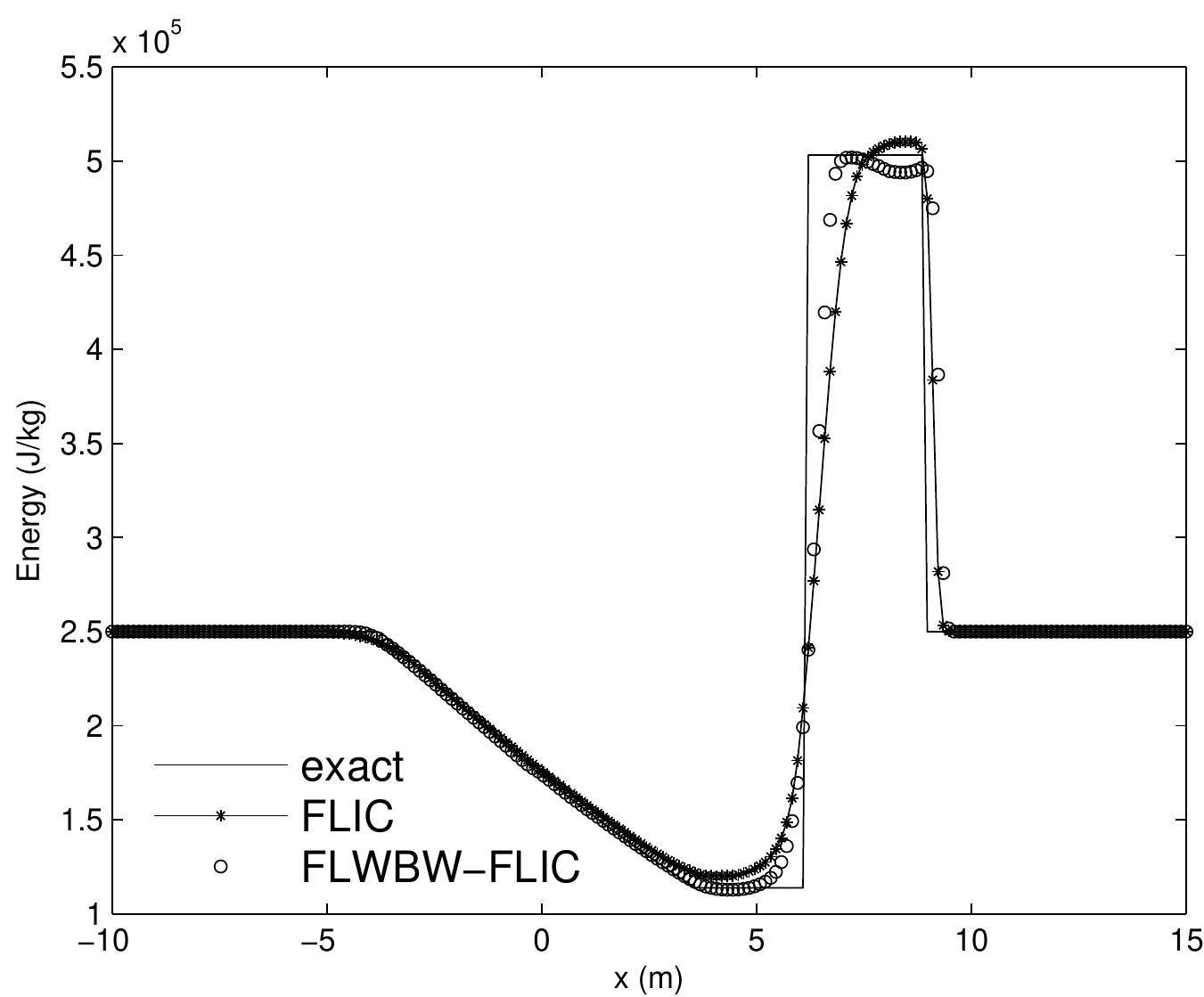}\\
(a) & (b)\\
\end{tabular}
  \caption{\label{FigLaney} Solution Laney test, $N=200, CFL=0.8$
    after $111$ time steps at $T= 0.01$ second with shock switch
    parameters $\epsilon=1e-8$ (a) $\delta=0.6$ (b) $\delta=0.9$ }
\end{figure}

Numerical results for 1D shock tube test problems show that the
corners of rarefaction and contact discontinuities are better resolved
by FLWBW-FLIC compared to centred flux limiter shock capturing TVD
scheme FLIC. The shock is also crisply captured compared to to FLIC
with Minbee limiter. The hybrid scheme presented in section
\ref{algo4sys} is robust and works for different choices of shock
parameters $\epsilon,\; \delta$.
\subsection{2D Euler Equation}
Consider the two-dimensional Euler equations for compressible gas
dynamics defined by the system
\begin{equation}
  U_t+F(U)_x+G(U)_y=0,\label{euler2d}
\end{equation}
where \begin{equation}
  U=\left( \begin{array}{cccc}
\rho \\
\rho u \\
\rho v \\
 e \end{array} \right),
 ~ F=\left( \begin{array}{cccc}
\rho u\\
\rho u^2+p\\
\rho uv\\
u(e+p)\end{array} \right),
~ G=\left( \begin{array}{cccc}
\rho v\\
\rho uv\\
\rho v^2+p\\
v(e+p) \end{array} \right)
\end{equation}
Here $\rho$ is the density, $u$ and $v$ are velocity components in $x$ and $y$
 direction respectively, $p$ is the pressure and $e$ is
 the energy defined by,
\begin{equation}
e=\frac{p}{\gamma-1}+\frac{\rho(u^2+v^2)}{2}.
\end{equation}
The Riemann problem for Euler equation (\ref{euler2d}) can be defined by
considering the constant initial data in each quadrant of unit square
$[0,1]\times[0,1]$ with center at $(0.5,0.5)$. More precisely consider
(\ref{euler2d}) with initial data\\
\begin{equation}
(p,\rho,u,v)(x,y,t=0) = \left\{\begin{array}{cc} 
(p_{1}, \rho_{1}, u_{1}, v_{1}) & \mbox{if}\; x>0.5, y>0.5\\ 
(p_{2}, \rho_{2}, u_{2}, v_{2}) & \mbox{if}\; x<0.5, y>0.5\\ 
(p_{3}, \rho_{3}, u_{3}, v_{3}) & \mbox{if}\; x<0.5, y<0.5\\ 
(p_{4}, \rho_{4}, u_{4}, v_{4}) & \mbox{if}\; x>0.5, y<0.5\\ 
\end{array}\right.
\end{equation}
In the 2D Riemann problems due to complex geometric wave pattern most
high resolution schemes experience problems in yielding oscillations
free crisp resolution to solution profile. Such 2D Riemann problem are
numerically solved by using positive scheme and Riemann solvers free
central schemes in \cite{LaxLiu1998} and \cite{Kurganov2002}
respectively. Recently some of the these Riemann problems are
considered to see the performance of a new finite volume adaptive
artificial viscosity method in \cite{Kurganov2012} and (with slight
changed geometry) a HLL Riemann solver in \cite{Vides2015643}
respectively. In this section numerical results are given for twelve
configurations. The one dimensional scheme presented in section
\ref{algo4sys} is extended to two dimensional Euler equation using the
Strang dimension by dimension splitting technique. In Figure
\ref{2DRP1} to Figure \ref{2DRP14} the contour plot of density are
given and compared with FORCE scheme for different test cases. In all
the figures contour plot by FORCE is given in column (a), and by
FLWBW-FORCE in column (b) with shock parameters $\epsilon =1\times
10^{-8}, \delta=0.6$. It is observed that small oscillations can occur
with if a flux limited TVD scheme is used in hybrid scheme in section
\ref{algo4sys} (Results are not shown here). Note that it is in
agreement with the comments in \cite{Kurganov2002}, that a over
compressive Minmod type limiter can lead to spurious oscillations for
the schemes proposed therein.
\begin{enumerate}
  \item[] Configuration 1
  \begin{equation}
    \begin{array}{llll}
        p_1=1    &p_2=0.4       &p_3=0.0439    &p4=0.15\\
        \rho_1=1 &\rho_2=0.5197 &\rho_3=0.1072 &\rho_4=0.2579\\
        u_1=0    &u_2=-0.7259   &u_3=-0.7259   &u_4=0.0\\
        v_1=0    &v_2=0         &v_3=-1.4045   &v_4=-1.4045\\
    \end{array}
  \end{equation}
\begin{figure}[!htb]
\begin{tabular}{cc}
\includegraphics[scale=0.5]{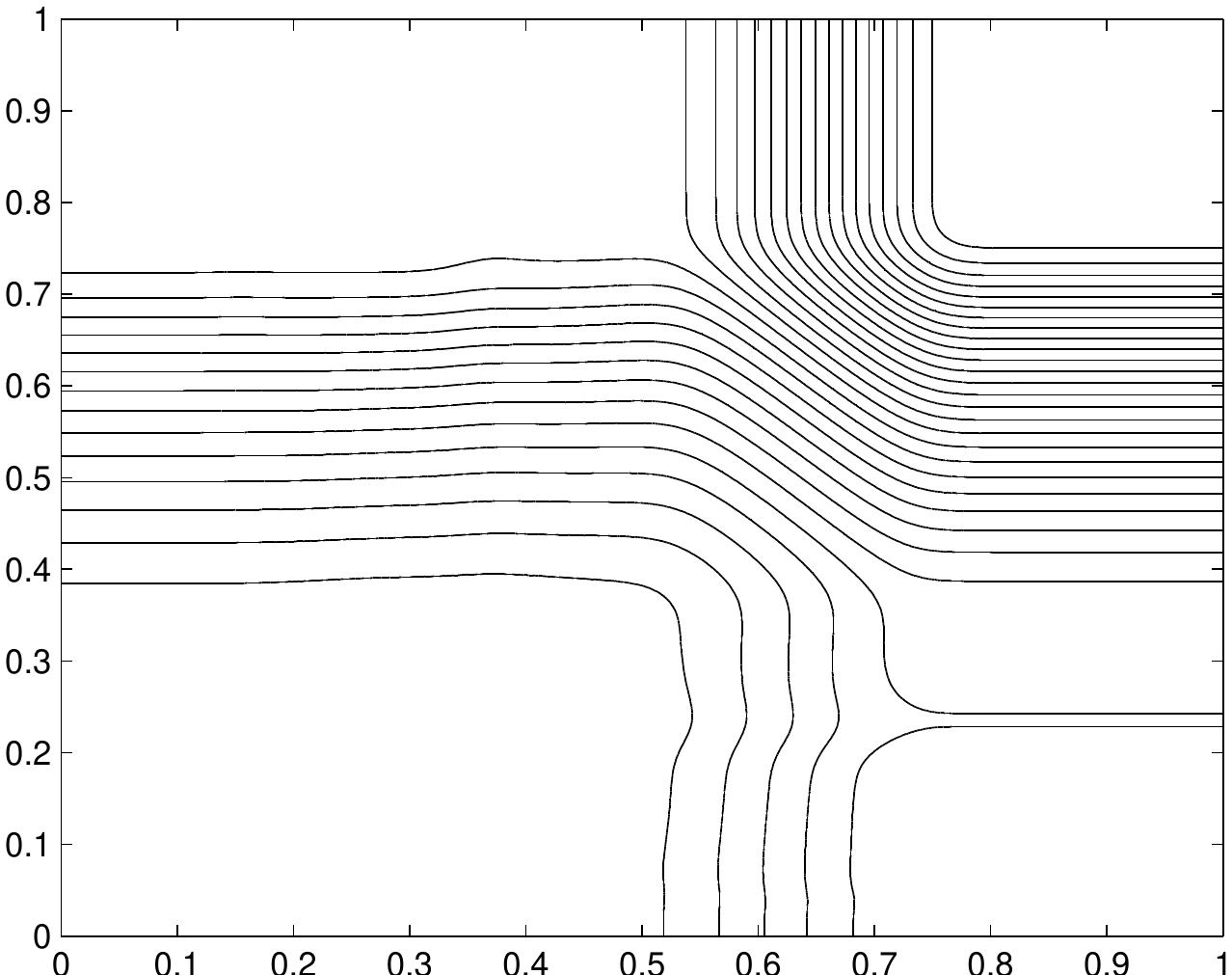} & \hspace{1cm} \includegraphics[scale=0.5]{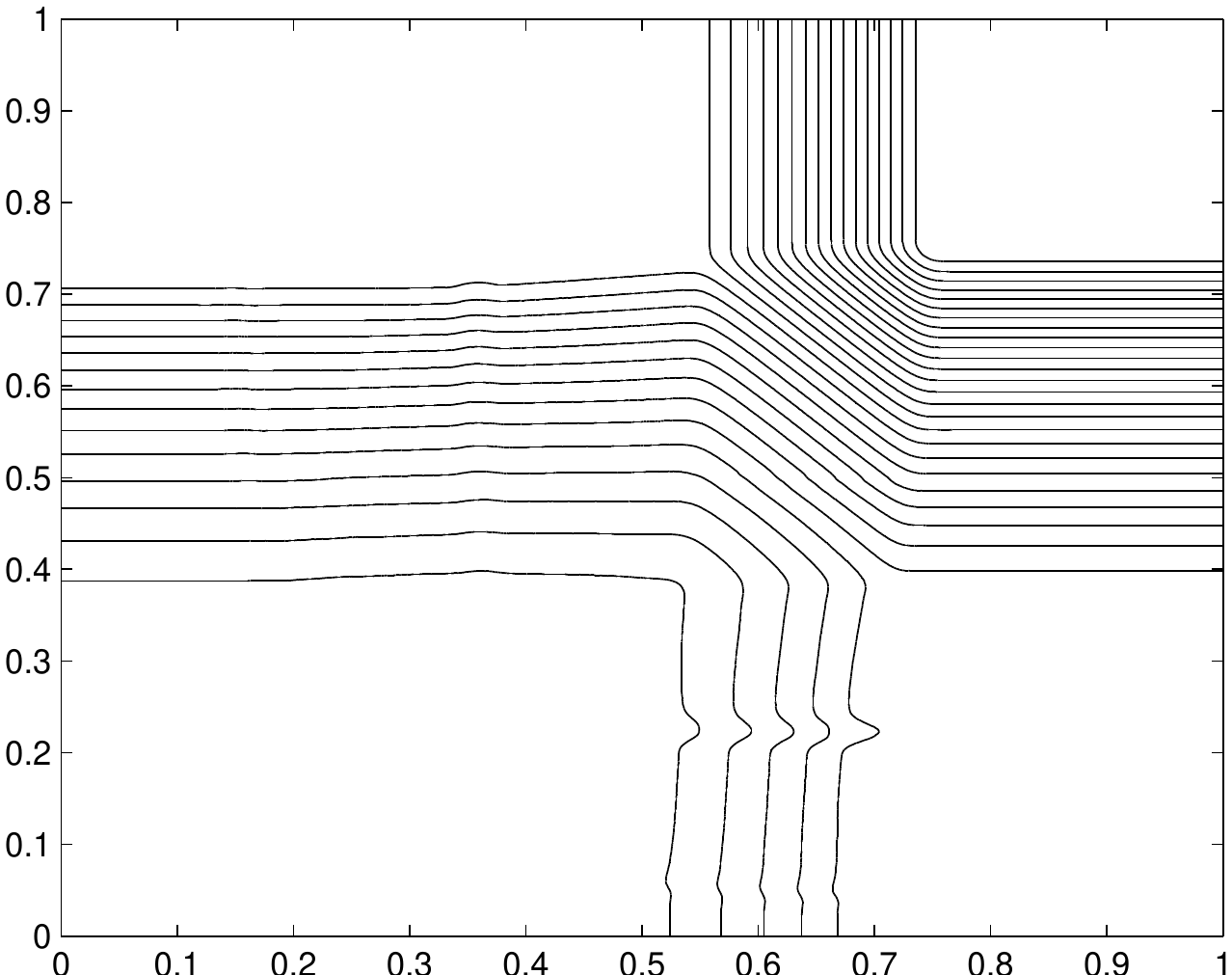}\\
(a)& (b)\\
\end{tabular} 
\caption{\label{2DRP1} Configuration 1: Density contour plot (30 lines)
  at $T=0.2$, sharp resolution for rarefaction and ripple in lower
  rarefaction are sharply resolved in (b) compared to FORCE (a).}
\end{figure}
 \item[] Configuration 2
 \begin{equation}
   \begin{array}{llll}
        p_1=1    &p_2=0.4       &p_3=1       &p_4=0.4     \\
        \rho_1=1 &\rho_2=0.5197 &\rho_3=1    &\rho_4=0.5197 \\
        u_1=0    &u_2=-0.7259   &u_3=-0.7259 &u_4=0       \\
        v_1=0    &v_2=0         &v_3=-0.7259 &v_4=-0.7259 
   \end{array}
 \end{equation} 
\begin{figure}[!htb]
\begin{tabular}{cc}
\includegraphics[scale=0.5]{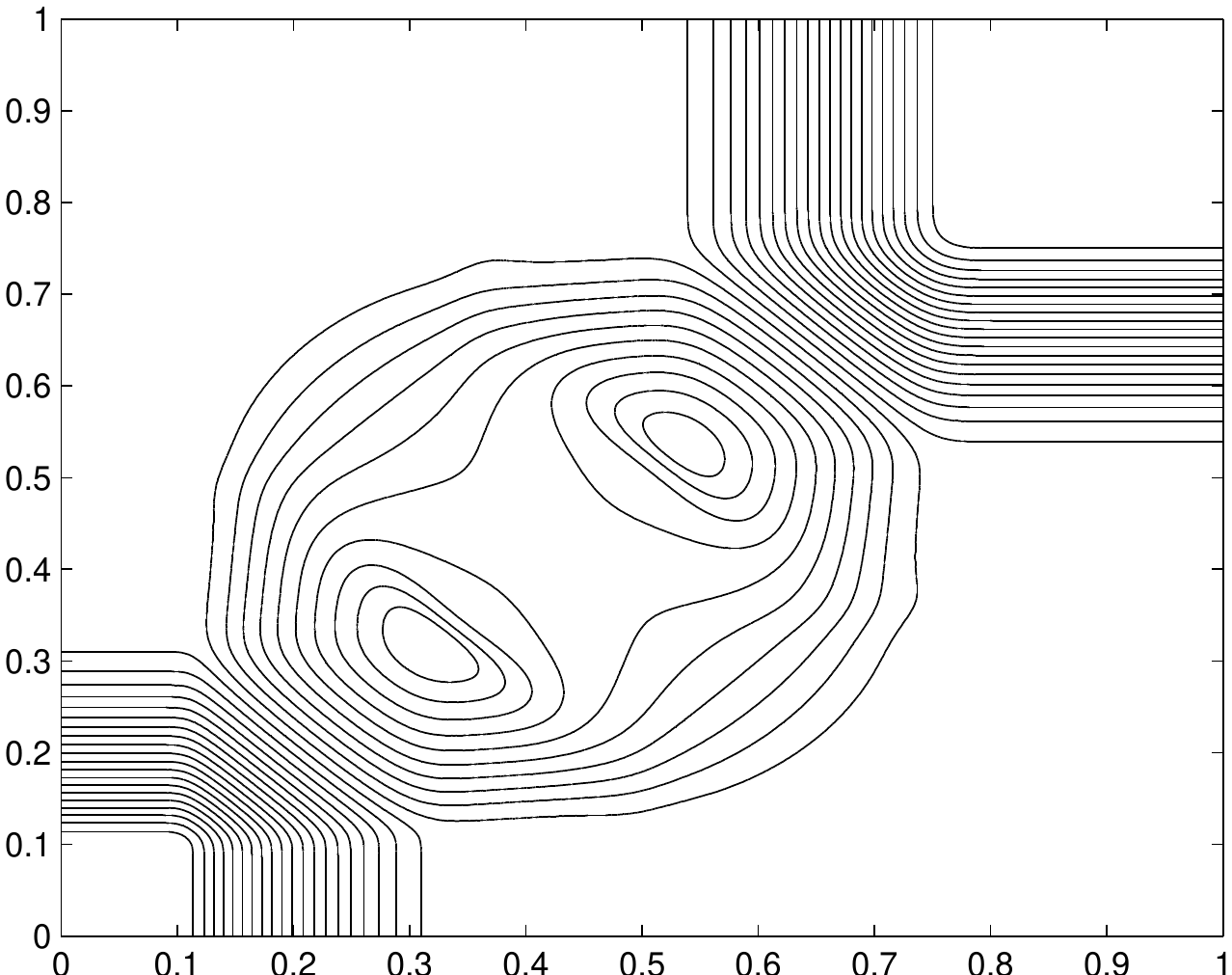} &\hspace{1cm} \includegraphics[scale=0.5]{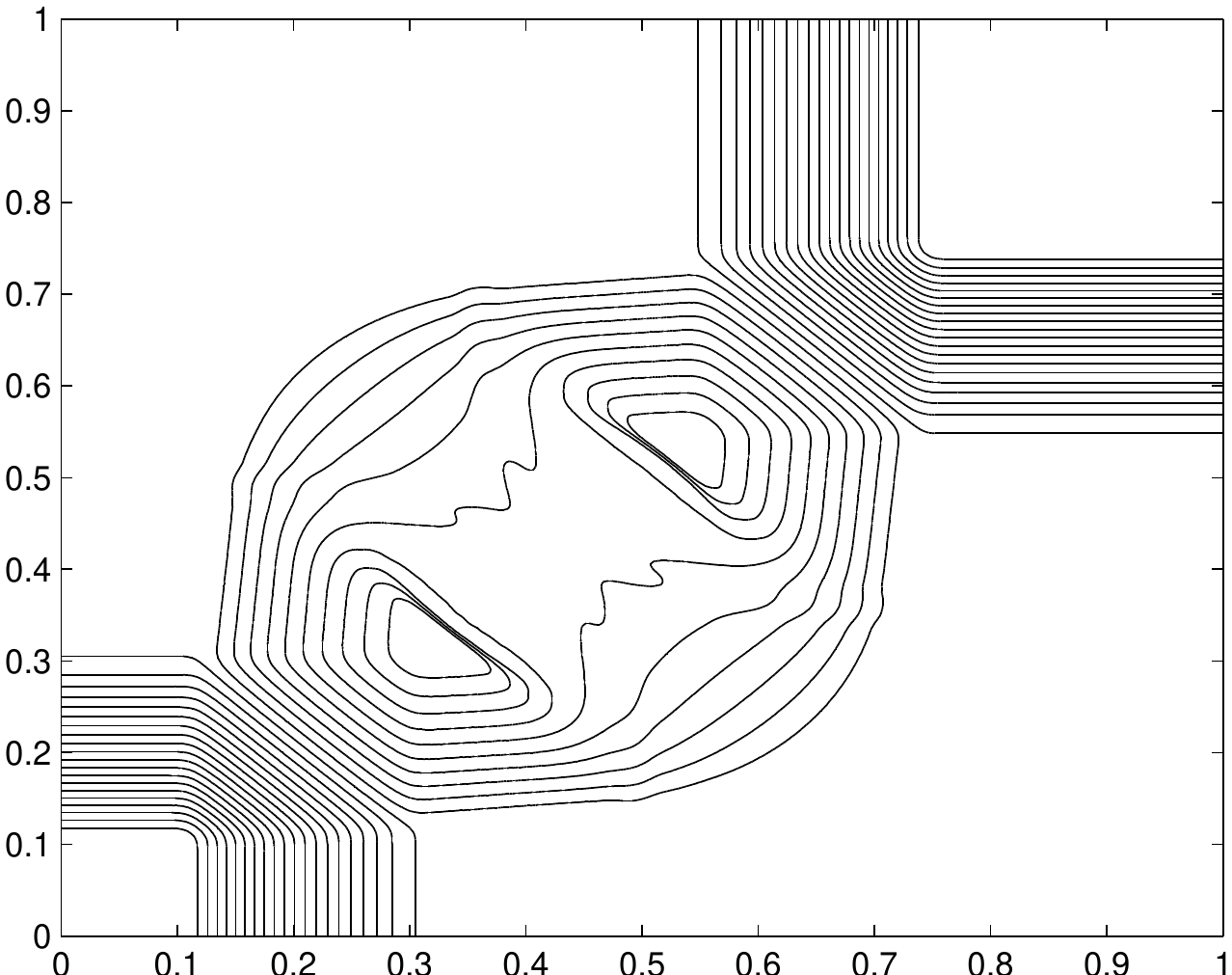}\\
(a)& (b)\\
\end{tabular}
\caption{\label{2DRP2} Configuration 2: Density contour plot (30
  lines) at $T=0.25$ non-oscillatory sharp resolution for rarefaction
  waves and corners in (b) compared to FORCE in (a)}
\end{figure}
 \item[] Configuration 3
 \begin{equation}
   \begin{array}{llll}
     p_1=1.5    &p_2=0.3      &p_3=0.029    &p_4=0.3\\
        \rho_1=1.5 &\rho2=0.5323 &\rho_3=0.138 &\rho4=0.5323\\
        u_1=0      &u_2=1.206    &u_3=1.206    &u_4=0\\
        v_1=0      &v_2=0        &v_3=1.206    &v_4=1.206
   \end{array}
 \end{equation}

\begin{figure}[!htb]
\begin{tabular}{cc}
\includegraphics[scale=0.5]{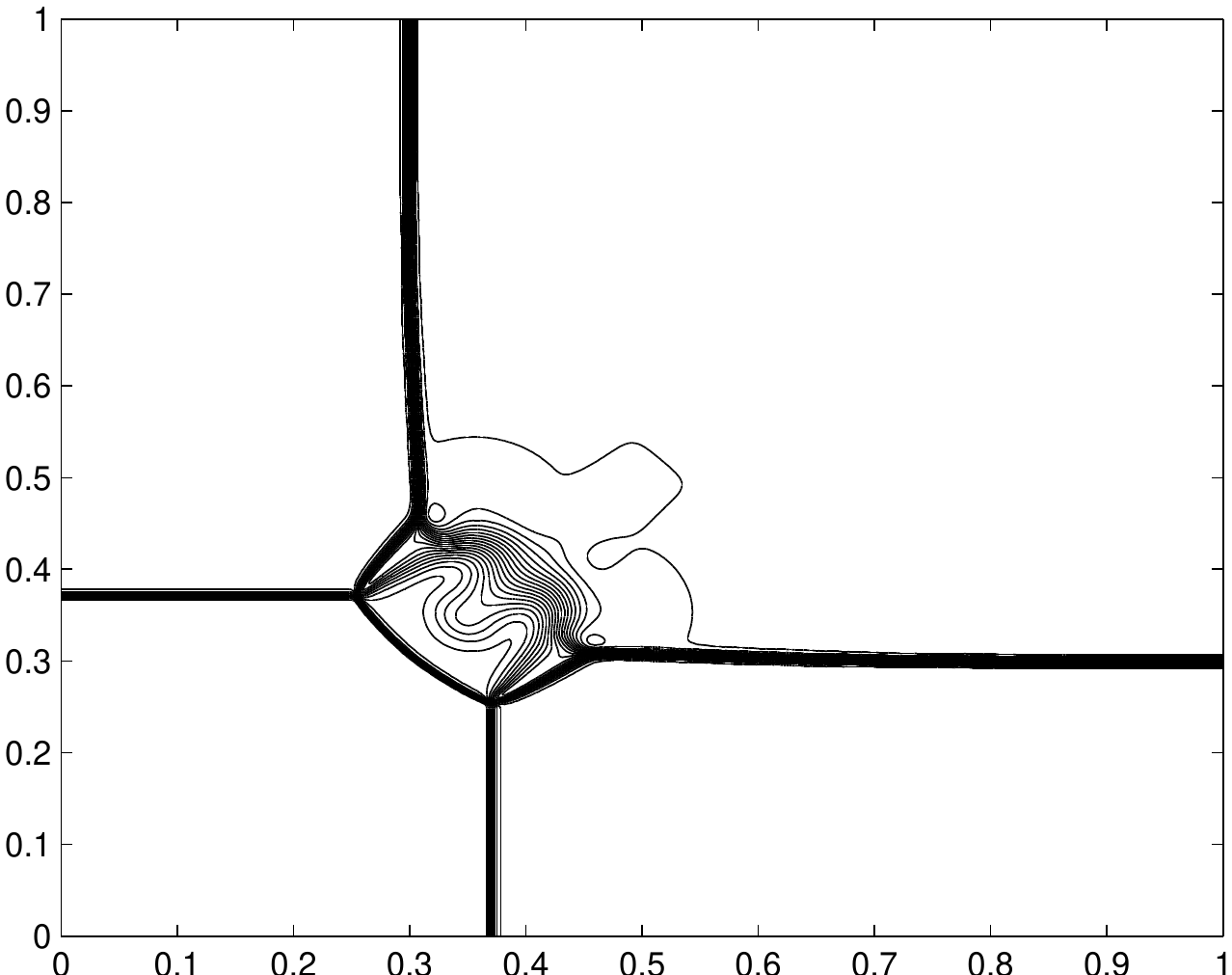} & \hspace{1cm}\includegraphics[scale=0.5]{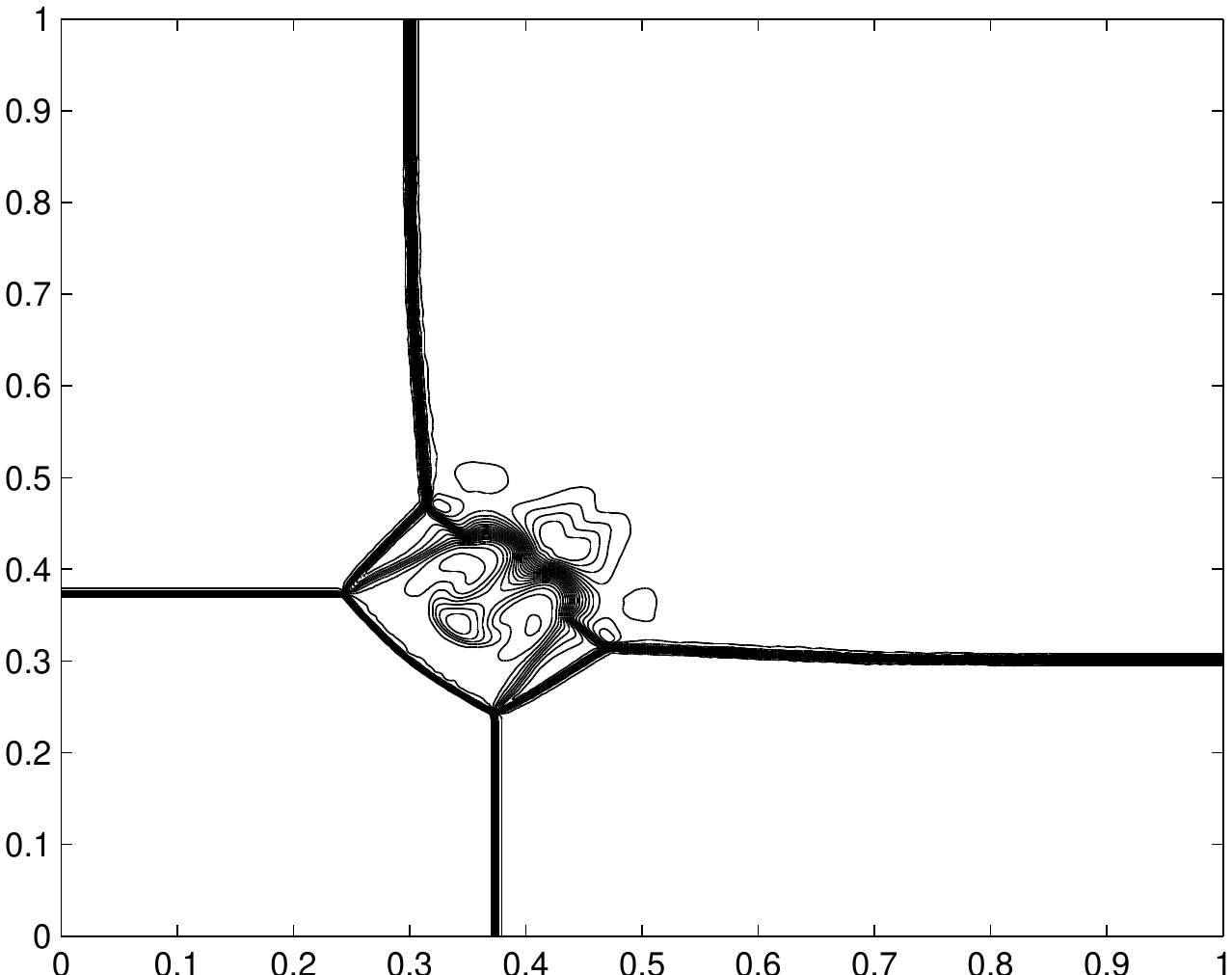}\\
(a)& (b)\\
\end{tabular}
\caption{\label{2DRP3} Configuration 3: Density contour plot (32 lines), All four shocks are sharply resolved with out induced oscillations in FLWBW-FORCE in (b) compared to FORCE in (a).}
\end{figure}
 \item[] Configuration 4
 \begin{equation}
   \begin{array}{llll}
     
        p_1=1.1    &p_2=0.35      &p_3=1.1    &p_4=0.35\\
        \rho_1=1.1 &\rho_2=0.5065 &\rho_3=1.1 & \rho_4=0.5065\\
        u_1=0      &u_2=0.8939    &u_3=0.8939 &u_4=0\\
        v_1=0      &v_2=0         &v_3=0.8939 &v_4=0.8939
   \end{array}
 \end{equation}
\begin{figure}[!htb]
\begin{tabular}{cc}
\includegraphics[scale=0.5]{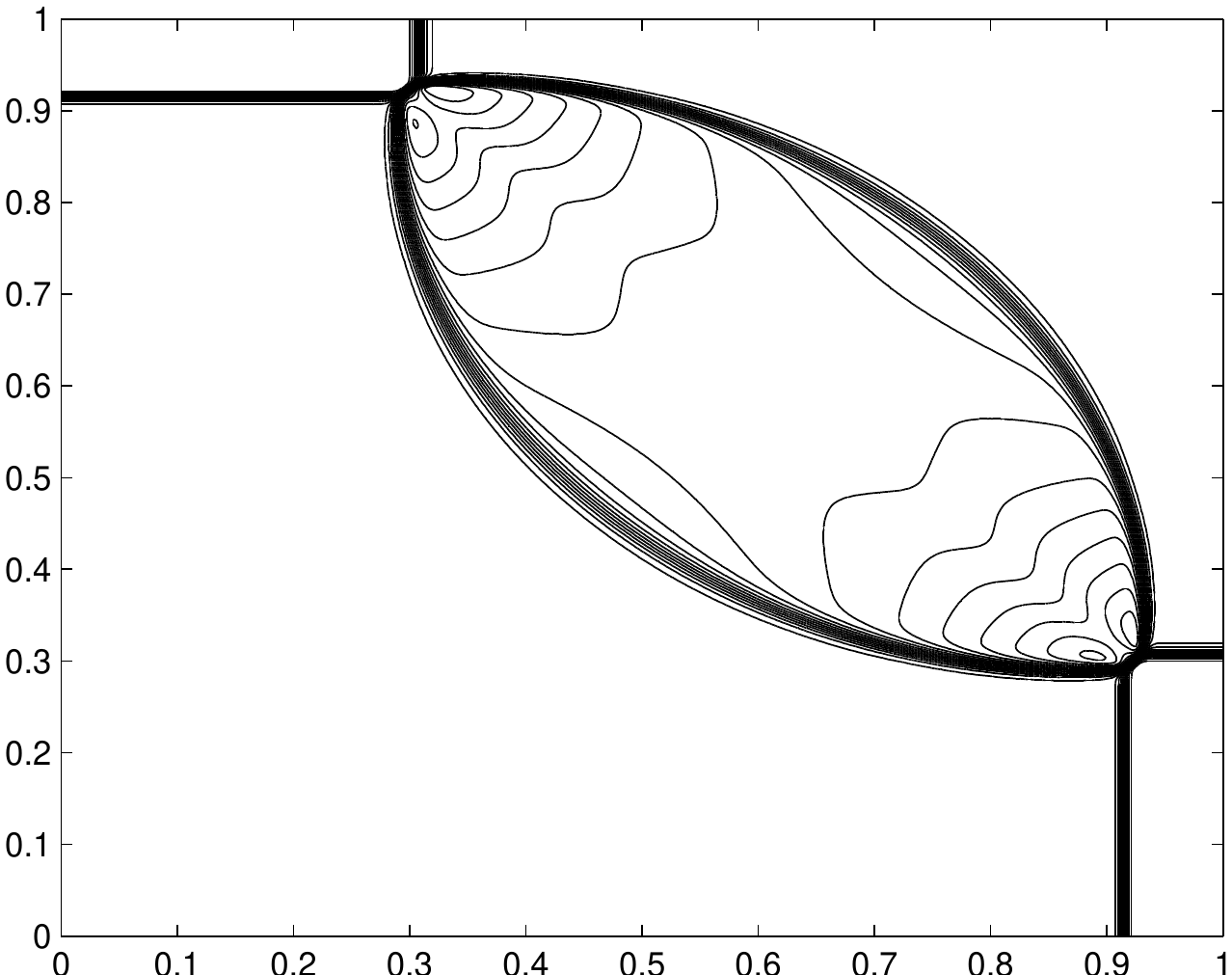} & \hspace{1cm} \includegraphics[scale=0.5]{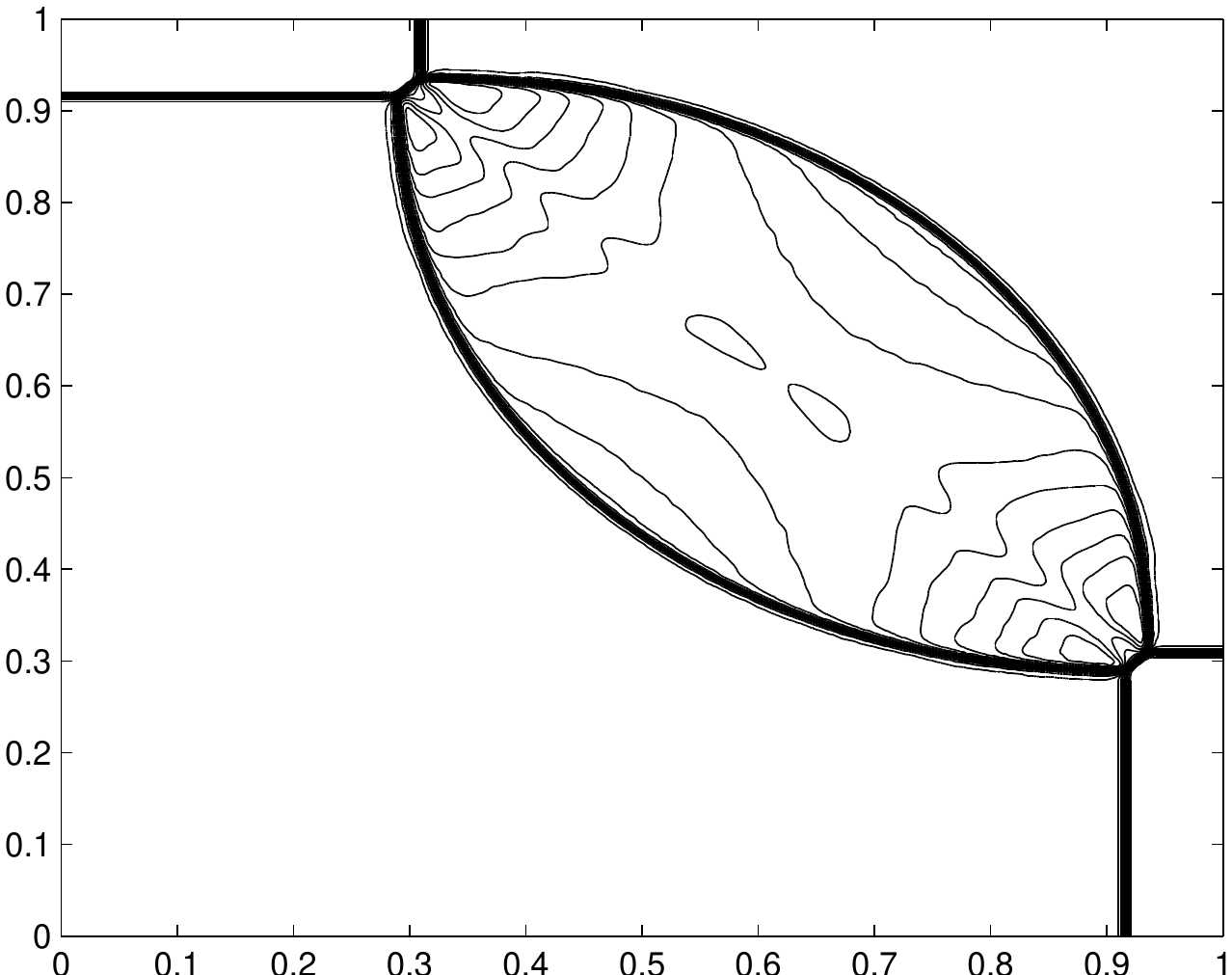}\\
(a)& (b)\\
\end{tabular}
\caption{\label{2DRP4} Configuration 4: Density contour plot (30
  lines) non-oscillatory, FLWBW-FORCE yield sharp resolution for shocks in (b)  compared to FORCE in (a)}
\end{figure}
\item[] Configuration 5
 \begin{equation}
   \begin{array}{llll}
     p_1=1    &p_2=1      &p_3=1    &p_4=1\\
        \rho_1=1 &\rho_2=2 &\rho_3=1 & \rho_4=3\\
        u_1=-0.75      &u_2=-0.75    &u_3=0.75 &u_4=0.75\\
        v_1=-0.5      &v_2=0.5         &v_3=0.5 &v_4=-0.5
   \end{array}
 \end{equation}
\begin{figure}[!htb]
\begin{tabular}{cc}
\includegraphics[scale=0.5]{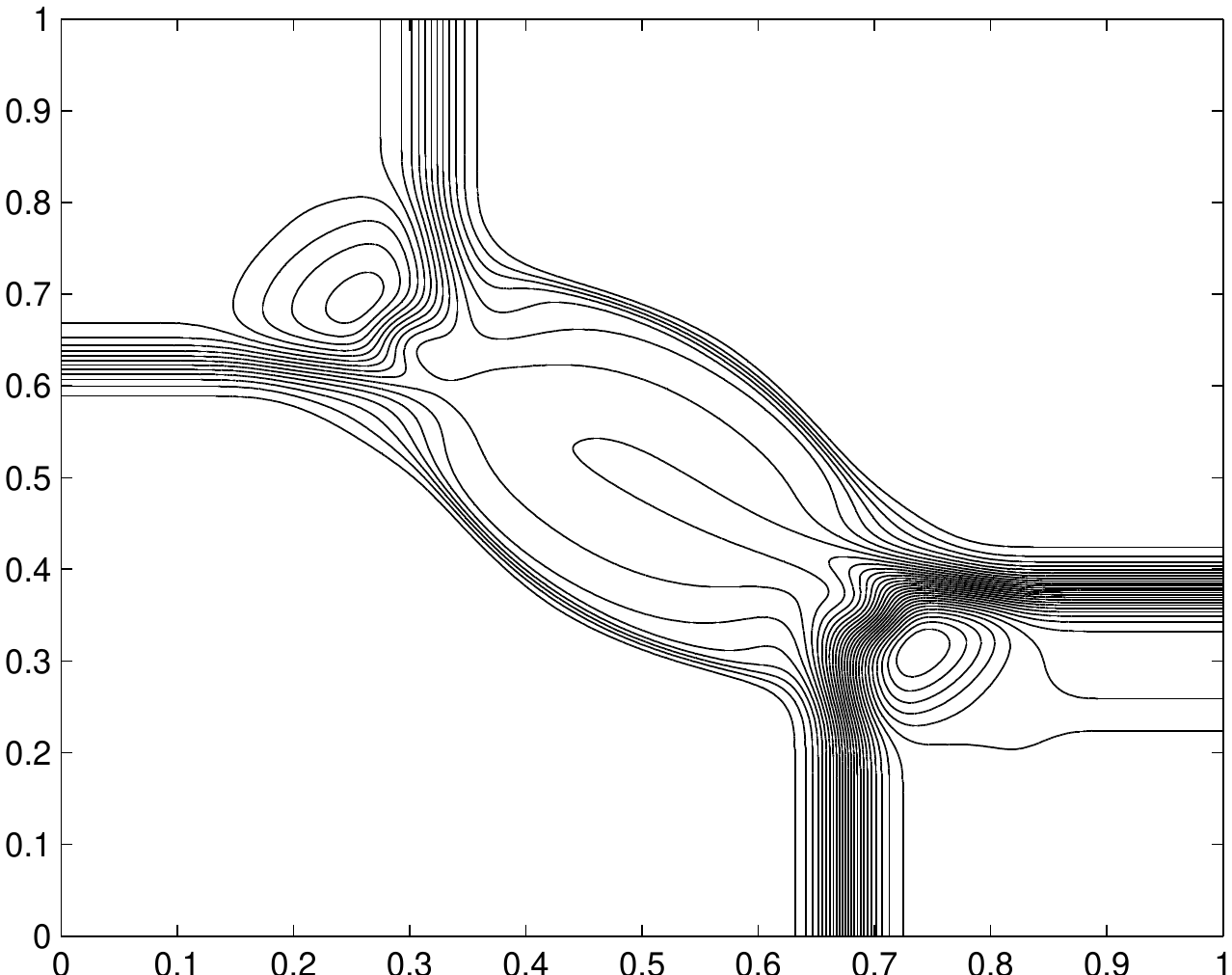} & \hspace{1cm} \includegraphics[scale=0.5]{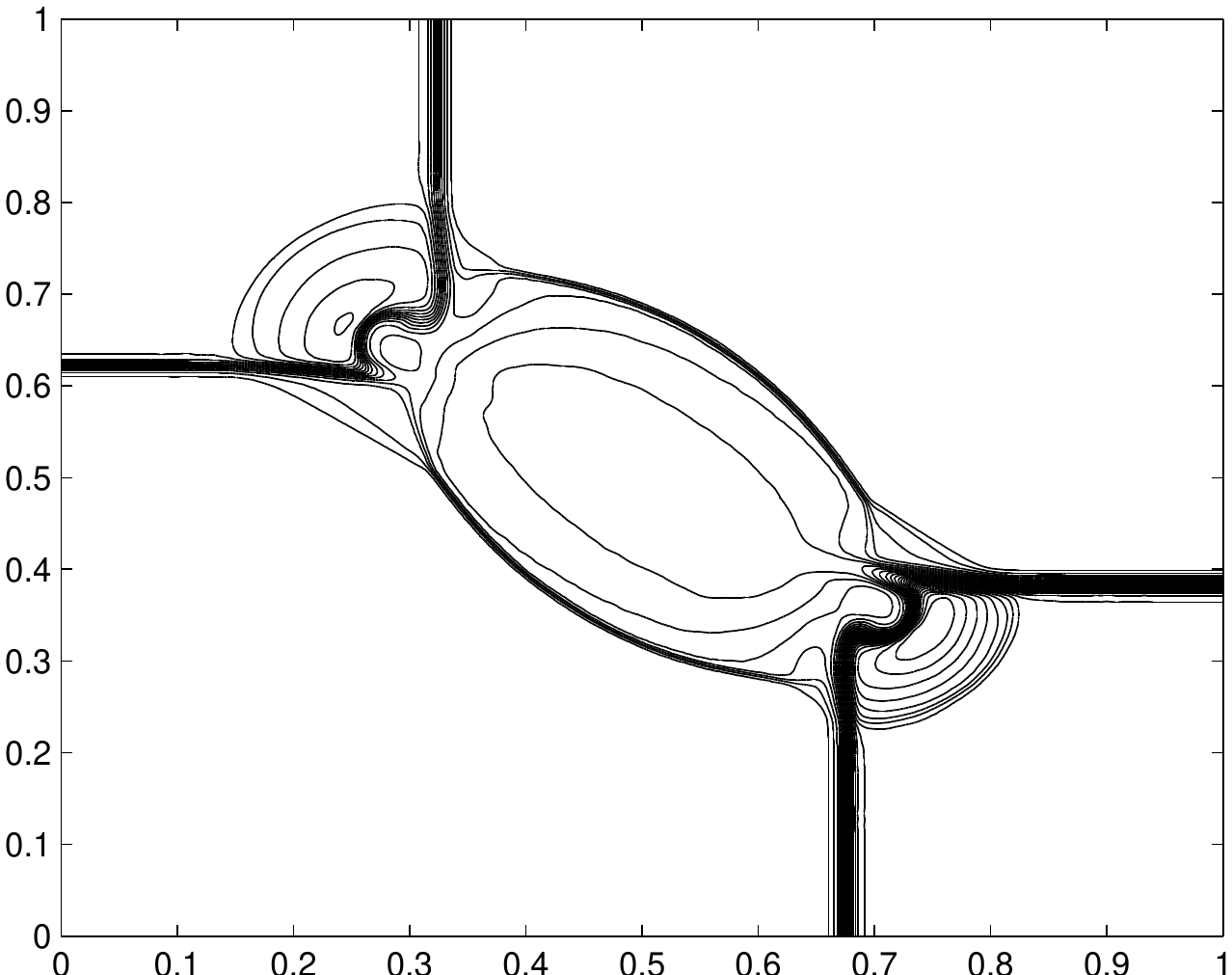}\\
(a)& (b)\\
\end{tabular}
\caption{\label{2DRP5} Configuration 5: Density plot (30 Lines) all four contacts are poorly resolved
  by FORCE (a), though FLWBW-FORCE yield sharp resolution to contacts with out oscillations in (b).}
\end{figure}
\item[] Configuration 6
 \begin{equation}
   \begin{array}{llll}
     p_1=1         &p_2=1           &p_3=1           &p_4=1\\
        \rho_1=1       &\rho_2=2         &\rho_3=1         &\rho_4=3\\
        u_1=0.75      &u_2=0.75       &u_3=-0.75       &u_4=-0.75\\
        v_1=-0.5      &v_2=0.5         &v_3=0.5         &v_4=-0.5
   \end{array}
 \end{equation}
\begin{figure}[!htb]
\begin{tabular}{cc}
\includegraphics[scale=0.5]{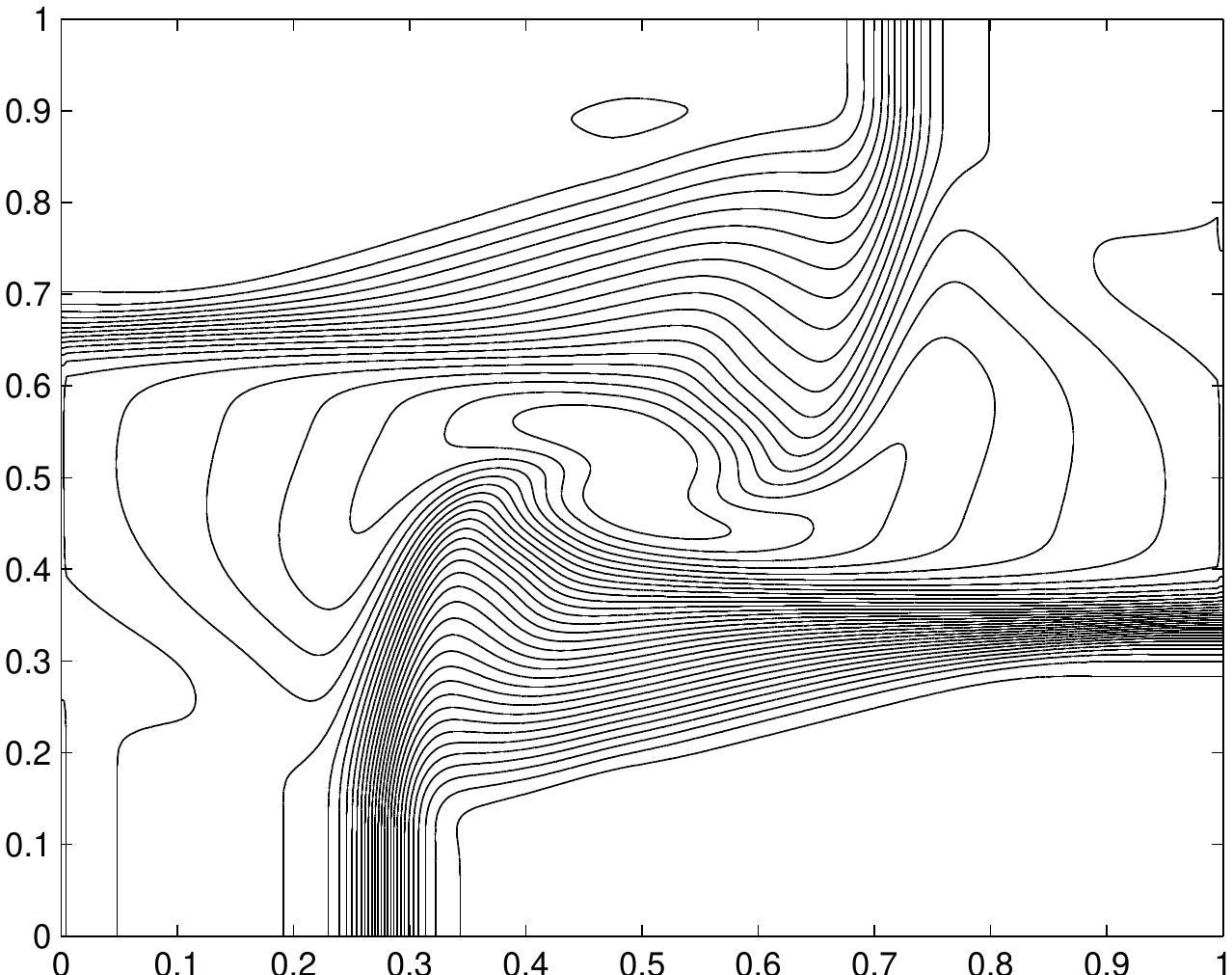} &\hspace{1cm} \includegraphics[scale=0.5]{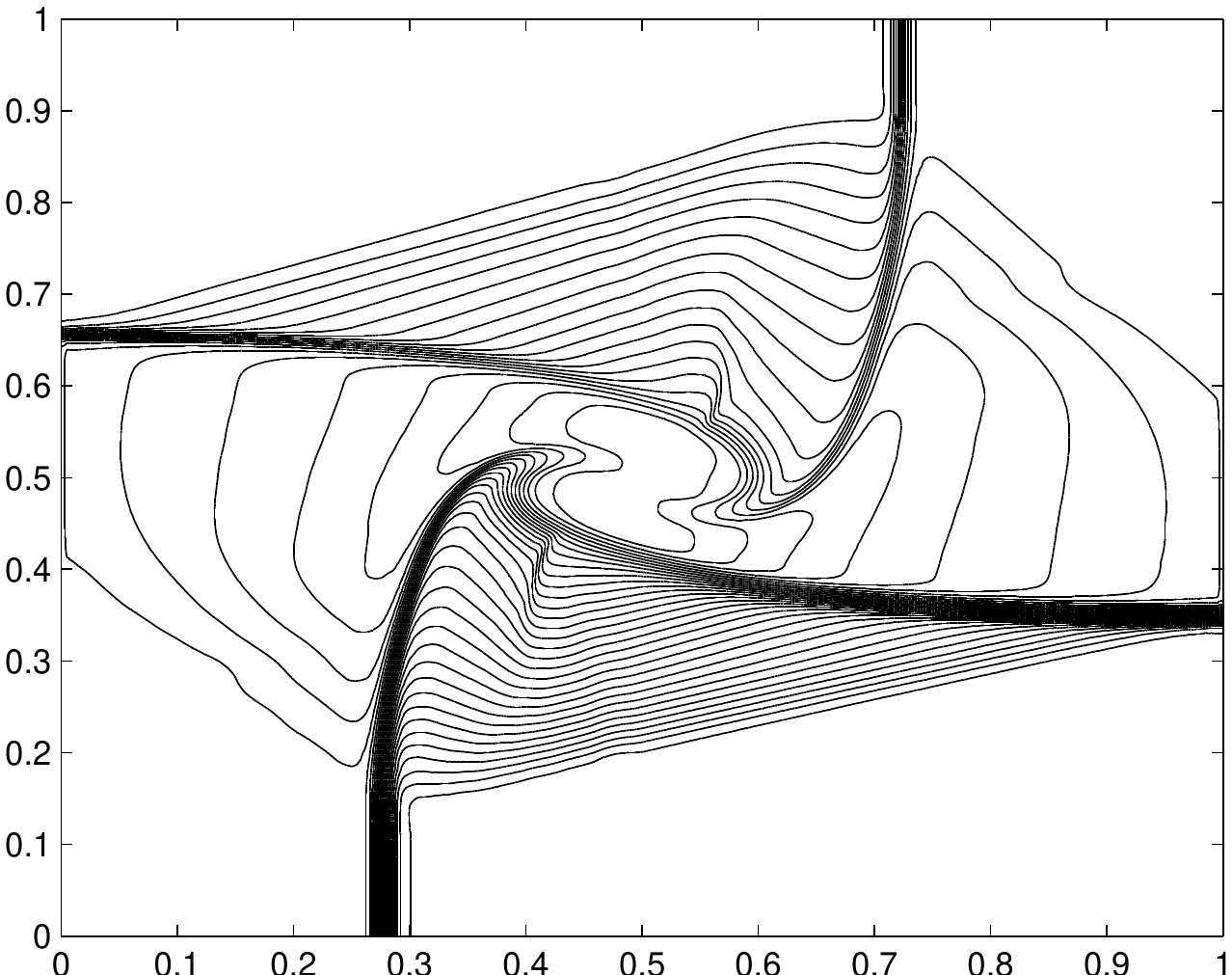} \\
(a)& (b)\\
\end{tabular}
\caption{\label{2DRP6} Configuration 6: The ripples in NE and SW
  quadrants are captured with comparable resolution with the one in
  \cite{Kurganov2002} using FLWBW-FORCE (b) though the resolution for
  contacts is little diffusive but much sharper compared to FORCE (a).}
\end{figure}
  \item[] Configuration 7
 \begin{equation}
   \begin{array}{llll}
     p_1=1      &p_2=0.4            &p_3=0.4           &p_4=0.4\\
    \rho_1=1    &\rho_2=0.5197       &\rho_3=0.8         &\rho_4=0.5197\\
     u_1=0.1    &u_2=-0.6259       & u_3=0.1          & u_4=0.1\\
     v_1=0.1    &v_2=0.1            &v_3=0.1           &v_4=-0.6259\\
   \end{array}
 \end{equation}
\begin{figure}[!htb]
\begin{tabular}{cc}
\includegraphics[scale=0.5]{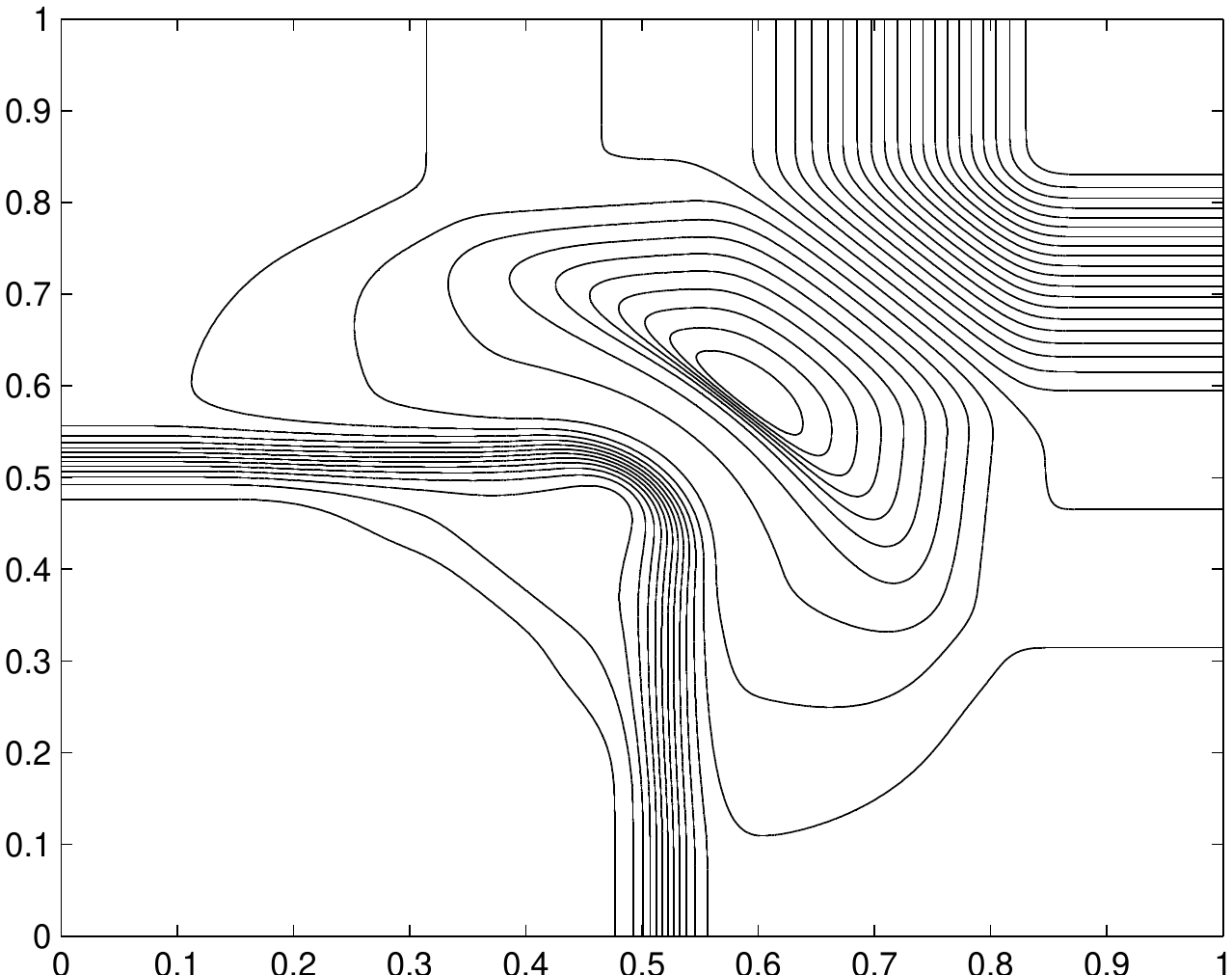} &\hspace{1cm} \includegraphics[scale=0.5]{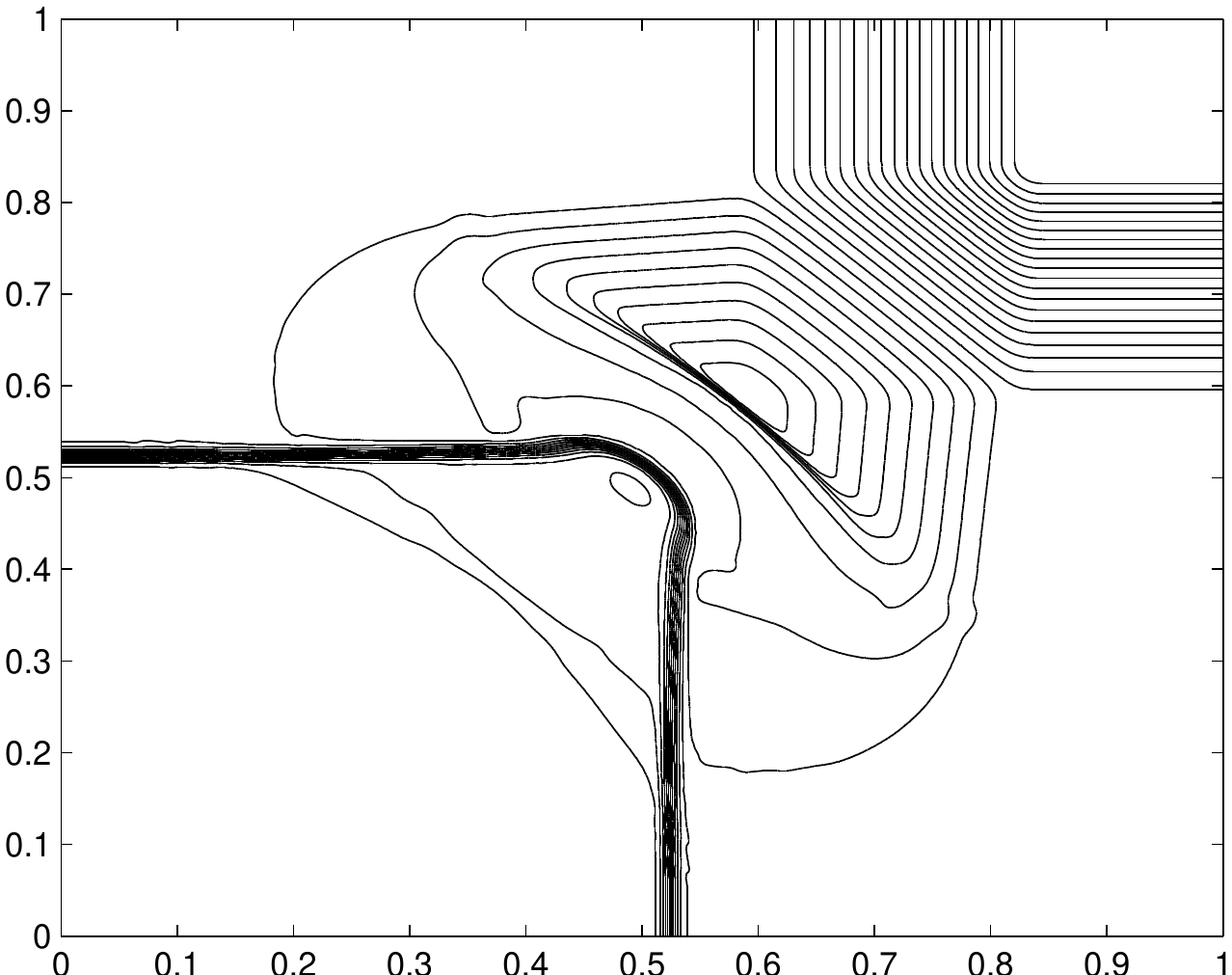}\\
(a)& (b)\\
\end{tabular}
\caption{\label{2DRP7} Configuration 7: The contacts in South and West
  are crisply resolved by FLWBW-FORCE (b). Moreover the rarefaction
  corners in NE quadrants are significantly sharper than FORCE (a).}
\end{figure}
\item[] Configuration 8
 \begin{equation}
   \begin{array}{llll}
     p_1=0.4       &p_2=1       &p_3=1      &p_4=1\\
        \rho_1=0.5197 &\rho_2=1    &\rho_3=0.8 &\rho_4=1\\
        u_1=0.1       &u_2=-0.6259 &u_3=0.1    &u_4=0.1\\
        v_1=0.1       &v_2=0.1     &v_3=0.1    &v_4=-0.6259
   \end{array}
 \end{equation}
\begin{figure}[!htb]
\begin{tabular}{cc}
\includegraphics[scale=0.5]{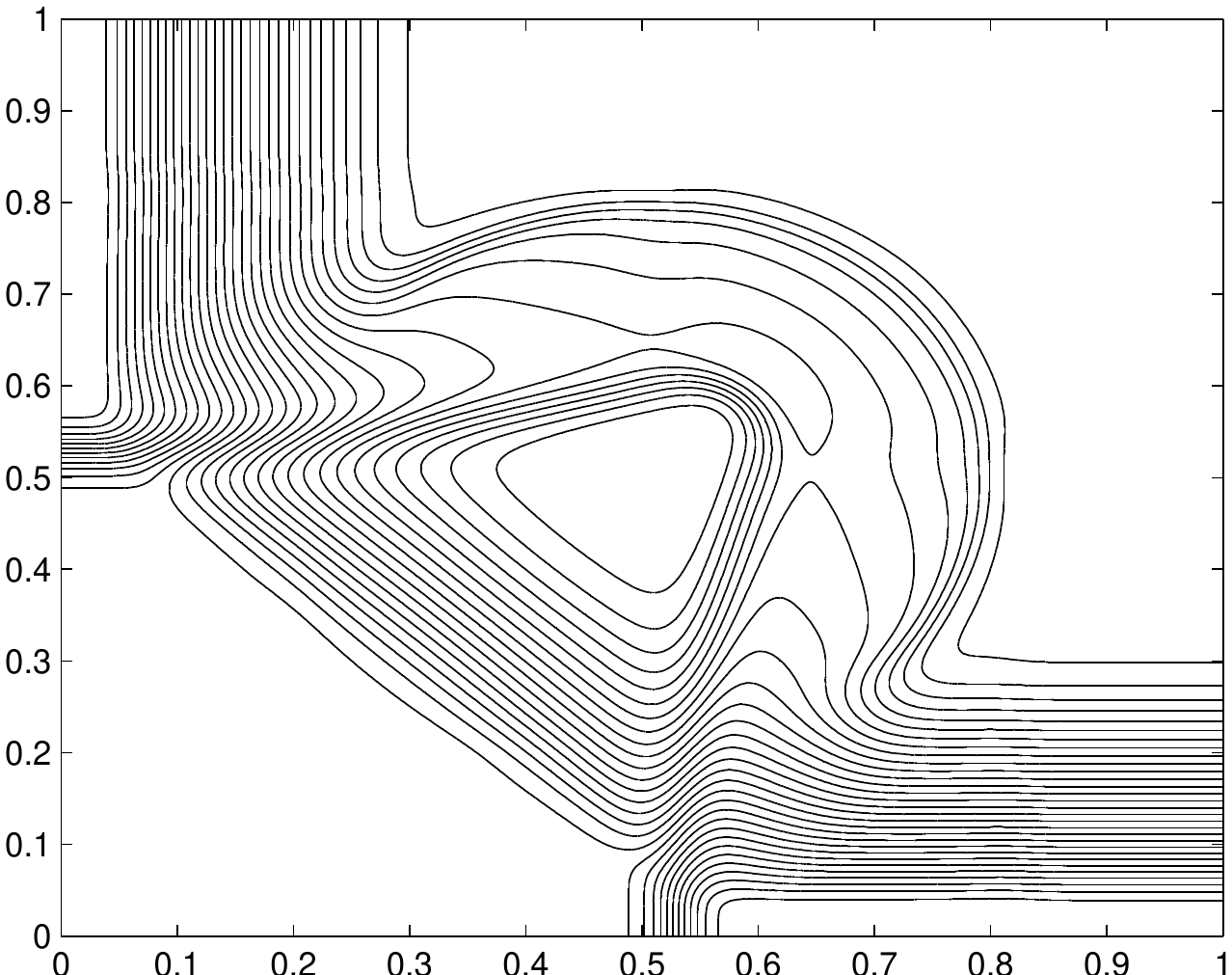} & \hspace{1cm} \includegraphics[scale=0.5]{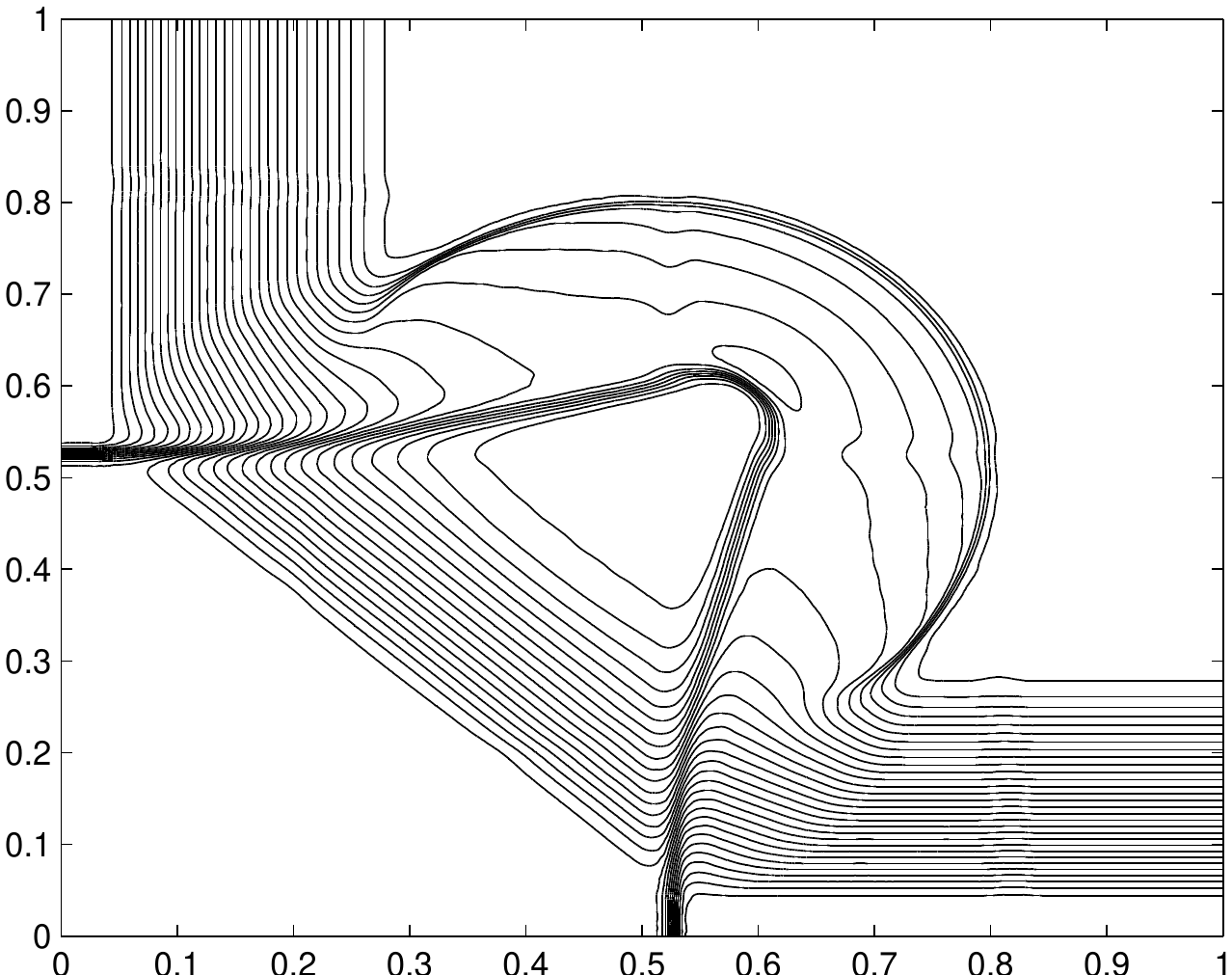} \\
(a)& (b)\\
\end{tabular}
\caption{\label{2DRP8} Configuration 8: The semi-circular wave front in NE is sharply resolved and resolution for contacts are comparable with the one in \cite{Kurganov2002}.}

\end{figure}
\item[] Configuration 9
 \begin{equation}
   \begin{array}{llll}
     p_1=1&           p_2=1&            p_3=0.4&           p_4=0.4\\
     \rho1=1&        \rho2=2&          \rho3=1.039&       \rho4=0.5197\\
     u_1=0&           u_2=0.0&          u_3=0&             u_4=0.0\\
     v_1=0.3&         v_2=-0.3&         v_3=-0.8133&       v_4=-0.4259\\
   \end{array}
 \end{equation}
\begin{figure}[!htb]
\begin{tabular}{cc}
\includegraphics[scale=0.5]{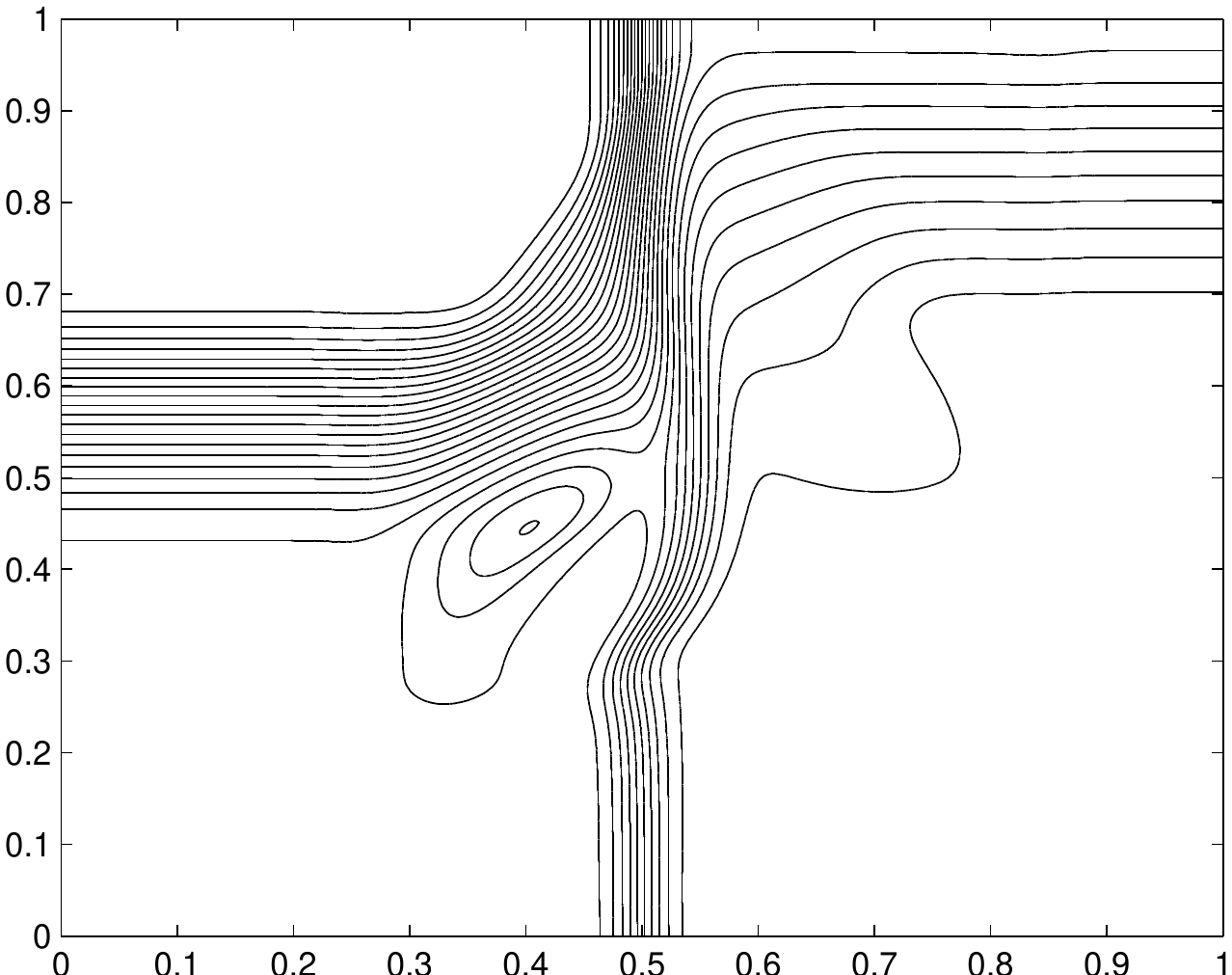} &\hspace{1cm} \includegraphics[scale=0.5]{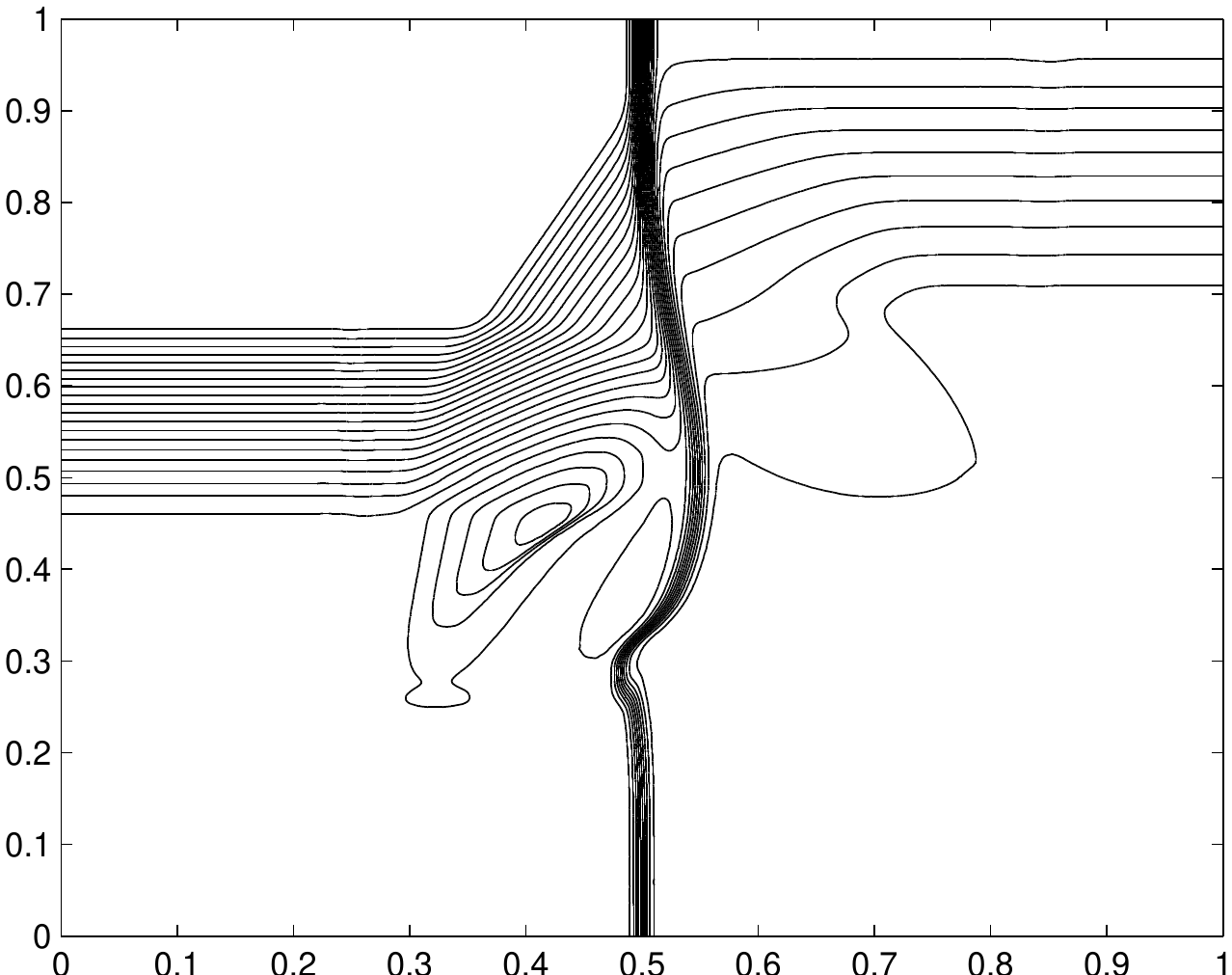} \\
(a)& (b)\\
\end{tabular}
\caption{\label{2DRP9} Configuration 9: Again rarefaction and corners are resolved sharper than FORCE (a). The vertical contact is crisply captured by FLWBW-FORCE.}
\end{figure}
\item[] Configuration 10
 \begin{equation}
   \begin{array}{llll}
     p_1=1       &p_2=1           &p_3=0.3333           &p_4=0.3333\\
    \rho_1=1     &\rho_2=0.5      &\rho_3=0.2281         &\rho_4=0.4562\\
     u_1=0       &u_2=0           &u_3=0                &u_4=0\\
     v_1=0.4297  &v_2=0.6076      &v_3=-0.6076          &v_4=-0.4297
   \end{array}
 \end{equation}
\begin{figure}[!htb]
\begin{tabular}{cc}
\includegraphics[scale=0.5]{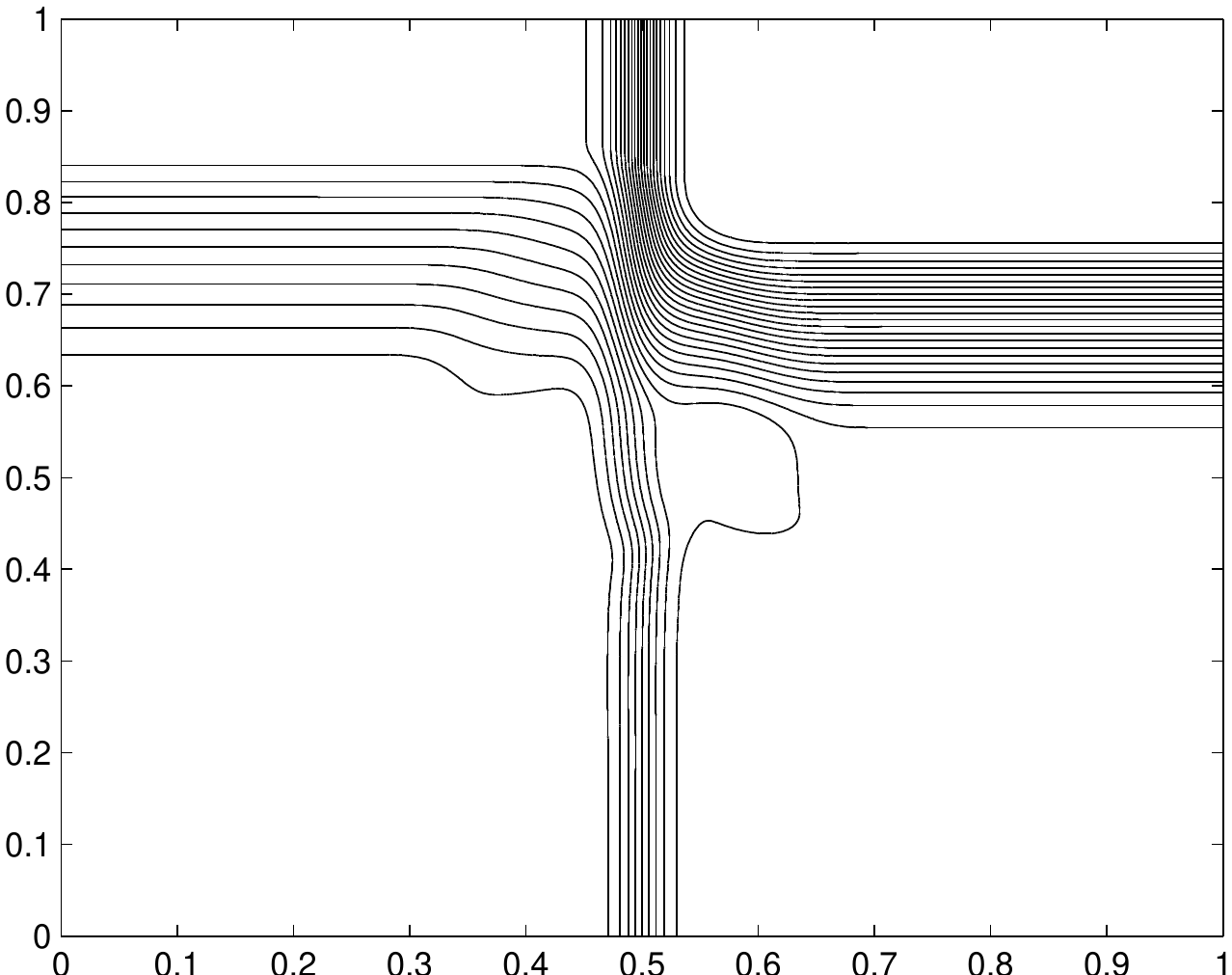} & \hspace{1cm}\includegraphics[scale=0.5]{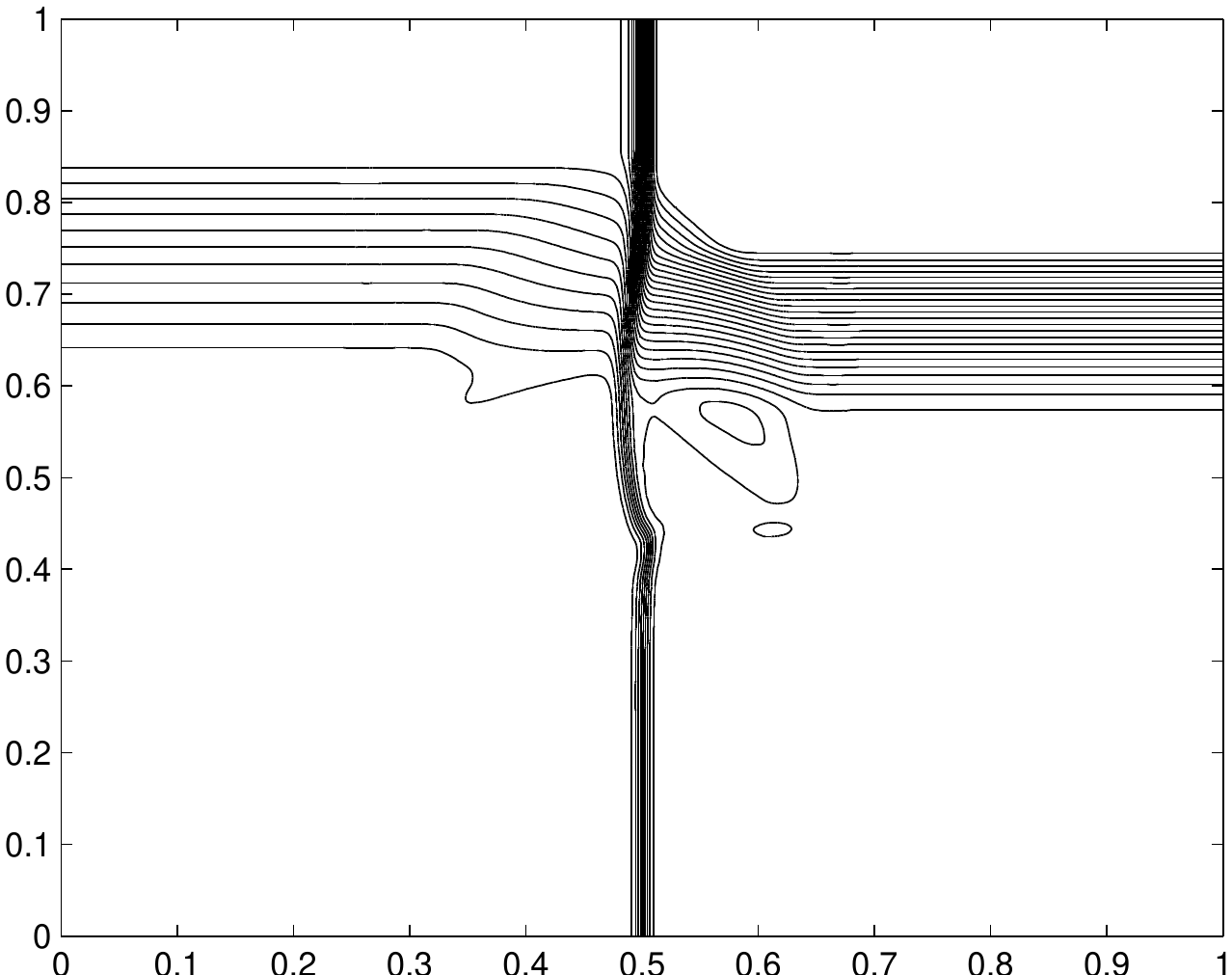} \\
(a)& (b)\\
\end{tabular}
\caption{\label{2DRP10} Configuration 10: Resolution of contacts by FLWBW-FORCE is comparable with the one in \cite{Kurganov2002}.}
\end{figure}
  \item[] Configuration 11
 \begin{equation}
   \begin{array}{llll}
     p_1=1           &p_2=0.4         &p_3=0.4           &p_4=0.4\\
   \rho_1=1          &\rho_2=0.5313    &\rho_3=0.8       &\rho_4=0.5313\\
        u_1=0.1         &u_2=0.8276      &u_3=0.1           &u_4=0.1\\
        v_1=0.0         &v_2=0.0         &v_3=0.0           &v_4=0.7276
   \end{array}
 \end{equation}

\begin{figure}[!htb]
\begin{tabular}{cc}
\includegraphics[scale=0.5]{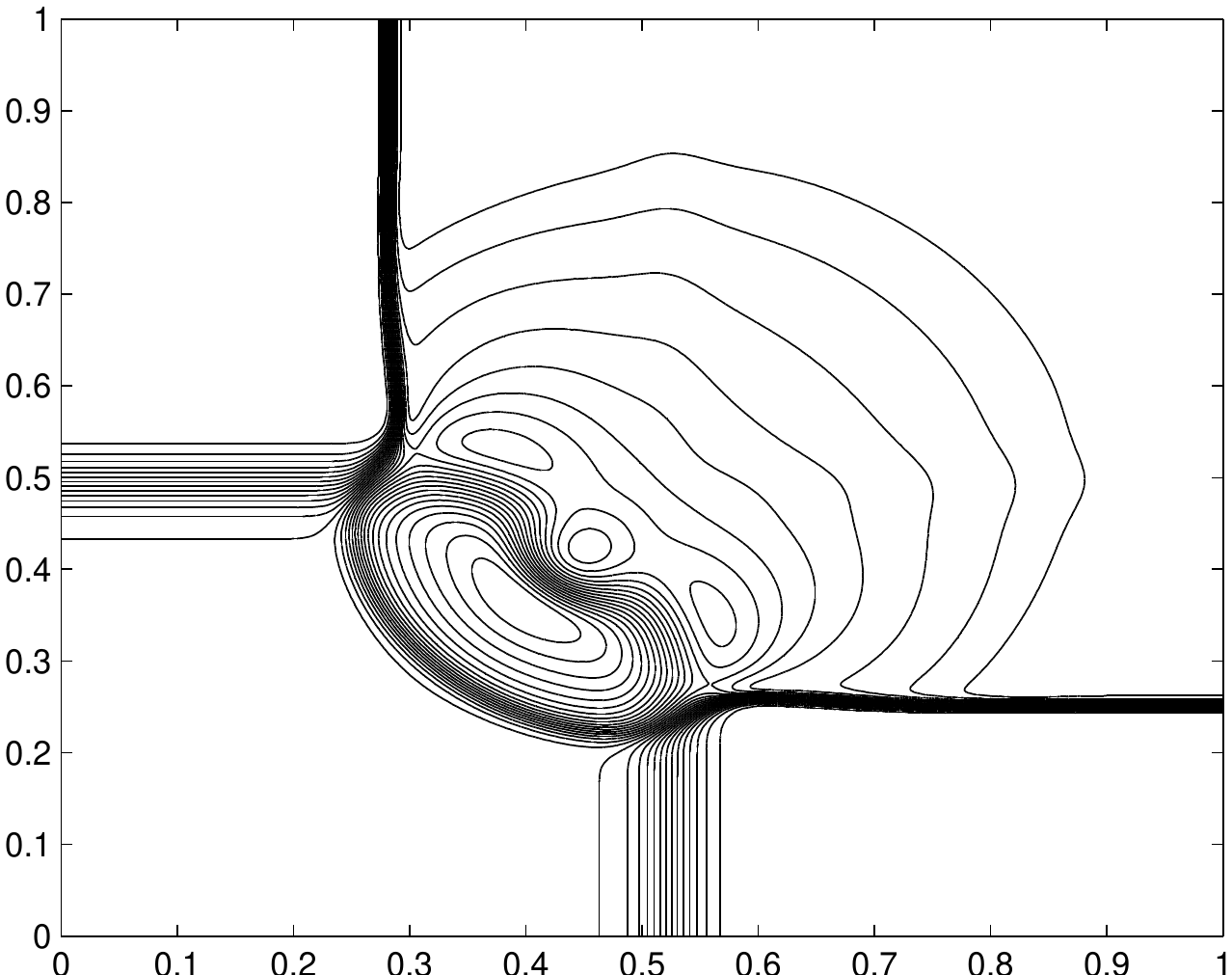} & \hspace{1cm} \includegraphics[scale=0.5]{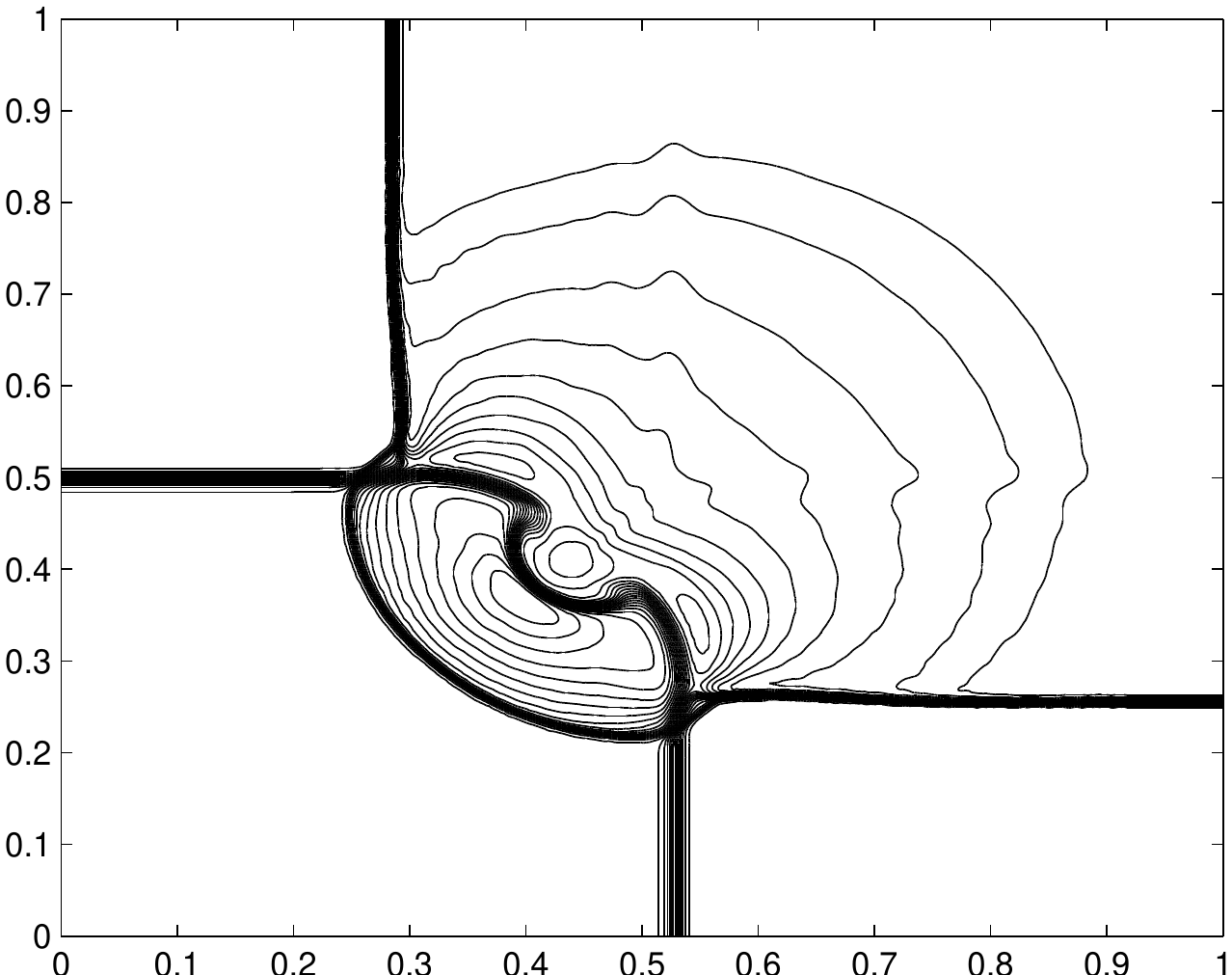} \\
(a)& (b)\\
\end{tabular}
\caption{\label{2DRP11} Configuration 11: Resolution of contact is SW quadrant, Shocks in SE and NW quadrants by FLWBW-FORCE (b) is better than that of the one in \cite{Kurganov2002, LaxLiu1998}.}
\end{figure}

\item[] Configuration 12
 \begin{equation}
   \begin{array}{llll}
     p_1=0.4 &         p_2=1&            p_3=1.0&           p_4=1.0\\
     \rho1=0.5313&     \rho2=1.0222&     \rho3=0.8&         \rho4=1.0\\
        u_1=0.1&          u_2=-0.6179&      u_3=0.1&         u_4=0.1\\
        v_1=0.1 &         v_2=0.1&          v_3=0.1&         v_4=0.8276
   \end{array}
 \end{equation}

\begin{figure}[!htb]
\begin{tabular}{cc}
\includegraphics[scale=0.5]{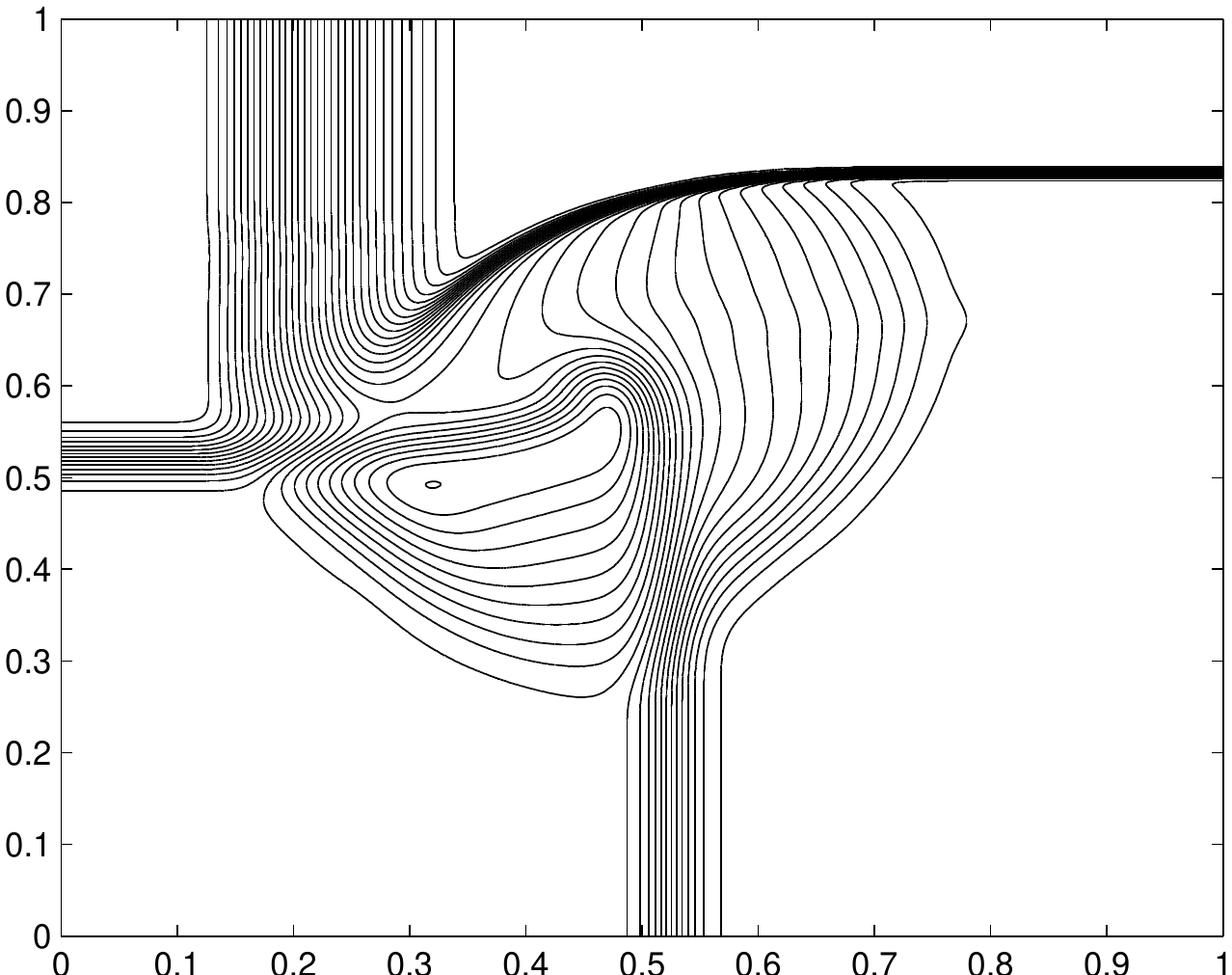} &\hspace{1cm} \includegraphics[scale=0.5]{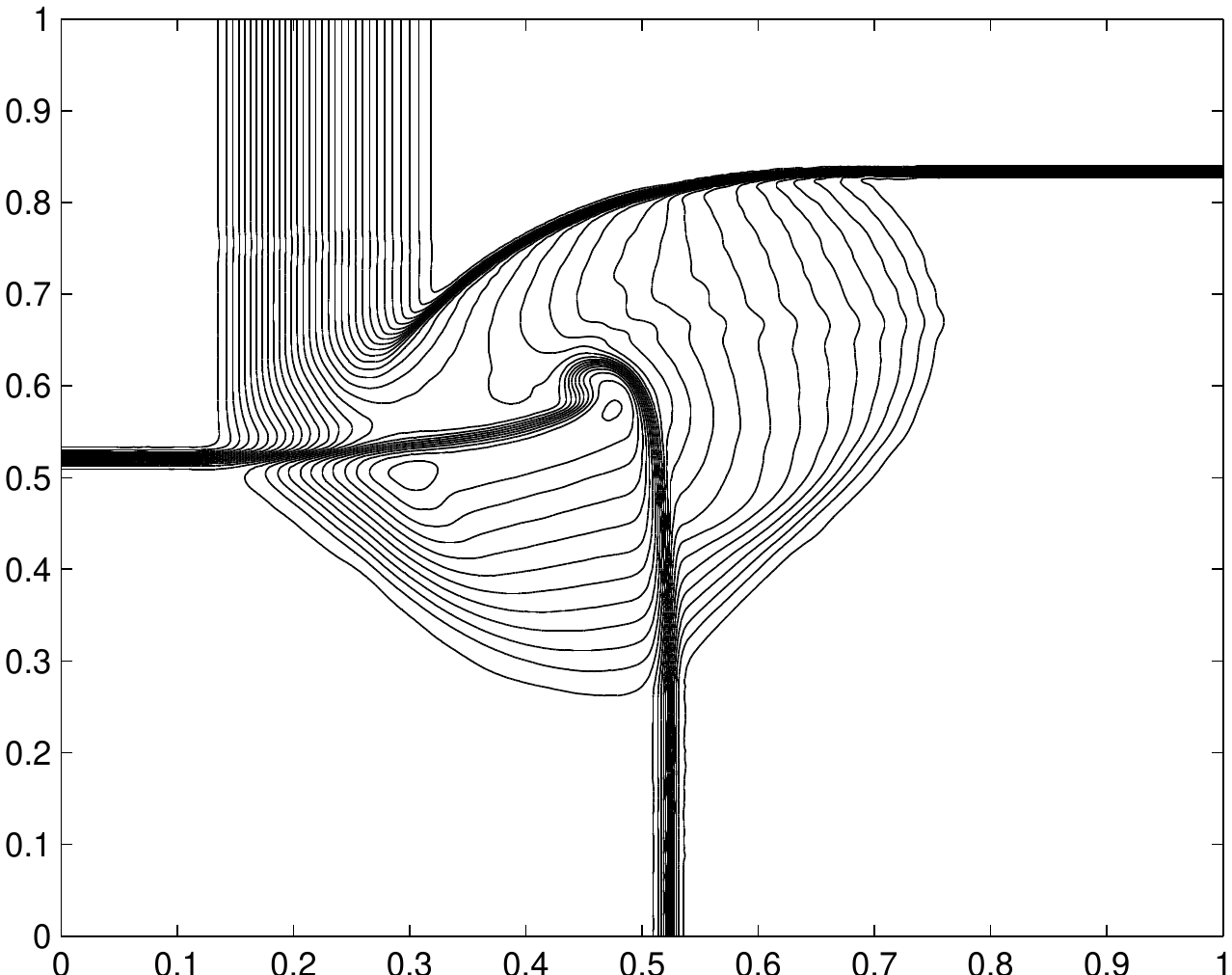} \\
(a)& (b)\\
\end{tabular}
\caption{\label{2DRP14} Configuration 12: FLWBW-FORCE recovers the
  ripples between NE shock and contact waves. The resolution for shock
  and contacts are comparable with the second order scheme results in
  \cite{Kurganov2002}.}
\end{figure}
\end{enumerate}
\textbf{Comments}


\section{Conclusion and Future work}\label{sec7}
In this work LMP/TVD bounds are obtained for uniformly second order
accurate schemes in non-conservative form. These bound show that
higher than second order TVD accuracy can be achieved at extrema and
steep gradient region in limiting sense i.e., when $r\rightarrow
0^{-}$. Based on the LMP/TVD bounds hybrid local maximum principle
satisfying schemes are constructed and applied on various benchmark
test problems. Numerical results show improvement in TVD approximation
of solution region with extreme points, smooth rarefaction as well
contact discontinuity compared to existing higher order TVD
method. For a separate work, the focus is on TVD bounds for multi-step
methods and efficient use of a shock detector. As, the algorithm
\ref{algo4} recovered the shock at right location for scalar case, it
would be interesting to devise a hybrid scheme for system by modifying
the wave speed choice in section \ref{algo4sys}, with out a shock
detector.




\begin{thebibliography}{10}
\bibitem{Breuss2010}
Michael Breuss.
\newblock An analysis of the influence of data extrema on some first and second
  order central approximations of hyperbolic conservation laws.
\newblock {\em ESAIM: Mathematical Modelling and Numerical Analysis},
  39(5):965--993, 3 2010.

\bibitem{jameson1995}
Jameson, Antony.
\newblock Positive schemes and shock modelling for compressible flows.
\newblock {\em International Journal for Numerical Methods in Fluids},20:743-776, 1995.

\bibitem{shuosher}
Shu Chi-Wang and Stanley Osher.
\newblock Efficient implementation of essentially non-oscillatory
  shock-capturing schemes, ii.
\newblock {\em Journal of Computational Physics}, 83:32--78, 1989.

\bibitem{Crandall}
Michael~G. Crandall and Andrew Majda.
\newblock Monotone difference approximations for scalar conservation laws.
\newblock {\em Mathematics of Computation}, 34(149):1--21, jan 1980.

\bibitem{davis1987}
S.~F. Davis.
\newblock A simplified tvd finite difference scheme via artificial viscosity.
\newblock {\em SIAM J. Sci. and Stat. Comput.}, 8(1):pp. 1--18, 1987.

\bibitem{despres}
Bruno Després and Frédéric Lagoutière.
\newblock Contact discontinuity capturing schemes for linear advection and
  compressible gas dynamics.
\newblock {\em Journal of Scientific Computing}, 16(4):479--524, 2001.

\bibitem{riteshEJDE}
Ritesh~Kumar Dubey.
\newblock Total variation stability and second-order accuracy at extrema.
\newblock In Goddard Jerome and Zhu Jianping, editors, {\em Ninth MSU-UAB
  Conference on Differential Equations and Computational Simulations, Electron.
  J. Diff. Eqns. Conf.}, pages 53--63, sept 2012.

\bibitem{Dubey2013}
Ritesh~Kumar Dubey.
\newblock Flux limited schemes: Their classification and accuracy based on
  total variation stability regions.
\newblock {\em Applied Mathematics and Computation}, 224(0):325 -- 336, 2013.


\bibitem{godunov}
S.~K. Godunov.
\newblock A difference scheme for numerical solution of discontinuous solution
  of hydrodynamic equations, translated us joint publ. res. service, jprs 7225
  nov. 29, 1960.
\newblock {\em Math. Sbornik}, 47:271--306, 1959.

\bibitem{goodman1988}
Jonathan~B. Goodman and Randall~J. LeVeque.
\newblock A geometric approach to high resolution tvd schemes.
\newblock {\em SIAM J. Numer. Anal.}, 25:268--284, April 1988.

\bibitem{harten1983}
Ami Harten.
\newblock High resolution schemes for hyperbolic conservation laws,.
\newblock {\em Journal of Computational Physics}, 49(3):357 -- 393, 1983.

\bibitem{Harten1989}
Ami Harten.
\newblock Eno schemes with subcell resolution.
\newblock {\em Journal of Computational Physics}, 83(1):148 -- 184, 1989.


\bibitem{harten1984}
Ami Harten and Peter~D. Lax.
\newblock On a class of high resolution total-variation-stable
  finite-difference schemes.
\newblock {\em SIAM Journal on Numerical Analysis}, 21(1):pp. 1--23, 1984.


\bibitem{Hou1994}
Thomas~Y. Hou and Philippe~G. Lefloch.
\newblock Why nonconservative schemes converge to wrong weak solutions: Error
  analysis.
\newblock {\em Mathematics of Computation}, 62(206), 1994.

\bibitem{jairaghu}
S.~Jaisankar and S.V.~Raghurama Rao.
\newblock A central rankine–hugoniot solver for hyperbolic conservation laws.
\newblock {\em Journal of Computational Physics}, 228(3):770 -- 798, 2009.


\bibitem{rkd2007}
M.K. Kadalbajoo and Ritesh Kumar.
\newblock A high resolution total variation diminishing scheme for hyperbolic
  conservation law and related problems.
\newblock {\em Applied Mathematics and Computation}, 175(2):1556 -- 1573, 2006.
\bibitem{rkdejde1}
Ritesh Kumar Dubey. 
\newblock A hybrid semi-primitive shock capturing scheme for conservation laws.
\newblock{\em  Electron. J. Diff. Eqns.,} Eighth Mississippi State - UAB Conference on Differential Equations and Computational Simulations. Conference 19 (2010), pp. 65-73.








\bibitem{Kurganov2012}
Alexander Kurganov and Yu~Liu.
\newblock New adaptive artificial viscosity method for hyperbolic systems of
  conservation laws.
\newblock {\em Journal of Computational Physics}, 231(24):8114 -- 8132, 2012.

\bibitem{Kurganov2002}
Alexander Kurganov and Eitan Tadmor.
\newblock Solution of two-dimensional riemann problems for gas dynamics without
  riemann problem solvers.
\newblock {\em Numerical Methods for Partial Differential Equations},
  18(5):584--608, 2002.

\bibitem{Laney}
Culbert~B. Laney.
\newblock {\em {Computational gasdynamics}}.
\newblock Cambridge University Press, 1998.

\bibitem{LaxLiu1998}
Peter~D. Lax and Xu-Dong Liu.
\newblock Solution of two-dimensional riemann problems of gas dynamics by
  positive schemes.
\newblock {\em SIAM J. Sci. Comput.}, 19(2):319--340, March 1998.

\bibitem{Lefloach99}
Philippe~G. Lefloch and Jian~Guo Liu.
\newblock Generalized monotone schemes, discrete paths of extrema, and discrete
  entropy conditions.
\newblock {\em Mathematics of Computation}, 68:1025--1055, 1999.

\bibitem{Lele1992}
Sanjiva~K. Lele.
\newblock Compact finite difference schemes with spectral-like resolution.
\newblock {\em Journal of Computational Physics}, 103(1):16--42, November 1992.

\bibitem{LeVeque1992}
Randall~J. LeVeque.
\newblock {\em {Numerical Methods for Conservation Laws}}.
\newblock Lectures in mathematics ETH Z\"{u}rich. Birkh\"{a}user Basel, 2nd
  edition, February 1992.

\bibitem{jiequan2011}
J.~Li and Z.~Yang.
\newblock Heuristic modified equation analysis on oscillations in numerical
  solutions of conservation laws.
\newblock {\em SIAM Journal on Numerical Analysis}, 49(6):2386--2406, 2011.

\bibitem{jiequan2009}
Jiequan Li, Huazhong Tang, Gerald Warnecke, and Lumei Zhang.
\newblock Local oscillations in finite difference solutions of hyperbolic
  conservation laws.
\newblock {\em Math. Comput.}, 78(268):1997--2018, 2009.


\bibitem{Liushock}
M.~Oliveira, P.~Lu, X.~Liu, and C.~Liu.
\newblock A new shock/discontinuity detector.
\newblock {\em International Journal of Computer Mathematics},
  87(13):3063--3078, 2010.

\bibitem{chakra}
S.~Osher and S.~Chakravarthy.
\newblock High resolution schemes and entropy condition.
\newblock {\em SIAM J. Numer. Anal.}, 21:955--984, 1984.

\bibitem{tadmor1988}
Stanley Osher and Eitan Tadmor.
\newblock On the convergence of difference approximations to scalar
  conservation laws.
\newblock {\em Mathematics of Computation}, 50(181):pp. 19--51, 1988.

\bibitem{Piperno}
Serge Piperno and Sophie Depeyre.
\newblock Criteria for the design of limiters yielding efficient high
  resolution tvd schemes.
\newblock {\em Computers \& Fluids}, 27(2):183 -- 197, 1998.

\bibitem{Rider}
William~J. Rider.
\newblock A comparison of tvd lax-wendroff methods.
\newblock {\em Communications in Numerical Methods in Engineering},
  9(2):147--155, 1993.


\bibitem{roe1985some}
Philip~L Roe.
\newblock Some contributions to the modelling of discontinuous flows.
\newblock In {\em Large-scale computations in fluid mechanics}, volume~1, pages
  163--193, 1985.


\bibitem{Sanders1983}
Richard Sanders.
\newblock On convergence of monotone finite difference schemes with variable
  spatial differencing.
\newblock {\em Mathematics of Computation}, 40(161):91--106, Jan 1983.

\bibitem{sanders1988}
Richard Sanders.
\newblock A third-order accurate variation nonexpansive difference scheme for
  single nonlinear conservation laws.
\newblock {\em Mathematics of Computation}, 51(184):pp. 535--558, 1988.

\bibitem{cwshu1999}
Chi-Wang Shu.
\newblock High order eno and weno schemes for computational fluid dynamics.
\newblock In TimothyJ. Barth and Herman Deconinck, editors, {\em High-Order
  Methods for Computational Physics}, volume~9 of {\em Lecture Notes in
  Computational Science and Engineering}, pages 439--582. Springer Berlin
  Heidelberg, 1999.

\bibitem{cwshu2012}
Chi-Wang Shu.
\newblock Efficient algorithms for solving partial differential equations with
  discontinuous solutions.
\newblock {\em Notices of the AMS}, 59(5), 2012.


\bibitem{Sod1978}
G.A. Sod.
\newblock Survey of several finite difference methods for systems of nonlinear
  hyperbolic conservation laws.
\newblock {\em Journal of Computational Physics}, Apr 1978.

\bibitem{sweby1984}
P.~K. Sweby.
\newblock High resolution schemes using flux limiters for hyperbolic
  conservation laws.
\newblock {\em Siam Journal on Numerical Analysis}, 21(5):995--1011, 1984.

\bibitem{Thomas}
J~W. Thomas.
\newblock {\em Numerical partial differential equations- conservation laws and
  elliptic equations}.
\newblock Texts in Applied Mathematics 33. Springer-Verlag, 1999.

\bibitem{toro2000}
EF~Toro and SJ~Billett.
\newblock Centred tvd schemes for hyperbolic conservation laws.
\newblock {\em IMA Journal of Numerical Analysis}, 20(1):47--79, 2000.

\bibitem{toro2009}
Eleuterio~F. Toro.
\newblock {\em {Riemann Solvers and Numerical Methods for Fluid Dynamics: A
  Practical Introduction}}.
\newblock Springer, 3rd edition, April 2009.

\bibitem{vanLeer1974}
Bram van Leer.
\newblock Towards the ultimate conservative difference scheme. ii. monotonicity
  and conservation combined in a second-order scheme.
\newblock {\em Journal of Computational Physics}, 14(4):361 -- 370, 1974.

\bibitem{Vides2015643}
Jeaniffer Vides, Boniface Nkonga, and Edouard Audit.
\newblock A simple two-dimensional extension of the \{HLL\} riemann solver for
  hyperbolic systems of conservation laws.
\newblock {\em Journal of Computational Physics}, 280(0):643 -- 675, 2015.

\bibitem{yang1996}
H.~Yang.
\newblock On wavewise entropy inequalities for high-resolution schemes. i: the
  semidiscrete case.
\newblock {\em Math. Comp.}, 65:45–67, 1996.

\bibitem{Yee1987}
H.~C. Yee.
\newblock Construction of explicit and implicit symmetric tvd schemes and their
  applications.
\newblock {\em Journal of Computational Physics}, 68(1):151 -- 179, 1987.


\bibitem{zhang2011}
X.~Zhang and C-W Shu.
\newblock Maximum-principle-satisfying and positivity-preserving high-order
  schemes for conservation laws: survey and new developments.
\newblock {\em Proc. R. Soc. A}, 467:2752--2776, 2011.

\bibitem{zhang2010}
Xiangxiong Zhang and Chi-Wang Shu.
\newblock A genuinely high order total variation diminishing scheme for
  one-dimensional scalar conservation laws.
\newblock {\em SIAM J. Numerical Analysis}, 48(2):772--795, 2010.

\bibitem{zhangjcp2010}
Xiangxiong Zhang and Chi-Wang Shu.
\newblock On maximum-principle-satisfying high order schemes for scalar
  conservation laws.
\newblock {\em Journal of Computational Physics}, 229(9):3091 -- 3120, 2010.

\end{thebibliography}
\end{document}